\renewcommand{\a}{\alpha}
\renewcommand{\b}{\beta}
\newcommand{\g}{\gamma}
\renewcommand{\d}{\delta}
\newcommand{\e}{\epsilon}
\renewcommand{\r}{\rho} 
\newcommand{\s}{\sigma}
\renewcommand{\O}{\Omega}
\renewcommand{\t}{\tau}
\newcommand{\p}{\varphi}
\newcommand{\<}{\langle}
\renewcommand{\>}{\rangle}
\newcommand{\la}{\langle}
\newcommand{\ra}{\rangle}
\newcommand{\leqs}{\leqslant}
\newcommand{\geqs}{\geqslant}
\newcommand{\Aut}{\operatorname{Aut}}
\newcommand{\Out}{\operatorname{Out}}
\newcommand{\Inndiag}{\operatorname{Inndiag}}
\newcommand{\vs}{\vspace{3mm}}
\newcommand{\imod}[1]{\allowbreak\mkern4mu({\operator@font mod}\,\,#1)}
\renewcommand{\mod}[1]{\imod{#1}}
\renewcommand{\leq}{\leqs}
\renewcommand{\geq}{\geqs}
\newcommand{\leqn}{\trianglelefteqslant}
\renewcommand{\:}{\colon}
\theoremstyle{plain}
\newtheorem{theorem}{Theorem} 
\newtheorem{corol}[theorem]{Corollary}
\newtheorem{thm}{Theorem}[section] 
\newtheorem{lem}[thm]{Lemma}
\newtheorem{prop}[thm]{Proposition} 
\newtheorem{cor}[thm]{Corollary} 
\newtheorem*{theorem*}{Theorem} 
\newtheorem*{conj*}{Conjecture}
\theoremstyle{definition}
\newtheorem{rem}[thm]{Remark}
\newtheorem{defn}[thm]{Definition}
\newtheorem{ex}[thm]{Example}
\newtheorem*{def-non}{Definition}
\begin{document}

\title[The spread of a finite group]{The spread of a finite group}

\author{Timothy C. Burness}
\address{T.C. Burness, School of Mathematics, University of Bristol, Bristol BS8 1UG, UK}
\email{t.burness@bristol.ac.uk}
 
\author{Robert M. Guralnick}
\address{R.M. Guralnick, Department of Mathematics, University of Southern California, Los Angeles, CA 90089-2532, USA}
\email{guralnic@usc.edu}

\author{Scott Harper}
\address{S. Harper, School of Mathematics, University of Bristol, Bristol BS8 1UG, UK, and Heilbronn Institute for Mathematical Research, Bristol, UK}
\email{scott.harper@bristol.ac.uk}

\date{\today} 

\begin{abstract}
A group $G$ is said to be $\frac{3}{2}$-generated if every nontrivial element belongs to a generating pair. It is easy to see that if $G$ has this property then every proper quotient of $G$ is cyclic. In this paper we prove that the converse is true for finite groups, which settles a conjecture of Breuer, Guralnick and Kantor from 2008. In fact, we prove a much stronger result, which solves a problem posed by Brenner and Wiegold in 1975. Namely, if $G$ is a finite group and every proper quotient of $G$ is cyclic, then for any pair of nontrivial elements $x_1,x_2 \in G$, there exists $y \in G$ such that $G = \la x_1, y \ra = \la x_2, y \ra$. In other words, $s(G) \geqs 2$, where $s(G)$ is the spread of $G$. Moreover, if $u(G)$ denotes the more restrictive uniform spread of $G$, then we can completely characterise the finite groups $G$ with $u(G) = 0$ and $u(G)=1$. To prove these results, we first establish a reduction to almost simple groups. For simple groups, the result was proved by Guralnick and Kantor in 2000 using probabilistic methods and since then the almost simple groups have been the subject of several papers. By combining our reduction theorem and this earlier work, it remains to handle the groups with  socle an exceptional group of Lie type and this is the case we treat in this paper.
\end{abstract}

\maketitle

\section{Introduction} \label{s:intro}

In this paper we study the spread and uniform spread of finite groups. These natural invariants encode interesting generation properties and they have been the subject of numerous papers spanning a period of more than 50 years. We begin with their definitions.

\begin{def-non}
Let $G$ be a group. 
\begin{itemize}\addtolength{\itemsep}{0.2\baselineskip}
\item[{\rm (i)}]   The \emph{spread} of $G$, denoted $s(G)$, is the largest integer $k$ such that for any nontrivial elements $x_1, \dots, x_k$ in $G$, there exists $y \in G$ with $G = \< x_i, y \>$ for all $i$.
\item[{\rm (ii)}]  The \emph{uniform spread} of $G$, denoted $u(G)$, is the largest integer $k$ such that there is a conjugacy class $C$ of $G$ with the property that for any nontrivial elements $x_1, \dots, x_k$, there exists $y \in C$ with $G = \< x_i, y \>$ for all $i$. Here we say $C$ \emph{witnesses} $u(G) \geq k$.
\item[{\rm (iii)}] If no such largest integer exists in (i) or (ii), then we write $s(G) = \infty$ or $u(G) = \infty$, respectively.
\end{itemize}
\end{def-non}

Let us observe that for any group $G$ we have $s(G) \geq u(G) \geq 0$, and if $G$ is cyclic, then $s(G) = u(G) = \infty$. A group $G$ is \emph{$\frac{3}{2}$-generated} if every nontrivial element belongs to a generating pair, which is equivalent to the condition $s(G) \geq 1$. Therefore, we can view the concepts of spread and uniform spread as natural extensions of the $\frac{3}{2}$-generation property.

The notion of spread was first introduced in the 1970s by Brenner and Wiegold in \cite{BW}, where numerous results on the spread of  soluble groups and certain families of simple groups (such as alternating groups and linear groups of the form ${\rm L}_{2}(q)$) are established. However, it turns out that the spread of finite groups has been studied since as early as the 1930s. For instance, a 1939 paper of Piccard \cite{Piccard} proves that the symmetric group $G = {\rm Sym}_{n}$ has positive spread for all $n \geqs 5$ and later work of Binder \cite{Binder68,Binder70I} in the 1960s extended this to $s(G) \geqs 2$. The more restrictive definition of uniform spread was formally introduced much more recently by Breuer, Guralnick and Kantor \cite{BGK}, although one finds work of Binder \cite{Binder70II} from 1970 on the uniform spread of symmetric groups.

As a consequence of the Classification of Finite Simple Groups, we know that every nonabelian finite simple group can be generated by two elements. This is a routine exercise for the alternating groups and a theorem of Steinberg \cite{Steinberg} for groups of Lie type. The property was verified for the sporadic groups by Aschbacher and Guralnick in \cite{AG}. In view of this fundamental result, it is natural to study the spread and uniform spread of finite simple groups and there is an extensive literature on this topic.  

The first main result is due to Guralnick and Kantor \cite{GK}, who proved that $u(G) \geqs 1$ for every finite simple group $G$ (also see Stein \cite{Stein}). The proof combines powerful probabilistic methods with a detailed analysis of the conjugacy classes and subgroup structure of simple groups. It follows that  every finite simple group is $\frac{3}{2}$-generated, as predicted by Steinberg in his $2$-generation paper of 1962 (see \cite[Section~1]{Steinberg}). These results for simple groups $G$ were extended in a subsequent paper by Breuer, Guralnick and Kantor \cite{BGK} who showed that $u(G) \geqs 2$, with equality if and only if
\begin{equation}\label{e:u2}
G \in \{ {\rm Alt}_5, \, {\rm Alt}_6, \, \O_{8}^{+}(2), \, {\rm Sp}_{2r}(2) \, (r \geqs 3)\}
\end{equation}
(for each of these groups, it is worth noting that $s(G) = 2$). Asymptotic results on the spread and uniform spread of simple groups are established by Guralnick and Shalev in \cite{GSh}.

It is easy to see that if a group $G$ is $\frac{3}{2}$-generated, then every
proper quotient of $G$ is cyclic (that is, $G/N$ is cyclic for all nontrivial normal subgroups $N$ of $G$). The converse statement is false for infinite groups since there exist infinite simple groups that are not finitely generated, such as the alternating group ${\rm Alt}_\infty$. In fact, there even exist finitely generated simple groups that are not $2$-generated (see \cite{Guba}). However, recent work of Donoven and Harper \cite{DH} shows that Thompson's group $V$, and related infinite families of finitely presented groups, are $\frac{3}{2}$-generated.

It is natural to ask if the cyclic quotient property is equivalent to $\frac{3}{2}$-generation for \emph{finite} groups. This is a conjecture of Breuer, Guralnick and Kantor (see \cite[Conjecture~1.8]{BGK}). 

\begin{conj*}
Let $G$ be a finite group. Then $s(G) \geqs 1$ if and only if every proper quotient of $G$ is cyclic.
\end{conj*}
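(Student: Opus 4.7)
The plan is to follow the two-stage strategy foreshadowed in the abstract: first reduce the problem to almost simple groups, and then combine existing results with a new treatment of exceptional Lie type socles. Let $G$ be a non-cyclic finite group all of whose proper quotients are cyclic. The starting observation is that $G$ has a unique minimal normal subgroup $N$, since two distinct minimal normals $N_1, N_2$ would embed $G$ into $(G/N_1) \times (G/N_2)$, which is abelian, forcing $G$ itself to be abelian and hence cyclic. Thus $N = \soc(G)$, the quotient $G/N$ is cyclic, and $N$ is either an elementary abelian $p$-group or a direct product $T^k$ of copies of a non-abelian simple group $T$.

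When $N$ is elementary abelian we have $C_G(N) = N$, so $G$ is an affine-type group with cyclic point stabiliser sitting inside $\mathrm{AGL}(N)$; here $s(G) \geqs 1$ should follow by a direct argument in the spirit of the Brenner--Wiegold work on soluble groups. When $N = T^k$ with $T$ non-abelian simple, uniqueness of $N$ gives $C_G(N) = 1$, so $G \leqs \Aut(T) \wr S_k$ and the cyclic quotient $G/N$ acts transitively on the $k$ factors via a cyclic image in $S_k$. The plan to reduce to the almost simple case is to exploit this twisted wreath structure: a generator of $G/N$ cycles the factors and induces some $\theta \in \Aut(T)$ on the first factor, so the generation of $G$ is largely controlled by the almost simple group $A = \la T, \theta \ra$. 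Given any nontrivial $x \in G$, I would use a sufficiently strong uniform spread witness in $A$ to construct $y \in G$ with $G = \la x, y \ra$, the technical heart being to verify that a single $y$ can be chosen to simultaneously generate with $x$ and to cycle the factors.

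The upshot is that $s(G) \geqs 1$ reduces to proving $u(A) \geqs 2$ for every almost simple group $A$ with $A/\soc(A)$ cyclic, which is known from \cite{GK}, \cite{BGK} and subsequent papers when $\soc(A)$ is alternating, sporadic, or classical of Lie type (the exceptional equalities listed in (\ref{e:u2}) are checked separately). The remaining and most demanding step is to establish this bound for almost simple groups with socle an exceptional group of Lie type. Here the plan is the probabilistic method of Guralnick--Kantor: for each such $A$, select a conjugacy class $C$ (typically regular semisimple, chosen so that its elements lie in few maximal subgroups) and show that for any nontrivial $x_1, x_2 \in A$ the proportion of $y \in C$ with $\la x_i, y \ra$ contained in some maximal subgroup is strictly less than $1$. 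This is achieved via fixed point ratio estimates on cosets of maximal subgroups, using the Liebeck--Seitz classification of maximal subgroups of exceptional groups together with the fixed point ratio bounds of Lawther--Liebeck--Seitz.

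The principal obstacle will be this final step: selecting a single witnessing class $C$ that works uniformly over all nontrivial $x \in A$, handling the graph, field and diagonal automorphisms that may appear in $A \setminus \soc(A)$, and dealing with the small rank exceptional groups (such as ${}^2B_2(q)$, ${}^2G_2(q)$, $G_2(q)$ and ${}^3D_4(q)$) where the probabilistic estimates become tight and explicit, possibly computer assisted, case-by-case verification is required.
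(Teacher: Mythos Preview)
Your overall strategy matches the paper's approach closely: reduce to almost simple groups via the unique minimal normal subgroup and the wreath-product structure, then invoke the existing literature together with a new probabilistic treatment of exceptional socles. Two points deserve attention.

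First, your reduction hinges on the claim that $u(A) \geqs 2$ for \emph{every} almost simple $A$ with $A/\soc(A)$ cyclic, but this fails for $A = {\rm Sym}_6$, where $u({\rm Sym}_6) = 0$ (although $s({\rm Sym}_6) = 2$). The paper isolates this case and treats the groups $G_k$ with $N_G(T_i)/C_G(T_i) \cong {\rm Sym}_6$ by a separate ad hoc argument (Theorem~\ref{t:s6}), choosing the witness element in a way that depends on the pair $x_1, x_2$ rather than uniformly. Your plan as written does not account for this exception, and the induction on $k$ will not close without it.

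Second, in the exceptional Lie type step you correctly anticipate that the difficulty lies in handling elements of $A \setminus \soc(A)$, but you do not identify the mechanism for doing so. The paper's key tool here is \emph{Shintani descent}: for a field or graph-field automorphism $g = \sigma$ of $G_0 = \bar{G}_{\sigma^e}$, the Shintani map sets up a bijection between $G_0$-classes in the coset $G_0\sigma$ and classes in the smaller group $\bar{G}_\sigma$, allowing one to select $x \in G_0 g$ whose maximal overgroups are controlled by those of a carefully chosen $y \in \bar{G}_\sigma$ (see Theorem~\ref{t:shintani} and Lemmas~\ref{l:shintani_subgroups}--\ref{l:shintani_substitute}). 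Without this or an equivalent device, it is not clear how your probabilistic estimate would even get started in the cosets outside $\soc(A)$.
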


This conjecture has been established in a handful of special cases. For example, see \cite[Theorem~2.01]{BW} for soluble groups and the main theorem of \cite{GK} for simple groups. In this paper, we prove a much stronger form of the conjecture in full generality.

\begin{theorem}\label{t:spread}
Let $G$ be a finite group. Then $s(G) \geqs 2$ if and only if every proper quotient of $G$ is cyclic.
\end{theorem}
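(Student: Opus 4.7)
The implication $s(G) \geq 2 \Rightarrow$ every proper quotient is cyclic is immediate, since it forces $G$ to be $\tfrac{3}{2}$-generated and this is already noted to entail cyclic proper quotients. For the converse, my plan is to reduce to almost simple groups and then combine the known simple-group results with the new exceptional Lie type analysis that constitutes the main body of this paper.

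Suppose $G$ satisfies the cyclic quotient hypothesis. If $G$ has two distinct minimal normal subgroups $M_1, M_2$, then $M_1 \cap M_2 = 1$ and $G$ embeds into $G/M_1 \times G/M_2$, a product of two cyclic groups; hence $G$ is abelian, and a short check limits the possibilities to cyclic groups and the elementary abelian groups $C_p \times C_p$, both of which are verified directly. Thus I may assume $G$ has a unique minimal normal subgroup $N$. If $N$ is elementary abelian then $G$ is soluble with cyclic $G/N$ and $s(G) \geq 2$ follows from the theorem of Brenner and Wiegold for soluble groups \cite[Theorem~2.01]{BW}. Otherwise $N = T^k$ for some nonabelian finite simple $T$. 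The substantive task of the reduction is to handle $k \geq 2$: the cyclicity of $G/N$ and the transitive action of $G$ on the $k$ simple factors must be exploited to build a witnessing pair in $G$, ultimately by bootstrapping from a solution in the almost simple group on a single factor together with a careful coordinate argument exploiting the action on the other factors.

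It then remains to treat the almost simple case, where $G$ has simple socle $T$ and $G/T$ is cyclic. For $G = T$ the result $u(T) \geq 2$ is proved by Breuer, Guralnick and Kantor \cite{BGK}, and the four exceptional families listed in \eqref{e:u2} still satisfy $s(T) = 2$. For proper almost simple groups, the cases with $T$ alternating, sporadic, or classical have been settled in earlier work, leaving only the case where $T$ is an exceptional group of Lie type. This is the main obstacle, and I would attack it by the standard probabilistic method: choose a conjugacy class $C = s^G$ and bound, for each prime-order $x \in G$, the quantity
\[
\sum_{H} \frac{|s^G \cap H|}{|s^G|},
\]
the sum running over maximal subgroups $H$ of $G$ containing $x$; if this is strictly less than $\tfrac{1}{2}$ for every such $x$, then for any two nontrivial $x_1, x_2 \in G$ a positive proportion of $G$-conjugates of $s$ generate $G$ with each $x_i$, witnessing $u(G) \geq 2$. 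The hard work, which the paper must carry out type by type, is the wise choice of $s$ in each exceptional family together with sharp fixed-point ratio estimates and a sufficiently complete description of the maximal overgroups of prime-order elements in each of the exceptional groups of Lie type.
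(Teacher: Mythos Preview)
Your outline matches the paper's strategy: reduce to a unique minimal normal subgroup, handle the abelian case directly, reduce the nonabelian case to almost simple groups by induction on the number of factors, and then settle the almost simple exceptional case by the probabilistic method. Two points, however, deserve correction.

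First, your probabilistic bound has the roles reversed. You propose to sum $|s^G\cap H|/|s^G|$ over maximal $H$ containing the \emph{prime-order} element $x$; this is formally a valid upper bound for $P(x,s)$, but it is impractical, since a prime-order element typically lies in a vast and uncontrollable collection of maximal subgroups. The paper instead uses the dual inequality $P(z,s)\le\sum_{H\in\mathcal M(s)}|z^G\cap H|/|z^G|$, summing over the maximal overgroups of the \emph{witness} $s$ (Lemma~\ref{l:prob_method}). The entire point is to choose $s$ so that $\mathcal M(s)$ is small and explicitly describable; when $s$ must lie in a non-inner coset $G_0g$, this choice is made via Shintani descent (Section~\ref{ss:p_shintani}), which you do not mention but which is the key device that makes the method go through. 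Your phrase ``maximal overgroups of prime-order elements'' is exactly the wrong list to compile.

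Second, the reduction from $N=T^k$ to almost simple groups is more delicate than your sketch indicates. The inductive argument (Theorems~\ref{t:red_even} and~\ref{t:red_odd}) needs not merely $u(A)\ge2$ for the almost simple factor $A$, but the strengthened conclusion of Theorem~\ref{t:almostsimple1}: the witnessing class must satisfy an additional order or non-square condition, precisely so that one can exclude certain diagonal overgroups in the coordinate argument. Moreover, when $A={\rm Sym}_6$ one has $u(A)=0$, so that case falls outside the inductive scheme and is handled separately (Theorem~\ref{t:s6}); there the bound $s(G)\ge2$ is \emph{not} witnessed by a single conjugacy class. Your ``careful coordinate argument'' is the right idea, but these two subtleties are where the genuine work of the reduction lies.
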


As noted above, there are infinitely many finite simple groups $G$ with $s(G)=2$. Moreover,   Corollary~\ref{c:spread2} shows that if $G$ is one of the simple groups in \eqref{e:u2}, then $s(G \wr C_k) = 2$ for all $k \geq 1$.

In \cite{BW}, Brenner and Wiegold prove that every finite soluble group $G$ with $s(G) \geq 1$ satisfies the stronger bound $s(G) \geq 3$ (see \cite[Corollary~2.02]{BW}). In \cite[Problem~1.04]{BW}, they seek a classification of the finite groups $G$ with $s(G)=1$, and they speculate that there are only finitely many such groups. As an immediate corollary to Theorem~\ref{t:spread}, we can now give the definitive solution to this problem, which has remained open since 1975: there are none.

\begin{corol}\label{c:spread}
There is no finite group $G$ with $s(G) = 1$.
\end{corol}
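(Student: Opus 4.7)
The plan is to derive the corollary as an almost immediate consequence of Theorem~\ref{t:spread}, using the easy direction that $\frac{3}{2}$-generation forces every proper quotient to be cyclic. Concretely, I would argue by contradiction: suppose there exists a finite group $G$ with $s(G) = 1$. In particular $s(G) \geqs 1$, so $G$ is $\frac{3}{2}$-generated, meaning every nontrivial element of $G$ lies in some generating pair.

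Next I would invoke the elementary observation recorded in the introduction: if $G$ is $\frac{3}{2}$-generated, then every proper quotient of $G$ is cyclic. (The one-line justification is that if $N \normeq G$ is nontrivial and $x \in N \setminus \{1\}$, then any $y$ completing $x$ to a generating pair of $G$ maps to a generator of $G/N$, so $G/N$ is cyclic.) Hence $G$ satisfies the cyclic quotient hypothesis.

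Applying Theorem~\ref{t:spread} to $G$ now yields $s(G) \geqs 2$, which directly contradicts the assumption $s(G) = 1$. Therefore no such $G$ exists. There is essentially no obstacle here beyond citing the main theorem; the whole content of the corollary is packaged in the jump from $s(G) \geqs 1$ to $s(G) \geqs 2$ provided by Theorem~\ref{t:spread}, which is precisely the strengthening of the Breuer--Guralnick--Kantor conjecture that settles Brenner and Wiegold's 1975 question in the negative.
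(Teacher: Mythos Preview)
Your proof is correct and is exactly the argument the paper intends: it states the corollary as an immediate consequence of Theorem~\ref{t:spread}, relying on the easy observation (recorded in the introduction) that $s(G)\geqs 1$ forces every proper quotient of $G$ to be cyclic. There is nothing to add.
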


We will prove Theorem~\ref{t:spread} by studying the uniform spread of finite groups. The following result characterises the finite groups $G$ with $u(G)=0$ and $u(G) = 1$.

\begin{theorem}\label{t:uspread}
Let $G$ be a finite group.
\begin{itemize}\addtolength{\itemsep}{0.2\baselineskip}
\item[{\rm  (i)}] $u(G) = 0$ if and only if $G$ has a noncyclic proper quotient, or $G$ is ${\rm Sym}_6$ or $C_p \times C_p$ for a prime $p$.
\item[{\rm (ii)}] $u(G) = 1$ if and only if $G$ has a unique minimal normal subgroup 
\[
N = T_1 \times \cdots \times T_{k} = ({\rm Alt}_6)^{k},
\]
where $k \geqs 2$, $G/N$ is cyclic and $N_G(T_i)/C_G(T_i) = {\rm Sym}_6$ for all $i$.
\end{itemize}
\end{theorem}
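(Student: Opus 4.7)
The theorem splits into ``if'' and ``only if'' directions in each of (i) and (ii). The plan is to dispose of the ``if'' directions by direct verification in each exceptional case, and then to attack the ``only if'' directions by combining Theorem~\ref{t:spread}, the reduction theorem to almost simple groups established earlier in the paper, and the known classification of uniform spread for almost simple groups from \cite{GK, BGK} together with the exceptional-group work completed here.

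\textbf{``If'' directions.} If $G$ has a noncyclic proper quotient then $s(G) \leq 1$ by the contrapositive of Theorem~\ref{t:spread}, and so $s(G) = 0$ by Corollary~\ref{c:spread}, giving $u(G) = 0$. For $G = C_p \times C_p$ every conjugacy class is a singleton $\{y\}$ and choosing $x = y$ forces $\la x, y \ra = \la y \ra \neq G$, so $u(G) = 0$. For $G = {\rm Sym}_6$ the classical obstruction via the exceptional outer automorphism (which interchanges the two conjugacy classes of $\rm Sym_5$-subgroups) gives $u(G) = 0$. For (ii) in the prescribed $({\rm Alt}_6)^k$ setting, I would exhibit both a witness class establishing $u(G) \geq 1$ and a bad diagonal element ruling out $u(G) \geq 2$.

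\textbf{``Only if'' setup.} Now suppose $u(G) \leq 1$. If $G$ has a noncyclic proper quotient we are already in case (i), so assume every proper quotient of $G$ is cyclic. Then $G$ has a unique minimal normal subgroup $N$, the quotient $G/N$ is cyclic, and $N = T^k$ with $T$ simple. In the abelian case $T = C_p$ the quotient condition forces $G = C_p \times C_p$ (the exception in (i)) or $G = C_p^k \rtimes \la g \ra$ with $\la g \ra$ acting irreducibly on $N$; adapting the Brenner--Wiegold soluble argument \cite{BW} gives $u(G) \geq 2$ in the latter case, producing no new exceptions from this branch.

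\textbf{Nonabelian socle and main obstacle.} With $T$ nonabelian simple, set $L = N_G(T_1)/C_G(T_1)$, an almost simple group with socle $T$. When $k = 1$ we have $G = L$, and the almost simple classification (from \cite{GK, BGK} together with the exceptional-Lie-type analysis carried out in the present paper) shows $u(G) \geq 2$ unless $G = {\rm Sym}_6$, accounting for the last exception in (i). When $k \geq 2$ I invoke the reduction theorem, which bounds $u(G)$ from below in terms of $u(L)$ together with the cyclic $G/N$-action permuting $\{T_1, \ldots, T_k\}$; whenever $L \neq {\rm Sym}_6$ this yields $u(G) \geq 2$, with Corollary~\ref{c:spread2} supplying the extra input needed to cover the simple socles listed in \eqref{e:u2}. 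The remaining case $T = {\rm Alt}_6$ and $L = {\rm Sym}_6$ is the main obstacle: since $u({\rm Sym}_6) = 0$, no naive lift of a witness class succeeds, and one must exploit the cyclic action on factors to assemble a class of $G$ witnessing $u(G) \geq 1$ out of partial data from $L$, and then separately identify a diagonal obstruction built from the ${\rm Sym}_6$ obstruction to rule out $u(G) \geq 2$. I expect this final step to be the most delicate, requiring detailed conjugacy and fixed-point data for ${\rm Sym}_6$ and a careful analysis of diagonal embeddings in $({\rm Sym}_6)^k$.
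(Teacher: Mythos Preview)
Your overall architecture matches the paper's: reduce to groups with a unique minimal normal subgroup, dispatch the abelian case, use the almost simple classification for $k=1$, apply the reduction theorem for $k\geqs 2$ with $A\ne{\rm Sym}_6$, and treat $A={\rm Sym}_6$ separately. However, two points need correction.

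First, there is a circularity problem. In the paper, Theorem~\ref{t:spread} and Corollaries~\ref{c:spread} and~\ref{c:spread2} are \emph{consequences} of Theorem~\ref{t:uspread}, not inputs to it. Your ``if'' direction for (i) invokes Theorem~\ref{t:spread} and Corollary~\ref{c:spread}, and your $k\geqs 2$ argument invokes Corollary~\ref{c:spread2}; both are illegitimate here. Fortunately neither is needed: the implication ``noncyclic proper quotient $\Rightarrow s(G)=0$'' is the elementary observation recorded before the Conjecture in the introduction, and the reduction theorem (Theorems~\ref{t:red_even} and~\ref{t:red_odd}) already covers all $A\ne{\rm Sym}_6$, including the socles in \eqref{e:u2}, without any appeal to Corollary~\ref{c:spread2}.

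Second, your plan for the upper bound $u(G)\leqs 1$ in (ii) via ``a bad diagonal element'' is not how the paper proceeds and is not obviously workable. The paper instead proves a projection inequality (Lemma~\ref{l:upperbound}): $u_0(G_k)\leqs u_0(A)$, obtained by lifting a bad pair $x_1,x_2\in T$ to the first coordinate of $N=T^k$ and conjugating any putative witness into the form $(v,1,\dots,1)\sigma$. One then checks directly in $A={\rm Sym}_6$ that $u_0(A)\leqs 1$. This is cleaner than hunting for an explicit obstruction inside $G_k$, and it is what you should aim for.
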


The next result is an immediate corollary (for part (ii), observe that ${\rm Sym}_{6}$ can be generated by two $6$-cycles).

\begin{corol}\label{c:conj}
Let $G$ be a finite group such that every proper quotient of $G$ is cyclic. 
\begin{itemize}\addtolength{\itemsep}{0.2\baselineskip}
\item[{\rm (i)}]  If $G$ has even order, then every involution in $G$ belongs to a generating pair. 
\item[{\rm (ii)}] If $G \ne C_p \times C_p$ for a prime $p$, then $G$ can be generated by two conjugate elements.
\end{itemize}
\end{corol}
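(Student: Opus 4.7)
\medskip
\noindent\textbf{Proof plan.} Both parts are designed to fall out of Theorems~\ref{t:spread} and~\ref{t:uspread}, and that is how I would prove them.

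For part (i), the cyclic-quotient hypothesis gives $s(G) \geqs 2$ by Theorem~\ref{t:spread}, and in particular $s(G) \geqs 1$, so $G$ is $\frac{3}{2}$-generated. Consequently every nontrivial element --- and hence, by the parity hypothesis on $|G|$, every involution --- belongs to some generating pair. There is really nothing more to do once Theorem~\ref{t:spread} is in hand.

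For part (ii) my plan is to appeal to Theorem~\ref{t:uspread}(i), which, under the hypotheses that every proper quotient of $G$ is cyclic and $G \ne C_p \times C_p$, leaves only two possibilities: either $u(G) \geqs 1$, or $G = {\rm Sym}_6$. Suppose first that $u(G) \geqs 1$, and fix a conjugacy class $C$ of $G$ witnessing this. The cyclic case is immediate, since a cyclic group is generated by a single element $g$, which is conjugate to itself. Otherwise $G$ is noncyclic, which forces $C$ to consist of nontrivial elements; I would then pick any $x \in C$ and invoke the defining property of uniform spread to produce $y \in C$ with $G = \la x, y \ra$, noting that $x$ and $y$ are $G$-conjugate by construction.

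It remains to dispose of $G = {\rm Sym}_6$, for which $u(G) = 0$ and so the uniform-spread argument does not apply directly. Here, as the authors already signal in the statement, it suffices to exhibit two conjugate $6$-cycles that together generate ${\rm Sym}_6$; this is a short explicit verification. The main obstacle to this corollary is essentially absent, since all the genuine work has been carried out in Theorems~\ref{t:spread} and~\ref{t:uspread}, and only the ${\rm Sym}_6$ check survives as a hands-on residue.
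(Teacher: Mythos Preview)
Your proposal is correct and matches the paper's approach: the authors explicitly present this corollary as immediate from Theorem~\ref{t:uspread} (and implicitly Theorem~\ref{t:spread}), noting only the additional observation that ${\rm Sym}_6$ can be generated by two $6$-cycles, which is precisely the residual case you isolate and handle.
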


Recall that a finite group $G$ is \emph{almost simple} if it has a unique minimal normal subgroup $G_0$ that is nonabelian and simple (in particular, $G_0 \leqs G \leqs {\rm Aut}(G_0)$ and $G_0$ is the socle of $G$). As a special case of Theorem~\ref{t:uspread}, we obtain the following result, which highlights the anomaly of the symmetric group of degree $6$.

\begin{corol}\label{c:uspread}
Let $G = \la G_0, g \ra$ be a finite almost simple group with socle $G_0$. Then $u(G)<2$ if and only if $G = {\rm Sym}_6$, in which case $s(G)=2$ and $u(G) = 0$.
\end{corol}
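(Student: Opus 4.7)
The plan is to derive Corollary \ref{c:uspread} as a direct consequence of Theorem \ref{t:uspread}, exploiting the restrictive structure of an almost simple group. The key preliminary observation is that every proper quotient of $G$ is cyclic: since $G_0$ is the unique minimal normal subgroup of $G$, any nontrivial normal subgroup of $G$ contains $G_0$, so every proper quotient of $G$ is itself a quotient of the cyclic group $G/G_0 = \la gG_0 \ra$.

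With this in hand, I would handle the two parts of Theorem \ref{t:uspread} in turn. Part (ii) can be eliminated outright: $u(G) = 1$ would force the socle of $G$ to be a direct product $({\rm Alt}_6)^k$ with $k \geqs 2$, contradicting the fact that the socle of an almost simple group is simple. For part (i), the condition that $G$ has a noncyclic proper quotient fails by the preliminary observation, and an almost simple group cannot be the abelian group $C_p \times C_p$, so the only remaining possibility is $G = {\rm Sym}_6$. Conversely, if $G = {\rm Sym}_6$, then the iff statement in part (i) gives $u(G) = 0$ directly. This establishes the biconditional $u(G) < 2 \Leftrightarrow G = {\rm Sym}_6$, together with $u({\rm Sym}_6) = 0$.

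It remains to verify that $s({\rm Sym}_6) = 2$. The lower bound $s({\rm Sym}_6) \geqs 2$ follows from Theorem \ref{t:spread} combined with the fact that the proper quotients of ${\rm Sym}_6$ are $1$ and $C_2$, both cyclic. The matching upper bound $s({\rm Sym}_6) \leqs 2$ is where I expect the only real obstacle: since $u({\rm Sym}_6) = 0$ the trivial inequality $s(G) \geqs u(G)$ gives no information, and the outer involution of ${\rm Sym}_6$ prevents a clean transfer from the value $s({\rm Alt}_6) = 2$ noted after \eqref{e:u2}. One would therefore exhibit three nontrivial elements of ${\rm Sym}_6$ admitting no common completion to a generating pair. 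For a group this small, such a triple can be found by direct inspection of the maximal subgroup lattice, or by a short computation in a system such as \textsc{Magma}; the fact $s({\rm Sym}_6) = 2$ is in any case well documented in the earlier literature on the spread of symmetric groups cited in the introduction (for instance the papers of Binder).
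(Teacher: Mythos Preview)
Your proposal is correct and follows essentially the same approach as the paper, which presents Corollary~\ref{c:uspread} as an immediate special case of Theorem~\ref{t:uspread}. The only point you flag as needing extra work, the upper bound $s({\rm Sym}_6)\leqs 2$, is handled in the paper exactly as you suggest: an explicit triple $x_1=(1,2)(3,4)$, $x_2=(1,2)(5,6)$, $x_3=(3,4)(5,6)$ with no common complement is exhibited in the proof of Theorem~\ref{t:s6}, and the value $s({\rm Sym}_6)=2$ is also attributed to \cite[Theorem~2(i)]{BH}.
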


Let $G$ be a finite group and recall that the \emph{generating graph} of $G$ is an undirected graph $\Gamma(G)$ with vertices the nontrivial elements of $G$ so that $x$ and $y$ are adjacent if and only if $G = \la x,y \ra$. This graph was first introduced by Liebeck and Shalev \cite[Section~7]{LSh96} and it has been widely studied in recent years, especially in the setting where $G$ is a simple group (see \cite{Bur19} and the references therein). The following result, which is an immediate corollary of Theorem~\ref{t:spread}, establishes a remarkable dichotomy for generating graphs of finite groups.

\begin{corol} \label{c:gg}
Let $G$ be a finite group and let $\Gamma(G)$ be the generating graph of $G$. Then either
\begin{itemize}\addtolength{\itemsep}{0.2\baselineskip}
\item[{\rm (i)}] $\Gamma(G)$ has isolated vertices; or
\item[{\rm (ii)}] $\Gamma(G)$ is connected and has diameter at most $2$.
\end{itemize}
\end{corol}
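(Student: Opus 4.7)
The strategy is to translate the dichotomy into statements about the spread of $G$ and then invoke Theorem~\ref{t:spread}. First observe that $\Gamma(G)$ has an isolated vertex precisely when some nontrivial $x \in G$ is not contained in any generating pair $\langle x, y \rangle = G$ with $y \notin \{1, x\}$. With the possible exception of $G$ of order at most two (where one checks the claim directly: $\Gamma(C_1)$ is empty and $\Gamma(C_2)$ has a single isolated vertex, both accounted for by case~(i) or vacuously by case~(ii)), this condition is precisely $s(G) = 0$. Hence, assuming we are not in case~(i), we have $s(G) \geq 1$, and Theorem~\ref{t:spread} upgrades this immediately to $s(G) \geq 2$.

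It then suffices to show that $s(G) \geq 2$ forces $\Gamma(G)$ to be connected of diameter at most $2$. Let $x_1, x_2$ be distinct nontrivial elements of $G$. By the definition of spread, there exists $y \in G$ with $G = \langle x_1, y \rangle = \langle x_2, y \rangle$. If $y$ is nontrivial and distinct from both $x_1$ and $x_2$, then $y$ is a vertex adjacent to each of $x_1$ and $x_2$, so $d_{\Gamma(G)}(x_1, x_2) \leq 2$. In the residual cases $y = 1$ or $y = x_i$ for some $i$, the relation $\langle x_i, y \rangle = G$ collapses to $\langle x_i \rangle = G$; then $G = \langle x_1, x_2 \rangle$, so $x_1$ and $x_2$ are themselves adjacent and $d_{\Gamma(G)}(x_1, x_2) = 1$.

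All the substantive mathematical content lives inside Theorem~\ref{t:spread}; this corollary is a purely formal consequence. The only point requiring any thought is the trivial bookkeeping above that converts a spread-witness $y$ into an actual neighbour in the graph, so there is no real obstacle.
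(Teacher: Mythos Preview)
Your proof is correct and follows the same route the paper intends: it simply unpacks the phrase ``immediate corollary of Theorem~\ref{t:spread}'' by noting that the absence of isolated vertices gives $s(G)\geq 1$, hence (via the easy direction plus Theorem~\ref{t:spread}, or equivalently Corollary~\ref{c:spread}) $s(G)\geq 2$, and then converting a spread-$2$ witness into a common neighbour in $\Gamma(G)$. The edge-case bookkeeping for $y\in\{1,x_1,x_2\}$ and for $|G|\leq 2$ is handled correctly.
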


We now turn to a further application of spread. Let $G$ be a finite group and let $k \geq d(G)$ be an integer, where $d(G)$ is the smallest size of a generating set for $G$. The vertices of the \emph{product replacement graph} $\Gamma_k(G)$ are the generating $k$-tuples of $G$ and the neighbours of $(x_1, \dots, x_i, \dots, x_k)$ in this graph are $(x_1, \dots, x_ix_j^\pm, \dots, x_k)$ and $(x_1, \dots, x_j^\pm x_i, \dots, x_k)$, for each $1 \leq i \neq j \leq k$. Two generating tuples in $\Gamma_k(G)$ are \emph{equivalent} if they are connected by a path in $\Gamma_k(G)$. A generating tuple is \emph{redundant} if one of the entries can be removed and the remaining entries still generate $G$.

This graph arises naturally in several different contexts. For example, the well known  product replacement algorithm for computing random elements of $G$ involves a random walk on $\Gamma_k(G)$ (see \cite{PRA}). A straightforward argument shows that if $s(G) \geq 2$, then all redundant generating $k$-tuples of $G$ are equivalent for $k > 2$ (see \cite[Lemma~2.8]{Evans}), so Theorem~\ref{t:spread} yields the following corollary. This is related to a much more general conjecture of Pak \cite[Conjecture~2.5.5]{Pak}, which asserts that $\Gamma_k(G)$ is connected for $k > d(G)$.

\begin{corol} \label{c:prg}
Let $k \geq 3$ and let $G$ be a finite group such that every proper quotient is cyclic. Then all redundant generating $k$-tuples are connected in the product replacement graph $\Gamma_k(G)$.
\end{corol}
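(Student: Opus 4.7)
The plan is to deduce the statement directly from Theorem \ref{t:spread} together with the lemma of Evans cited in the paragraph preceding the corollary. Since every proper quotient of $G$ is cyclic, Theorem \ref{t:spread} immediately yields $s(G) \geq 2$, and the only remaining task is to verify the essentially combinatorial claim that spread at least two forces all redundant generating $k$-tuples to lie in a single connected component of $\Gamma_k(G)$ whenever $k \geq 3$. This claim is \cite[Lemma~2.8]{Evans}, but it is worth recording the underlying idea.

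For this combinatorial step I would argue as follows. Let $T = (x_1, \dots, x_k)$ be a redundant generating $k$-tuple and, after relabelling, suppose $\langle x_2, \dots, x_k \rangle = G$. Then $x_1$ is some word $w$ in $x_2, \dots, x_k$, so applying the Nielsen moves corresponding to $w^{-1}$ to the first coordinate produces a path in $\Gamma_k(G)$ from $T$ to $(1, x_2, \dots, x_k)$, and the reverse procedure gives a path from $(1, x_2, \dots, x_k)$ to $(y, x_2, \dots, x_k)$ for any $y \in G$. In other words, a slot that is known to be redundant can be freely overwritten by an arbitrary element of $G$. Given a second redundant tuple $T' = (x_1', \dots, x_k')$, I would use $s(G) \geq 2$ to select an element $y$ that simultaneously completes $x_i$ and $x_i'$ to generating pairs for a chosen coordinate $i$, place $y$ in a currently redundant slot to liberate a neighbouring slot, and then iterate; since $k \geq 3$ there are always at least two non-pivot coordinates to work with, so the redundancy can be shuttled along the tuple and the entries of $T$ can be converted into those of $T'$ one coordinate at a time.

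The main obstacle is not this corollary but Theorem \ref{t:spread} itself, which requires the reduction to almost simple groups and the classification-based analysis of exceptional groups of Lie type carried out in the body of the paper; once spread two is in hand, the translation to connectivity of the redundant locus in $\Gamma_k(G)$ is formal, and indeed is standard in the product replacement literature.
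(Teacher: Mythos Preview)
Your proposal is correct and follows exactly the paper's approach: the paper also derives the corollary by invoking Theorem~\ref{t:spread} to obtain $s(G)\geqslant 2$ and then appealing to \cite[Lemma~2.8]{Evans}. Your additional sketch of the Evans argument goes beyond what the paper records (which simply cites the lemma), but the underlying strategy is the same.
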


Let $G$ be a finite group such that every proper quotient is cyclic. We adopt a two-step strategy for proving Theorems~\ref{t:spread} and~\ref{t:uspread}. The first step involves a reduction to almost simple groups; this is the content of Section~\ref{s:reduction}. It is straightforward to reduce to the case where $G$ has a unique minimal normal subgroup $N = T_1 \times \cdots \times T_k$ with each $T_i$ isomorphic to a nonabelian finite simple group $T$. We then proceed by induction on $k$, applying a slightly stronger form of Corollary~\ref{c:uspread} for almost simple groups (see Theorem~\ref{t:almostsimple1}). 

The case $k=1$ is the base for the induction. Let $G = \la G_0, g \ra$ be an almost simple group with socle $G_0$. By the Classification of Finite Simple Groups we know that $G_0$ is an alternating group, a sporadic group or a group of Lie type (classical or exceptional). As previously mentioned, the result for simple groups (the case $G = G_0$) is due to Breuer, Guralnick and Kantor \cite[Theorem~1.2]{BGK}, so we may assume $G \ne G_0$. This setting has been the focus of several recent papers and the desired result has been proved when $G_0$ is one of the following:

\begin{itemize}\addtolength{\itemsep}{0.3\baselineskip} \setlength{\itemindent}{-0.5mm}
\item[{\rm (a)}] ${\rm Alt}_n$: Breuer, Guralnick \& Kantor \cite[Lemma~6.5]{BGK}, Burness \& Harper \cite[Theorem~4.4]{BH}
\item[{\rm (b)}] Sporadic: Breuer, Guralnick \& Kantor \cite[Table~9]{BGK}
\item[{\rm (c)}] ${\rm L}_{n}(q)$: Burness \& Guest \cite[Theorem~2]{BG}
\item[{\rm (d)}] ${\rm PSp}_{2m}(q)$ or $\Omega_{2m+1}(q)$: Harper \cite[Theorem~1]{Harper17} 
\item[{\rm (e)}] ${\rm U}_n(q)$ or ${\rm P}\Omega^{\pm}_{2m}(q)$: Harper \cite[Theorem~2]{HarperClassical}.
\end{itemize}

In view of this earlier work, and with the reduction theorem in hand, it just remains to consider the case where $G_0$ is an exceptional group of Lie type. To complete the picture, in this paper we handle the final remaining case.
 
\begin{theorem}\label{t:exceptional}
Let $G = \la G_0, g \ra$ be a finite almost simple group whose socle $G_0$ is an exceptional group of Lie type. Then $u(G) \geqs 2$. Moreover, if $(G_n)$ is a sequence of almost simple exceptional groups of this form such that $|G_n| \to \infty$, then $u(G_n) \to \infty$.
\end{theorem}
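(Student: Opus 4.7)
The plan is to apply the standard probabilistic method of Guralnick and Kantor. For any $s \in G$, set
\[
P(x, s) = \sum_{H \in \mathcal{M}(G, s)} \frac{|x^G \cap H|}{|x^G|},
\]
where $\mathcal{M}(G, s)$ is the set of maximal subgroups of $G$ containing $s$. A standard union-bound argument shows that if $P(x, s) < 1/k$ for every nontrivial $x \in G$, then the class $s^G$ witnesses $u(G) \geqs k$. Thus, to prove $u(G) \geqs 2$ it suffices to exhibit an element $s$ with $P(x,s) < 1/2$ for all nontrivial $x$, and for the asymptotic statement it suffices to produce, in each sufficiently large $G_n$, an element $s_n$ such that $\sup_{x \neq 1} P(x, s_n) \to 0$.

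Since the simple case $G = G_0$ is handled by \cite{BGK}, attention focuses on cosets $G_0 g$ with $g \notin G_0$. The witness $s$ is chosen to be a regular semisimple element of $G_0 g$ whose order is divisible by a primitive prime divisor of $\Phi_e(q)$ for a well-chosen large integer $e$ (for instance, for $E_8(q)$ one takes $e = 30$ or $e = 24$, giving a Coxeter-like maximal torus). Such a choice forces $\mathcal{M}(G, s)$ to consist only of the torus normaliser together with a short explicit list of reductive or subfield subgroups, which can be read off from the classification of maximal subgroups of exceptional groups of Lie type due to Liebeck, Seitz and collaborators, with more recent refinements by Craven. When $g$ induces a nontrivial field automorphism, Shintani descent provides a canonical bijection between $G_0$-classes in $G_0 g$ and classes in a smaller exceptional group over a subfield, and is used both to locate $s$ and to transfer information about its overgroups.

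With the short list $\mathcal{M}(G, s)$ in hand, one bounds the summands $|x^G \cap H|/|x^G|$ using the fixed point ratio estimates of Lawther--Liebeck--Seitz and Frohardt--Magaard. These bounds decay as $q^{-c\dim x^G}$ for an explicit positive constant $c$, so $P(x, s)$ is majorised by a geometric sum in negative powers of $q$ with controlled exponent. Splitting into cases according to whether $x$ is semisimple, unipotent or mixed (with special care for unipotent elements of small support, such as long root elements, for which $\dim x^G$ is small and tighter estimates are required) then yields $P(x, s) < 1/2$, and the same estimates simultaneously give the $q \to \infty$ asymptotic.

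I expect the main obstacles to lie in three places. First, the low-rank groups, notably $G_2(q)$, ${}^3D_4(q)$, ${}^2F_4(q)$, ${}^2G_2(q)$ and ${}^2B_2(q)$ over small $q$, where the asymptotic fixed point ratio bounds are too weak and one must verify $P(x, s) < 1/2$ directly using the character table data available in \textsf{GAP} or \textsf{Magma}. Second, the graph and graph-field cosets that arise for $E_6(q)$ and ${}^2E_6(q)$, and for $F_4(q)$ with $p = 2$ and $G_2(q)$ with $p = 3$: here $C_{G_0}(g)$ is a smaller exceptional or classical group of Lie type, and a suitable $s$ must be constructed from its structure rather than by a uniform Shintani argument. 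Third, verifying that $\mathcal{M}(G, s)$ really is short for the ``large'' exceptional groups $E_7(q)$ and $E_8(q)$: this depends sensitively on the state of the art concerning their maximal subgroups, and forces one to tune the order of $s$ carefully enough to exclude every potentially problematic reductive or almost simple maximal overgroup.
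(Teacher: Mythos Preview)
Your plan is correct and matches the paper's approach: the probabilistic method with a witness chosen via Shintani descent (or, for graph cosets of $E_6^{\e}(q)$, $F_4(q)$ with $p=2$, $G_2(q)$ with $p=3$ and ${}^3D_4(q)$, via the centraliser $C_{G_0}(g)$), overgroups constrained using the Liebeck--Seitz subgroup structure theorem, and fixed point ratios bounded by Lawther--Liebeck--Seitz. Two small deviations worth noting: the paper does not use Frohardt--Magaard or split the nontrivial element by Jordan type, relying instead on the uniform bounds in \cite{LLS} (sharpened by hand in a few explicit cases); and rather than arranging $\mathcal{M}(G,s)$ to be a literally short list, it typically bounds $|\mathcal{M}(x)|$ by the product of the number of $\bar{G}_{\s}$-classes of relevant maximal subgroups with $|C_{\bar{G}_{\s}}(F(x))|$, which is enough because the LLS bound decays so fast in $q$.
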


The proof of Theorem~\ref{t:exceptional} is given in Sections~\ref{s:low}--\ref{s:3d4}, with a number of preliminary results presented in Section~\ref{s:prelims}.

By combining the asymptotic statement in Theorem~\ref{t:exceptional} with similar results in \cite{BGK,BG,BH,GSh,Harper17,HarperClassical} for alternating, symmetric and classical groups, we obtain the following corollary. In the statement, $\mathcal{G}$ denotes the collection of almost simple groups of the form $G = \la G_0,g\ra$, where $G_0$ is the socle of $G$.

\begin{corol}\label{c:asymptotic}
Let $(G_n)$ be a sequence of almost simple groups such that $G_n \in \mathcal{G}$ for all $n$ and $|G_n| \to \infty$. In addition, assume $(G_n)$ has no infinite subsequence of groups of Lie type defined over fields of bounded size. Then either $u(G_n) \to \infty$, or $(G_n)$ has an infinite subsequence of
\begin{itemize}\addtolength{\itemsep}{0.2\baselineskip}
\item[{\rm (i)}]  symmetric groups; or
\item[{\rm (ii)}] alternating groups of degree all divisible by a fixed prime.
\end{itemize}
\end{corol}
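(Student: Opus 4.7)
The plan is to decompose $(G_n)$ according to the type of its socles and apply the asymptotic results already available for each family. By the Classification of Finite Simple Groups, after passing to an infinite subsequence we may assume that all socles lie in exactly one of the four classes: alternating, sporadic, classical of Lie type, or exceptional of Lie type. The sporadic case is immediate, since there are only finitely many sporadic simple groups and hence no infinite subsequence with $|G_n|\to\infty$ can have sporadic socles.

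Suppose next that the socles are of Lie type. The standing hypothesis rules out any infinite subsequence over a field of bounded order, so the field order must tend to infinity along the chosen subsequence. If the socles are exceptional, then after restricting to a single Lie type (there are only finitely many) the asymptotic statement of Theorem~\ref{t:exceptional} applies directly and yields $u(G_n)\to\infty$. If the socles are classical, then the asymptotic parts of the main theorems of \cite{BG}, \cite{Harper17} and \cite{HarperClassical}, together with \cite{GSh}, deliver $u(G_n)\to\infty$ in every remaining configuration where the rank and/or the field order is unbounded. Thus neither Lie-type case can contribute to the exceptions (i) or (ii).

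It remains to handle the alternating case, where each $G_n$ has socle ${\rm Alt}_{m_n}$ with $m_n\to\infty$ and $G_n \in \{{\rm Alt}_{m_n},\,{\rm Sym}_{m_n}\}$. Passing to a subsequence, we may assume that all the $G_n$ lie in the same family. If they are all symmetric groups, this gives exception (i). Otherwise $G_n = {\rm Alt}_{m_n}$ for every $n$, and we appeal to the asymptotic lower bounds for $u({\rm Alt}_m)$ in \cite{BH}, \cite{BGK} and \cite{GSh}. These bounds imply that $u({\rm Alt}_{m_n})\to\infty$ whenever the smallest prime divisor of $m_n$ tends to infinity, since one can build uniform spread witnesses from elements of large prime order in ${\rm Alt}_{m_n}$ whose cycle type severely restricts the overgroups. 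Consequently, if $u({\rm Alt}_{m_n})$ fails to tend to infinity, then after passing to a subsequence the smallest prime divisor of $m_n$ is bounded; a pigeonhole argument over the finitely many primes up to this bound then extracts an infinite sub-subsequence in which a common prime divides every $m_n$, which is exactly exception (ii).

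The principal obstacle is the alternating case: one must isolate from the existing literature on $u({\rm Alt}_m)$ the precise arithmetic statement that divisibility of $m$ by a fixed prime is the only obstruction to $u({\rm Alt}_m)\to\infty$. In contrast, the Lie-type subsequences require only routine bookkeeping once Theorem~\ref{t:exceptional} is combined with the earlier asymptotic results of \cite{BG, Harper17, HarperClassical, GSh}.
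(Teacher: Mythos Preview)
Your proposal is correct and matches the paper's approach: the corollary is stated without proof, with the paper simply noting that it follows by combining the asymptotic statement in Theorem~\ref{t:exceptional} with the analogous results in \cite{BGK,BG,BH,GSh,Harper17,HarperClassical} for alternating, symmetric and classical groups. Your case-by-case decomposition via the socle type, together with the observation that the field-size hypothesis forces $q\to\infty$ along any Lie-type subsequence and that the alternating obstruction is precisely divisibility by a fixed prime (as in \cite{GSh}), is exactly the intended argument.
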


The exceptions in Corollary~\ref{c:asymptotic} are genuine. In particular, for $n \geq 5$, \cite[Theorem~2]{BH} gives
\[
u({\rm Sym}_{n}) = \left\{\begin{array}{ll}
0 & \mbox{if $n = 6$} \\
2 & \mbox{otherwise.}
\end{array}\right.
\]

Let $G = \la G_0, g \ra$ be an almost simple group whose socle $G_0$ is an exceptional group of Lie type over $\mathbb{F}_q$. At the heart of our proof of Theorem~\ref{t:exceptional} is the probabilistic method for studying uniform spread, which was introduced by Guralnick and Kantor \cite{GK}. This is encapsulated in Lemma~\ref{l:prob_method}, which states that if there exists an element $x \in G_0g$ such that 
\[
\sum_{H \in \mathcal{M}(x)}^{} {\rm fpr}(z,G/H) < \frac{1}{k}
\]
for all nontrivial $z \in G$, then $u(G) \geqs k$, witnessed by $x^G$. Here $\mathcal{M}(x)$ is the set of maximal overgroups of $x$ in $G$ and ${\rm fpr}(z,G/H)$ is the fixed point ratio of $z$, which is the proportion of cosets in $G/H$ fixed by $z$ with respect to the natural transitive action of $G$ on $G/H$. 
Typically, we will aim to derive an explicit upper bound $f(q)$ on the above summation for a suitable choice of element $x$ (and independent of $z$) with the property that $f(q)<\frac{1}{2}$ and $f(q) \to 0$ as $q$ tends to infinity. In particular, the latter property is needed to prove the asymptotic statement in Theorem~\ref{t:exceptional}.

In order to effectively apply this approach, we need to select an appropriate element $x$ in the coset $G_0g$ in such a way that we can get some control on the subgroups in $\mathcal{M}(x)$. Then for each $H \in \mathcal{M}(x)$,  we need to work with upper bounds on the corresponding fixed point ratios. Bounds on the relevant fixed point ratios for exceptional groups are established in \cite{LLS} and this work plays a key role in our analysis (in a few cases, we need to strengthen their bounds for our application). However, several special difficulties arise in the initial step, where we select $x$ and then determine its maximal overgroups. 

In the special case $G = G_0$, Breuer, Guralnick and Kantor \cite{BGK} appeal to work of Weigel \cite{Weigel}, where a specific semisimple element $x \in G$ is identified that is contained in very few maximal subgroups (typically, $N_G(\la x \ra)$ is the unique maximal overgroup). However, for the almost simple groups we are considering in this paper, we need to select $x$ in the coset  $G_0g$ and a different approach is required, which will depend on the type of automorphism $g$. It is worth emphasising that this constitutes a major difference between the simple groups handled in \cite{BGK} and the almost simple groups we are working with in this paper. In particular, there are some substantial technical difficulties to overcome in the almost simple setting.

To handle these difficulties, we will rely heavily on the theory of Shintani descent, which was exploited in \cite{BG} to study the uniform spread of almost simple groups with socle ${\rm L}_{n}(q)$. These techniques have been subsequently extended and developed by Harper in \cite{Harper17,HarperClassical} and they play a key role in this paper (see Section~\ref{ss:p_shintani} for further details). Needless to say, our approach will also use deep results on the maximal subgroups of exceptional groups, due to Liebeck, Seitz and others (see Theorem~\ref{t:types} for example).

\subsection*{Notation} Let $G$ be a finite group and let $n$ be a positive integer. Our group theoretic notation is fairly standard. In particular, we will write $C_n$, or just $n$, for a cyclic group of order $n$ and $G^n$ will denote the direct product of $n$ copies of $G$. An unspecified extension of $G$ by a group $H$ will be denoted by $G.H$. If $X$ is a subset of $G$, then $i_n(X)$ is the number of elements of order $n$ in $X$ and ${\rm meo}(X)$ is the maximal order of an element in $X$. We will use the notation for simple groups from \cite{KL}, so we write ${\rm L}_{n}(q) = {\rm PSL}_{n}(q)$ and $E_6^{-}(q) = {}^2E_6(q)$, etc. For positive integers $a$ and $b$, $\delta_{a,b}$ is the familiar Kronecker delta and we write $(a,b)$ for the greatest common divisor of $a$ and $b$. In this paper, all logarithms are base two. 

\subsection*{Acknowledgements} Guralnick was partially supported by the NSF grant DMS-1901595 and Simons Foundation Fellowship 609771. Burness and Harper thank the Isaac Newton Institute for Mathematical Sciences for support and hospitality during the programme \emph{Groups, Representations and Applications: New perspectives}, when some of the work on this paper was undertaken. This work was supported by: EPSRC grant number EP/R014604/1.

\section{The reduction} \label{s:reduction}

In this section, we establish reduction theorems which reduce the proofs of Theorems~\ref{t:spread} and~\ref{t:uspread} to almost simple groups. We begin by recording some preliminary results. 

\subsection{Preliminaries}\label{ss:reduction_prelims}

Let $G$ be a finite group and recall the definition of the \emph{spread} and \emph{uniform spread} of $G$, denoted by $s(G)$ and $u(G)$, respectively (see Section~\ref{s:intro}). Let us also recall that $s(G) \geqs 1$ only if every proper quotient of $G$ is cyclic. The following elementary result describes the structure of the groups with this property.

\begin{lem}\label{l:quotient}
Let $G$ be a finite group such that every proper quotient of $G$ is cyclic. 
Then one of the following holds:
\begin{itemize}\addtolength{\itemsep}{0.2\baselineskip}
\item[{\rm   (i)}] $G$ is cyclic and $s(G) = u(G) = \infty$. 
\item[{\rm  (ii)}] $G = C_p \times C_p$ for a prime $p$ and $s(G) = p$ and $u(G) = 0$. 
\item[{\rm (iii)}] $G$ is nonabelian with a unique minimal normal subgroup.
\end{itemize}
\end{lem}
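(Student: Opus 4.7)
The plan is to split on whether $G$ is abelian, since in the nonabelian case the uniqueness of the minimal normal subgroup is essentially formal from the cyclic-quotient hypothesis, whereas the abelian case needs a little more care to identify $C_p \times C_p$ and then to compute $s$ and $u$.

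First I would handle the nonabelian case, which is the cleanest. Suppose $G$ is nonabelian and admits two distinct minimal normal subgroups $N$ and $M$. Minimality forces $N \cap M = 1$, so the natural map $G \to G/N \times G/M$ is injective. By hypothesis both $G/N$ and $G/M$ are cyclic, hence abelian, so $G$ embeds in an abelian group and is therefore abelian, a contradiction. This yields (iii).

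Next I would deal with the abelian case by showing that the only options are $G$ cyclic or $G = C_p \times C_p$. Write $G = \prod_p G_p$ as a product of Sylow subgroups. If more than one prime divides $|G|$, then for each prime $p \mid |G|$ the quotient $G/G_p$ must be cyclic, which since the $G_q$ ($q \ne p$) have pairwise coprime orders forces each $G_q$ to be cyclic; then $G$ itself is cyclic. So we may assume $G$ is a $p$-group, say $G = C_{p^{a_1}} \times \cdots \times C_{p^{a_k}}$ with $a_1 \leq \cdots \leq a_k$ and $k \geq 2$ (otherwise $G$ is cyclic). Killing the unique subgroup of order $p$ inside the first factor yields a proper quotient of rank at least $k-1$ whose invariant factors are $p^{a_1 - 1}, p^{a_2}, \ldots, p^{a_k}$; for this to be cyclic we need $k = 2$ and $a_1 = 1$. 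A symmetric argument applied to the second factor then forces $a_2 = 1$, giving $G = C_p \times C_p$.

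It remains to verify the numerical values in (ii). A pair $x,y \in C_p \times C_p$ generates $G$ precisely when they lie in different subgroups of order $p$, of which there are exactly $p+1$. Given $k$ nontrivial elements $x_1, \dots, x_k$, they occupy at most $k$ of these $p+1$ subgroups, so a common complement $y$ exists as soon as $k \leq p$; conversely, taking one $x_i$ from each of the $p+1$ subgroups shows $k = p+1$ fails. Hence $s(G) = p$. Finally, because $G$ is abelian every conjugacy class is a singleton, so a witnessing class $\{y\}$ for $u(G) \geq 1$ would need to satisfy $\langle x,y\rangle = G$ for every nontrivial $x$, but choosing $x \in \langle y\rangle \setminus \{1\}$ shows this is impossible; thus $u(G) = 0$. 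The only step needing real attention is the abelian $p$-group case, but the quotient-by-a-subgroup-of-order-$p$ trick forces the rank and exponent constraints immediately, so there is no serious obstacle.
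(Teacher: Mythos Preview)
Your proof is correct and follows essentially the same approach as the paper: both split on whether $G$ is abelian, and in the nonabelian case both observe that two distinct minimal normal subgroups would force $G$ to be abelian (you phrase this via the embedding $G \hookrightarrow G/N \times G/M$, the paper via $G' \leqs N_1 \cap N_2 = 1$, but these are the same argument). The only difference is that in the abelian noncyclic case the paper simply cites \cite[Remark~1(c)]{BH}, whereas you supply the full details of the reduction to $C_p \times C_p$ and the computation of $s(G)$ and $u(G)$.
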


\begin{proof}  
We may assume that $G$ is noncyclic. If $G$ is abelian then it is easy to see that (ii) holds (see \cite[Remark~1(c)]{BH}, for example). Now assume $G$ is nonabelian. If $N_1$ and $N_2$ are distinct minimal normal subgroups, then $G/N_i$ is cyclic and thus $G' \leqs N_1 \cap N_2 = 1$, which is a contradiction. Therefore, (iii) holds.
\end{proof}

For the remainder of Section~\ref{s:reduction}, we may assume that $G$ is a nonabelian group with a unique minimal normal subgroup $N = T_1 \times \cdots \times T_{k}$, where for each $i$ the group $T_i$ is isomorphic to a fixed simple group $T$. In addition, we assume throughout that $G/N$ is cyclic.

The case where $N$ is abelian is easy to deal with. 

\begin{lem}\label{l:abel}  
Let $G$ be a finite nonabelian group with a unique minimal normal subgroup $N$. Assume that $N$ is abelian and $G/N$ is cyclic. Then $s(G) = |N|-\e$ and $u(G) = |N|-1$, where $\e=0$ if $|G/N|$ is a prime, and otherwise $\e=1$. In particular, $u(G) \geqs 2$.
\end{lem}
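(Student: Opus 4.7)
The plan is to exploit the rigid structure imposed on $G$. Since $N$ is minimal normal and abelian, it is an elementary abelian $p$-group of order $p^d$, and the conjugation action of $G/N$ on $N$ is irreducible (a proper invariant subspace would be a normal subgroup strictly smaller than $N$) and faithful (an element of the kernel would generate a $G$-normal cyclic subgroup disjoint from $N$, contradicting the uniqueness of $N$). As the image of the cyclic group $G/N$ in ${\rm GL}(N)$ is irreducible, it embeds into a Singer cycle ${\rm GL}_1(\mathbb{F}_{p^d})$, so $|G/N|$ divides $p^d - 1$ and is in particular coprime to $|N|$. Schur--Zassenhaus therefore gives $G = N \rtimes H$ with $H = \la h \ra$ cyclic, and the fixed-point-freeness of $H$ on $N$ yields exactly $|N|$ distinct complements. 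A direct calculation shows that every $x \in G \setminus N$ lies in a \emph{unique} complement $K(x)$, and that $h^G = Nh$.

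For $u(G) \geqs |N|-1$ I take $C = h^G = Nh$ as witness and count the ``bad'' $y \in C$ with $\la x, y \ra \ne G$ for each nontrivial $x$. If $x \in N \setminus \{1\}$ then $\la x, y \ra$ surjects onto $G/N$, so $\la x, y \ra \cap N$ is a nonzero $G$-invariant subspace of $N$, hence equals $N$, giving $\la x, y \ra = G$ for every $y \in C$. If $x \in G \setminus N$ then similarly $\la x, y \ra \cap N \in \{0, N\}$, and $\la x, y \ra \ne G$ precisely when $\la x, y \ra \subseteq K(x)$, that is, when $y$ is the unique element of $K(x) \cap Nh$. Hence any $|N|-1$ nontrivial elements contribute at most $|N|-1 < |C|$ bad $y$'s, leaving a good one. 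Conversely, any witness class for $u(G) \geqs 1$ must be some $Nh^i$ with $\gcd(i,|H|)=1$, and taking one representative from each of the $|N|$ complements makes every $y \in Nh^i$ bad, so $u(G) = |N|-1$.

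The inequality $s(G) \geqs u(G) = |N|-1$ is automatic. When $|G/N|$ is prime the maximal subgroups of $G$ are exactly $N$ and the $|N|$ complements: given $|N|$ nontrivial $x_j$, if none lies in $N$ then any nontrivial $y \in N$ generates with each $x_j$ by irreducibility; if some $x_j \in N$ then the bad $y \in G \setminus N$ come from at most $|N|-1$ complements, contributing at most $(|N|-1)(|H|-1) < |N|(|H|-1) = |G \setminus N|$ elements, so a good $y$ exists. This gives $s(G) \geqs |N|$, and one representative from each complement together with one nontrivial element of $N$ produces $|N|+1$ elements that hit every maximal subgroup, so $s(G) \leqs |N|$. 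When $|G/N|$ is composite with distinct prime divisors $p_1, \dots, p_r$, there are $r$ further maximal subgroups $N\la h^{p_i}\ra$; since any element of $Nh^{p_i}$ lies in both its complement and in $N\la h^{p_i}\ra$, one can distribute $|N|$ elements across the cosets $Nh^{p_i}$ (one per complement, each coset used at least once, which is possible since $r \leqs \log_2 |H| < |N|$ as $|H|$ divides $|N|-1$) to hit all $|N|+r$ maximal subgroups, forcing $s(G) \leqs |N|-1$.

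Finally, $|N| \geqs 3$ in our setting: $|N|=2$ would force $\Aut(N)=1$ and hence $G$ abelian, contrary to hypothesis, so $u(G) = |N|-1 \geqs 2$. The most delicate step is the hitting-set argument for $s(G) \leqs |N|-1$ in the composite case, where one must check that the $|N|$ chosen elements can simultaneously cover all $|N|$ complements and all $r$ cosets $Nh^{p_i}$; the combinatorial bookkeeping is straightforward once one observes that each complement contains exactly one element in each coset $Nh^j$ with $j\ne 0$, but it is the place where care is needed.
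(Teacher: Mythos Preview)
The paper's own proof is a two-line citation: \cite[Theorem~2.01]{BW} for $s(G)$ and \cite[Theorem~1]{BH} for $u(G)$, together with the remark that $|N|\geqslant 3$. Your direct, self-contained argument is therefore a genuinely different route, and the overall strategy---split $G=N\rtimes H$ with $H$ cyclic acting fixed-point-freely, show that the $|N|$ complements partition $G\setminus N$, and then count bad elements coset by coset---is sound and delivers all four (in)equalities correctly.

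There is, however, one real gap. Your justification that $G/N$ acts \emph{faithfully} on $N$ (``an element of the kernel would generate a $G$-normal cyclic subgroup disjoint from $N$'') does not stand up as written: an element $g\in C_G(N)\setminus N$ need not generate a normal subgroup of $G$. The claim is true but needs more work. One route: from $G'=N$ (as $G/N$ is abelian and every nontrivial normal subgroup contains $N$) one gets $Z(G)=1$; then $K:=C_G(N)$ is abelian, since $N\leqslant Z(K)$ and $K/N$ is cyclic. The Hall $p'$-part of $K$ is characteristic and meets $N$ trivially, hence is trivial, so $K$ is an abelian $p$-group and $|G/K|$ is prime to $p$. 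Maschke applied to the elementary abelian group $\Omega_1(K)$ under the coprime action of $G/K$ forces $\Omega_1(K)=N$, and then a short argument with $pK$ (using that $K/N$ is cyclic) gives $K=N$. This point is load-bearing: without faithfulness you obtain neither $\gcd(|N|,|G/N|)=1$ (needed for Schur--Zassenhaus) nor the fixed-point-freeness of every nontrivial $h^i$ on $N$, and both the splitting $G=N\rtimes H$ and the ``unique complement $K(x)$ through each $x\in G\setminus N$'' step depend on these.

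A minor simplification: for $s(G)\leqslant |N|-1$ in the composite case you only need a \emph{single} $x_j$ to lie in \emph{one} of the subgroups $M_i=N\langle h^{p_i}\rangle$, since $N\subseteq M_i$ already makes every $y\in N$ bad for that $x_j$. There is no need to arrange that all the $M_i$ are covered.
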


\begin{proof}
For the spread see \cite[Theorem~2.01]{BW}, while the uniform spread follows from \cite[Theorem~1]{BH}, noting that $|N| \geqs 3$ since $G$ is nonabelian.  
\end{proof}

From now on we can assume that the unique minimal normal subgroup $N$ is nonabelian. Observe that $G$ acts transitively by conjugation on $\{T_1, \ldots, T_k\}$ and for each $i$ we have $N_G(T_i)/C_G(T_i) \cong A$, where $A = \< T, y \>$ is an almost simple group with socle $T$. By conjugating in $\Aut(N)$, we may, and will, assume that $G=G_k$, where
\begin{equation}\label{e:Gk}
\mbox{$G_k = \< N, x \>$, $x = (y,1, \dots, 1)\s \in \Aut(N)$, $\s = (1, 2, \dots, k) \in {\rm Sym}_k$.}
\end{equation}
Note that if $k=1$, then we simply have $G_k = A$ and $x = y$. 

Let us now present some preliminary results that we will use in the proofs of our main reduction theorems. The first two are straightforward computations and we omit their proofs.   

\begin{lem}\label{l:powers}   
Let $d$ be a positive integer. Then $x$ is a $d$-th power in $\Aut(N)$ if and only if $(d,k)=1$ and $y$ is a $d$-th power in $\Aut(T)$.  
\end{lem}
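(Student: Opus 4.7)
The plan is to use the identification $\Aut(N) = \Aut(T) \wr {\rm Sym}_k$, write any element of $\Aut(N)$ as $\mathbf{a}\tau = (a_1, \dots, a_k)\tau$ with $a_i \in \Aut(T)$ and $\tau \in {\rm Sym}_k$, and analyse $d$-th powers via the standard cycle-product invariants of wreath-product elements. A direct induction on $m$ gives the power formula
\[
(\mathbf{a}\tau)^m \;=\; \bigl( a_j \, a_{\tau^{-1}(j)} \, a_{\tau^{-2}(j)} \cdots a_{\tau^{-(m-1)}(j)} \bigr)_{j=1}^{k} \cdot \tau^m.
\]

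Suppose first that $w = \mathbf{a}\tau$ satisfies $w^d = x = (y, 1, \dots, 1)\sigma$. Equating permutation parts gives $\tau^d = \sigma$; since the orbits of $\tau^d$ refine those of $\tau$ and $\sigma$ is a $k$-cycle, $\tau$ is also a $k$-cycle, and then $\tau^d$ is itself a $k$-cycle precisely when $(d, k) = 1$. Let $p := a_1 a_{\tau^{-1}(1)} \cdots a_{\tau^{-(k-1)}(1)} \in \Aut(T)$ denote the product of the $a_i$'s along the $\tau$-cycle based at $1$. Reading the $1$st base coordinate of $w^{dk}$ in two ways, as $(w^k)^d$ and as $(w^d)^k = x^k$, yields $p^d = y$: indeed, $w^k$ has trivial permutation part with $1$st coordinate $p$ (so the $1$st coordinate of $(w^k)^d$ is $p^d$), while the $1$st coordinate of $x^k$ is the $\sigma$-cycle product of the base part $(y, 1, \dots, 1)$ of $x$, which is $y$. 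Hence $y$ is a $d$-th power in $\Aut(T)$.

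For the converse, suppose $(d, k) = 1$ and $y = z^d$ for some $z \in \Aut(T)$. Let $e$ be the inverse of $d$ modulo $k$ and set $w := (z, 1, \dots, 1)\sigma^e$. Since $(e, k) = 1$, the only $i \in \{0, 1, \dots, k-1\}$ with $\sigma^{-ei}(1) = 1$ is $i = 0$, so the $\sigma^e$-cycle product of $(z, 1, \dots, 1)$ based at $1$ equals $z$. The identity used above then shows that the base part of $w^d$ has $\sigma$-cycle product $z^d = y$, matching that of $x$. Since $\Aut(T)^k$-conjugacy on the coset $\Aut(T)^k \sigma$ is classified by the $\Aut(T)$-conjugacy class of the $\sigma$-cycle product (the standard invariant for conjugacy in wreath products), there exists $\mathbf{c} \in \Aut(T)^k$ with $\mathbf{c}^{-1} w^d \mathbf{c} = x$, and then $(\mathbf{c}^{-1} w \mathbf{c})^d = x$, as required.

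The only genuine content is the cycle-product identity $y = p^d$, which collapses the wreath-product $d$-th root problem into a $d$-th root problem in $\Aut(T)$; the rest is routine bookkeeping with the ${\rm Sym}_k$-action on the base group and an appeal to the well-known conjugacy classification for single-cycle elements in wreath products.
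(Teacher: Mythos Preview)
Your argument is correct. The paper itself omits the proof entirely, stating only that it is a ``straightforward computation'', so there is no approach to compare against; your explicit wreath-product computation (via the power formula, the cycle-product invariant, and the standard $\Aut(T)^k$-conjugacy classification on the coset $\Aut(T)^k\sigma$) is a perfectly valid way to fill in the omitted details.
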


\begin{lem}\label{l:diagonal} 
Suppose $k \geqs 2$ and let $p$ be a prime divisor of $k$. Let
\begin{equation}\label{e:Xi}
X_i = T_i \times T_{i+p} \times \cdots \times T_{i+k-p} \cong T^{k/p}
\end{equation} 
for each $i \in \{ 1, \dots, p\}$.
\begin{itemize}\addtolength{\itemsep}{0.2\baselineskip}
\item[{\rm (i)}] Then $x$ acts transitively on $\{X_1, \ldots, X_p\}$ and $x^p$ normalises each $X_i$, inducing the automorphism $(y,1, \ldots, 1)\mu_i \in \Aut(X_i)$, where $\mu_i = (i, i+p, \ldots, i+k-p)$.
\item[{\rm (ii)}] Suppose $D$ is a diagonal subgroup of $X_1 \times \cdots \times X_p$ of the form 
\begin{equation}\label{e:ddiag}
D = \{ (z, z^{\p_1}, \dots, z^{\p_{p-1}}) \,:\, z \in X_1\} \cong T^{k/p}
\end{equation}
with $\varphi_i \in \Aut(X_1)$. Then $x$ normalises $D$ if and only if $\p_i= \p_1^i$ for each $i$ and $x^p = \p_1^p$ as automorphisms of $X_1$. 
\end{itemize}
\end{lem}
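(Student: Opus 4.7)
The proof is a direct calculation inside $\Aut(N) \supseteq \Aut(T)^k \rtimes {\rm Sym}_k$, using the explicit form of $x = (y,1,\ldots,1)\s$.

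For (i), since $\s$ shifts indices $T_j \mapsto T_{j+1}$ modulo $k$, it sends the index set $\{i, i+p, \ldots, i+k-p\}$ defining $X_i$ onto that of $X_{i+1}$ (reading subscripts modulo $p$). Hence $x$ induces a single $p$-cycle on $\{X_1, \ldots, X_p\}$, which is transitive. To pin down $x^p$, I would first establish by induction on $j \leq k$ that
\[
x^j = (\underbrace{y, \ldots, y}_{j}, 1, \ldots, 1)\s^j,
\]
using at each step the identity $\s(y,1,\ldots,1)\s^{-1} = (1,y,1,\ldots,1)$. Specialising to $j=p$ and writing $\s^p$ in disjoint cycle form as $\mu_1 \mu_2 \cdots \mu_p$, the cycle $\mu_i$ permutes precisely the indices of $X_i$, while the unique $y$ lying in $X_i$ sits at the first factor $T_i$; restricting $x^p$ to $X_i$ therefore yields $(y,1,\ldots,1)\mu_i$ as required.

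For (ii), I would identify each $X_i$ with $X_1$ via the canonical map sending the factor $T_{i+jp}$ of $X_i$ to $T_{1+jp}$ of $X_1$, and parametrise $N$ as $X_1^p$ accordingly. Combining (i) with the $p$-cycle action of $x$ on $\{X_1,\ldots,X_p\}$, one checks that $x$ acts on these parameters as the cyclic shift
\[
(\tilde z_1, \tilde z_2, \ldots, \tilde z_p) \mapsto (\phi(\tilde z_p), \tilde z_1, \ldots, \tilde z_{p-1}),
\]
where $\phi := x^p|_{X_1}$ is the automorphism described in (i). Applying this to the generic element $(z, z^{\p_1}, \ldots, z^{\p_{p-1}})$ of $D$ and demanding the image to lie in $D$ forces the existence of $w \in X_1$ with
\[
(w, w^{\p_1}, \ldots, w^{\p_{p-1}}) = (\phi(z^{\p_{p-1}}), z, z^{\p_1}, \ldots, z^{\p_{p-2}}).
\]
Comparing the second coordinate fixes $w$ as an explicit transform of $z$ by $\p_1$; the middle coordinates (for $3 \leq i \leq p$) then give a recursion that inductively forces $\p_i = \p_1^i$; and substituting back into the first coordinate yields the remaining identity $x^p = \p_1^p$ on $X_1$. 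The converse direction reverses this chain of equalities.

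The calculation is routine in structure; the main obstacle is really just the bookkeeping in Part (ii), namely keeping the composition conventions for the exponential notation $z^{\p_i}$ and the canonical identifications $X_i \cong X_1$ consistent throughout, so that the recursions and the final identity come out exactly as stated.
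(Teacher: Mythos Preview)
Your proposal is correct and is exactly the kind of direct computation the authors have in mind: the paper states explicitly that this lemma (together with Lemma~\ref{l:powers}) is a ``straightforward computation'' and omits the proof entirely. Your inductive formula for $x^j$, the decomposition of $\s^p$ into the cycles $\mu_i$, and the coordinate-by-coordinate comparison for part~(ii) are precisely the steps one would carry out, and your closing caveat about tracking composition conventions is the only genuine subtlety.
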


We will also need the next two results on the maximal subgroups of $G$ containing $x$. The first one follows by combining \cite[Theorems 1 and 5]{AS}.
Since the proof is so much simpler in this case, we give details.  

\begin{lem} \label{l:maximal2}  
Let $H$ be a maximal subgroup of $G$ containing $x$. Then either 
\begin{itemize}\addtolength{\itemsep}{0.2\baselineskip}
\item[{\rm(i)}] $H = N_G((M \cap T)^k)$, where $M$ is a maximal subgroup of $A$ containing $y$; or
\item[{\rm(ii)}] $H = N_G(D)$, where $D \cong T^{k/p}$ is a diagonal subgroup of $N$ and $p$ is a prime divisor of $k$. 
\end{itemize}
\end{lem}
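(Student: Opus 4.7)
My plan is to analyze $K := H \cap N$ as a subgroup of $N = T_1 \times \cdots \times T_k$, exploiting the fact that $x \in H$ normalizes $K$, and then match $K$ to one of the two listed forms. First I observe that $K < N$: otherwise $H$ would contain $\langle N, x \rangle = G$, contrary to maximality. A direct calculation gives $x(t_1, \ldots, t_k)x^{-1} = (y t_k y^{-1}, t_1, \ldots, t_{k-1})$, so the natural projections $L_i \leq T_i$ of $K$ satisfy $L_{i+1} = L_i$ for $1 \leq i < k$ and $L_1 = L_k^y$ under the identifications $T_i \cong T$. Hence the $L_i$ coincide with a single $y$-invariant subgroup $L \leq T$, and the proof will split according to whether $L = T$ or $L < T$.

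Suppose first that $L < T$. I set $\bar K := L^k$, a proper $x$-invariant subgroup of $N$ containing $K$. Any automorphism of $N$ normalizing $K$ permutes the projections $L_i$ among themselves, so normalizes $\bar K$; thus $H \leq N_G(\bar K)$. Since $N$ is the unique minimal normal subgroup of $G$ and $\bar K < N$ is a proper nontrivial subgroup (the degenerate possibility $L = 1$ is handled separately), $\bar K$ is not normal in $G$, so $N_G(\bar K) < G$ and maximality forces $H = N_G(\bar K)$. To put this in the advertised form, I then realize $L$ as $M \cap T$ for a suitable maximal subgroup $M$ of $A = \langle T, y \rangle$ containing $y$: concretely, $M$ can be taken as the image in $A$ of the stabilizer $H_1 \leq H$ of $T_1$, and a routine maximality check shows $M \cap T = L$ and $H = N_G((M \cap T)^k)$, which is case (i).

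Suppose instead that $L = T$, so $K$ is a subdirect product of $N$. By Scott's classification of subgroups of direct powers of a nonabelian simple group, $K = D_1 \times \cdots \times D_d$ where $\{P_1, \ldots, P_d\}$ partitions $\{1, \ldots, k\}$ and each $D_j$ is a full diagonal subgroup of $\prod_{i \in P_j} T_i$. The $x$-invariance of $K$ forces this partition to be $\sigma$-invariant, and since $\sigma = (1, 2, \ldots, k)$ is a $k$-cycle, the blocks are necessarily the orbits of $\sigma^d$, all of common size $m = k/d$ with $d \mid k$. Because $K < N$ we have $m > 1$, and my final step is to show $m$ is prime. If $m$ had a proper prime divisor $p$, I would refine the partition $\sigma$-invariantly to blocks of size $p$ and use the twisted diagonal structure of $K$ to build a proper $x$-invariant subgroup $K' > K$ with $K' \cong T^{k/p}$ of the form described in Lemma~\ref{l:diagonal}(ii). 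Then $N_G(K')$ would strictly contain $H$ and be proper in $G$, contradicting maximality of $H$. Hence $m = p$ is prime, $K \cong T^{k/p}$ is the diagonal subgroup $D$, and $H = N_G(D)$, which is case (ii).

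The main obstacle will be the refinement argument in the subdirect-product case, specifically verifying that the twists appearing on the refined blocks of size $p$ can be chosen compatibly with the cyclic $\sigma$-action so that $K'$ is genuinely $x$-invariant; the criterion in Lemma~\ref{l:diagonal}(ii) provides the framework for this compatibility check and reduces it to relations among the original twists of $K$. Granted this step, the remainder reduces to standard facts about normalizers, the minimal normal subgroup $N$, and Scott's structure theorem for subdirect subgroups of $T^k$.
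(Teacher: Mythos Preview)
Your proposal is correct and follows essentially the same approach as the paper: analyse $K=H\cap N$, split according to whether the projections $L_i$ are all of $T$, and in the surjective case use the subdirect-product (Goursat/Scott) structure together with a refinement argument to force the block size to be prime. The paper is terser in places---for instance it simply replaces $J_1$ by a maximal $y$-invariant overgroup rather than deducing, as you implicitly do, that $L=N_T(L)$ and hence $M=N_A(L)$ is already maximal---but the skeleton is identical. One small point: you defer the degenerate case $L=1$ (equivalently $K=1$) without indicating how to dispatch it; the paper does this first, observing that then $H=\langle x\rangle$ and deriving a contradiction from the fact that $x$ normalises some nontrivial proper subgroup of $N$ (a Sylow subgroup if $(|x|,|N|)=1$, otherwise $C_N(z)$ for a prime-order $z\in H$).
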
  

\begin{proof}   
Set $J=H \cap N$ and suppose $J = 1$. Then $H = \la x \ra$ since $G/N$ is cyclic. If $x$ has order coprime to $|N|$, then $x$ normalises a Sylow subgroup of $N$, otherwise $x \in N_G(C_N(z))$ for some element $z \in H$  of prime order. Plainly in both cases we get a contradiction, whence $J$ is nontrivial. 

Suppose the projection of $J$ into $T_{\ell}$ is not surjective for some $\ell$. Then $x$ normalises $J_1 \times \cdots \times J_k$, where $J_i$ is the image of the $i$-th projection, and it follows that $y$ normalises
each $J_i$. Replace $J_1$ by a maximal $y$-invariant subgroup of $T_1$ (so  $M=N_A(J_1)$ is a maximal subgroup of $A$). Since $x$ permutes the components transitively and $y$ normalises each $J_i$, it follows that
$J_i = J_1^{x^{i-1}}$ and so (i) holds.   

For the remainder, we may assume that each projection of $J$ into $T_i$ is surjective. It follows that $J \cong S_1 \times \cdots \times S_m$ for some $m \geqs 1$, where each $S_i$ is isomorphic to $T$. Let $Y_j$ be the direct product of the $T_i$ such that $S_j$ projects onto $T_i$. Then 
$N=Y_1 \times \cdots \times Y_m$ and since $x$ acts transitively on $\{T_1, \ldots, T_k\}$, it follows that $x$ permutes the $Y_i$. Therefore, $k=mr$ for some $r$ and we have $J = D_1 \times \cdots  \times D_m$, where $D_i \cong T$ is a diagonal subgroup of $Y_i$. 

Suppose $r$ is composite. Then the set of components in $Y_1$ is not a minimal block for the permutation action of $x$ on $\{T_1, \ldots, T_k\}$. Therefore, we can write $Y_1 = Z_1 \times \cdots \times Z_s$, where the components of $Z_i$ form a minimal block of prime size $r/s$, and we see that $x$ normalises the product of the $E_{ij}$, where
$E_{ij} \cong T$ is the inverse image of the projection of $D_i$ into $Z_j$.   This contradicts the maximality of $H$ and so $r$ is prime and $J \cong T^{k/r}$. Therefore (ii) holds and $H$ is completely determined by $D_1$ (which corresponds to a minimal block of imprimitivity containing $T_1$). 
\end{proof}  

The next result is essentially a special case of the main results of \cite{AS}.  It also follows from Lemma \ref{l:maximal2}.  

\begin{lem} \label{l:maximal}  
Let $p$ be a prime divisor of $k$ and define $X_i$ as in \eqref{e:Xi}. Let $H$ be a maximal subgroup of $G$ containing $x$ such that the projection of $H \cap N$ onto $X_1$ is surjective. 
Then $H=N_G(D)$ where $D = D_{\varphi}$ is a diagonal subgroup of $N$ of the form 
\begin{equation}\label{e:dphi}
D_{\varphi} = \{ (z, z^{\p}, z^{\p^2}, \dots, z^{\p^{p-1}}) \,:\, z \in X_1\}
\end{equation}
and $\p$ is an automorphism of $X_1$ with $\p^p = x^p$.   Moreover, $p^2$ does not divide $k$.  
\end{lem}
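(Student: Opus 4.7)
The plan is to invoke Lemma \ref{l:maximal2} to narrow down the structure of $H$, then use the surjectivity hypothesis to pin down the diagonal shape, and finally apply Lemma \ref{l:diagonal}(ii) to obtain the parametrised form $D_\varphi$.

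First I would apply Lemma \ref{l:maximal2} to $H$. In case (i) we have $H = N_G((M \cap T)^k)$ for some maximal subgroup $M \leqs A$ with $y \in M$, so $H \cap N = N_T(M \cap T)^k$ and its projection onto $X_1 \cong T^{k/p}$ is $N_T(M \cap T)^{k/p}$. For this to equal $X_1$ one needs $N_T(M \cap T) = T$, and simplicity of $T$ then forces $M \cap T \in \{1, T\}$; both are ruled out by the maximality of $H$ (properness) and of $M$ in $A$. Thus we are in case (ii): $H = N_G(D)$ where $D \cong T^{k/q}$ is a diagonal subgroup of $N$ for some prime $q$ dividing $k$. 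Writing $D = D_1 \times \cdots \times D_{k/q}$ with each $D_j \cong T$ diagonally embedded in $\prod_{i \in B_j} T_i$, the fact that $x$ normalises $D$ and conjugates $T_i$ to $T_{\sigma(i)}$ forces $\{B_j\}$ to be an imprimitivity system for $\la \sigma \ra$. So after relabelling, $B_j = \{j, j + k/q, \ldots, j + (q-1)k/q\}$ is the orbit of $j$ under $\sigma^{k/q}$, with indices read modulo $k$.

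The substantive step, which I expect to be the main obstacle, is extracting both $q = p$ and $p^2 \nmid k$ from the surjectivity condition. The projection of $D_j$ into $X_1$ lies diagonally on the coordinates indexed by $B_j \cap \{i : i \equiv 1 \pmod{p}\}$, so for the total projection of $D$ onto $X_1$ to be surjective, this intersection must have size at most $1$ for every $j$. Inspecting the residues $j + rk/q \pmod{p}$ for $r = 0, \ldots, q-1$: if $q \neq p$ then $p \mid k/q$, so every element of $B_j$ has residue $j \bmod p$ and the intersection has size either $0$ or $q > 1$, which is impossible. Hence $q = p$, and in that case the residues $j + rk/p \pmod{p}$ are distinct if and only if $\gcd(p, k/p) = 1$, i.e.\ $p^2 \nmid k$; this gives the final claim of the lemma.

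Finally, with $q = p$ and $p^2 \nmid k$, each block $B_j$ meets each $X_i$-index set in exactly one element, so $D$ is a full diagonal subgroup of $X_1 \times \cdots \times X_p = N$ of the form considered in Lemma \ref{l:diagonal}(ii). Applying that lemma to the $x$-invariant subgroup $D$ yields $D = D_\varphi$ for some automorphism $\varphi$ of $X_1$ with $\varphi^p = x^p$, completing the proof.
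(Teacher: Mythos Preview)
Your proof is correct and follows essentially the same route as the paper: both invoke Lemma~\ref{l:maximal2} to reduce to the diagonal case~(ii), extract $q=p$ and $p^2\nmid k$ from the surjectivity of the projection onto $X_1$, and finish with Lemma~\ref{l:diagonal}(ii). The only cosmetic difference is that the paper phrases the key step via the subgroup $Y=T_1\times T_{k/r+1}\times\cdots\times T_{k+1-k/r}$ (observing $Y\leqs X_1$ when $r\ne p$ or $p^2\mid k$, so $D$ would project onto $Y$ yet land in a single diagonal copy of $T$), whereas you compute the block--residue intersections $B_j\cap\{i\equiv 1\pmod p\}$ directly; these are two wordings of the same combinatorial fact.
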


\begin{proof}  
Since $x$ acts transitively on $\{X_1, \dots, X_p\}$, it follows that the projection of $H \cap N$ into each $X_i$ is surjective. Then by applying the main theorem of \cite{AS}, or by inspecting the proof of Lemma \ref{l:maximal2}, we deduce that each maximal subgroup of $G$ containing $H \cap N$ is the normaliser of a diagonal subgroup $D$ of $N$ corresponding to a minimal $x$-invariant partition of $\{T_1, \ldots, T_k\}$ of size $r$ with $r$ prime. In particular, $D \cong T^{k/r}$.   

Let $Y = T_1 \times T_{k/r+1} \times \cdots \times T_{k+1-k/r} \cong T^r$ be the product of the components of $N$ corresponding to the $r$ conjugates of $T_1$ under $\langle x^{k/r} \rangle$.  Suppose that either $r \ne p$ or $p^2$ divides $k$. Then $Y \leqs X_1$ and thus $D$ projects onto $Y$ (since it projects onto $X_1$).  But by the proof of Lemma \ref{l:maximal2} (or the main theorem of \cite{AS}), we see that the image of the projection of $D$ into $Y$ is isomorphic to $T$. This is a contradiction and we conclude that $r = p$ and $p^2$ does not divide $k$.  Finally, since $x$ normalises $D$, we deduce that $D$ has the given form by applying Lemma~\ref{l:diagonal}(ii).
\end{proof}  

Although stronger versions of the following result are available, this will be sufficient for our application. 

\begin{lem}\label{l:cent}
Suppose $G = \< N,z \>$, where $z \in \Aut(N)$ transitively permutes the components of $N$. If $G = \< h,z \>$ for some $h \in N$ with $h^N = h^{\Aut(N)}$, then 
\[
|C_{\Aut(N)}(z)|  \leqs |N:C_N(h)| \leqs \frac{1}{3^k}|N|.
\]
\end{lem}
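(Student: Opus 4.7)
The plan is to establish the two inequalities separately.

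For the first inequality $|C_{\Aut(N)}(z)| \leq |N:C_N(h)|$, the idea is to produce a free action. Consider the map $\psi \colon C_{\Aut(N)}(z) \to N$ defined by $c \mapsto c(h)$. Since $h^{\Aut(N)} = h^N$, the image of $\psi$ lies in the $N$-orbit $h^N$, which has size exactly $|N:C_N(h)|$. It therefore suffices to show that $\psi$ is injective. If $c_1(h) = c_2(h)$, set $c := c_2^{-1}c_1$. Then $c$ fixes $h$ and commutes with $z$, so $c$ centralises the subgroup $G = \langle h, z\rangle$. In particular, $c$ fixes every element of $N \leq G$ (using that $N$ has trivial centre, so that $N$ embeds in $\Aut(N)$ as inner automorphisms), and hence $c = 1$ in $\Aut(N)$. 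This gives injectivity, and the first inequality follows.

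For the second inequality $|N:C_N(h)| \leq |N|/3^k$, decompose $h = (h_1, \ldots, h_k)$ according to $N = T_1 \times \cdots \times T_k$, so that $C_N(h) = \prod_{i=1}^k C_{T_i}(h_i)$. The claim reduces to the pointwise bound $|C_{T_i}(h_i)| \geq 3$ for every $i$. This is immediate when $h_i = 1$ (as $|T| \geq 60$) or when $h_i$ has order at least $3$ (as $\langle h_i\rangle \leq C_T(h_i)$).

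The main obstacle is the involution case, where I must rule out self-centralising involutions in the nonabelian simple group $T$. This is a classical consequence of Burnside's normal $p$-complement theorem: if $C_T(t) = \langle t\rangle$ for an involution $t$, then a short argument in a Sylow $2$-subgroup of $T$ containing $t$ forces this Sylow to equal $\langle t\rangle$, so $N_T(\langle t\rangle) = C_T(\langle t\rangle) = \langle t\rangle$ (using $|\Aut(\langle t\rangle)| = 1$), and Burnside's theorem produces a normal $2$-complement in $T$, contradicting the simplicity of the nonabelian group $T$. Hence $|C_T(h_i)| \geq 3$ in every case, and multiplying over $i$ completes the proof.
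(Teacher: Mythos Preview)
Your proof is correct and follows essentially the same approach as the paper: both show $C_{\Aut(N)}(z)$ acts freely on $h^N$ by observing that anything centralising both $h$ and $z$ centralises $G \geqs N$ and is hence trivial, and both reduce the second inequality to the nonexistence of self-centralising involutions in $T$. The only minor differences are that the paper uses the hypothesis $h^N = h^{\Aut(N)}$ a second time to deduce that every coordinate $h_i$ is nontrivial (whereas you handle the trivial-coordinate case directly), and the paper simply asserts the fact about involutions while you supply the Burnside normal-complement argument.
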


\begin{proof}  
Set $Z = C_{\Aut(N)}(z)$. Since $G$ contains $N$ we have $C_{\Aut(N)}(G)=1$ and thus $C_Z(h) = Z \cap C_{\Aut(N)}(h)=1$. Therefore $|Z| = |h^Z| \leqs |h^{\Aut(N)}| = |h^N|$. Since $h^N = h^{\Aut(N)}$, each coordinate of $h$ is nontrivial and thus $|C_N(h)| \geqs 3^k$ (there are no self-centralising involutions in $T$). 
\end{proof}

\begin{rem}\label{r:cent}
We will apply Lemma \ref{l:cent} in the proof of Theorem \ref{t:red_odd}. In this setting, we will work with an element $z \in G$ such that for any nontrivial $h \in N$ there exists $g \in G$ such that $G = \la h^g,z \ra$. We can then apply the lemma because there exists an involution $h \in N$ with $h^N = h^{\Aut(N)}$ by \cite[Lemma 12.1]{FGS}.
\end{rem}

The proof of our main reduction theorem (see Theorem~\ref{t:red_odd}) relies on the following deep result for almost simple groups. As explained in the proof below, this follows by combining earlier work in the literature with the proof of Theorem~\ref{t:exceptional} in this paper.

\begin{thm}\label{t:almostsimple1} 
Let $G = \< G_0, g \>$ be an almost simple group with socle $G_0$ and assume that $G \neq {\rm Sym}_6$. Then $u(G) \geq 2$, and this is witnessed by a class $y^G$ such that 
\begin{itemize}\addtolength{\itemsep}{0.2\baselineskip}
\item[{\rm   (i)}] the order of $\<y\> \cap G_0$ does not divide $4$; or
\item[{\rm  (ii)}] $\<y\> \cap G_0$ is nontrivial and $y$ is not a square in $\Aut(G_0)$; or
\item[{\rm (iii)}] $G= \mathrm {Alt}_6$ and $y$ has order $4$. 
\end{itemize}
\end{thm}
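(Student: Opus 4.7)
The bound $u(G) \geqs 2$ is a consolidation of earlier work: \cite[Theorem~1.2]{BGK} for $G = G_0$ simple, \cite{BGK,BH} for socle alternating or sporadic, \cite{BG,Harper17,HarperClassical} for socle classical, and Theorem~\ref{t:exceptional} of the present paper for socle exceptional of Lie type. The genuinely new content in Theorem~\ref{t:almostsimple1} is the constraint on the witness. My plan is, family by family, to inspect the witness class $y^G$ produced in each of the cited works and verify that it can be chosen to satisfy one of (i), (ii), (iii); where the default witness fails, I substitute an alternative drawn from the same family.

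The basic observation is that if $|\la y \ra \cap G_0| \notin \{1,2,4\}$ then (i) holds for free. In every reference above, the witness is built from a long cycle, a suitably generic semisimple element, or a Shintani-descent element, whose order is typically much larger than $4$. So (i) is the generic situation and requires no further work. For alternating and symmetric socles this is transparent from the cycle structures used in \cite{BH}; for classical socles it follows from the torus elements constructed in \cite{BG,Harper17,HarperClassical}; for exceptional socles it comes from the Shintani-descent elements produced in Sections~\ref{s:low}--\ref{s:3d4}, whose orders grow with $q$; and for sporadic socles a single finite check against \cite[Table~9]{BGK} handles all cases at once.

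The real difficulty is concentrated in the short list of groups where the witness is forced to have small order, namely the simple groups ${\rm Alt}_5$, ${\rm Alt}_6$, $\Omega_8^+(2)$ and ${\rm Sp}_{2r}(2)$ with $r \geqs 3$ listed in \eqref{e:u2}, together with the small almost simple extensions of ${\rm Alt}_6$ such as ${\rm PGL}_2(9)$, $M_{10}$ and ${\rm P\Gamma L}_2(9)$. In each such case I would look up the witness in \cite{BGK,BH} and do one of three things: confirm directly that its order is not in $\{1,2,4\}$, fall back on (ii) by checking that $y$ lies in a non-square coset of $\Aut(G_0)$, or — only for $G = {\rm Alt}_6$ — use an order-$4$ class, which is precisely what clause (iii) is designed to accommodate.

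I expect the main obstacle to be the ${\rm Sp}_{2r}(2)$ family: here $\Out(G_0) = 1$, so (ii) reduces to checking that $y$ is a non-square in $G_0$ itself, which must be done against the specific witnesses provided in \cite{BGK}. This is a direct calculation using the structure of the unipotent and semisimple classes in ${\rm Sp}_{2r}(2)$; if the witness in \cite{BGK} happens to be a square involution, I would replace it by a semisimple element drawn from a maximal torus of order coprime to $4$, and re-verify via the probabilistic method (Lemma~\ref{l:prob_method}) that it still witnesses $u(G) \geqs 2$. The analogous but milder check for $\Omega_8^+(2)$ uses $\Out(\Omega_8^+(2)) = {\rm Sym}_3$ together with the triality classes, and the remaining tiny groups (such as ${\rm Alt}_5$, ${\rm PGL}_2(9)$, $M_{10}$, ${\rm P\Gamma L}_2(9)$) reduce to finite character-table computations.
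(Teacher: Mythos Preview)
Your approach is essentially the paper's: cite the earlier references for $u(G)\geqs 2$ and then inspect the explicit witness classes case by case to verify one of (i)--(iii). Two corrections that would save you effort. First, you misplace the difficulty: the groups in \eqref{e:u2} are not problematic, since the witnesses chosen in \cite{BGK} for ${\rm Alt}_5$, $\Omega_8^+(2)$ and ${\rm Sp}_{2r}(2)$ already have order at least $5$, so (i) holds immediately and your contingency plan for ${\rm Sp}_{2r}(2)$ is never triggered --- low uniform spread does not force the witness to have small order. Second, ${\rm P\Gamma L}_2(9)$ is not of the form $\langle G_0,g\rangle$ since ${\rm P\Gamma L}_2(9)/{\rm Alt}_6\cong C_2\times C_2$ is non-cyclic, so it lies outside the hypothesis; the only proper extensions of ${\rm Alt}_6$ to check are ${\rm PGL}_2(9)$ and $M_{10}$ (both handled via (ii), as $y$ lies in a non-square coset of $\Out({\rm Alt}_6)=C_2\times C_2$). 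The only genuinely delicate case is $G={\rm Alt}_6$ itself, exactly as you say.
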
 

\begin{rem}\label{r:alt6}
As noted in the proof below, if $G = G_0 = {\rm Alt}_6$ then $y^G$ with $y=(1,2,3,4)(5,6)$ is the only class to witness the bound $u(G) \geqs 2$. Here $\< y \> \cap G_0 = \< y \>$ has order $4$ and $y$ is a square in $\Aut(G_0)$, which explains why (iii) is required in the statement of Theorem~\ref{t:almostsimple1}.
\end{rem}

\begin{proof}[Proof of Theorem \ref{t:almostsimple1}]  
First assume that $G_0 \neq {\rm Alt}_6$. As explained in Section \ref{s:intro}, by combining Theorem \ref{t:exceptional} (which is of course independent of the reduction theorems we are considering here) with the main results in \cite{BGK,BG,BH,Harper17,HarperClassical} we see that $u(G) \geq 2$ is witnessed by a class $y^G$, say. Since $y$ is necessarily not contained in any proper normal subgroup of $G$, without loss of generality we may assume that $y^G \subseteq G_0g$. Therefore, it suffices to show that $y$ can always be chosen to satisfy one of the conditions (i) or (ii) in the statement.

First assume that $G_0$ is alternating or sporadic (we continue to assume that $G_0 \neq {\rm Alt}_6$). Suppose $G \ne G_0$, which implies that $G = {\rm Aut}(G_0)$ and $|G:G_0|=2$. Here $G_0g$ is not a square in $\Out(G_0)$ and therefore $y \in G_0g$ is not a square in $\Aut(G_0)$. Moreover, for any involution $x \in G$ there exists $h \in G$ such that $G=\<x,y^h\>$, which implies that $|y| > 2$ and thus $\<y\> \cap G_0 \geq \<y^2\> \neq 1$, so condition (ii) holds. Now assume $G$ is simple. Here we inspect the class $y^G$ identified in \cite{BGK} that witnesses $u(G) \geq 2$. If $G = {\rm Alt}_n$, then $y = (1, \dots, n)$ if $n \geq 5$ is odd (see \cite[Proposition~6.7]{BGK}) and $y = (1, \dots, m-k)(m-k+1, \dots, n)$ if $n=2m \geq 8$ is even, where $k = m-(2,m-1)$ (see \cite[Proposition~6.3]{BGK}). If $G$ is sporadic, then the class $y^G$ is given in \cite[Table~7]{BGK}. In all cases, $|y| \geq 5$, so condition (i) is satisfied.

Next assume $G_0$ is a group of Lie type and let $y^G$ be the class identified in the relevant reference above, which witnesses $u(G) \geqs 2$. If $G = \Aut(G_0)$ and $|G:G_0|$ is even, then condition (ii) is satisfied. Otherwise, by considering each case in turn, we see that $y^{|G:G_0|} \in G_0$ has order at least $5$ and thus condition (i) holds. For instance, if $G_0 = E_8(q)$ and $|G:G_0| = e > 1$, then in the proof of Theorem~\ref{t:e8} we choose $y$ such that $|y^e| = q_0^8+q_0^7-q_0^5-q_0^4-q_0^3+q_0+1 \geq 331$, where $q=q_0^e$.

To complete the proof of the theorem, we may assume that $G = \<G_0,g\>$ with $G_0 = {\rm Alt}_6$, and further that $G \neq {\rm Sym}_6$. If $G = {\rm Alt}_6$, then an easy computation in \textsc{Magma} \cite{magma} demonstrates that $u(G) \geq 2$ and the unique class to witness this is $(1,2,3,4)(5,6)^G$, so (iii) holds. Now assume $G$ is a cyclic extension of $G_0$ isomorphic to either ${\rm PGL}_2(9)$ or ${\rm M}_{10}$. Here a \textsc{Magma} computation shows that $u(G) \geq 2$, witnessed by $y^G$, say. Condition (ii) is satisfied as $|y| > 2$ and $y \in G_0g$ is not square in $\Aut(G_0)$ since $G_0g$ is not square in $\Out(G_0) = C_2 \times C_2$.
\end{proof}

The following result, which also follows from the main lemma of \cite[Section~2]{LM}, is an immediate corollary of Theorem~\ref{t:almostsimple1} (note that the result is trivial for $G = {\rm Sym}_6$).

\begin{cor}\label{c:lm}
Let $G$ be an almost simple group with socle $G_0$. Then for all $g \in G\setminus G_0$, there exists $h \in G_0g$ such that the order of $h$ is greater than the order of $G_0g \in G/G_0$. 
\end{cor}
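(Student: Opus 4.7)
The plan is to deduce Corollary~\ref{c:lm} from Theorem~\ref{t:almostsimple1}. Fix $g \in G \setminus G_0$ and set $H = \langle G_0, g \rangle$; then $H$ is almost simple with socle $G_0$ and $|H:G_0| = d$, where $d$ denotes the order of $G_0 g$ in $G/G_0$. The task reduces to producing $h \in G_0 g$ with $|h| > d$.

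If $H \cong \mathrm{Sym}_6$, then $d = 2$ and $G_0 g$ is the set of odd permutations, which contains any $4$-cycle (of order $4 > d$). Otherwise Theorem~\ref{t:almostsimple1} applies to $H$, yielding a witness $y$ for $u(H) \geq 2$ satisfying one of the conditions (i)--(iii). Condition (iii) would force $H = \mathrm{Alt}_6 = G_0$, contradicting $g \notin G_0$, so (i) or (ii) must hold. In both cases the integer $a := |\langle y \rangle \cap G_0|$ satisfies $a \geq 2$. Moreover, since $y$ witnesses $u(H) \geq 2$, pairing $y$ with any nontrivial element of $G_0$ and reducing modulo $G_0$ shows that $y G_0$ generates the cyclic quotient $H/G_0$. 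Hence $y G_0 = (g G_0)^k$ for some $k$ coprime to $d$, and $|y| = ad$.

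The proposal is then to take $h := y^\ell$ for a positive integer $\ell$ with $k \ell \equiv 1 \pmod d$, so that $y^\ell \in G_0 g$. Since $\gcd(\ell, d) = 1$, a direct computation gives $|y^\ell| = ad/\gcd(a, \ell)$, and this exceeds $d$ if and only if $a \nmid \ell$. Such an $\ell$ can always be found: if $c := \gcd(a, d) > 1$, then any $\ell$ coprime to $d$ is coprime to $c$ and hence not divisible by $a$; if $\gcd(a, d) = 1$, then as $\ell$ ranges over the arithmetic progression $k^{-1} + d\mathbb{Z}$, its residue modulo $a$ attains every value, so some choice is nonzero mod $a$.

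The only real obstacle is that the witness $y$ produced by Theorem~\ref{t:almostsimple1} sits in the coset $G_0 g^k$, which need not equal the target coset $G_0 g$; the substance of the argument lies in passing between the two via exponentiation without losing the lower bound on the order. Conditions (i) and (ii) of Theorem~\ref{t:almostsimple1} are precisely what is needed to guarantee $a \geq 2$, which in turn makes the final arithmetic step succeed.
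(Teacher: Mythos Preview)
Your proof is correct. The approach is the same as the paper's---deduce the result from Theorem~\ref{t:almostsimple1}, treating ${\rm Sym}_6$ separately---but you work harder than necessary. The paper regards the corollary as immediate because, in the proof of Theorem~\ref{t:almostsimple1}, the witness $y$ is explicitly taken to lie in the coset $G_0g$ (see the line ``without loss of generality we may assume that $y^G \subseteq G_0g$''). Granting this, one simply takes $h=y$: then $|h| = d\cdot|\langle y\rangle\cap G_0| \geqs 2d > d$, with no need for the power-and-CRT manoeuvre.

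Your version has the virtue of using only the \emph{statement} of Theorem~\ref{t:almostsimple1} as written, which does not literally assert $y\in G_0g$; you then carefully transport information from the coset $G_0g^k$ containing $y$ to the target coset $G_0g$ via $h=y^{\ell}$. This is a genuine (if minor) strengthening of the logical dependencies, and your arithmetic handling of the two cases $\gcd(a,d)>1$ and $\gcd(a,d)=1$ is clean. The paper's route is shorter but tacitly relies on inspecting the proof of the theorem rather than its statement alone.
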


\subsection{The main reduction theorem} \label{ss:reduction_main}

Let $G$ be a finite group with a unique minimal normal subgroup 
$N = T_1 \times \cdots \times T_k$, where each $T_i$ is isomorphic to a fixed nonabelian simple group $T$ and $N_{G}(T_i)/C_{G}(T_i) \cong A = \la T, y \ra$ for each $i$. Let us assume that $G/N$ is cyclic.

As previously explained, we may assume that $G=G_k = \la N, x \ra$ (see \eqref{e:Gk}), where 
\[
x = (y, 1, \ldots, 1)\s, \;\; \s = (1, \ldots, k) \in {\rm Sym}_k.
\]
Moreover, in this section we will assume that $A \ne {\rm Sym}_6$ (the special case $A = {\rm Sym}_6$ will be addressed in Section \ref{ss:reduction_sym6}). This means that we may, and will, assume that the element $y$ in the definition of $x$ satisfies the conclusions of Theorem~\ref{t:almostsimple1}, namely:
\begin{itemize}\addtolength{\itemsep}{0.2\baselineskip}
\item[{\rm (I)}] For all nontrivial $r,s \in A$, there exists $z \in A$ such that $A = \la r^z,y \ra = \la s^z ,y \ra$;
\item[{\rm (II)}] $\langle y \rangle \cap T \ne 1$; and 
\item[{\rm (III)}] If $y$ is a square in ${\rm Aut}(T)$, then either $|\langle y \rangle \cap T|$ does not divide $4$, or $A = {\rm Alt}_6$ and $|y|=4$.
\end{itemize} 
In particular, for $k=1$ we observe that $x^{G_k}$ witnesses $u(G_k) \geqs 2$.

Our first result handles the special case where $k$ is a power of $2$.

\begin{thm}\label{t:red_even}
If $A \ne {\rm Sym}_6$ and $k = 2^e \geqs 2$, then $x^{G_k}$ witnesses $u(G_k) \geqs 2$.
\end{thm}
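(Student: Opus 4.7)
The plan is to apply the probabilistic criterion for uniform spread (Lemma~\ref{l:prob_method}): I will show that for every nontrivial $z \in G_k$,
$$\sum_{H \in \mathcal{M}(x)} {\rm fpr}(z, G_k/H) < \frac{1}{2},$$
which implies that $x^{G_k}$ witnesses $u(G_k) \geqs 2$.

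The first step is to determine $\mathcal{M}(x)$. By Lemma~\ref{l:maximal2}, each $H \in \mathcal{M}(x)$ is either of \emph{product type} $N_{G_k}((M \cap T)^k)$, where $M$ is a maximal subgroup of $A$ containing $y$, or of \emph{diagonal type} $N_{G_k}(D)$ with $D \cong T^{k/p}$ for some prime $p$ dividing $k$. Since $k = 2^e$, the only candidate prime is $p = 2$, and Lemma~\ref{l:maximal} further forbids diagonal maximal subgroups whenever $p^2 \mid k$. Thus when $e \geqs 2$, the set $\mathcal{M}(x)$ consists \emph{only} of product-type subgroups, whereas when $e = 1$ diagonal subgroups of the form $N_{G_2}(D_\p)$ with $\p^2 = y$ in $\Aut(T)$ may also appear, and only in the case where $y$ is a square in $\Aut(T)$.

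For the product-type contribution, I would identify the coset action of $G_k$ on $G_k/N_{G_k}((M \cap T)^k)$ with a natural twisted product action built from $A/M$, expressing ${\rm fpr}_{G_k}(z, G_k/H)$ in terms of the fixed-point ratios ${\rm fpr}_A(\cdot, A/M)$. Summing over all maximal subgroups $M$ of $A$ containing $y$, property~(I) of $y$ --- equivalent to $u(A) \geqs 2$ being witnessed by $y^A$ --- supplies the bound on $\sum_M {\rm fpr}_A(a, A/M)$ for each nontrivial $a \in A$ that is needed to keep the product-type part of the sum strictly below $\tfrac{1}{2}$. For the diagonal contribution when $k = 2$, I would bound the number of square roots $\p$ of $y$ in $\Aut(T)$ using condition~(III) of $y$: if $|\la y \ra \cap T|$ does not divide $4$ then such $\p$ are severely restricted, and the exceptional case $A = {\rm Alt}_6$ with $|y|=4$ can be handled by a direct computation in \textsc{Magma}.

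The main obstacle I foresee is the quantitative translation of property~(I) into a strict sum-of-fixed-point-ratios bound on $A$: property~(I) as stated is an existence assertion about generating pairs in $A$, whereas the probabilistic method demands a probabilistic inequality. I would most likely extract the sharper estimate needed (with enough slack to absorb the diagonal contribution when $k=2$) from the proofs underlying Theorem~\ref{t:almostsimple1}, rather than from its statement. The case $e \geqs 2$ should be considerably cleaner since diagonal maximals are absent outright, and the product-type sum can be handled almost immediately from property~(I).
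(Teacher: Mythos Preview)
Your approach is genuinely different from the paper's, and the obstacle you flag is not a technicality --- it is a real gap.

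The paper proceeds by induction on $e$, working directly with pairs of prime-order elements rather than with fixed-point-ratio sums. For the base case $k=2$, it fixes prime-order $a,b$, conjugates so that $\langle a_1,y\rangle$ and $\langle b_1,y\rangle$ both project onto $T_1$ (using (I)), and then observes via Lemma~\ref{l:maximal} that the only possible maximal overgroups of $\langle a,x\rangle$ are normalisers of diagonals $D_\varphi$ with $\varphi^2=y$. It then conjugates by a random $c=(1,t)$ with $t\in\langle y\rangle\cap T_2$ and uses (III) to show that with probability at least $2/3$ the conjugate $a^c$ (and likewise $b^c$) lies in no such diagonal. For $k=2^e\geqs 4$, after applying the inductive hypothesis to get projection onto $X_1$ or $X_2$, Lemma~\ref{l:powers} (with $d=2$ and $k/2=2^{e-1}$ even) shows that $x^2$ is not a square in $\Aut(X_1)$, so no diagonal $D_\varphi$ with $\varphi^2=x^2$ exists at all and $G_k=\langle a,x\rangle$ outright.

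Your approach, by contrast, needs the inequality $\sum_{M\in\mathcal{M}_A(y)}{\rm fpr}(z_1,A/M)<\tfrac12$ for every nontrivial $z_1\in A$: taking $z=(z_1,1,\dots,1)\in N$, each product-type $H$ contributes exactly ${\rm fpr}(z,G_k/H)={\rm fpr}(z_1,A/M)$, so the product-type sum for this particular $z$ equals the full sum over $A$. This inequality is strictly stronger than property~(I), which only asserts that $y^A$ witnesses $u(A)\geqs 2$. It is not a consequence of Theorem~\ref{t:almostsimple1}, and ``extracting it from the proofs'' would amount to establishing a sharper theorem for every almost simple $A$, including cases (e.g.\ $A={\rm Alt}_6$) where only a computational verification of $u(A)\geqs 2$ is recorded. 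So even your claim that the case $e\geqs 2$ ``can be handled almost immediately from property~(I)'' does not go through: the product-type sum already requires the unavailable stronger bound. (Note also that Lemma~\ref{l:prob_method} is stated for almost simple groups; you would need its easy generalisation to $G_k$.)
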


\begin{proof} 
We proceed by induction on $e$. Notice that it suffices to show that for any elements $a,b \in G_k$ of prime order, there exists $g \in G_k$ such that $G_k = \la a, x^g \ra = \la b, x^g \ra$. 
   
First assume $e=1$, so $G=G_2 = \la N, x \ra$, where $N= T_1 \times T_2$ and $x^2 = (y,y)$. The special case $A = {\rm Alt}_6$ can be checked by direct computation, so we will assume $A \ne {\rm Alt}_6$ for the remainder of the proof for $k=2$. 

Suppose $a,b \in G$ have prime order. There are two types of prime order elements in $G$, namely:
\begin{itemize}\addtolength{\itemsep}{0.2\baselineskip}
\item[{\rm (i)}] Elements $(a_1,a_2) \in  \Aut(T_1)  \times \Aut(T_2)$ of prime order; and
\item[{\rm (ii)}] Involutions of the form $(a_1,a_1^{-1})\s$ with $a_1 \in \Aut(T_1)$.
\end{itemize}
Note that elements of type (ii) exist if and only if $Ty \in A/T$ has odd order. These two types of prime order elements give us three separate cases to consider.

\vs

\noindent \emph{Case 1. $a = (a_1,a_2)$ and $b=(b_1,b_2)$.}

\vs

Suppose that for each $i$, either $a_i$ or $b_i$ is trivial. In view of (I) above, by conjugating we may assume that $\langle a,x^2 \rangle$ projects onto $T_1$ or $T_2$. By applying Lemma \ref{l:maximal}, it follows that any maximal overgroup of $\la a, x\ra$ in $G$ is of the form $N_G(D_{\varphi})$, where 
\begin{equation}\label{e:dphi2}
D_{\varphi}:=\{(z,z^{\varphi}) \,:\, z \in T_1 \}
\end{equation}
for some $\varphi \in \Aut(T_1)$. But since $a$ has at least one trivial component, it does not normalise such a diagonal subgroup and thus $G = \la a,x \ra$. Similarly, we deduce that $G = \la b, x \ra$. 

We can now assume that $a_i$ and $b_i$ are both nontrivial for some $i$. By conjugating $a$ and $b$ simultaneously, we may assume that $a_1$ and $b_1$ are nontrivial. By a further conjugation, and by appealing to condition (I) above, we may assume that $\langle a_1, y \rangle$ and $\langle b_1,y  \rangle$ project onto $T_1$. Then Lemma \ref{l:maximal} implies that $G \ne \la a,x \ra$ if and only if $\la a,x \ra$ normalises a diagonal subgroup $D_{\varphi}$ of $N$ as in \eqref{e:dphi2}, where $\varphi \in \Aut(T_1)$ and $\varphi^2 = y$ as automorphisms of $T_1$. Note that in this situation we have $a_2 = a_1^{\varphi}$ and 
\[
(y,y) \in \{ (z, z^{\varphi}) \,:\, z \in {\rm Aut}(T_1)\},
\]
so $y^{\varphi} = y$. Moreover, since $\langle a_1, y \rangle$ projects onto $T_1$, it follows that $\varphi$ is uniquely determined by $a_1^{\varphi}$ and we deduce that $a$ is contained in the normaliser of at most one such diagonal subgroup. Similarly, $b$ normalises at most one such subgroup.

Suppose $\la a,x \ra$ normalises $D_{\varphi}$. Let $c = (1,t) \in N$ with $t \in \langle y  \rangle \cap T_2$, 
so $a^c = (a_1,a_2^t)$. If $G \ne \langle a^c, x \rangle$ then by arguing as above we see that 
$\la a^c,x\ra$ normalises $D_{\theta}$ for some $\theta \in \Aut(T_1)$ with $\theta^2=y$ and $a_2^t = a_1^{\theta}$. Then $a_1^{\varphi t} = a_1^{\theta}$ and thus $\varphi t = \theta$ since both automorphisms are uniquely determined by their effect on $a_1$. Since $t$ and $\varphi$ commute, it follows that  
\[
yt^2 = \varphi^2t^2 = (\varphi t)^2 = \theta^2 = y
\]
and thus $t^2=1$, so either $|\langle y \rangle \cap T_2|$ is odd and $t=1$, or there are two possibilities for $t$. In view of (III) above, noting that $y$ is a square in $\Aut(T_1)$, we see that $|\langle y \rangle \cap T_2|$ does not divide $4$. By combining these observations, it follows that if we choose $t \in \langle y \rangle \cap T_2$ at random then $t^2=1$ with probability at most $1/3$, whence $G = \la a^c,x\ra$ with probability at least $2/3$. The same argument applies with $a^c$ replaced by $b^c$ and we conclude that there exists $c \in N$ such that $G = \la a^c,x\ra = \la b^c,x\ra$.

\vs

\noindent \emph{Case 2. $a = (a_1,a_1^{-1}) \s$ and $b=(b_1,b_1^{-1})\s$.}

\vs
 
Suppose $a = (a_1,a_1^{-1}) \s$ and $b=(b_1,b_1^{-1})\s$ are involutions in $G$. By conjugating, we may assume that both $a_1$ and $b_1$ are nontrivial, and then a second conjugation by a diagonal element allows us to assume that $\la a_1, y \ra$ and $\la b_1, y \ra$ both project onto $T_1$.  

Suppose $G \ne \langle a,x \rangle$. Then Lemma \ref{l:maximal} implies that $\langle a,x \rangle$ normalises a diagonal subgroup $D_{\varphi}$ of $N$ as in \eqref{e:dphi2}, where $\varphi \in \Aut(T_1)$ and 
$\varphi^2 = y$. Since $a = \s^{(1, a_1)}$ and the only diagonal subgroups of $N$ normalised by $\s$ are those of the form $D_{\psi}$ with $\psi^2 = 1$, it follows that any diagonal subgroup normalised by $a$ is of the form $D_{\psi a_1}$ with $\psi^2 = 1$, whence $\varphi = \psi a_1$ and $(\psi a_1)^2 = y$. Similarly, if $G \ne \la b,x \ra$ then $(\theta b_1)^2 = y$ for some $\theta \in \Aut(T_1)$ with $\theta^2 = 1$.

If $y$ is not a square in $\Aut(T)$, then $G = \la a,x\ra = \la b,x \ra$ and the result follows. So let us assume $y$ is a square, so (III) implies that $|\la y \ra \cap T_2|$ does not divide $4$. Let $c = (1,t) \in N$ with $t \in \la y \ra \cap T_2$ and note that $a^c = (a_1t,t^{-1}a_1^{-1})\s$ and similarly for $b^c$.
Suppose that $a^c$ normalises $D_{\varphi}$ with $\varphi^2=y$.  As above this implies that $\varphi = \psi a_1 t$ for some $\psi \in \Aut(T_1)$ with 
$\psi^2=1$. Then $t$ and $\psi a_1t$ both centralise $y$, so $\psi a_1$ centralises $y$ and therefore $t$ as well. It follows that $t^2 = y(\psi a_1)^{-2}$. Clearly there are at most two elements in the cyclic group $\la y \ra \cap T_2$ with this property, so the condition in (III) implies that if we choose $t$ at random, then the probability that $G = \la a^c,x\ra$ is at least $2/3$. 
By the same argument, $G = \la b^c,x\ra$ with probability at least $2/3$ and hence there exists $c \in N$ such that $G = \la a^c,x \ra = \la b^c,x \ra$.

 \vs

\noindent \emph{Case 3. $a = (a_1,a_2)$ and $b=(b_1,b_1^{-1})\s$.}

\vs

By conjugating, we may assume that $\langle a_1, y \rangle$ and $\langle b_1, y \rangle$ project onto $T_1$. As before, if neither $a$ nor $b$ normalise a diagonal subgroup, then $G = \la a,x\ra = \la b,x \ra$ and we are done. 

Suppose $\la a, x \ra$ normalises $D_{\varphi}$, so $\varphi^2 = y$ and $a_2 = a_1^{\varphi}$. Consider an element $c=(1,t) \in N$ with $t \in \la y \ra \cap T_2$. By arguing as in Case 1, $G \ne \la a^c, x \ra$ if and only if $t^2=1$. Similarly, by recalling the argument in Case 2 we see that $G \ne \la b^c,x\ra$ if and only if $b^c$ normalises a diagonal subgroup $D_{\theta}$, where $\theta^2 = y$ and $\theta = \psi b_1t$ with $t^2 = y(\psi b_1)^{-2}$. As explained in Cases 1 and 2, if we choose $t \in \la y \ra \cap T_2$ at random then with positive probability we have $t^2 \ne 1$ and $t^2 \ne y(\psi b_1)^{-2}$, so there exists $c \in N$ with $G = \la a^c,x\ra = \la b^c,x\ra$.

To complete the argument we can assume that $G = \la a^c,x\ra$ for all $c = (1,t) \in N$ with $t \in \la y \ra \cap T_2$. Then by arguing as in Case 2, if we choose such an element $c$ at random, then $G = \la b^c,x\ra$ with probability at least $2/3$. The result follows.

\vs

To complete the proof of the theorem, we may assume that $k =2^e \geqs 4$.  Write $G = G_k = \la N, x \ra$, where $N = X_1 \times X_2$ and
\[
X_1 = T_1 \times T_3 \times \cdots \times T_{k-1},\;\; X_2 = T_2 \times T_4 \times \cdots \times T_{k}
\]
as in Lemma \ref{l:diagonal} (with $p=2$). Let us observe that every element in $G$ of prime order normalises $X_1$ and $X_2$. 
 
Let $a = (a_1, a_2)$ and $b = (b_1,b_2)$ be elements in $G$ of prime order, where $a_i,b_i \in {\rm Aut}(X_i)$. By simultaneously conjugating $a$ and $b$ by a suitable element of $G$, and by applying the inductive hypothesis, we may assume that both $\< a,x^2 \>$ and $\< b,x^2 \>$ project onto at least one of $X_1$ and $X_2$. Since $x$ interchanges $X_1$ and $X_2$, Lemma \ref{l:maximal} implies that the only possible maximal overgroups of $\la a,x\ra$ in $G$ are the normalisers of diagonal subgroups $D_{\varphi} \cong T^{k/2}$ of $N$ as in \eqref{e:dphi}, where $\varphi \in {\rm Aut}(X_1)$ and $\varphi^2=x^2$ as automorphisms of $X_1$. However, there is no such automorphism $\varphi$ by Lemma \ref{l:powers} and we conclude that $G = \langle  a,x \rangle$. The same argument shows that $G = \langle  b,x \rangle$ and the result follows.
\end{proof}

We can now establish our main reduction theorem.
 
\begin{thm}\label{t:red_odd}     
If $A \ne {\rm Sym}_6$ and $k \geqs 1$, then $x^{G_k}$ witnesses $u(G_k) \geqs 2$.
\end{thm}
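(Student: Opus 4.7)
The proof is by strong induction on $k$, with base cases $k = 1$ (Theorem \ref{t:almostsimple1}, invoked via conditions (I)--(III)) and $k = 2^e$ (Theorem \ref{t:red_even}). So I may fix an odd prime $p$ dividing $k$ and write $m = k/p$. Following Lemma \ref{l:diagonal}, I decompose $N = X_1 \times \cdots \times X_p$ with $X_i \cong T^m$, so that $x$ cyclically permutes the $X_i$ and $x^p|_{X_1} = (y, 1, \ldots, 1)\mu_1$ identifies $\langle X_1, x^p|_{X_1}\rangle$ with a copy of $G_m$; the inductive hypothesis then delivers $u(G_m) \geq 2$, witnessed by the class of $x^p|_{X_1}$.

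It suffices to prove that for all prime-order $a, b \in G_k$ there exists $c \in N$ with $G_k = \langle a^c, x \rangle = \langle b^c, x \rangle$. I split the analysis by the coset structure of $a, b$ modulo $N$, paralleling Cases 1--3 in the proof of Theorem \ref{t:red_even}. For each pair, the first step is to use the inductive hypothesis to arrange, after a simultaneous $N$-conjugation of $a$ and $b$, that the projection of $\langle a, x \rangle \cap N$ onto $X_1$ is surjective, and likewise for $b$. By Lemma \ref{l:maximal2}(i) this excludes Type~I maximal overgroups of $\langle a, x \rangle$, so every maximal overgroup has the diagonal form $N_G(D_\varphi)$ of Lemma \ref{l:maximal}, corresponding to some prime divisor $q$ of $k$ with $q^2 \nmid k$ and some $\varphi$ on the corresponding $q$-block satisfying $\varphi^q = x^q$. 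By Lemma \ref{l:powers}, such a $\varphi$ exists only when $y$ is a $q$-th power in $\mathrm{Aut}(T)$; if no prime divisor of $k$ meets the arithmetic requirements, then $G_k = \langle a, x \rangle = \langle b, x \rangle$ is immediate.

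In the residual scenario, where some prime $q \mid k$ satisfies $q^2 \nmid k$ and $y$ is a $q$-th power in $\mathrm{Aut}(T)$, I adapt the probabilistic conjugation argument from the $k = 2$ analysis. Varying $c = (1, t, 1, \ldots, 1)$ with $t \in \langle y \rangle \cap T_2$, the condition that $a^c$ lies in the normaliser of a given diagonal $D_\varphi$ amounts to a polynomial identity in $t$ with at most two solutions in the cyclic group $\langle y \rangle \cap T$ (generalising the $t^2 = 1$ and $t^2 = y(\psi a_1)^{-2}$ equations from the $k = 2$ argument). Condition (III), which forbids $|\langle y \rangle \cap T|$ from dividing $4$, then ensures that a positive proportion of $t$ simultaneously avoids the bad diagonals for both $a$ and $b$. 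When $a$ or $b$ lies outside $N$, the counting is closed by Lemma \ref{l:cent} applied as in Remark \ref{r:cent}: the involution in $N$ produced there together with the generation hypothesis yields $|C_{\mathrm{Aut}(N)}(x)| \leq |N|/3^k$, which bounds the number of $G_k$-conjugate diagonal subgroups that could contain $a^c$ or $b^c$.

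The principal obstacle I anticipate lies in the projection step for elements $a, b \notin N$ whose permutation components preserve or nontrivially permute the block decomposition $X_1, \ldots, X_p$: in the latter subcase the inductive hypothesis on $G_m$ only gives projection information on a subblock of $X_1$, so additional work is needed to lift this to surjectivity onto the full $X_1$. Coordinating this lifting with the Lemma \ref{l:cent} counting is where the technical core of the induction lies.
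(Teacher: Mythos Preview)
Your framework is right—induction on $k$ with base cases $k=1$ and $k=2^e$, decomposition into $p$ blocks $X_1,\ldots,X_p$ for an odd prime $p\mid k$, and reduction to diagonal overgroups $N_G(D_\varphi)$ via Lemmas~\ref{l:maximal2} and~\ref{l:maximal}—but the counting argument you propose does not work, and this is a genuine gap. You attempt to port the $k=2$ mechanism verbatim: vary $c=(1,t,1,\ldots,1)$ with $t\in\langle y\rangle\cap T_2$ and invoke condition~(III) to bound the bad set. For odd $p\geq 3$ the paper instead exploits the $p-2\geq 1$ \emph{extra} block coordinates. In Case~1 (both $a,b$ normalise every $X_i$) one conjugates by $c=(1,1,c_3,\ldots,c_p)$ with each $c_i$ ranging over the \emph{full} group $X_i$. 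Fixing the first two block components pins down the unique candidate $\varphi$ (since $a_2=a_1^{\varphi}$ and $\varphi$ is determined by $a_1^{\varphi}$), so the only remaining constraints are $a_i^{c_i}=a_1^{\varphi^{i-1}}$ for $i\geq 3$; each has density at most $|X_i:C_{X_i}(a_i)|^{-1}$, giving success probability at least $4/5$ with no appeal to~(III). Your single-coordinate variation over the small cyclic group $\langle y\rangle\cap T$ does not pin down $\varphi$ in this way, and the claimed ``at most two solutions'' is unsubstantiated for $p\geq 3$.

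Two further corrections. First, the relevant case split is not by coset modulo $N$ but by whether $a$ and $b$ act trivially or transitively on $\{X_1,\ldots,X_p\}$; since $p$ is prime, every prime-order element does one or the other. Second, Lemma~\ref{l:cent} is applied not to $G_k$ but to the copy of $G_{k/p}$ sitting as $\langle X_1,x^p\rangle$: in Cases~2 and~3 (where $a$ or $b$ acts transitively on the $X_i$) one varies a single $d\in X_{p-1}$, and the bad event becomes a $C_{\Aut(X_1)}(x^p)$-conjugacy condition on an element of $X_1$, whose density is at most $1/3$ by Lemma~\ref{l:cent} applied to $G_{k/p}$ via the inductive hypothesis. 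Your bound $|C_{\Aut(N)}(x)|\leq|N|/3^k$ targets the wrong group and does not feed into the argument.
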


\begin{proof}    
We proceed by induction on $k$. As before, it suffices to show that for any elements $a,b \in G_k$ of prime order, there exists $g \in G_k$ such that $G_k = \la a, x^g \ra = \la b, x^g \ra$. 

The base case $k=1$ is clear since $x=y$ has been chosen via Theorem \ref{t:almostsimple1} so that $x^{G_1}$ witnesses $u(G_1) \geqs 2$. In addition, the result follows from Theorem \ref{t:red_even} if $k=2^e \geqs 2$. Therefore, we may assume that $k$ is divisible by an odd prime $p$. As in Lemma \ref{l:diagonal}, let 
\[
X_i = T_i \times T_{i+p} \times \cdots \times T_{i+k-p} \cong T^{k/p}
\]
for each $i \in \{1, \ldots, p\}$. Let $a$ and $b$ be elements of $G_k$ of prime order. Note that the action of $a$ (and similarly $b$) on $\{X_1, \ldots, X_p\}$ is either trivial or transitive (indeed, if $a$ normalises some $X_i$, then it normalises every $X_i$). It follows that there are three cases to consider, according to the actions of $a$ and $b$ on $\{X_1, \ldots, X_p\}$. For the remainder of the proof, we will write $G = G_k$.

\vs

\noindent \emph{Case 1. Both $a$ and $b$ act trivially on $\{X_1, \ldots, X_p\}$.}

\vs

First we assume $a$ and $b$ both normalise some (and hence all) $X_i$. Write 
\[
a=(a_1, \ldots, a_{p}),\;\; b=(b_1, \ldots, b_{p}),
\]
with $a_i, b_i \in {\rm Aut}(X_i)$. 

Suppose that for each $i$, either $a_i$ or $b_i$ is trivial.  By the inductive hypothesis, we can assume that $\langle a,x^p \rangle$ projects onto $X_i$ for some $i$ and similarly $\langle b,x^p \rangle$ projects onto $X_j$ some $j$. By applying Lemma \ref{l:maximal}, it follows that any maximal overgroup of $\la a, x\ra$ in $G$ is of the form $N_G(D_{\varphi})$, where $D_{\varphi}$ is a 
diagonal subgroup of $N$ as in \eqref{e:dphi}. But we are assuming that $a$ has at least one trivial component, so $G = \la a,x \ra$ since $a$ does not normalise such a diagonal subgroup. Similarly, we deduce that $G = \la b, x \ra$. 

Therefore, we may assume that $a_i$ and $b_i$ are both nontrivial for some $i$. By conjugating $a$ and $b$ simultaneously, we may assume that $a_1$ and $b_1$ are nontrivial. Then by applying the inductive hypothesis, we can conjugate $a$ and $b$ simultaneously so that
$\langle a,x^p \rangle$ and $\langle b,x^p \rangle$ both project onto $X_1$. As above, the only possible maximal overgroups of $\la a, x\ra$ in $G$ are the normalisers of diagonal subgroups $D_{\varphi}$ as in \eqref{e:dphi}, where $\varphi \in \Aut(X_1)$ and $\varphi^p=x^p$ as automorphisms of $X_1$. The latter equality implies that  
\[
(x^p, \ldots, x^p) \in \{ (z, z^{\varphi}, \ldots, z^{\varphi^{p-1}}) \,:\, z \in {\rm Aut}(X_1)\}
\]
and thus $(x^p)^{\varphi} = x^p$. Moreover, since $\langle a_1,x^p \rangle$ projects onto $X_1$,
we see that $\varphi$ is uniquely determined by $a_1^{\varphi}$ and thus $a$ is contained in the normaliser of at most one such diagonal subgroup. Similarly, $b$ normalises at most one such subgroup.

Suppose $G \ne \la a,x\ra$ and let $N_G(D_{\varphi})$ be the unique maximal overgroup of $\la a,x\ra$ in $G$. Set $c=(1,1, c_3, \ldots, c_p) \in N$ with $c_i \in X_i$, so $a^c = (a_1,a_2,a_3^{c_3}, \ldots, a_p^{c_p})$. Since the first component of $a^c$ is $a_1$, the previous argument implies that either $G = \la a^c,x\ra$, or $\la a^c,x \ra$ normalises $D_{\varphi}$ and we have $a_i^{c_i} = a_1^{\varphi^{i-1}}$ for $i = 3, \ldots, p$. Since there are at most $|C_{X_i}(a_i)|$ elements $c_i \in X_i$ with $a_i^{c_i} = a_1^{\varphi^{i-1}}$, if we choose such an element $c$ at random, then 
the probability that $G = \la a^c,x\ra$ is at least
\[
1 - \prod_{i=3}^{p} |X_i: C_{X_i}(a_i)|^{-1} \geqs \frac{4}{5}.
\]
In the same way, the probability that $G = \la b^c,x\ra$ is at least $4/5$. Therefore, 
there exists $c$ as above with $G = \langle a^c,x \rangle = \langle b^c,x \rangle$ and the result follows. 

\vs

\noindent \emph{Case 2. Both $a$ and $b$ act transitively on $\{X_1, \ldots, X_p\}$.}

\vs

Here $a$ and $b$ have order $p$ and we may write 
\[
x = (x^p, 1, \ldots, 1)\gamma \in ({\rm Aut}(X_1) \times \cdots \times {\rm Aut}(X_p)){:}{\rm Sym}_p,
\] 
where $\gamma = (1,2,\ldots, p) \in {\rm Sym}_p$ and we view $x^p$ as an automorphism of $X_1$ (see Lemma \ref{l:diagonal}(i)). Then
\[
a = (a_1, \ldots, a_{p})\gamma, \;\; b=(b_1, \ldots, b_{p})\gamma,
\]
with $a_i,b_i \in {\rm Aut}(X_i)$. Note that $\prod_{i}a_i =  \prod_{i}b_i  =1$ (since $|a|=|b|=p$).

Conjugating $a$ and $b$ simultaneously by an element $(c_1, 1, \ldots, 1) \in N$ with $c_1 \in X_1$, we may assume that both $a_1$ and $b_1$ are nontrivial. Then  conjugating by an element of the form $(c, \ldots, c) \in N$,
we may (by the inductive hypothesis) assume that $\langle a_1,x^p \rangle$ and $\langle b_1,x^p \rangle$ both contain subgroups projecting onto $X_1$.   
By Lemma \ref{l:maximal}, it follows that the only possible maximal subgroups of $G$ containing
either $\la a, x\ra$ or $\la b,x \ra$ are the normalisers of diagonal subgroups $D_{\varphi}$ of $N$ as in \eqref{e:dphi}.

Suppose $G \ne \la a,x \ra$, so $\la a, x\ra$ normalises $D_{\varphi}$. Here $\varphi^p=x^p$ as automorphisms of $X_1$ and as in Case 1 we note that 
$(x^p)^{\varphi} = x^p$ and $\varphi$ is uniquely determined by $a_1^{\varphi}$. Since $ax^{-1} = (a_1x^{-p}, a_2, \ldots, a_p)$ also normalises $D_{\varphi}$, it follows that $a_i = a_1^{\varphi^{i-1}}x^{-p}$ for $i =2, \ldots, p$ and thus $N_G(D_{\varphi})$ is the unique maximal overgroup of $\la a, x \ra$ in $G$. 

Set $c = (1,\ldots, 1, d,1) \in N$ with $d  \in X_{p-1}$, so 
\[
a^c = (a_1, \ldots, a_{p-2}, d^{-1}a_{p-1},a_pd)\gamma.
\]
Notice that the first component of $a^c$ is still $a_1$, so either $G = \la a^c,x\ra$, or $a^c$ normalises $D_{\varphi}$. Let us assume $a^c$ normalises $D_{\varphi}$. Then $a^cx^{-1}$ also normalises $D_{\varphi}$ and this implies that $d^{-1}a_{p-1}$ is $C_{\Aut(X_1)}(x^p)$-conjugate to $a_1$. Let $h \in X_1$ be an involution with $h^{X_1} = h^{\Aut(X_1)}$ (see Remark \ref{r:cent}), so by the inductive hypothesis there exists $g \in  \la X_1, x^p \ra$ such that $\la X_1, x^p \ra = \la h^g, x^p \ra$. Then by applying Lemma \ref{l:cent} we deduce that if we choose $d  \in X_{p-1}$ at random, then the probability that $d^{-1}a_{p-1}$ is $C_{\Aut(X_1)}(x^p)$-conjugate to $a_1$ is at most $1/3$. In particular, the probability that $G = \la a^c,x\ra$ is at least $2/3$ and an entirely similar argument gives the same conclusion with $a^c$ replaced by $b^c$. Therefore, there exists $c \in N$ as above such that $G = \langle a^c, x \rangle = \langle b^c, x \rangle$. 

\vs

\noindent \emph{Case 3. $a$ acts trivially and $b$ act transitively on $\{X_1, \ldots, X_p\}$.}

\vs

As above, we may write
\[
a = (a_1, \ldots, a_p),\;\; b = (b_1, \ldots, b_p)\gamma,
\]
where $a_i,b_i \in \Aut(X_i)$ and $\gamma = (1, \ldots, p) \in {\rm Sym}_p$. By applying the inductive hypothesis, and by replacing $a$ and $b$ by suitable (simultaneous) conjugates, we may assume that $\la a_1, x^p \ra$ and $\la b_1, x^p \ra$ both project onto $X_1$. Then either $G = \la a,x\ra$, or $\la a,x \ra$ normalises a diagonal subgroup $D_{\varphi}$ as in \eqref{e:dphi}, where $\varphi \in C_{\Aut(X_1)}(x^p)$ and $\varphi$ is uniquely determined by $a_1^{\varphi}$. And similarly for $\la b,x\ra$.

Set $c = (1, \ldots, 1, c_{p-1},c_p) \in N$, where $c_{p-1} \in X_{p-1}$ and 
$c_p \in  X_p \cap \langle x^p \rangle$.  Then 
\[
b^cx^{-1} = (b_1c_px^{-p},b_2, \ldots, b_{p-2},c_{p-1}^{-1}b_{p-1},c_p^{-1}b_pc_{p-1})
\]
and we note that $\langle x^p, b_1c_p \rangle$ projects onto $X_1$ (since 
$c_p \in \langle x^p \rangle$).  

If $G \ne \la b^c,x\ra$ then $b^cx^{-1}$ must normalise a diagonal subgroup $D_{\varphi}$ and thus $c_{p-1}^{-1}b_{p-1}$ is $C_{\Aut(X_1)}(x^p)$-conjugate to $b_1c_px^{-p}$. As we argued in Case 2, if we fix $c_p \in X_p \cap \langle x^p \rangle$ and we choose $c_{p-1} \in X_{p-1}$ at random, then the probability that $c_{p-1}^{-1}b_{p-1}$ is $C_{\Aut(X_1)}(x^p)$-conjugate to $b_1c_px^{-p}$ is at most $1/3$. In particular, the probability that $G = \la b^c,x\ra$ is at least $2/3$.

If $G \ne \la a^c,x\ra$ then $a^c$ normalises some $D_{\theta}$, where 
$a_p^{c_p}= a_1^{\theta^{p-1}}$ and $\theta$ is uniquely determined by $a_1^{\theta}$. Since $(x^p)^{\theta} = x^p$ and $\la a_1, x^p \ra$ projects onto $X_1$, it follows that $\la a_p,x^p \ra$ projects onto $X_p$ and thus the conjugates $a_p^{c_p}$ are distinct as $c_p$ runs through $X_p \cap \langle x^p \rangle$. In particular, there is at most one $c_p$ such that $a_p^{c_p}= a_1^{\theta^{p- 1}}$. Therefore, if we fix $c_{p-1} \in X_{p-1}$ and choose $c_p \in X_p \cap \langle x^p \rangle$ at random, then the probability that $G = \la a^c,x\ra$ is at least $1 - |\langle x^p \rangle \cap X_p|^{-1} \geqs 1/2$ (note that $\langle x^p \rangle \cap X_p \ne 1$ by condition (II) above).

Finally, by combining the two previous arguments we conclude that there exists $c \in N$ such that $G = \langle a^c, x \rangle = \langle b^c, x \rangle$.
\end{proof} 

Subject to proving Theorem~\ref{t:exceptional}, by Theorem~\ref{t:red_odd} we conclude that the proofs of Theorems~\ref{t:spread} and~\ref{t:uspread} are complete, unless $A = {\rm Sym}_6$. The groups $G_k$ for which $A = {\rm Sym}_6$ are handled in Theorem~\ref{t:s6} in the following section.

\subsection{The special case \texorpdfstring{$A = {\rm Sym}_6$}{A = Sym(6)}} \label{ss:reduction_sym6}

For the proof of Theorem \ref{t:s6}, it will be useful to introduce some additional notation. Let $G$ be a finite group with a unique minimal normal subgroup $N$. Write $s_0(G)$ for the largest integer $k \geqs 0$ such that for any nontrivial elements $x_1, \ldots, x_k$ of $N$, there exists $y \in G$ with $G = \la x_i,y\ra$ for all $i$. Define $u_0(G)$ in the same way, with the condition $y \in G$ replaced by $y \in C$, where $C$ is a specified conjugacy class of $G$. Clearly, we have $s(G) \leqs s_0(G)$ and $u(G) \leqs u_0(G)$. 

The following observation will be useful. Here $G_k$ is defined as in \eqref{e:Gk}.

\begin{lem}\label{l:upperbound}  
We have $s_0(G_k) \leqs s_0(A)$ and $u_0(G_k) \leqs u_0(A)$.
\end{lem}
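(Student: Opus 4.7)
The plan is to exploit the natural surjection $\pi\colon H\to A$, where $H:=N_{G_k}(T_1)$ and $\pi$ is the composition of the inclusion with the canonical quotient $H/C_{G_k}(T_1)\cong A$. Since $G_k$ acts transitively on $\{T_1,\ldots,T_k\}$ with cyclic image (hence regular), we have $H=N_{G_k}(T_i)$ for every $i$, $H$ is normal in $G_k$ with $G_k/H\cong C_k$, and concretely $H$ is isomorphic to the colinear subgroup $A^{(k)}:=\{(a_1,\ldots,a_k)\in A^k: a_iT=a_jT\text{ for all }i,j\}$, with $\pi$ the first-coordinate projection. The case $k=1$ of the lemma is trivial since $G_1=A$.

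For $u_0(G_k)\leqs u_0(A)$, suppose $u_0(G_k)\geqs m$ is witnessed by $C=g^{G_k}$. Since $\la\tilde t,g'\ra=G_k$ forces $g'$ to surject onto the cyclic group $G_k/N$, every $g'\in C$ acts as a $k$-cycle on $\{T_1,\ldots,T_k\}$; after relabelling I may assume $g$ acts as $\s=(1,2,\ldots,k)$, and every $g'\in C$ does too, since the image of $G_k$ in $S_k$ is the abelian group $\la\s\ra$. Set $\bar g:=\pi(g^k)\in A$. Given nontrivial $t_1,\ldots,t_m\in T$, I lift to $\tilde t_i=(t_i,1,\ldots,1)\in T_1$, which are nontrivial in $N$, and pick $g'\in C$ with $G_k=\la\tilde t_i,g'\ra$ for every $i$.

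For each $i$, set $W_i:=\la(g')^k,\ (g')^j\tilde t_i(g')^{-j}:0\leqs j<k\ra$. Then $W_i\leqs H$, and since $W_i$ is $g'$-invariant and contains $\tilde t_i$, we have $\la g',W_i\ra\supseteq\la g',\tilde t_i\ra=G_k$, forcing $W_i$ normal in $G_k$ with cyclic quotient of order at most $k=[G_k:H]$. Hence $W_i=H$. Applying $\pi$: $\pi((g')^k)=:\bar g'$, $\pi(\tilde t_i)=t_i$, and for $1\leqs j<k$ the element $(g')^j\tilde t_i(g')^{-j}$ lies in $T_{1+j}$ and so has trivial first coordinate. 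Therefore $A=\pi(H)=\la t_i,\bar g'\ra$ for every $i$. To complete the $u_0$ argument I verify $\bar g'\in\bar g^A$: write $g'=g^h$ with $h=g^jn$ for some $n\in H$; since $g^k$ commutes with $g$, one finds $(g')^k=(g^k)^n$, so $\bar g'=\bar g^{\pi(n)}\in\bar g^A$. The proof of $s_0(G_k)\leqs s_0(A)$ is identical, omitting only this last step.

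The main technical obstacle is the normality argument identifying $W_i$ with $H$: it relies crucially on $G_k/H$ being cyclic of order $k$ generated by $g'H$, without which $\la\tilde t_i,g'\ra\cap H$ would not be so easily described. Once this decomposition is in hand, the projection $\pi$ translates the generation equation in $G_k$ directly into the desired equation in $A$.
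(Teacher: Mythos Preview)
Your proof is correct, and the underlying idea matches the paper's: lift the $t_i$ to $\tilde t_i=(t_i,1,\ldots,1)$, pick a generating partner, and extract from its $k$-th power the desired witness in $A$. The packaging differs. The paper conjugates the generating element $w$ by a suitable element of $A^k$ into the standard form $(v,1,\ldots,1)\s$ with $v=w_1\cdots w_k$, and then reads off $A=\la y_i,v\ra$ directly from $G_k^g=\la x_i,w^g\ra$. You instead argue structurally: the subgroup $W_i$ generated by $(g')^k$ and the $\la g'\ra$-conjugates of $\tilde t_i$ is $g'$-invariant, hence normal in $G_k$ with cyclic quotient of order dividing $k$, forcing $W_i=H$; applying the coordinate projection $\pi$ then yields $A=\la t_i,\bar g'\ra$ with $\bar g'=\pi((g')^k)$. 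These are two presentations of the same mechanism, since in the paper's notation $\pi(w^k)=w_1\cdots w_k=v$. Your version has the minor advantage of treating $u_0$ more explicitly: the paper replaces $w$ by a power to make it act as $\s$, a move that does not preserve the conjugacy class, and leaves the adaptation for $u_0$ to the reader; you instead note that all elements of $C$ already act as the same $k$-cycle (since $G_k/H$ is abelian) and verify directly that $\bar g'\in\bar g^A$. One cosmetic remark: your relabelling step is not actually needed, since for $1\leqs j<k$ the conjugate $(g')^j\tilde t_i(g')^{-j}$ lands in some $T_\ell$ with $\ell\ne1$ regardless of which $k$-cycle $g'$ induces, and that is all the argument uses.
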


\begin{proof}  
We prove the first inequality; the proof of the second is essentially the same. Write $s_0(A) = m-1$ and fix nontrivial elements $y_1, \dots, y_m \in T$ such that no element of $A$ generates with each of the $y_i$. Seeking a contradiction, suppose that $s_0(G_k) \geq m$. 

For each $i$, let $x_i = (y_i, 1, \dots, 1) \in N$. Suppose that $w$ generates with each $x_i$. Since $w$ necessarily permutes the $k$ factors of $N$ transitively, by replacing $w$ with a suitable power, we can assume that $w = (w_1,\dots,w_k)\s$. Set $g = (1,w_2 w_3 \cdots w_k, w_3 \cdots w_k, \dots, w_k) \in A^k$ and $v = w_1 \cdots w_k \in A$. Then $w^g = (v, 1, \dots, 1)\s$ and $x_i^g=x_i$, whence  $G_k^g = \< x_i, w^g \>$ for all $i$. Since 
$\< N, x^k \>  = \< N, x^k \>^g \leq G_k^g$, we deduce that $A =  \< y_i, v \>$ for all $i$ and we have reached a contradiction.
\end{proof}

We now complete our reduction. 

\begin{thm}\label{t:s6}
Suppose $G = G_k$ and $A = {\rm Sym}_6$. Then $s(G) = 2$ and $u(G) = 1-\delta_{1,k}$.
\end{thm}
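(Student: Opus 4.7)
The statement requires matching upper and lower bounds for both $s$ and $u$, split between the base case $k = 1$ and the main case $k \geq 2$. For $k = 1$ we have $G = {\rm Sym}_6$, and I would verify $s(G) = 2$ and $u(G) = 0$ by direct computation in \textsc{Magma} \cite{magma}; both are classical consequences of the exceptional outer automorphism of ${\rm Sym}_6$. This base case also identifies, for each pair of nontrivial elements of $A$, a common generating partner, which I would record for use later.

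For $k \geq 2$, the upper bounds $s(G_k) \leq 2$ and $u(G_k) \leq 1$ would follow from Lemma~\ref{l:upperbound} together with the numerical equalities $s_0({\rm Sym}_6) = 2$ and $u_0({\rm Sym}_6) = 1$, which I would verify computationally (the only mildly subtle point is checking that neither invariant drops when the nontrivial inputs are restricted to the socle $N = {\rm Alt}_6$). The substantive content is therefore the lower bounds, which I would prove by adapting the framework of Theorems~\ref{t:red_even} and~\ref{t:red_odd}. The main obstacle is that $A = {\rm Sym}_6$ fails condition~(I) of Theorem~\ref{t:almostsimple1}: there is no single class in $A$ that generates with every nontrivial element of $A$.

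For $u(G_k) \geq 1$, I would select a specific outer element $y \in A \setminus T$ satisfying $\langle y \rangle \cap T \ne 1$ (a $6$-cycle works), set $x = (y, 1, \ldots, 1)\sigma$, and show that $x^{G_k}$ witnesses $u(G_k) \geq 1$ by repeating the case analyses of Theorems~\ref{t:red_even} and~\ref{t:red_odd} weakened from pairs of prime-order elements to a single nontrivial element $w \in G_k$; the probabilistic counting bounds used there (the $\geq 4/5$ and $\geq 2/3$ estimates, together with Lemma~\ref{l:cent}) are more than sufficient for a single element, and the only place condition~(I) was actually used can be replaced by the weaker statement that some fixed class of $A$ generates with every \emph{fixed} nontrivial element of $A$, which can be arranged for our single input $w$ by conjugation.

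For $s(G_k) \geq 2$ I would again mirror the inductive structure of Theorem~\ref{t:red_odd}, but in place of condition~(I) I would use the base-case fact $s({\rm Sym}_6) = 2$: for any two elements of $A$ there is a common generating partner, which in general depends on the pair. Thus the outer element $y$ (and hence $x$) is now chosen as a function of the input pair $w_1, w_2 \in G_k$ rather than once and for all. The case analysis splits, as in Theorem~\ref{t:red_odd}, according to whether $w_1, w_2$ act trivially or transitively on the blocks $X_1, \ldots, X_p$. The hardest step will be the "both act trivially" case, where a single pair-dependent choice of $y$ must simultaneously satisfy the local compatibility conditions for $w_1$ and $w_2$ on the projections to $X_1$; this is precisely where the pair-dependent $s = 2$ witness inherited from the base case is essential, and where one must verify that the two local choices of $y$ (coming from the two elements) can be reconciled into a single global $y$.
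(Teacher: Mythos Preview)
Your overall plan matches the paper's: upper bounds via Lemma~\ref{l:upperbound} together with $s_0({\rm Sym}_6)=2$ and $u_0({\rm Sym}_6)=1$; the bound $s(G_k)\geqs 2$ by rerunning the induction of Theorem~\ref{t:red_odd} with a pair-dependent choice of $y$ (using $s({\rm Sym}_6)=2$ in place of condition~(I)); and the base cases $k=1,2$ by direct computation. These parts are essentially identical to the paper's argument.

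The gap is in your proof of $u(G_k)\geqs 1$. You propose to fix $y$ to be a $6$-cycle and then replace condition~(I) by the single-element statement that the class $y^A$ generates with every nontrivial element of $A$. But that is exactly the assertion that $y^A$ witnesses $u(A)\geqs 1$, and since $u({\rm Sym}_6)=0$ no class has this property. Concretely, no conjugate of $(1,2,3)(4,5)$ generates ${\rm Sym}_6$ together with a transposition (the subgroup is always intransitive), and applying the exceptional outer automorphism of ${\rm Sym}_6$ shows that no $6$-cycle generates ${\rm Sym}_6$ together with a fixed-point-free involution $(1,2)(3,4)(5,6)$. Hence the base case of your single-element induction fails, and the argument of Theorem~\ref{t:red_odd} breaks down precisely when the prime-order element $a=(a_1,\ldots,a_k)$ has every component in this exceptional class.

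The paper does not avoid this obstruction but confronts it. It takes $y=(1,2,3)(4,5)$, notes that $A=\langle y^c,z\rangle$ for some $c$ whenever $z$ is nontrivial and \emph{not} a transposition (so the usual argument runs for all other $a$), and then for $k$ prime gives a separate ad hoc argument for the residual case in which every $a_i$ is a transposition: after conjugating so that $\langle a_1,a_2,y\rangle=A$ and $a_3=y^3=(4,5)$, one shows that $\langle a,x\rangle\cap N$ is subdirect in $N$ and that $\langle a,x\rangle$ cannot normalise a diagonal $D_\varphi$, forcing $G=\langle a,x\rangle$. This extra case is the genuine content you are missing; without it (or an analogous argument handling fixed-point-free involutions if you insist on a $6$-cycle) the lower bound $u(G_k)\geqs 1$ does not follow.
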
 

\begin{proof}
We begin by establishing upper bounds on $s(G)$ and $u(G)$. By Lemma \ref{l:upperbound} we have $s(G) \leqs s_0(G) \leqs s_0(A)$ and it is easy to check that $s_0(A) \leqs 2$. For example, if we take $x_1 = (1,2)(3,4)$, $x_2 = (1,2)(5,6)$ and $x_3 = (3,4)(5,6)$ then there is no $y \in A$ such that $A = \la x_i, y \ra$ for all $i$. Similarly, it is easy to check that $y^A$ witnesses $u_0(A) \geqs 1$ if and only if $y$ has order $6$. But if we take $y = (1,2,3)(4,5)$, $x_1 = (1,2,3)$ and $x_2 = (4,5,6)$, then there is no $c \in A$ such that $G = \la x_1,y^c\ra = \la x_2,y^c \ra$. By applying an outer automorphism of $A$, we see that the class of $6$-cycles in $A$ also fails to witness $u_0(A) \geqs 2$ and we conclude that $u(G) \leqs u_0(G) \leqs u_0(A) \leqs 1$. We have now shown that $s(G) \leqs 2$ and $u(G) \leqs 1$. 

The case $k=1$ is an easy computation and it is also a special case of \cite[Theorem~2(i)]{BH}, which gives the exact spread and uniform spread of all symmetric groups (see also Remark~\ref{r:s6}). Similarly, if $k=2$ then it is straightforward to verify the bounds $s(G) \geqs 2$ and $u(G) \geqs 1$ by direct computation, which gives $s(G) =2$ and $u(G)=1$. For the remainder of the proof, let us assume $k \geq 3$.

To show that $s(G) \geqs 2$, which gives $s(G) = 2$, the argument is essentially identical to the general case handled above.  As before we choose $y \in A$ such that $A = \la T, y\ra$ and we write $G = \la N,x\ra$ with $x = (y,1,\ldots, 1)\s$ and $\s = (1,\ldots, k) \in {\rm Sym}_{k}$. Given elements $a,b \in G$ of prime order, the goal is to show that there exists $c \in G$ such that $G = \la a^c, x \ra = \la b^c,x \ra$. Since $s(A)=2$ and $u(A)=0$, the difference here is that we choose $y$ (and hence $x$) according to the choice of $a$ and $b$, rather than picking it uniformly 
as we did before.  

To complete the proof, it remains to show that $u(G) \geqs 1$ for $k \geqs 3$. To do this, write $A = \la T,y\ra$ and $G = \la N, x\ra$, where $x = (y,1, \ldots, 1)\s$ and $\s = (1,\ldots, k) \in {\rm Sym}_k$. We will show that if $a \in G$ has prime order, then there exists $c \in G$ such that $G = \la a^c,x\ra$.

First assume $k$ is a prime and set $y = (1,2,3)(4,5) \in A$. One checks that if $z \in A$ is nontrivial and not a transposition, then $A = \la y^c, z \ra$ for some $c \in A$. Therefore, we can proceed as in the proof of Theorem \ref{t:red_odd}, unless $a = (a_1, \ldots, a_k) \in \Aut(T)^k$ and each $a_i$ is a  transposition. By conjugating by an element of $N = T^k$, we can assume that $A= \la a_1,a_2,y \ra$ and $a_3 = y^3 = (4,5)$. In addition, we may assume that the projections of $\la a,x^k\ra$ on to $T_1$ and $T_2$ are $H_1 = {\rm Alt}_5$ (intransitive) and $H_2 = C_3 \times C_3$, respectively. Since ${\rm Alt}_6 = \la H_1, H_2 \ra$ and $x$ acts transitively on $\{T_1, \ldots, T_k\}$, it follows that $\la a,x \ra \cap N$ is a subdirect product of $N$ and so either $G = \la a,x \ra$, or $\la a,x\ra$ normalises a diagonal subgroup $D_{\varphi}$ of $N$.
If $\la a, x \ra$ normalises $D_{\varphi}$, then $y^{\varphi}=y$ and $a_i = a_1^{\varphi^{i-1}}$ for $i = 2, \ldots, k$, so 
\[
A = \la y^{\varphi}, a_1^{\varphi},a_2^{\varphi}\ra = \la y,a_2,y^3\ra = \la y,a_2\ra.
\]
But this is a contradiction since $A \ne \la y, z\ra$ for all transpositions $z \in A$. The result follows.
  
Finally, if $k \geqs 4$ is composite, then a suitably modified version of the induction proof for Theorem \ref{t:red_odd} goes through (but the argument here is easier since we only need to deal with a single element rather than a pair).
\end{proof}

\begin{rem}\label{r:s6}
For completeness, let us present a direct argument to show that $u({\rm Sym}_6) = 0$. Let $G = {\rm Sym}_n$, where $n \geq 6$ is even. Suppose that $u(G) > 0$ is witnessed by the class $x^G$. Since a conjugate of $x$ generates with $(1,2,3)$, $x$ must be odd. Similarly, since a conjugate of $x$ generates with $(1,2)$, we see that $x$ must have at most two cycles. Since $n$ is even, it follows that $x$ is a $n$-cycle. However, if $n=6$ and $\p \in \Aut(G) \setminus G$, then $x^\p \in (1,2,3)(4,5)^G$ also witnesses $u(G) > 0$, which is a contradiction.
\end{rem}

We close this section by establishing, subject to proving Theorem~\ref{t:uspread}, that there are infinitely many groups with spread two that are not almost simple.

\begin{cor} \label{c:spread2} 
Let $G = T \wr C_k$ where $k \geqs 1$ and $T$ is ${\rm Alt}_5$, ${\rm Alt}_6$, $\O^+_8(2)$ or ${\rm Sp}_{2r}(2)$ with $r \geq 3$. Then $s(G) = u(G) = 2$.  
\end{cor}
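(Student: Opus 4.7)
The plan is to dispatch the case $k=1$ using the known value of $s(T) = u(T) = 2$ from \cite{BGK}, and for $k \geq 2$ to combine the reduction theorem (Theorem~\ref{t:red_odd}) with the upper bound from Lemma~\ref{l:upperbound}. The case $k = 1$ is immediate: $G = T$ is one of the four simple groups listed in \eqref{e:u2}, for which $s(G) = u(G) = 2$ by \cite[Theorem~1.2]{BGK}.

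For $k \geq 2$, I would first verify that $G = T \wr C_k$ fits the setup of \S\ref{ss:reduction_main}. The unique minimal normal subgroup is $N = T^k$, the quotient $G/N \cong C_k$ is cyclic, and the cyclic top group permutes the simple factors while inducing no further automorphism on any individual factor, so a direct computation gives $N_G(T_i)/C_G(T_i) \cong T$. Hence $A = T$ in the notation of \S\ref{ss:reduction_main}, and in particular $A \ne {\rm Sym}_6$. To apply Theorem~\ref{t:red_odd} I would use Theorem~\ref{t:almostsimple1} applied to the simple group $T$ to choose $y \in T$ such that $y^T$ witnesses $u(T) \geq 2$ and conditions (I)--(III) of \S\ref{ss:reduction_main} are satisfied; the case $T = {\rm Alt}_6$ is covered by alternative~(iii) of Theorem~\ref{t:almostsimple1}, which aligns with the ${\rm Alt}_6$-exception built into condition~(III). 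Setting $x = (y,1,\dots,1)\sigma$ with $\sigma = (1,\dots,k) \in {\rm Sym}_k$, the group $G$ is precisely $G_k$ as in \eqref{e:Gk}, and Theorem~\ref{t:red_odd} then delivers $u(G) \geq 2$.

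For the matching upper bound I would invoke Lemma~\ref{l:upperbound} to get $s(G) \leq s_0(G) \leq s_0(A) = s_0(T)$. Since $T$ is simple, the minimal normal subgroup appearing in the definition of $s_0(T)$ is $T$ itself, so $s_0(T) = s(T) = 2$, the last equality because $T$ lies in the list \eqref{e:u2}. Combining with the lower bound yields $2 \leq u(G) \leq s(G) \leq 2$, whence $u(G) = s(G) = 2$.

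There is no real obstacle in this argument: the heavy lifting is packaged once and for all in Theorem~\ref{t:red_odd} (and, ultimately, in the Breuer--Guralnick--Kantor classification of simple groups with spread $2$), and the rest is bookkeeping. The only mildly delicate point is the choice of the witness $y$ when $T = {\rm Alt}_6$, where the unique class realising $u({\rm Alt}_6) \geq 2$ consists of elements of order $4$ that are squares in ${\rm Aut}({\rm Alt}_6)$; this is precisely the scenario for which condition~(III) of \S\ref{ss:reduction_main} was formulated with an ${\rm Alt}_6$-exception in the first place, so the reduction still goes through without modification.
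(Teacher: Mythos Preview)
Your proof is correct and follows essentially the same route as the paper: the upper bound $s(G)\leqs 2$ comes from Lemma~\ref{l:upperbound} together with $s(T)=2$ from \cite{BGK}, and the lower bound $u(G)\geqs 2$ comes from the reduction theorem. The only cosmetic difference is that the paper cites the packaged Theorem~\ref{t:uspread} for the lower bound, whereas you invoke Theorem~\ref{t:red_odd} directly and verify the $G_k$ setup (including the choice of $y$ via Theorem~\ref{t:almostsimple1} and the ${\rm Alt}_6$ exception) by hand; both amount to the same argument.
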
 

\begin{proof}
As noted in \eqref{e:u2}, $s(T) = u(T) = 2$ by \cite{BGK}, so Lemma~\ref{l:upperbound} implies that $u(G) \leq s(G) \leq 2$. Combining this with Theorem~\ref{t:uspread}, we see that $u(G) \geq 2$ and hence $s(G) = u(G) = 2$.
\end{proof}

In view of the main results in this section, we have now reduced the proofs of Theorems~\ref{t:spread} and~\ref{t:uspread} to the proof of Theorem~\ref{t:exceptional}. Strictly speaking, we need the slightly stronger conclusion given in Theorem~\ref{t:almostsimple1}, but this will follow easily from our proof. Therefore, for the remainder of the paper, our goal is to prove Theorem~\ref{t:exceptional}. We begin by recording some preliminary results for exceptional groups of Lie type.

\section{Preliminaries on exceptional groups} \label{s:prelims}

In this section, we collect together some general results on almost simple exceptional groups of Lie type that will be crucial to our proof of Theorem~\ref{t:exceptional}. In addition, we will introduce the probabilistic approach for bounding the uniform spread of a finite group, which is at the heart of our proof, and we will discuss the relevant notation and set up for applying Shintani descent in this context.

For this discussion, it will be convenient to partition the finite simple exceptional groups  over $\mathbb{F}_q$ into two collections:
\begin{align*}
\mathcal{A} &= \{ {}^2B_2(q), \, {}^2G_2(q)', \, {}^2F_4(q)', \, G_2(q)' \} \\
\mathcal{B} &= \{ E_8(q), \, E_7(q), \, E_6^{\e}(q), \, F_4(q), \, {}^3D_4(q) \}.
\end{align*}
The proof of Theorem~\ref{t:exceptional} for the low rank groups with socle in $\mathcal{A}$ will be given in Section~\ref{s:low} and the remaining groups whose socle is in $\mathcal{B}$ will be handled in Sections~\ref{s:e8}--\ref{s:3d4}.

\begin{rem}\label{r:nota}
In this paper, we always use expressions such as $E_7(q)$ and ${}^2E_6(q)$ to denote the corresponding \emph{simple} groups. 
\end{rem}

\subsection{Subgroup structure}\label{ss:p_subgroups}

Let $G$ be a finite almost simple exceptional group of Lie type over $\mathbb{F}_q$ with socle $G_0$. Write $q=p^f$ with $p$ prime. Let $\mathcal{M}$ be the set of maximal subgroups $H$ of $G$ with $G = HG_0$.

First assume $G_0 \in \mathcal{A} \cup \{ {}^3D_4(q) \}$. In each of these cases, the maximal subgroups of $G$ have been determined up to conjugacy. For $G_0 = {}^2F_4(q)'$ this is due to Malle \cite{Malle} and in the other cases we refer the reader to the relevant table in \cite[Chapter 8]{BHR} for a convenient list of the subgroups that arise. These tables reproduce the original results of Suzuki \cite{Suzuki} for ${}^2B_2(q)$, Cooperstein \cite{Coop} for $G_2(q)'$ ($q$ even) and Kleidman \cite{Kleidman3D4,KleidmanG2} for $G_2(q)$ ($q$ odd), ${}^2G_2(q)'$ and ${}^3D_4(q)$. We will make extensive use of this work in the proof of Theorem \ref{t:exceptional}.

For the remainder of Section~\ref{ss:p_subgroups}, we will assume $G_0 \in \mathcal{B}'$, where
\[
\mathcal{B}' = \{ E_8(q), \, E_7(q), \, E_6^{\e}(q), \, F_4(q) \}.
\]
Here we only have a complete description of the maximal subgroups of $G$ up to conjugacy when $G_0$ is one of
\[
E_7(2), \; E_6(2), \; {}^2E_6(2), \; F_4(2)
\]
(see \cite{BBR}, \cite{KW}, \cite{ATLAS, Wilson2E62} and \cite{NW}, respectively). However, as described below, we are able to appeal to some powerful reduction theorems to obtain a very useful description of the maximal subgroups in the general cases.

Write $G_0 = (\bar{G}_{\s})'$, where $\bar{G}$ is a simple algebraic group of adjoint type over the algebraic closure of $\mathbb{F}_p$ and $\s$ is an appropriate Steinberg endomorphism of $\bar{G}$. The subgroups in $\mathcal{M}$ fall into several families according to the following fundamental theorem (see \cite[Theorem~2]{LS90}). 

\begin{thm}\label{t:types}
Let $G$ be an almost simple group with socle $G_0 = (\bar{G}_{\s})' \in \mathcal{B}'$ and let $H \in \mathcal{M}$. Then one of the following holds:
\begin{itemize}\addtolength{\itemsep}{0.2\baselineskip}
\item[{\rm (I)}]   $H = N_G(\bar{H}_{\s})$ for a maximal closed $\sigma$-stable positive dimensional subgroup $\bar{H}$ of $\bar{G}$;
\item[{\rm (II)}]  $H$ is of the same type as $G$ (possibly twisted) over a subfield of $\mathbb{F}_{q}$;
\item[{\rm (III)}] $H$ is an exotic local subgroup (see \cite{CLSS});
\item[{\rm (IV)}]  $G_0=E_8(q)$, $p \geqs 7$ and $H \cap G_0 = ({\rm Alt}_5 \times {\rm Alt}_6).2^2$;
\item[{\rm (V)}]   $H$ is almost simple and not of type (I) or (II).
\end{itemize}
\end{thm}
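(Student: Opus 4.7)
The plan is to analyse a maximal subgroup $H \in \mathcal{M}$ via the algebraic structure of $J = H \cap G_0$ inside the algebraic group $\bar{G}$. The central dichotomy is whether $J$ is contained in a proper positive-dimensional closed $\sigma$-stable subgroup of $\bar{G}$, or not. First I would form the Zariski closure $\bar{J}$ (automatically $\sigma$-stable, since $J$ is $\sigma$-invariant) and consider its identity component $\bar{J}^0$. If $\bar{J}^0 \ne 1$, then I would choose $\bar{H}$ to be maximal among proper closed $\sigma$-stable positive-dimensional subgroups of $\bar{G}$ containing $\bar{J}$; the normaliser $N_G(\bar{H}_\sigma)$ is then a proper overgroup of $H$, so by maximality it equals $H$, placing us in case (I). A small verification is needed to ensure $\bar{H}_\sigma$ is nontrivial and controls enough of $\bar{H}$ to force $N_G(\bar{H}_\sigma)$ below $G$, and here I would invoke the Lang--Steinberg theorem in the standard way.

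Otherwise $J$ is finite, and I would analyse the generalised Fitting subgroup $F^*(H)$. If $F^*(H)$ contains a nontrivial normal elementary abelian $p$-subgroup $U$, then $H = N_G(U)$; unless this normaliser is contained in a proper parabolic (which returns us to case (I)), the configuration $(G,U)$ must be one of the finitely many exotic local examples classified by Cohen, Liebeck, Saxl and Seitz in \cite{CLSS}, yielding case (III). A parallel analysis applies if $F^*(H)$ has a nontrivial normal $\ell$-subgroup for $\ell \ne p$, where one instead studies normalisers of $\ell$-tori. If no such layer exists, then $F^*(H)$ is a direct product of nonabelian simple groups; the transitive conjugation action of $H$ on these factors combined with maximality forces $F^*(H)$ to be simple, so $H$ is almost simple. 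Then either the socle of $H$ is of the same Lie type as $G_0$ (possibly twisted) over a subfield $\mathbb{F}_{q_0} \subseteq \mathbb{F}_q$, giving case (II), or it belongs to the remaining almost simple configurations in case (V), together with the single explicit exceptional embedding into $E_8(q)$ for $p \geqs 7$ recorded in case (IV).

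The main obstacle is the almost simple case: bounding and classifying the finite almost simple subgroups $H$ whose socle is not a subfield subgroup of $G_0$. Here one must first show that the socle has Lie rank bounded in terms of the rank of $\bar{G}$ (or, if sporadic/alternating, bounded order), and then rule out the vast majority of would-be embeddings into $\bar{G}$ by representation-theoretic means --- minimal dimensions of faithful representations, Frobenius--Schur indicators, Lie primitivity, and careful analysis of fixed-point subgroups under $\sigma$. This rests on the deep Liebeck--Seitz analysis of finite subgroups of exceptional algebraic groups, and I would invoke \cite[Theorem~2]{LS90} as a black box rather than reproduce the reduction theorems and long case distinctions on which it depends.
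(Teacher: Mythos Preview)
The paper does not prove this theorem: it is quoted verbatim as a known structural result and attributed directly to \cite[Theorem~2]{LS90}, with no argument given. Your outline is a reasonable sketch of the architecture of the Liebeck--Seitz proof itself (Zariski closure dichotomy, local analysis via \cite{CLSS}, reduction to the almost simple case), and you correctly identify that the hard content lies in bounding and classifying the almost simple subgroups --- but since you conclude by invoking \cite[Theorem~2]{LS90} as a black box anyway, the sketch is redundant. For the purposes of this paper the theorem is simply a citation, and that is all that is required.
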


In view of Theorem~\ref{t:types}, it will be convenient to write 
\begin{equation}\label{e:mpart}
\mathcal{M} = \mathcal{M}_1 \cup \mathcal{M}_2 \cup \mathcal{M}_3
\end{equation}
where $\mathcal{M}_1$ comprises the maximal subgroups of type (I)--(IV) and $\mathcal{M}_2 \cup \mathcal{M}_3$ is the remaining collection of almost simple subgroups of type (V). Specifically, if $H$ is a type (V) subgroup with socle $S$ and ${\rm Lie}(p)$ denotes the set of finite simple groups of Lie type over a field of characteristic $p$, then we write $H \in \mathcal{M}_2$ if $S \in {\rm Lie}(p)$ and $H \in \mathcal{M}_3$ otherwise. 

Through the work of numerous authors, the subgroups comprising $\mathcal{M}_1$ are well understood and they have been determined up to conjugacy. However, there is no equivalent result for the subgroups in $\mathcal{M}_2 \cup \mathcal{M}_3$, although there has been some substantial progress. In particular, there is a short list of possibilities for $S$ up to isomorphism (see Theorem~\ref{t:simples} below for $\mathcal{M}_2$ and \cite{LS99, Litt} for $\mathcal{M}_3$), but the conjugacy problem remains open in general. Extensive ongoing work of Craven \cite{Craven17, CravenPSL, CravenMedium} seeks to significantly shorten the list of candidate subgroups in $\mathcal{M}_2 \cup \mathcal{M}_3$, with the ultimate goal of a complete classification.

The remainder of this section is dedicated to deducing the information we need on the subgroups in $\mathcal{M}$ for the proof of Theorem~\ref{t:exceptional}. We begin by studying the conjugacy classes of subgroups in $\mathcal{M}_1$. Recall our convention that logarithms are base two.

\begin{prop}\label{p:classes}
The number of $\bar{G}_{\s}$-classes of subgroups in $\mathcal{M}_1$ is at most $a(G_0) + \log\log q$, where 
\[
\begin{array}{ccccc} 
\hline
G_0    & F_4(q) & E_6^{\e}(q) & E_7(q) & E_8(q) \\ 
a(G_0) & 25     & 25          & 30     & 49     \\ 
\hline
\end{array}
\]
\end{prop}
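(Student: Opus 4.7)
The plan is to partition $\mathcal{M}_1$ according to the four classes (I)--(IV) of Theorem~\ref{t:types} and bound each family of $\bar{G}_\sigma$-classes separately, absorbing every $q$-independent contribution into the constant $a(G_0)$ and isolating the growth with $q$ into the $\log\log q$ term.

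For type (I), I would appeal to the classification of maximal closed $\sigma$-stable positive-dimensional subgroups $\bar{H}$ of $\bar{G}$ due to Liebeck--Seitz, building on Dynkin, Borel--de~Siebenthal and earlier work. For each exceptional $\bar{G}$, the $\bar{G}$-conjugacy classes of such $\bar{H}$ form an explicit finite list depending only on the root system: the maximal parabolics (one per node of the Dynkin diagram), the maximal-rank reductive subgroups coming from subsystem subgroups, and the handful of positive-dimensional maximal subgroups of small rank (for instance the $\mathrm{PGL}_2$-type subgroups and the $G_2 F_4, A_1 G_2, \ldots$ subgroups of the larger $E_n$). For each such $\bar{H}$, the number of $\bar{G}_\sigma$-classes of fixed-point subgroups of the form $\bar{H}_\sigma$ is controlled by $H^1(\langle \sigma \rangle, N_{\bar{G}}(\bar{H})/\bar{H}^{\circ})$, which is a finite group whose order depends only on the type. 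Summing over the finite list gives a bound depending only on $G_0$, contributing to $a(G_0)$.

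For types (III) and (IV), the contribution is trivial: the exotic local subgroups are explicitly enumerated in \cite{CLSS} and give at most a handful of $\bar{G}_\sigma$-classes per $G_0$, and (IV) contributes a single class. For type (II), a maximal subfield subgroup corresponds to a subfield $\mathbb{F}_{q_0} < \mathbb{F}_q$ with $q = q_0^r$ for some prime $r \mid f$ (where $q = p^f$), and for each such $r$ there is a bounded number of classes (exactly one in the untwisted cases $F_4, E_7, E_8$; at most two in the $E_6^{\pm}$ case, coming from the untwisted and twisted variants when $r=2$). Hence the type (II) contribution is at most $c \cdot \omega(f)$ where $c$ is a small constant and $\omega(f)$ is the number of distinct prime divisors of $f$. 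Since $2^{\omega(f)} \leq f$ we have $\omega(f) \leq \log f$, and because $\log q = f \log p \geq f$ we have $\log f \leq \log \log q$, so $\omega(f) \leq \log \log q$; any residual constant factor from $c > 1$ (e.g.\ the twisted $E_6$ case) is absorbed into $a(G_0)$.

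The main obstacle is bookkeeping the precise values $25, 25, 30, 49$: one has to run through the Liebeck--Seitz list of maximal positive-dimensional subgroups for each of $F_4, E_6, E_7, E_8$, add in the parabolics and the maximal-rank reductive subgroups from the extended Dynkin diagram, and account for the Frobenius twisting (which can split one $\bar{G}$-class into several $\bar{G}_\sigma$-classes, depending on cocycles in $N_{\bar{G}}(\bar{H})/\bar{H}^{\circ}$). Once this tabulation is made, combining it with the bounded type~(III) and type~(IV) contributions yields the constant $a(G_0)$, and the subfield bound above yields the $\log\log q$ term, proving the proposition.
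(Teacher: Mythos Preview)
Your approach is essentially the same as the paper's: both partition $\mathcal{M}_1$ by the four types in Theorem~\ref{t:types}, count type~(I) by inspecting the Liebeck--Seitz tables (the paper cites \cite{LSS} and \cite[Table~3]{LS03} directly rather than framing it as an $H^1$ computation, but this is the same content), bound type~(II) by $\omega(f)\leqs\log\log q$, and pick up the handful of type~(III) and~(IV) classes from \cite{CLSS}. One small wording issue: saying ``any residual constant factor from $c>1$ is absorbed into $a(G_0)$'' is not literally correct, since $(c-1)\omega(f)$ still grows with $q$; what actually happens (and what you already observed) is that the extra twisted subfield class occurs for at most one prime $r$, so the type~(II) count is $\omega(f)+O(1)$ rather than $c\cdot\omega(f)$, and it is that $O(1)$ which gets absorbed.
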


\begin{proof}
The argument is similar in each case and we just give details for $G_0 = E_8(q)$. First consider the subgroups of type (I) in Theorem~\ref{t:types}, so $H = N_G(\bar{H}_{\s})$. Clearly, there are $8$ classes of maximal parabolic subgroups (one for each node in the Dynkin diagram) and by inspecting \cite{LSS} we find that there are at most $29$ additional classes of maximal subgroups of type (I) with $\bar{H}$ of maximal rank. The remaining possibilities for $H$ are listed in \cite[Table~3]{LS03}, together with the case recorded in \cite[Theorem~8(I)(d)]{LS03}; this gives at most $9$ further classes. Altogether, this demonstrates that there are at most $46$ classes of maximal subgroups in $\mathcal{M}_1$ of type (I). The subgroups of type (II) are subfield subgroups; there is a unique class for each maximal subfield of $\mathbb{F}_q$ and there are at most $\log\log q$ such subfields (this is an upper bound on the number of prime divisors of $f$, where $q=p^f$). By the main theorem of \cite{CLSS}, there are at most $2$ classes of subgroups of type (III) and there is at most $1$ additional class of type (IV). By bringing the above estimates together, we conclude that there are at most $49 + \log\log q$ distinct $\bar{G}_{\s}$-classes of subgroups in $\mathcal{M}_1$.
\end{proof}

Now assume that $H \in \mathcal{M}_2 \cup \mathcal{M}_3$ and let $S$ be the socle of $H$. Note that $S$ is a subgroup of $G_0$. The following result significantly restricts the subgroups in $\mathcal{M}_2$ (see \cite[Theorem~8]{LS03}, noting that the value of $b(E_8(q))$ in part~(iii) is taken from \cite{Lawther}). In the statement, if $X$ is a simple group of Lie type, then ${\rm rk}(X)$ denotes the untwisted Lie rank of $X$ (that is, ${\rm rk}(X)$ is the rank of the ambient simple algebraic group).

\begin{thm}\label{t:simples}
Suppose that $H \in \mathcal{M}_2$ has socle $S$, a simple group of Lie type over $\mathbb{F}_t$, where $t$ is a power of $p$. Then ${\rm rk}(S) \leqs \frac{1}{2}{\rm rk}(G_0)$ and one of the following holds: 
\begin{itemize}\addtolength{\itemsep}{0.2\baselineskip}
\item[{\rm (i)}]   $t \leqs 9$;
\item[{\rm (ii)}]  $S = {\rm L}_{3}^{\e}(16)$;
\item[{\rm (iii)}] $S \in \{ {\rm L}_{2}(t), {}^2B_2(t), {}^2G_2(t)\}$, where $t \leqs (2,q-1)\,b(G_0)$ and  
\[
\begin{array}{cccccc} 
\hline
G_0    & F_4(q) & E_6^{\e}(q) & E_7(q) & E_8(q) \\
b(G_0) & 68     & 124         & 388    & 1312   \\ 
\hline
\end{array}
\]
\end{itemize}
\end{thm}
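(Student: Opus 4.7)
The plan is to invoke \cite[Theorem~8]{LS03} directly, and to substitute Lawther's sharpened constant $b(E_8(q)) = 1312$ from \cite{Lawther} in the $E_8$ case of part (iii). Since these are the standard references on maximal subgroups of exceptional groups whose socle lies in ${\rm Lie}(p)$, my approach is to cite them cleanly rather than to reprove the statement; what follows is a brief outline of the strategy underlying \cite{LS03}.

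Let $H \in \mathcal{M}_2$ have socle $S$ in ${\rm Lie}(p)$. Because $S$ lies in the defining characteristic, Seitz's embedding theory (together with Steinberg's tensor product theorem) lifts a suitable cover of $S$ to an algebraic group embedding $\phi\colon \bar{S} \to \bar{G}$, so that $\phi(\bar{S})$ is a $\sigma$-stable closed positive-dimensional subgroup of $\bar{G}$ containing $S$. If $\phi(\bar{S})$ is \emph{proper}, then $S \leq N_{\bar{G}_{\sigma}}(\phi(\bar{S}))$ forces $H$ to be of type (I) in Theorem~\ref{t:types}, contradicting $H \in \mathcal{M}_2$. Hence the lift, if it exists at all, must have $\phi(\bar{S}) = \bar{G}$, and the entire game is to show that a ``large'' choice of $S$ is incompatible with this.

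Quantifying ``large'' gives the two conclusions. For the rank bound ${\rm rk}(S) \leqs \tfrac{1}{2}{\rm rk}(G_0)$, an inspection of the maximal closed connected reductive subgroups of $\bar{G}$ (including subsystem subgroups and Levi factors of parabolics) shows that no proper closed positive-dimensional subgroup of $\bar{G}$ has a simple factor of rank exceeding $\tfrac{1}{2}{\rm rk}(\bar{G})$ without itself producing a subgroup of type (I). For the bound on $t$ in parts (i)--(iii), one uses order and dimension estimates: both $|S|$ and $\dim \bar{S}$ grow polynomially in $t$, so once $t$ exceeds the stated thresholds, $\phi(\bar{S})$ is too large to be a proper closed subgroup of $\bar{G}$, and again we are forced into the forbidden configuration.

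The main obstacle in \cite{LS03}, and the reason the statement is so intricate, lies in the low-rank candidates $S \in \{{\rm L}_2(t),\,{}^2B_2(t),\,{}^2G_2(t),\,{\rm L}_3^{\e}(16)\}$ appearing in parts (ii) and (iii): here $\dim \bar{S} \in \{3,10,14,8\}$ is too small for the crude dimensional estimate to be decisive, so one must proceed case-by-case, controlling the decomposition of the adjoint module ${\rm Lie}(\bar{G})|_{\bar{S}}$ into composition factors and eliminating individual candidate embeddings by hand. This delicate analysis is precisely what occupies the bulk of \cite{LS03}, and Lawther's refinement \cite{Lawther} improves the $E_8$ constant in part (iii) through a more careful enumeration of the conjugacy classes of the remaining candidates.
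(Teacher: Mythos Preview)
Your approach matches the paper exactly: the paper does not prove this theorem but simply cites \cite[Theorem~8]{LS03}, noting that the value of $b(E_8(q))$ is taken from \cite{Lawther}. Your supplementary outline is more than the paper provides, though your description of Lawther's contribution is inaccurate --- \cite{Lawther} is a root-lattice computation (sublattices generated by root differences), not an enumeration of conjugacy classes.
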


It remains to discuss the situation where $H \in \mathcal{M}_3$. In this case, the possibilities for $S$ (up to isomorphism) are described in \cite{LS99} (see \cite[Tables~10.1--10.4]{LS99}) and we note that substantial refinements are established in \cite{Craven17, CravenPSL, CravenMedium, Litt}. For instance, the main theorem of \cite{Craven17} states that if $S = {\rm Alt}_n$ then $n=6$ and $n=7$ are the only options, whereas \cite{LS99} gives $n \leqs 18$.

We conclude this section by studying the maximal order of an element in a subgroup contained in $\mathcal{M}_2 \cup \mathcal{M}_3$. Given a subset $X$ of a finite group, set 
\[
{\rm meo}(X) = \max\{|x| \,:\, x \in X\}.
\]
The following result gives an upper bound on ${\rm meo}(H)$, where $H$ is almost simple and either classical or a low rank exceptional group.

\begin{prop}\label{p:meo}
Let $S$ be a finite simple group of Lie type over $\mathbb{F}_t$ with ${\rm rk}(S)=m$.
\begin{itemize}\addtolength{\itemsep}{0.2\baselineskip}
\item[{\rm (i)}] If $S$ is a classical group, then either 
\[
{\rm meo}({\rm Aut}(S)) \leqs \frac{t^{m+1}}{t-1},
\]
or $S = {\rm PSp}_{4}(2)'$ and ${\rm meo}({\rm Aut}(S)) = 10$.
\item[{\rm (ii)}] If $S$ is an exceptional group with $m \leqs 4$, then ${\rm meo}({\rm Aut}(S)) \leqs c(S)$, where $c(S)$ is given in Table~\ref{tab:meo}.
\end{itemize}
\end{prop}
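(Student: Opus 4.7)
The plan is to handle the two parts separately, reducing each to known results on element orders in finite simple groups of Lie type and their automorphism groups.

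For part (i), I would run through the classical families: $S = {\rm L}_n(t)$, ${\rm U}_n(t)$, ${\rm PSp}_{2k}(t)$, and ${\rm P}\Omega^{\varepsilon}_d(t)$. The claimed bound $t^{m+1}/(t-1)$ is essentially the order of the image in ${\rm PGL}_{m+1}(t)$ of a Singer cycle of ${\rm GL}_{m+1}(t)$, so it is a natural Singer-type bound. For inner elements of $S$, the standard approach is to note that every semisimple element of $S$ lies in the image of a maximal torus of the ambient algebraic group, and hence has order dividing the lcm of cyclotomic factors $t^{e_i} \pm 1$ corresponding to a partition of the Lie rank, while unipotent (and mixed) contributions are easily controlled by a factor of $p$. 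In each family a direct check shows that no resulting order exceeds $t^{m+1}/(t-1)$. The passage from $S$ to $\Aut(S)$ requires handling outer cosets: for diagonal outer elements one simply replaces $S$ by $\Inndiag(S)$, which only removes denominators like $(n,t-1)$ and so preserves the bound, while for field (and graph-field) automorphisms I would invoke Shintani descent (see Section~\ref{ss:p_shintani}), which shows that an element in a coset of a field automorphism of order $e$ has order at most $e$ times that of an associated element of a subfield subgroup of the same type over $\mathbb{F}_{t^{1/e}}$; a short arithmetic check confirms that this remains under the stated bound. The isolated exception ${\rm PSp}_4(2)' \cong {\rm Alt}_6$ must be handled by hand, since $\Aut({\rm Alt}_6) \cong {\rm P}\Gamma{\rm L}_2(9)$ contains an element of order $10$ while $t^{m+1}/(t-1) = 8$; this is the sole exceptional case.

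For part (ii), the simple exceptional groups with ${\rm rk}(S) \leq 4$ are ${}^2B_2(t)$, ${}^2G_2(t)'$, $G_2(t)'$, ${}^3D_4(t)$, ${}^2F_4(t)'$, and $F_4(t)$. In each family the orders of maximal tori, and hence bounds on the maximal order of an inner element, are explicitly tabulated in the literature on conjugacy classes of exceptional groups. The passage to $\Aut(S)$ is controlled by at most a cyclic extension by field automorphisms (the graph/diagonal contributions are absent or very small here), and Shintani descent once again bounds the element orders in outer cosets. For the very smallest parameters the \textsc{Atlas} \cite{ATLAS} provides exact element orders which can simply be read off to populate the corresponding rows of Table~\ref{tab:meo}; for generic $t$ the bounds $c(S)$ come from the torus calculation together with a crude multiplicative adjustment by the field automorphism order.

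The main obstacle is the outer-element analysis in part (i): a naive estimate multiplying inner element orders by $|\Out(S)|$ is much too weak to give $t^{m+1}/(t-1)$, and the key technical input is therefore the Shintani-descent description of coset element orders, which must be combined with a careful case analysis of the classical Aschbacher-type data to guarantee a bound that is uniform in both $t$ and the rank of $S$. For part (ii) the difficulty is largely clerical: one must verify that each entry of Table~\ref{tab:meo} genuinely dominates the maximal element order across the entire automorphism group, paying particular attention to small-parameter anomalies such as ${}^2F_4(2)'$ and $G_2(2)' \cong {\rm U}_3(3)$ where the simple group is a proper derived subgroup of its defining Lie-type group.
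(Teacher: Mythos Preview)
Your plan is sound and would yield a correct proof, but it is substantially more laborious than the paper's, which leans almost entirely on the existing literature rather than redoing the element-order analysis.

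For part~(i) the paper does not carry out any case analysis at all: the bound is quoted as an immediate corollary of \cite[Theorem~2.16]{GMPS}. Your proposed re-derivation via maximal tori together with Shintani descent for the field and graph-field cosets is essentially the strategy that underlies the proof in \cite{GMPS}, so it would work, but there is no need to reproduce it here. Your diagnosis that the naive bound $|{\rm Out}(S)|\cdot{\rm meo}(S)$ is too weak for part~(i) is correct, and this is exactly why the paper defers to the sharper result already in the literature.

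For part~(ii), by contrast, you are working harder than necessary. The bounds $c(S)$ in Table~\ref{tab:meo} are deliberately generous: each carries an explicit $\log t$ factor (and in even characteristic also a power-of-$2$ factor bounding the unipotent contribution), precisely so that the crude estimates ${\rm meo}({\rm Aut}(S))\leqslant |{\rm Out}(S)|\cdot{\rm meo}(S)$ (for $t$ odd, with ${\rm meo}(S)$ read off from \cite[Table~A.7]{KS}) and ${\rm meo}({\rm Aut}(S))\leqslant \alpha\beta\,|{\rm Out}(S)|$ (for $t$ even, with the semisimple and unipotent bounds $\alpha,\beta$ taken from \cite[Table~5]{GMPS}) already suffice. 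No Shintani descent is needed here. Your phrase ``crude multiplicative adjustment by the field automorphism order'' is in fact exactly what the paper does; the additional Shintani step you propose would give tighter bounds than Table~\ref{tab:meo} records, but those are not required for the applications.
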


\begin{proof}
Part~(i) is an immediate corollary of \cite[Theorem~2.16]{GMPS}. For~(ii), we argue as in the proof of \cite[Theorem~1.2]{GMPS} (see \cite[p.7683]{GMPS}). If $t$ is odd, then ${\rm meo}(S)$ is given in \cite[Table~A.7]{KS} and the result follows from the trivial bound
\begin{equation}\label{e:meo}
{\rm meo}({\rm Aut}(S)) \leqs |{\rm Out}(S)|\, {\rm meo}(S).
\end{equation}
Now assume $t$ is even. For $S = {}^2B_2(t)$ with $t = 2^{2k+1}>2$, we have ${\rm meo}(S) = 2^{2k+1}+2^{k+1}+1$ (see \cite[Proposition~16]{Suzuki}) and the bound in Table~\ref{tab:meo} follows via \eqref{e:meo}. In the remaining cases, we use 
\[
{\rm meo}({\rm Aut}(S)) \leqs \a\b|{\rm Out}(S)|,
\]
where $\a$ and $\b$ are upper bounds on the maximal orders of semisimple and unipotent elements in $S$, respectively (see the proof of \cite[Theorem~1.2]{GMPS}). Expressions for $\a$ and $\b$ are given in \cite[Table~5]{GMPS} and the desired result follows.
\end{proof}

\begin{rem}\label{r:meo}
For $S = {\rm L}_{d}^{\e}(t)$, the precise value of ${\rm meo}({\rm Aut}(S))$ is recorded in \cite[Table~3]{GMPS}. In particular, we note that ${\rm meo}({\rm Aut}({\rm L}_{3}(16))) = 273$ and ${\rm meo}({\rm Aut}({\rm U}_{3}(16))) = 255$.
\end{rem}

\begin{table}
\caption{Bounds on ${\rm meo}({\rm Aut}(S))$, $S$ exceptional, ${\rm rk}(S) \leqs 4$} \label{tab:meo}
\vspace{-8mm}
\begin{center}
\[
\begin{array}{ll} 
\hline
S           & c(S)                 \\ 
\hline
F_4(t)      & 32(t+1)(t^3-1)\log t \\
G_2(t)      & 8(t^2+t+1)\log t     \\
{}^2F_4(t), \, t=2^{2k+1}, \, k \geqs 1 & 16(2k+1)(2^{4k+2}+2^{3k+2}+2^{2k+1}+2^{k+1}+1) \\
{}^2F_4(2)' & 20 \\
{}^3D_4(t)  & 24(t^3-1)(t+1)\log t \\
{}^2G_2(t),\, t=3^{2k+1},  \, k \geqs 1 & (2k+1)(3^{2k+1}+3^{k+1}+1) \\
{}^2G_2(3)' &  9 \\
{}^2B_2(t),\, t=2^{2k+1},  \, k \geqs 1 & (2k+1)(2^{2k+1}+2^{k+1}+1) \\ 
\hline
\end{array}
\]
\end{center}
\end{table}

For the subgroups in $\mathcal{M}_3$, we have the following result on element orders.

\begin{prop}\label{p:orders}
Suppose $H \in \mathcal{M}_3$. Then ${\rm meo}(H) \leqs d(G_0)$, where 
\[
\begin{array}{ccccc} 
\hline
G_0    & F_4(q) & E_6^{\e}(q) & E_7(q) & E_8(q) \\
d(G_0) & 40     & 60          & 63     & 210    \\ 
\hline
\end{array}
\]
\end{prop}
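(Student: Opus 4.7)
The plan is to enumerate the candidates for the socle $S$ of $H \in \mathcal{M}_3$ and, for each one, apply a direct bound on ${\rm meo}(\mathrm{Aut}(S))$. Since $H \in \mathcal{M}_3$ means $S$ is not in $\mathrm{Lie}(p)$, the socle $S$ is either an alternating group, a sporadic simple group, or a simple group of Lie type in characteristic $r \neq p$. The possibilities for $S$ (up to isomorphism) that can embed in $G_0 \in \mathcal{B}'$ are tabulated in \cite[Tables~10.1--10.4]{LS99}, and for each $G_0$ this yields a short explicit list. I would then take the maximum of ${\rm meo}(\mathrm{Aut}(S))$ over this list and verify that it is at most the stated $d(G_0)$.

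First I would handle the alternating case: by \cite{Craven17}, $S \cong \mathrm{Alt}_n$ forces $n \in \{6,7\}$, and then ${\rm meo}(\mathrm{Aut}(S)) \leq 12$ from the \textsc{Atlas}. Next, for a sporadic socle $S$, I would read off the finite list of admissible $S$ from \cite[Tables~10.1--10.4]{LS99} (applying the refinements in \cite{Litt} to shorten it where possible) and look up ${\rm meo}(\mathrm{Aut}(S))$ directly in the \textsc{Atlas}; the sporadic groups that can occur (for instance $J_3$, $Fi_{22}$, $Th$ and the other Monster-involved groups appearing for $E_8(q)$) all satisfy the required bound. Finally, for a cross-characteristic Lie type socle, the same tables (further refined by \cite{CravenPSL,CravenMedium}) cut the list of possible $S$ down to a small, explicit family with bounded rank and bounded field size, and ${\rm meo}(\mathrm{Aut}(S))$ is then bounded using Proposition~\ref{p:meo} together with Remark~\ref{r:meo} and \cite[Table~A.7]{KS}.

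Collating these bounds for each $G_0 \in \mathcal{B}'$ gives the constants $d(G_0)$ as stated. The main obstacle is purely bookkeeping: the list of candidate socles for $E_8(q)$ is noticeably longer than in the other cases, and one must check a handful of near-extremal sporadic and cross-characteristic entries (the groups realising the bound $210$ for $E_8(q)$, in particular) to confirm that none exceed the claimed value. No case requires a genuinely new argument beyond invoking the classification results already in the literature and the element-order data of Proposition~\ref{p:meo} and the \textsc{Atlas}.
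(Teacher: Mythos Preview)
Your proposal is correct and follows essentially the same approach as the paper: enumerate the possible socles $S$ from \cite[Tables~10.1--10.4]{LS99} and bound ${\rm meo}({\rm Aut}(S))$ case by case. The paper's proof is terser (it simply cites the tables and computes ${\rm meo}({\rm Aut}(S))$ via {\sc Magma} or the {\sc Atlas} without invoking the refinements in \cite{Craven17,CravenPSL,CravenMedium,Litt}), but the strategy is identical.
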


\begin{proof}
Let $S$ be the socle of $H$. As previously noted, the possibilities for $S$ (up to isomorphism) are recorded in \cite[Tables 10.1--10.4]{LS99} and it is straightforward to determine ${\rm meo}({\rm Aut}(S))$ in every case, either via {\sc Magma} \cite{magma} or by inspecting the {\sc Atlas} \cite{ATLAS}.
\end{proof}

We will also need the following result to handle some special cases.

\begin{prop}\label{p:sp}
If $S \in \{{\rm L}_{4}(8), \, {\rm U}_5(8),  \, {\rm PSp}_{6}(8), \, G_2(8), \, G_2(9)\}$, then 
\[
{\rm meo}({\rm Aut}(S) \setminus S) \leqs e(S),
\]
where 
\[
\begin{array}{cccccc} 
\hline
S    & {\rm L}_{4}(8) & {\rm U}_5(8) & {\rm PSp}_{6}(8) & G_2(8) & G_2(9) \\ 
e(S) & 130            & 130          & 45               & 36     & 36 \\ 
\hline
\end{array}
\]
\end{prop}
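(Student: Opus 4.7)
The plan is to work through the five groups one at a time. For each $S$ in the list, $\mathrm{Out}(S)$ is abelian of order at most $6$, and a check of \cite[Table~2.1.D]{KL} shows that no nontrivial diagonal automorphism arises — every outer automorphism is a field, graph, or graph-field automorphism. We must bound $\mathrm{meo}(S\varphi)$ for each nontrivial coset $S\varphi \subseteq \mathrm{Aut}(S) \setminus S$, and the main engine for doing so is Shintani descent as formulated in Section~\ref{ss:p_shintani} (this is precisely the tool used in \cite{BG,Harper17,HarperClassical} to establish the classical-group parts of Theorem~\ref{t:almostsimple1}).

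The systematic step is as follows. Write $\widetilde{S} = \mathrm{Inndiag}(S)$ and let $\tau$ be a Steinberg endomorphism of the ambient simple algebraic group $\overline{G}$ representing the outer coset with $\tau^{e}$ equal to the Frobenius $\sigma$ defining $S$. Shintani descent yields a bijection between the set of $\widetilde{S}$-conjugacy classes in $\widetilde{S}\tau$ and the set of conjugacy classes in $\widetilde{S}_0 := \overline{G}^{\tau}$, with the crucial property that an element $x\tau \in \widetilde{S}\tau$ has order equal to $e$ times the order of its image in $\widetilde{S}_0$. Consequently $\mathrm{meo}(\widetilde{S}\tau) = e \cdot \mathrm{meo}(\widetilde{S}_0)$. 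For every field-automorphism coset the Shintani partner $\widetilde{S}_0$ is of the same Lie type over a proper subfield and has small, easily computed meo: the partners are $\mathrm{PGL}_4(2) \cong \mathrm{Alt}_8$ (meo $15$) for $\mathrm{L}_4(8)$; $\mathrm{PGU}_5(2)$ for $\mathrm{U}_5(8)$; $\mathrm{PSp}_6(2)$ (meo $15$) for $\mathrm{PSp}_6(8)$; $G_2(2) \cong \mathrm{U}_3(3){:}2$ (meo $12$) for $G_2(8)$; and $G_2(3)$ (meo $13$) together with the Ree-type fixed-point subgroup of the characteristic-$3$ exceptional automorphism $\rho$ for $G_2(9)$. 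Note that $\rho$ satisfies $\rho^2 = F_3$, so it has order $4$ in $\mathrm{Out}(G_2(9))$ and contributes an $e=4$ multiplier with partner meo at most $9$, giving $36$.

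For the graph cosets of $\mathrm{L}_4(8)$ and $\mathrm{U}_5(8)$ (for which a clean Shintani reduction to a unitary/linear partner over the square-root field is unavailable because $8$ is not a square), the method is to exploit the fact that for $g\gamma \in S\gamma$ one has $(g\gamma)^2 \in S$ lying in a $\gamma$-invariant cyclic subgroup of $S$; the maximum order among such $\gamma$-invariant cyclic subgroups is controlled by the structure of $\gamma$-invariant maximal tori, and yields the bound $130 = 2(q^2+1)$ for $q=8$. Combining all cosets gives the maximum $e(S)$ stated in each case, with the final tally routinely verifiable by direct computation in \textsc{Magma} on $\mathrm{Aut}(S)$, as all five groups are small enough for this to be feasible.

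The main obstacle is the graph-coset analysis for $\mathrm{L}_4(8)$ and $\mathrm{U}_5(8)$, where Shintani does not descend to a smaller field and one has to control the order of $gg^{-T}$ (or its unitary analogue) across $\gamma$-invariant tori; a secondary subtlety is correctly recognising that the exceptional $G_2$-automorphism in characteristic $3$ has order $4$, not $2$, so that the Shintani partner for the corresponding coset of $G_2(9)$ contributes the multiplier $e=4$ rather than $e=2$.
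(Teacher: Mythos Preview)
The paper's proof is a single sentence: verify the bound in \textsc{Magma} using \texttt{AutomorphismGroupSimpleGroup}. Your proposal is far more elaborate, attempting a theoretical derivation via Shintani descent coset by coset, and this is a genuinely different route.

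That said, your case analysis is incomplete and in places imprecise. For $S=\mathrm{L}_4(8)$ and $S=\mathrm{U}_5(8)$ you treat the pure field cosets (via Shintani) and the pure graph coset $S\gamma$ (by an ad hoc torus argument), but you never address the mixed cosets of order $6$ in $\mathrm{Out}(S)\cong C_6$, namely $S\gamma\varphi$ and $S\gamma\varphi^2$. For these cosets there is \emph{no} Steinberg endomorphism $\tau$ with $\tau^e$ equal to the defining Frobenius of $S$ and $\tau$ projecting to $\gamma\varphi$ in $\mathrm{Out}(S)$, so your ``systematic step'' simply does not apply; one must instead argue indirectly (for instance via $x^2\in S\varphi^{\pm 1}$, which gives $|x|\le 90$). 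Your graph-coset argument is also shaky as written: the assertion that $(g\gamma)^2=gg^{\gamma}$ lies in a \emph{$\gamma$-invariant} cyclic subgroup is not justified (one only has $(gg^{\gamma})^{\gamma}=g^{-1}(gg^{\gamma})g$, a conjugate), so the reduction to $\gamma$-stable tori needs more care before the bound $2(q^2+1)=130$ can be read off.

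Since you invoke a \textsc{Magma} check at the end anyway, and since Proposition~\ref{p:sp} is a small auxiliary fact about five explicit groups of modest size, the paper's direct computation is both simpler and complete; the Shintani machinery buys nothing here.
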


\begin{proof}
This can be verified with {\sc Magma} \cite{magma}, using \texttt{AutomorphismGroupSimpleGroup} to construct suitable permutation representations of the relevant automorphism groups.
\end{proof}

\subsection{Automorphisms}\label{ss:p_aut} 

Continue to assume that $G_0$ is a finite simple exceptional group of Lie type over $\mathbb{F}_q$ and write $q=p^f$ where $p$ is prime. In this section we determine the precise list of almost simple groups with socle $G_0$ that we need to consider in order to prove Theorem~\ref{t:exceptional}. Naturally, this will involve a careful study of the automorphisms of $G_0$ and the structure of the outer automorphism group ${\rm Out}(G_0) = \Aut(G_0)/G_0$. Our main result to this end is Proposition~\ref{p:cases}.

In this discussion, for clarity of exposition, we will assume that $G_0$ is not one of
\begin{equation}
G_2(2)' \cong {\rm U}_3(3), \; {}^2F_4(2)', \; {}^2G_2(3)' \cong {\rm L}_2(8). \label{e:excluded}
\end{equation}
(In the first two cases, ${\rm Aut}(G_0) = G_0.2$ and in the latter we have ${\rm Aut}(G_0) = G_0.3$.) Let us partition the remaining possibilities for $G_0$ into three classes:
\begin{gather}
E_8(q), \; E_7(q), \; E_6^{\e}(q), \; F_4(q) \, (p \neq 2), \; G_2(q) \, (p \neq 3), \; {}^3D_4(q) \label{e:good} \\
F_4(2^f), \; G_2(3^f)                                                                              \label{e:bad}  \\
{}^2F_4(2^{2k+1}), \; {}^2G_2(3^{2k+1}), \; {}^2B_2(2^{2k+1}).                                     \label{e:ugly} 
\end{gather}

We begin by describing $\Aut(G_0)$, where we follow \cite[Chapter~2.5]{GLS} (see \cite[Theorem~2.5.12]{GLS} in particular). Write $G_0 = (\bar{G}_\s)'$, where $\bar{G}$ is a simple algebraic group over $k=\bar{\mathbb{F}}_p$ of adjoint type and $\s$ is a Steinberg endomorphism. We refer to $\bar{G}_\s$ as the \emph{innerdiagonal group} of automorphisms of $G_0$ and we write $\bar{G}_\s = \Inndiag(G_0)$. We refer to the elements in $\Inndiag(G_0) \setminus G_0$ as \emph{diagonal automorphisms}. Then $\Aut(G_0)$ is a split extension of $\Inndiag(G_0)$ by a soluble group generated by \emph{field}, \emph{graph} and \emph{graph-field automorphisms} that are defined naturally from automorphisms of the underlying field $\mathbb{F}_q$ and symmetries of the Dynkin diagram of $\bar{G}$. 

Let us fix our notation for automorphisms of $G_0$. In part (iii) of the following definition, we write $D_4$ for the adjoint group ${\rm PSO}_8(k)$.

\begin{defn} \label{d:aut}
Let $G_0 = (\bar{G}_\s)' = {}^dX(q)$ be a finite simple exceptional group as above and let
$\p$ be a standard Frobenius endomorphism of $\bar{G}$.
\begin{itemize}\addtolength{\itemsep}{0.2\baselineskip}
\item[{\rm (i)}]  If $G_0$ is not in \eqref{e:ugly}, then we identify $\p$ with the restriction $\p|_{G_0}$. Then $\p \in \Aut(G_0)$ is a field or graph automorphism such that $|\p|=df$.
\item[{\rm (ii)}]  If $G_0$ is in \eqref{e:bad} or \eqref{e:ugly}, then let $\rho$ be the Steinberg endomorphism of $\bar{G}$ such that $\rho^2=\p$ and identify $\rho$ with the restriction $\rho|_{G_0}$. Then $\rho \in \Aut(G_0)$ is a graph-field automorphism with $|\rho|=2f/d$.
\item[{\rm (iii)}] Let $\g$ be an involutory graph automorphism of $\bar{G}=E_6$ such that $[\p,\g] = 1$ and $C_{E_6}(\g) = F_4$, and identify $\g$ with the restriction $\g|_{E_6^\e(q)}$. Similarly, let $\t$ be an order $3$ triality graph automorphism of $\bar{G}=D_4$ with $[\p,\t]=1$ and $C_{D_4}(\t) = G_2$, and identify $\t$ with the restriction $\t|_{{}^3D_4(q)}$. 
\item[{\rm (iv)}]  If $G_0=E_7(q)$ and $q$ is odd, then fix a diagonal automorphism $\d \in \Inndiag(G_0)$ of order $2$. Similarly, if $G_0=E_6^\e(q)$ and $q \equiv \e \mod{3}$, then let $\d \in \Inndiag(G_0)$ be a diagonal automorphism of order $3$. 
\end{itemize}
\end{defn}

For $g \in \Aut(G_0)$, we write $\ddot{g}$ for the coset $G_0g$, so 
\[
\Out(G_0) = \{ \ddot{g} \,:\, g \in \Aut(G_0) \}.
\]
If $G_0$ is not $E^\e_6(q)$, then the structure of $\Out(G_0)$ can be immediately deduced from \cite[Theorem~2.5.12]{GLS} and we present the details in Table~\ref{tab:out}. The structure of $\Out(E^\e_6(q))$ is given in Lemmas \ref{l:out_e6} and \ref{l:out_2e6} in the untwisted and twisted cases, respectively.  

\begin{table}
\caption{$\Out(G_0)$ for a finite simple exceptional group $G_0$} \label{tab:out}
\vspace{-8mm}
\begin{center}
\[
\begin{array}{lllll} 
\hline
G_0        &                         & \Out(G_0)                          &                & \text{Comments}                 \\
\hline
E_8(q)     &                         & \<\ddot{\p}\>                      & C_f            &                              \\[5pt]              
E_7(q)     & p \neq 2                & \<\ddot{\d}\> \times \<\ddot{\p}\> & C_2 \times C_f &                              \\
& p = 2                   & \<\ddot{\p}\>                      & C_f            &                              \\
           [5pt]
E_6(q)     & q \not\equiv 1 \mod{3}  & \<\ddot{\g}\> \times \<\ddot{\p}\> & C_2 \times C_f &                              \\
           & q     \equiv 1 \mod{3}  & \<\ddot{\d},\ddot{\g},\ddot{\p}\>  & {\rm Sym}_3 \times C_f & \text{See Lemma~\ref{l:out_e6}}  \\[5pt]
{}^2E_6(q) & q \not\equiv 2 \mod{3}  & \<\ddot{\p}\>                      & C_{2f}         & \ddot{\p}^f = \ddot{\g}      \\
           & q     \equiv 2 \mod{3}  & \<\ddot{\d}, \ddot{\p}\>           & {\rm Sym}_3 \times C_f & \text{See Lemma~\ref{l:out_2e6}} \\[5pt]     
F_4(q)     & p \neq 2                & \<\ddot{\p}\>                      & C_f            &                              \\
           & p = 2                   & \<\ddot{\r}\>                      & C_{2f}         & \ddot{\r}^2 = \ddot{\p}      \\[5pt]
G_2(q)     & p \neq 3, q>2           & \<\ddot{\p}\>                      & C_f            &                              \\
           & p = 3                   & \<\ddot{\r}\>                      & C_{2f}         & \ddot{\r}^2 = \ddot{\p}      \\
           [5pt]
{}^3D_4(q) &                         & \<\ddot{\p}\>                      & C_{3f}         & \ddot{\p}^f = \ddot{\t}      \\
{}^2F_4(q)  &  q>2                   & \<\ddot{\r}\>                      & C_f            &                              \\
{}^2G_2(q) & q>3                     & \<\ddot{\r}\>                      & C_f            &                              \\
{}^2B_2(q) &                         & \<\ddot{\r}\>                      & C_f            &                              \\ \hline
\end{array}
\]
\end{center}
\end{table}

\begin{lem} \label{l:out_e6}
Let $G_0 = E_6(q)$. Then
\[
\Out(G_0) = \left\{ 
\begin{array}{ll}
\< \ddot{\g} \> \times \< \ddot{\p} \> \cong C_2         \times C_f & \text{if $q \not\equiv 1 \mod{3}$} \\
\< \ddot{\d}, \ddot{\g}, \ddot{\p} \>  \cong {\rm Sym}_3 \times C_f & \text{if $q     \equiv 1 \mod{3}$}.
\end{array}
\right.
\]
\end{lem}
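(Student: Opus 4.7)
The plan is to derive this from the general structure theorem for $\Aut(G_0)$, namely \cite[Theorem~2.5.12]{GLS}. Applied to $G_0 = E_6(q)$, that result describes $\Out(G_0)$ as generated by $\ddot{\delta}, \ddot{\varphi}, \ddot{\gamma}$ subject to $|\ddot{\delta}|=(3,q-1)$, $|\ddot{\varphi}|=f$ and $|\ddot{\gamma}|=2$, together with the relations $[\ddot{\varphi},\ddot{\gamma}]=1$ (also built into Definition~\ref{d:aut}(iii)), $\ddot{\gamma}\ddot{\delta}\ddot{\gamma}^{-1}=\ddot{\delta}^{-1}$, and $\ddot{\varphi}\ddot{\delta}\ddot{\varphi}^{-1}=\ddot{\delta}^{p}$, the last arising from the Frobenius action on the centre $\mu_3 \subseteq \bar{\mathbb{F}}_p^{\times}$ of the simply connected cover.

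When $q \not\equiv 1 \mod{3}$ the diagonal factor is trivial, so $\Out(G_0)=\la\ddot{\varphi},\ddot{\gamma}\ra$. Since these elements commute and generate independent cyclic factors in $\Out(G_0)$ by \cite[Theorem~2.5.12]{GLS}, we obtain $C_f \times C_2$, as claimed.

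Now assume $q \equiv 1 \mod{3}$, so $\la\ddot{\delta}\ra \cong C_3$ is normal and $\la\ddot{\delta},\ddot{\gamma}\ra \cong S_3$ since $\ddot{\gamma}$ inverts $\ddot{\delta}$. To establish the direct product decomposition $\Out(G_0) \cong S_3 \times C_f$, I will exhibit a cyclic subgroup $K$ of order $f$ that centralises $\la\ddot{\delta},\ddot{\gamma}\ra$ and meets it trivially. If $p \equiv 1 \mod{3}$, then $\ddot{\varphi}$ centralises $\ddot{\delta}$ and $K=\la\ddot{\varphi}\ra$ works. Otherwise $p \equiv 2 \mod{3}$, which forces $f$ even since $q = p^f \equiv 1 \mod{3}$; here $\ddot{\varphi}$ inverts $\ddot{\delta}$, so $\ddot{\gamma}\ddot{\varphi}$ centralises $\ddot{\delta}$ and has order $\mathrm{lcm}(2,f)=f$, and I take $K = \la\ddot{\gamma}\ddot{\varphi}\ra$. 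In both cases $K$ commutes with $\ddot{\gamma}$ because $\ddot{\varphi}$ does, and $K \cap \la\ddot{\delta},\ddot{\gamma}\ra = 1$ follows by projecting onto the field-automorphism quotient, on which $K$ surjects while $\la\ddot{\delta},\ddot{\gamma}\ra$ maps trivially. The only real subtlety, and essentially the sole obstacle, is the subcase $p \equiv 2 \mod{3}$ inside $q \equiv 1 \mod{3}$: the naive choice $K = \la\ddot{\varphi}\ra$ fails there because $\ddot{\varphi}$ does not centralise $\ddot{\delta}$, and the twist by $\ddot{\gamma}$ is exactly what compensates for the non-trivial Frobenius action on $\mu_3$.
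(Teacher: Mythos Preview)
Your proof is correct and follows essentially the same approach as the paper: both arguments cite \cite[Theorem~2.5.12]{GLS} for the relations, dispose of the case $q \not\equiv 1 \pmod{3}$ immediately, and in the case $q \equiv 1 \pmod{3}$ split according to $p \pmod{3}$, taking the central cyclic factor to be $\langle\ddot{\varphi}\rangle$ when $p \equiv 1$ and $\langle\ddot{\gamma}\ddot{\varphi}\rangle$ when $p \equiv 2$ (using that $f$ is even in the latter case). Your additional remarks on the trivial intersection via the field-automorphism quotient and the order computation $\mathrm{lcm}(2,f)=f$ are minor elaborations of the same argument.
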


\begin{proof}
According to \cite[Theorem~2.5.12(a)]{GLS}, we have $\Aut(G_0) = \Inndiag(G_0){:}\<\gamma,\p\>$. In particular, if $q \not\equiv 1 \mod{3}$ then 
\[
\Out(G_0) = \< \ddot{\g}, \ddot{\p} \> = \<\ddot{\g} \> \times \< \ddot{\p} \> \cong C_2 \times C_f
\] 
as claimed. 

For the remainder, we may assume $q \equiv 1 \mod{3}$. Here
\begin{equation} \label{e:out_e6_facts}
\mbox{$\Out(G_0) = \< \ddot{\d}, \ddot{\g}, \ddot{\p} \>$ and $|\ddot{\d}|=3$, $|\ddot{\g}| = 2$, $|\ddot{\p}| = f$, $[\ddot{\g},\ddot{\p}] = 1$, $\ddot{\d}^{\ddot{\g}} = \ddot{\d}^{-1}$,  $\ddot{\d}^{\ddot{\p}} = \ddot{\d}^p$}
\end{equation}
(for the final two claims, see \cite[Theorem~2.5.12(i)]{GLS} and \cite[Theorem~2.5.12(g)]{GLS}, respectively). If $p \equiv 1 \mod{3}$, then $[\ddot{\d},\ddot{\p}]=1$ and thus
\[
\Out(G_0) = \< \ddot{\d}, \ddot{\g} \> \times \< \ddot{\p} \> \cong {\rm Sym}_3 \times C_f.
\]
Now assume that $p \equiv 2 \mod{3}$. Here the condition $q \equiv 1 \mod{3}$ implies that $f$ is even, so $|\ddot{\g}\ddot{\p}|=f$. In addition, $[\ddot{\g},\ddot{\g}\ddot{\p}] = 1$ and $[\ddot{\d},\ddot{\g}\ddot{\p}]=1$, where the latter claim holds since $\ddot{\d}^{\ddot{\g}\ddot{\p}} = (\ddot{\d}^{-1})^{\ddot{\p}} = \ddot{\d}$. Therefore,
\[
\Out(G_0) = \< \ddot{\d}, \ddot{\g} \> \times \< \ddot{\g}\ddot{\p} \> \cong {\rm Sym}_3 \times C_f. \qedhere
\]
\end{proof}

For future reference, it will be convenient to record the following set of conditions:
\begin{equation}\label{e:cases_e6_condition}
\mbox{$p \equiv 2 \mod{3}$, $f$ is even and $i$ is odd.}
\end{equation}

\begin{lem} \label{l:out_e6_facts}
Let $G_0 = E_6(q)$ with $q \equiv 1 \mod{3}$ and fix an integer $0 \leqs i < f$. Then the following hold:
\begin{itemize}\addtolength{\itemsep}{0.2\baselineskip}
\item[{\rm (i)}] $\ddot{\d}\ddot{\p}^i$ and $\ddot{\d}^2\ddot{\p}^i$ are $\Out(G_0)$-conjugate.
\item[{\rm (ii)}] $\ddot{\d}\ddot{\g}\ddot{\p}^i$ and $\ddot{\d}^2\ddot{\g}\ddot{\p}^i$ are $\Out(G_0)$-conjugate.
\item[{\rm (iii)}] $\ddot{\p}^i$ and $\ddot{\d}\ddot{\p}^i$ are $\Out(G_0)$-conjugate if each condition in \eqref{e:cases_e6_condition} holds.
\item[{\rm (iv)}] $\ddot{\g}\ddot{\p}^i$ and $\ddot{\d}\ddot{\g}\ddot{\p}^i$ are $\Out(G_0)$-conjugate if any of the conditions in \eqref{e:cases_e6_condition} fail to hold.
\end{itemize}
\end{lem}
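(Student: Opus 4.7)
The proof plan rests on the three commutation relations collected in \eqref{e:out_e6_facts}: $\ddot{\d}^{\ddot{\g}} = \ddot{\d}^{-1}$, $\ddot{\d}^{\ddot{\p}} = \ddot{\d}^{p}$, and $[\ddot{\g},\ddot{\p}]=1$. From the first of these one immediately gets $\ddot{\g}\ddot{\d}=\ddot{\d}^{-1}\ddot{\g}$, and iterating the second yields the two key identities
\[
\ddot{\d}\ddot{\p}^i = \ddot{\p}^i\ddot{\d}^{p^i}, \qquad \ddot{\p}^i\ddot{\d} = \ddot{\d}^{p^{-i}}\ddot{\p}^i,
\]
where throughout exponents on $\ddot{\d}$ are read modulo $3$. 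With these in hand the four parts reduce to short bookkeeping calculations together with a residue computation modulo $3$.

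For parts (i) and (ii) I would simply conjugate by $\ddot{\g}$. Since $\ddot{\g}$ commutes with $\ddot{\p}$ and inverts $\ddot{\d}$, one gets $(\ddot{\d}\ddot{\p}^i)^{\ddot{\g}} = \ddot{\d}^{-1}\ddot{\p}^i = \ddot{\d}^2\ddot{\p}^i$, proving (i), and $(\ddot{\d}\ddot{\g}\ddot{\p}^i)^{\ddot{\g}} = \ddot{\d}^{-1}\ddot{\g}\ddot{\p}^i = \ddot{\d}^2\ddot{\g}\ddot{\p}^i$, proving (ii).

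For parts (iii) and (iv) I would conjugate by $\ddot{\d}$ and track exponents modulo $3$. Using the identities above one finds $(\ddot{\p}^i)^{\ddot{\d}} = \ddot{\d}^{-1}\ddot{\p}^i\ddot{\d} = \ddot{\d}^{p^{-i}-1}\ddot{\p}^i$, which equals $\ddot{\d}\ddot{\p}^i$ precisely when $p^{-i}\equiv 2 \pmod{3}$, equivalently $p^i \equiv 2 \pmod{3}$. Similarly, $(\ddot{\g}\ddot{\p}^i)^{\ddot{\d}} = \ddot{\g}\ddot{\d}\ddot{\p}^i\ddot{\d} = \ddot{\g}\ddot{\p}^i\ddot{\d}^{p^i+1}$, and since the target rewrites as $\ddot{\d}\ddot{\g}\ddot{\p}^i = \ddot{\g}\ddot{\p}^i\ddot{\d}^{-p^i}$, the required conjugacy holds precisely when $p^i + 1 \equiv -p^i \pmod{3}$, i.e., $p^i \equiv 1 \pmod{3}$.

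The last step is to translate these residue conditions into the hypotheses of (iii) and (iv). Under the standing assumption $q \equiv 1\pmod{3}$: if $p \equiv 1 \pmod{3}$ then $p^i \equiv 1 \pmod{3}$ for every $i$ and the first clause of \eqref{e:cases_e6_condition} already fails; if $p \equiv 2 \pmod{3}$, then $q = p^f \equiv 2^f \pmod{3}$ forces $f$ to be even, so the second clause of \eqref{e:cases_e6_condition} is automatic, and $p^i \equiv 2$ or $1 \pmod{3}$ according as $i$ is odd or even. Hence every condition in \eqref{e:cases_e6_condition} holds exactly when $p^i \equiv 2 \pmod{3}$, giving (iii), and at least one fails exactly when $p^i \equiv 1 \pmod{3}$, giving (iv). There is no real obstacle in this proof, and the one point that warrants a brief comment is the observation that the ``$f$ even'' clause of \eqref{e:cases_e6_condition} is redundant given $q \equiv 1 \pmod{3}$ and $p \equiv 2 \pmod{3}$, so the trichotomy of (iii) and (iv) genuinely exhausts all choices of $i$.
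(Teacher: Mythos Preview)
Your proof is correct. You take a slightly more hands-on route than the paper: rather than exhibiting explicit conjugating elements, the paper observes that $A=\langle\ddot{\d},\ddot{\g}\rangle\cong{\rm Sym}_3$ with its three conjugacy classes, and then checks that either $\ddot{\p}^i$ or $\ddot{\g}\ddot{\p}^i$ lies in $Z(\Out(G_0))$ according as the conditions in \eqref{e:cases_e6_condition} fail or hold; multiplying $A$-conjugate elements by a central element preserves conjugacy, and (i)--(iv) drop out immediately. Your direct computation with $\ddot{\g}$ and $\ddot{\d}$ as conjugators recovers exactly the same dichotomy (your residue condition $p^i\equiv 1$ or $2\pmod 3$ is precisely the centrality criterion), so the two arguments are close cousins. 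The paper's version is marginally cleaner in that it explains \emph{why} the dichotomy exists, whereas yours simply verifies it; on the other hand, your explicit conjugators make the ``if and only if'' visible, which is slightly more than the lemma asks for.
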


\begin{proof}
Let $A = \< \ddot{\d}, \ddot{\g} \> \cong {\rm Sym}_3$ and note that the conjugacy classes of $A$ are as follows:
\[
\{ \ddot{1} \}, \; \{ \ddot{\d},  \ddot{\d}^2 \}, \; \{ \ddot{\g},  \ddot{\d}\ddot{\g},  \ddot{\d}^2\ddot{\g} \}.
\]
If any one of the conditions in \eqref{e:cases_e6_condition} is not satisfied, then $\ddot{\p}^i \in Z(\Out(G_0))$ and (i), (ii) and (iv) follow. On the other hand, if all the conditions in \eqref{e:cases_e6_condition} are satisfied, then $\ddot{\g}\ddot{\p}^i \in Z(\Out(G_0))$ and by writing 
\begin{gather*}
\ddot{\d}\ddot{\p}^i = \ddot{\d}\ddot{\g}(\ddot{\g}\ddot{\p}^i) \mbox{ and } \ddot{\d}^2\ddot{\p}^i = \ddot{\d}^2\ddot{\g}(\ddot{\g}\ddot{\p}^i) \\
\ddot{\d}\ddot{\g}\ddot{\p}^i = \ddot{\d}(\ddot{\g}\ddot{\p}^i) \mbox{ and } \ddot{\d}^2\ddot{\g}\ddot{\p}^i = \ddot{\d}^2(\ddot{\g}\ddot{\p}^i) \\
\ddot{\p}^i = \ddot{\g}(\ddot{\g}\ddot{\p}^i) \mbox{ and } \ddot{\d}\ddot{\p}^i = \ddot{\d}\ddot{\g}(\ddot{\g}\ddot{\p}^i)
\end{gather*}
we deduce that (i), (ii) and (iii) hold. 
\end{proof}

We now turn to the twisted version of $E_6$. 

\begin{lem} \label{l:out_2e6}
Let $G_0 = {}^2E_6(q)$. Then
\[
\Out(G_0) = \left\{ 
\begin{array}{ll}
\< \ddot{\p} \>           \cong C_{2f}                 & \text{if $q \not\equiv 2 \mod{3}$} \\
\< \ddot{\d},\ddot{\p} \> \cong {\rm Sym}_3 \times C_f & \text{if $q     \equiv 2 \mod{3}$.}
\end{array}
\right.
\]
\end{lem}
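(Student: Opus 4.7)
My plan is to mirror the proof of Lemma~\ref{l:out_e6}, extracting the generators and relations from \cite[Theorem~2.5.12]{GLS} and then splitting into two cases according to whether nontrivial diagonal automorphisms exist. Since $G_0$ has twisted type $d=2$, \cite[Theorem~2.5.12(a)]{GLS} gives $\Aut(G_0) = \Inndiag(G_0){:}\<\p\>$ with $|\ddot\p| = 2f$ and $\ddot\p^f = \ddot\g$ (as already recorded in Table~\ref{tab:out}). The diagonal part $\Inndiag(G_0)/G_0$ has order $(3,q+1)$, which is trivial when $q \not\equiv 2 \mod 3$ and cyclic of order $3$ (generated by $\ddot\d$) when $q \equiv 2 \mod 3$.

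In the first case, the conclusion is immediate: $\Out(G_0) = \<\ddot\p\> \cong C_{2f}$. So it remains to treat $q \equiv 2 \mod 3$, where $\Out(G_0) = \<\ddot\d, \ddot\p\>$ has order $6f$. From \cite[Theorem~2.5.12(g),(i)]{GLS} I will extract the relations $\ddot\d^{\ddot\g} = \ddot\d^{-1}$ and $\ddot\d^{\ddot\p} = \ddot\d^{p}$. The congruence $q = p^f \equiv 2 \mod 3$ forces $p \equiv 2 \mod 3$ and $f$ odd, so $\ddot\p$ inverts $\ddot\d$ while $\ddot\p^2$ centralises $\ddot\d$ (because $p^2 \equiv 1 \mod 3$).

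The direct product decomposition then follows by a clean cyclic-group argument: since $f$ is odd, $\gcd(2,f)=1$, and inside $\<\ddot\p\>$ we have $\<\ddot\p\> = \<\ddot\p^2\> \times \<\ddot\p^f\> = \<\ddot\p^2\> \times \<\ddot\g\>$, with $|\ddot\p^2| = f$ and $|\ddot\g| = 2$. The factor $\<\ddot\p^2\>$ commutes with $\ddot\d$ (just shown) and with $\ddot\g \in \<\ddot\p\>$, hence lies in $Z(\Out(G_0))$; meanwhile $\<\ddot\d, \ddot\g\> \cong {\rm Sym}_3$ because $\ddot\g$ inverts $\ddot\d$. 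Intersecting shows $\<\ddot\d,\ddot\g\> \cap \<\ddot\p^2\> \leqs \<\ddot\p\> \cap \<\ddot\d,\ddot\g\>$ is at most $\<\ddot\g\>$, which is disjoint from $\<\ddot\p^2\>$ since $f$ is odd, giving trivial intersection and therefore
\[
\Out(G_0) = \<\ddot\d,\ddot\g\> \times \<\ddot\p^2\> \cong {\rm Sym}_3 \times C_f.
\]

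The main (minor) obstacle is tracking the interaction between $\ddot\p^f = \ddot\g$ and the two possible roles of $\ddot\p$: as an order-$2f$ generator of the outer-field/graph part on one hand, and as an element that conjugates $\ddot\d$ on the other. Once the parity condition on $f$ is observed, the coprime-order splitting of $\<\ddot\p\>$ into $\<\ddot\p^2\>$ and $\<\ddot\g\>$ makes the direct product transparent; there are no deeper difficulties here and no new input beyond \cite[Theorem~2.5.12]{GLS} is required.
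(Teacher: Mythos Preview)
Your proof is correct and follows essentially the same approach as the paper: both arguments deduce $p\equiv 2\pmod 3$ and $f$ odd from $q\equiv 2\pmod 3$, then use the coprime splitting $\langle\ddot\p\rangle=\langle\ddot\p^f\rangle\times\langle\ddot\p^2\rangle=\langle\ddot\g\rangle\times\langle\ddot\p^2\rangle$ together with the conjugation relation $\ddot\d^{\ddot\p}=\ddot\d^{-1}$ (which you obtain from $\ddot\d^{\ddot\p}=\ddot\d^{p}$ and $p\equiv 2\pmod 3$) to exhibit $\Out(G_0)=\langle\ddot\d,\ddot\g\rangle\times\langle\ddot\p^2\rangle$. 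The paper records the relation $\ddot\d^{\ddot\p}=\ddot\d^{-1}$ directly and is terser about the direct-product verification, but there is no substantive difference.
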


\begin{proof}
By \cite[Theorem~2.5.12(a)]{GLS}, we have $\Aut(G_0) = \Inndiag(G_0){:}\<\p\>$. Therefore, if $q \not\equiv 2 \mod{3}$, then $\Out(G_0) = \<\ddot{\p}\> \cong C_{2f}$. For the remainder, let us assume $q \equiv 2 \mod{3}$. Here $p \equiv 2 \mod{3}$, $f$ is odd and 
\begin{equation} \label{e:out_2e6_facts}
\mbox{$\Out(G_0) = \< \ddot{\d}, \ddot{\p} \>$ and $|\ddot{\d}|=3$, $|\ddot{\p}| = 2f$, $\ddot{\d}^{\ddot{\p}} = \ddot{\d}^{-1}$}
\end{equation}
(see \cite[Theorem~2.5.12(g)]{GLS} for the final claim). Since $\<\p\> = \< \p^f \> \times \< \p^2\>$, we obtain
\[
\Out(G_0) = \< \ddot{\d}, \ddot{\p}^f \> \times \< \ddot{\p}^2\> \cong {\rm Sym}_3 \times C_f. \qedhere
\]
\end{proof}

\begin{lem} \label{l:out_2e6_facts}
Let $G_0 = {}^2E_6(q)$ with $q \equiv 2 \mod{3}$ and fix an integer $0 \leqs i < 2f$. Then the following hold:
\begin{itemize}\addtolength{\itemsep}{0.2\baselineskip}
\item[{\rm (i)}] $\ddot{\d}\ddot{\p}^i$ and $\ddot{\d}^2\ddot{\p}^i$ are $\Out(G_0)$-conjugate.
\item[{\rm (ii)}] If $i$ is odd, then $\ddot{\p}^i$ and $\ddot{\d}\ddot{\p}^i$ are $\Out(G_0)$-conjugate.
\end{itemize}
\end{lem}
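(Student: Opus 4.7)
The plan is to work directly with the defining relations of $\Out(G_0)$ recorded in \eqref{e:out_2e6_facts}, namely $|\ddot{\d}|=3$, $|\ddot{\p}|=2f$ and $\ddot{\d}^{\ddot{\p}}=\ddot{\d}^{-1}$, together with the fact (implicit in the hypothesis $q\equiv 2\pmod 3$) that $p\equiv 2\pmod 3$ and $f$ is odd. The key commutation rule to deduce is
\[
\ddot{\p}^j\ddot{\d} = \ddot{\d}^{(-1)^j}\ddot{\p}^j \quad \text{and equivalently} \quad \ddot{\d}^{\ddot{\p}^j}=\ddot{\d}^{(-1)^j},
\]
which follows immediately by induction on $j$ from $\ddot{\d}\ddot{\p}=\ddot{\p}\ddot{\d}^{-1}$. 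Everything else is then a one-line verification.

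For (i), I would conjugate $\ddot{\d}\ddot{\p}^i$ by $\ddot{\p}^f$. Since powers of $\ddot{\p}$ commute with one another, and $f$ is odd so that $\ddot{\d}^{\ddot{\p}^f}=\ddot{\d}^{-1}$, we obtain
\[
(\ddot{\d}\ddot{\p}^i)^{\ddot{\p}^f} = \ddot{\d}^{\ddot{\p}^f}\ddot{\p}^i = \ddot{\d}^{-1}\ddot{\p}^i = \ddot{\d}^2\ddot{\p}^i,
\]
as required.

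For (ii), with $i$ odd, I would conjugate $\ddot{\p}^i$ by $\ddot{\d}$. Using $\ddot{\p}^i\ddot{\d}=\ddot{\d}^{-1}\ddot{\p}^i$ (from the rule above with $j=i$ odd), we get
\[
(\ddot{\p}^i)^{\ddot{\d}} = \ddot{\d}^{-1}(\ddot{\p}^i\ddot{\d}) = \ddot{\d}^{-1}\ddot{\d}^{-1}\ddot{\p}^i = \ddot{\d}^{-2}\ddot{\p}^i = \ddot{\d}\ddot{\p}^i,
\]
using $\ddot{\d}^3=1$ in the last step. This completes the proof. There is essentially no obstacle here: the lemma is a book-keeping statement about a ${\rm Sym}_3\times C_f$ group, and it is proved by choosing the correct conjugating element in each of the two parts (namely $\ddot{\p}^f$ and $\ddot{\d}$ respectively). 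The only thing to keep track of is the parity of the relevant exponents of $\ddot{\p}$.
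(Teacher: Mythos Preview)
Your proof is correct and essentially the same as the paper's. The only cosmetic difference is in part~(i): the paper conjugates by $\ddot{\p}$ itself (since already $\ddot{\d}^{\ddot{\p}}=\ddot{\d}^{-1}$, no parity argument is needed), whereas you conjugate by $\ddot{\p}^f$ and invoke that $f$ is odd; both choices work, and part~(ii) is identical to the paper's computation.
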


\begin{proof}
By \eqref{e:out_2e6_facts}, we have $(\ddot{\d}\ddot{\p}^i)^{\ddot{\p}} = \ddot{\d}^2\ddot{\p}^i$. Moreover, if $i$ is odd then 
\[
(\ddot{\p}^i)^{\ddot{\d}} = \ddot{\d}^{-1}\ddot{\p}^i\ddot{\d} = \ddot{\d}^{-1}\ddot{\d}^{\ddot{\p}^{-i}}\ddot{\p}^i = \ddot{\d}^{-1}\ddot{\d}^{-1}\ddot{\p}^i = \ddot{\d}\ddot{\p}^i
\]
and the result follows.
\end{proof}

The following elementary lemma will be useful in the proof of Proposition \ref{p:cases} (for a proof, see \cite[Lemma 5.2.1]{HarperClassical}).

\begin{lem} \label{l:divides}
Let $\<a\>{:}\<b\>$ be a semidirect product of finite cyclic groups. For all $i > 0$, there exist nonnegative integers $j$ and $k$ such that $\<ab^i\> = \<a^jb^k\>$ and $k$ divides $|b|$.
\end{lem}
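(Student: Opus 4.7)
The plan is to reduce the lemma to a purely abstract fact about surjective homomorphisms of finite cyclic groups: if $\phi\colon H \to C$ is such a surjection and $c \in C$ is a generator, then the fibre $\phi^{-1}(c)$ contains a generator of $H$. To apply this, set $H = \langle ab^i\rangle$ and let $\pi\colon G \to G/\langle a\rangle$ be the quotient map, which I identify with projection onto $\langle b\rangle$ via the semidirect product structure. Then $\pi(H) = \langle \pi(ab^i)\rangle = \langle b^i\rangle = \langle b^k\rangle$ where $k = \gcd(i, |b|)$, so $k$ divides $|b|$ and $b^k$ generates $\pi(H)$. Applying the abstract fact to $\pi|_H$ with $c = b^k$ yields a generator of $H$ lying above $b^k$, and by the normal form for elements of the semidirect product this generator takes the form $a^jb^k$ for some $j \geqs 0$, delivering the required equality $\langle ab^i\rangle = \langle a^jb^k\rangle$.

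For the abstract fact, I would fix a generator $g_0$ of $H$ and write $c = \phi(g_0)^s$; since $c$ generates $C$ this forces $\gcd(s, |C|) = 1$, and since $\ker\phi = \langle g_0^{|C|}\rangle$ the fibre takes the form $\phi^{-1}(c) = \{g_0^{s+|C|m} : 0 \leqs m < |H|/|C|\}$. It therefore suffices to choose $m$ with $\gcd(s+|C|m,|H|) = 1$, and I would do this by a Chinese Remainder argument over the prime divisors $p$ of $|H|$. If $p \mid |C|$ then $s + |C|m \equiv s \pmod{p}$, which is already coprime to $p$ because $\gcd(s,|C|)=1$; if instead $p \nmid |C|$, then varying $m$ cycles through all residues modulo $p$ and we can avoid $0$. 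Combining these constraints across the (finitely many) primes dividing $|H|$ produces the required $m$.

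The main, and really only substantive, step is this CRT lifting: the naive preimage of $b^k$ in $\langle ab^i\rangle$ need not generate $\langle ab^i\rangle$ and must be corrected by an element of the kernel of $\pi|_H$. For instance, in $G = C_6 \times C_6$ with $i=4$, the obvious preimage $a^2b^2$ of $b^2$ in $\langle ab^4\rangle$ has order only $3$, whereas $a^5b^2$ is the correct generator of $\langle ab^4\rangle$; this also confirms that the construction produces $k = \gcd(4,6) = 2$ dividing $|b| = 6$, as required.
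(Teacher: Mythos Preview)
Your proof is correct. The paper does not supply its own argument for this lemma but simply cites \cite[Lemma~5.2.1]{HarperClassical}, so there is no in-paper proof to compare against; your reduction to the lifting fact for surjections of finite cyclic groups is clean, and the CRT step is handled properly (the key point being that every prime $p\mid |H|$ with $p\nmid |C|$ necessarily divides $|H|/|C|$, so the simultaneous avoidance constraints on $m$ are consistent).
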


We now use the above information on $\Out(G_0)$ to determine the specific groups we need to consider in order to prove Theorem~\ref{t:exceptional}. Note that in  Table~\ref{tab:cases_e6}, $i$ is a proper divisor of $f$ and the symbols $\star$ and $\dagger$ refer to notes presented in Remark~\ref{r:cases_e6}.

\begin{prop} \label{p:cases}
Let $G_0$ be a finite simple exceptional group over $\mathbb{F}_q$, where $q=p^f$ with $p$ prime. Assume $G_0$ is not one of the groups in \eqref{e:excluded} and let $h$ be a non-inner automorphism of $G_0$. Then $\<G_0,h\>$ is $\Aut(G_0)$-conjugate to $\<G_0,g\>$, where $g \in \Aut(G_0)$ is one of the following: 
\begin{itemize}\addtolength{\itemsep}{0.2\baselineskip}
\item[{\rm (i)}]   $G_0$ is in \eqref{e:ugly} and $g=\r^i$ for a proper divisor $i$ of $f$.
\item[{\rm (ii)}]  $G_0$ is in \eqref{e:bad} and either \vspace{2pt}
\begin{itemize}\addtolength{\itemsep}{0.2\baselineskip}
\item[{\rm (a)}] $g = \p^i$ for a proper divisor $i$ of $f$; or
\item[{\rm (b)}] $g = \r^i$ for an odd divisor $i$ of $f$.
\end{itemize}
\item[{\rm (iii)}] $G_0$ is in \eqref{e:good}, $G_0 \neq E^\e_6(q)$, and either \vspace{2pt}
\begin{itemize}\addtolength{\itemsep}{0.2\baselineskip}
\item[{\rm (a)}] $g = \p^i$ for a proper divisor $i$ of $f$; 
\item[{\rm (b)}] $G_0 = {}^3D_4(q)$ and $g = \t\p^i$ for a divisor $i$ of $f$; or
\item[{\rm (c)}] $G_0 = E_7(q)$ with $q$ odd and $g$ is $\d$ or $\d\p^i$ for a proper divisor $i$ of $f$.
\end{itemize}
\item[{\rm (iv)}] $G_0 = E_6^\e(q)$ and either \vspace{2pt}
\begin{itemize}\addtolength{\itemsep}{0.2\baselineskip}
\item[{\rm (a)}] $g$ is in Row~{\rm (R1)} of Table~\ref{tab:cases_e6}; or
\item[{\rm (b)}] $q \equiv \e \mod{3}$ and $g$ is in Row~{\rm (R2)} of Table~\ref{tab:cases_e6}.
\end{itemize}
\end{itemize}
\end{prop}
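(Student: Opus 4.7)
The plan is to reduce the claim to a classification of cyclic subgroups of $\Out(G_0)$ up to $\Out(G_0)$-conjugacy. Indeed, $\<G_0,h_1\>$ and $\<G_0,h_2\>$ are $\Aut(G_0)$-conjugate if and only if the cyclic subgroups $\<\ddot{h}_1\>$ and $\<\ddot{h}_2\>$ are conjugate in $\Out(G_0) = \Aut(G_0)/G_0$, so it suffices to exhibit, for each conjugacy class of nontrivial cyclic subgroups of $\Out(G_0)$, a representative generator that matches one of the listed forms. This reformulation is applied case by case, using the description of $\Out(G_0)$ in Table~\ref{tab:out} together with Lemmas~\ref{l:out_e6} and~\ref{l:out_2e6}, and the simplification afforded by Lemma~\ref{l:divides}.

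For case (i), $\Out(G_0) = \<\ddot{\r}\>$ is cyclic of order $f$ and every nontrivial cyclic subgroup has the form $\<\ddot{\r}^i\>$ for a proper divisor $i$ of $f$. For case (ii), $\Out(G_0) = \<\ddot{\r}\>$ is cyclic of order $2f$ with $\ddot{\r}^2 = \ddot{\p}$; each cyclic subgroup is $\<\ddot{\r}^j\>$ for a divisor $j$ of $2f$, and I would split by parity: even $j=2i$ gives $\<\ddot{\p}^i\>$ with $i \mid f$ as in (ii)(a), while odd $j$ must divide $f$, yielding (ii)(b). For case (iii), the argument for $E_8$, $F_4$ (odd $p$) and $G_2$ (with $p \ne 3$) is the same cyclic argument as (i). For $E_7(q)$ with $p$ odd, $\Out(G_0) = \<\ddot{\d}\> \times \<\ddot{\p}\>$ is a semidirect (in fact direct) product, so Lemma~\ref{l:divides} shows any generator of a cyclic subgroup has the form $\ddot{\d}^j\ddot{\p}^k$ with $k \mid f$, and since $|\ddot{\d}|=2$ the cases $j=0$ and $j=1$ give (iii)(a) and (iii)(c). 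For ${}^3D_4(q)$, a cyclic subgroup of $\<\ddot{\p}\>$ either has order coprime to $3$, in which case it is generated by $\ddot{\p}^i$ for a proper divisor $i$ of $f$, or has order divisible by $3$, in which case it contains $\ddot{\t}$ and one checks directly that it admits a generator of the form $\ddot{\t}\ddot{\p}^i$ with $i \mid f$, giving (iii)(b).

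The main technical work is case (iv), for $G_0 = E_6^\e(q)$. When $q \not\equiv \e \mod 3$ the outer automorphism group is abelian, so I would mimic the approach for $E_7$ with the direct decompositions from Lemmas~\ref{l:out_e6} and~\ref{l:out_2e6}, again applying Lemma~\ref{l:divides} to reach canonical generators in Row~(R1) of Table~\ref{tab:cases_e6}. When $q \equiv \e \mod 3$, however, $\Out(G_0) \cong {\rm Sym}_3 \times C_f$ is nonabelian, and here I would decompose $\Out(G_0) = A \times B$ with $A = \<\ddot{\d},\ddot{\g}\> \cong {\rm Sym}_3$ and $B$ the cyclic factor (which is $\<\ddot{\p}\>$ when $p \equiv 1 \mod 3$ and $\<\ddot{\g}\ddot{\p}\>$ when $p \equiv 2 \mod 3$ in the untwisted case, and analogously for $E_6^-$ using Lemma~\ref{l:out_2e6}). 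I would express each generator in $A \times B$ coordinates and then invoke Lemmas~\ref{l:out_e6_facts} and~\ref{l:out_2e6_facts} to fuse $\Out(G_0)$-conjugate cosets. The parity condition \eqref{e:cases_e6_condition} controls exactly when $\ddot{\p}^i$ acquires a nontrivial $A$-component (specifically $\ddot{\g}$), which is precisely the dichotomy encoded by Row~(R2) of Table~\ref{tab:cases_e6} and the footnotes $\star$, $\dagger$.

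The main obstacle will be the $E_6^\e(q)$ bookkeeping: one must track simultaneously (a) the parity of $i$ and $f$, (b) the residue of $p$ modulo $3$ and its interaction with $\e$, and (c) the $A \times B$ coordinates of the various candidate generators $\ddot{\p}^i$, $\ddot{\d}\ddot{\p}^i$, $\ddot{\g}\ddot{\p}^i$, $\ddot{\d}\ddot{\g}\ddot{\p}^i$. Everything else is routine manipulation of cyclic groups and applications of Lemma~\ref{l:divides}; the $E_6^\e$ case, by contrast, requires a careful case split matching exactly the rows and conditions of Table~\ref{tab:cases_e6}, ensuring that every nontrivial cyclic subgroup of $\Out(G_0)$ is represented by at least one listed $g$ and that the symbols $\star, \dagger$ correctly record the conjugacy coincidences forced by \eqref{e:cases_e6_condition}.
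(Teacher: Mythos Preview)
Your overall strategy matches the paper's exactly: reduce to classifying cyclic subgroups of $\Out(G_0)$ up to conjugacy, then use Table~\ref{tab:out}, Lemma~\ref{l:divides}, and (for $E_6^\e$) Lemmas~\ref{l:out_e6_facts} and~\ref{l:out_2e6_facts}. The $E_6^\e$ analysis you sketch is precisely what the paper does.

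However, your ${}^3D_4(q)$ dichotomy is inverted and, as stated, false. You claim that a subgroup of $\langle\ddot{\p}\rangle\cong C_{3f}$ of order coprime to $3$ is generated by $\ddot{\p}^i$ with $i\mid f$, and that one of order divisible by $3$ admits a generator $\ddot{\t}\ddot{\p}^i$ with $i\mid f$. Take $f=6$: the unique subgroup of order $2$ is $\langle\ddot{\p}^9\rangle$, but no divisor $i$ of $6$ satisfies $\gcd(i,18)=9$, so it is \emph{not} of the form $\langle\ddot{\p}^i\rangle$ with $i\mid 6$. Likewise, the subgroup of order $6$ is $\langle\ddot{\p}^3\rangle$, yet for $i\in\{1,2,3,6\}$ the element $\ddot{\t}\ddot{\p}^i=\ddot{\p}^{6+i}$ has order $18,9,2,3$ respectively, never $6$. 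The correct split (used in the paper) is the opposite: if $3$ divides the order $3f/i$ then $i\mid f$ and we are in (iii)(a); if $3\nmid 3f/i$ then $3\mid i$, and setting $j=i/3$ one has $\langle\ddot{\p}^i\rangle=\langle\ddot{\t}\ddot{\p}^j\rangle$ with $j\mid f$, giving (iii)(b). Once you swap the two branches, your argument goes through.
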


\begin{table}
\caption{The automorphisms of $G_0=E_6^\e(q)$ in Proposition \ref{p:cases}(iv)}\label{tab:cases_e6}
\begin{center}
\vspace{-8mm}
\[
\begin{array}{cccccccc}
\hline 
\e           & \pm        & \pm        & +           & +           & +           & -           & -           \\
\hline
g &
\begin{array}{c} \      \\ \d           \end{array} & 
\begin{array}{c} \g     \\ \            \end{array} &
\begin{array}{c} \p^i   \\ \d^\pm\p^i   \end{array} &
\begin{array}{c} \g\p^i \\ \d^\pm\g\p^i \end{array} &
\begin{array}{c} \g\p^i \\ \            \end{array} &
\begin{array}{c} \g\p^i \\ \d^\pm\g\p^i \end{array} &
\begin{array}{c} \p^i   \\ \            \end{array} \\
\hline 
f/i          &            &            & \text{any}  & \text{even} & \text{odd}  & \text{odd}  & \text{any}  \\
\text{notes} &            &            & \star       & \dagger     &             &             &             \\
\hline
\end{array}
{\arrayrulecolor{white}
\begin{array}{c}
\hline
    \\
\hline
\text{(R1)} \\
\text{(R2)} \\
\hline
    \\
    \\
\hline
\end{array}}
\]
\end{center}
\end{table}
\arrayrulecolor{black}

\begin{rem}\label{r:cases_e6}
In Table~\ref{tab:cases_e6}, the symbol $\d^\pm$ denotes that we may consider either $\d$ or $\d^{-1}$ (but there is no need to consider both). The notes labelled $\star$ and $\dagger$  impose further restrictions on the automorphisms we need to consider:
\begin{itemize}\addtolength{\itemsep}{0.2\baselineskip}
\item[$\star$] We need only consider one of the automorphisms in $\{ \p^i, \d\p^i, \d^2\p^i\}$ in the very special case when all the conditions in \eqref{e:cases_e6_condition} are satisfied.
\item[$\dagger$] We need only consider one automorphism in $\{ \g\p^i, \d\g\p^i, \d^2\g\p^i \}$ \emph{unless} all the conditions in \eqref{e:cases_e6_condition} hold.
\end{itemize}
\end{rem}

\begin{proof}[Proof of Proposition~\ref{p:cases}]
Since $\<G_0,g\>$ and $\<G_0,h\>$ are $\Aut(G_0)$-conjugate if and only if $\<\ddot{g}\>$ and $\<\ddot{h}\>$ are $\Out(G_0)$-conjugate, we must determine the conjugacy classes of cyclic subgroups of $\Out(G_0)$. Fix an automorphism $h \in \Aut(G_0) \setminus G_0$. 

If $G_0$ is in \eqref{e:ugly} or \eqref{e:bad}, then Table~\ref{tab:out} indicates that $G_0$ has a graph-field automorphism $\r$ such that $\Out(G_0) = \< \ddot{\r} \>$. Moreover, if $G_0$ is in \eqref{e:ugly}, then $|\ddot{\r}|=f$, so $\< \ddot{h} \> = \<\ddot{\r}^i\>$ for some proper divisor $i$ of $f$, as we claim. Similarly, if $G_0$ is in \eqref{e:bad}, then $|\ddot{\r}|=2f$, so $\< \ddot{h} \> = \<\ddot{\r}^i\>$ for some proper divisor $i$ of $2f$. In particular, $\<\ddot{h}\>$ is either equal to $\<\ddot{\r}^i\>$ for some odd divisor $i$ of $f$ (as in (ii)(b)), or $\<\ddot{\r}^{2i}\> = \<\ddot{\p}^i\>$ for some proper divisor $i$ of $f$ (as in (ii)(a)).

Next assume $G_0$ is in \eqref{e:good} with $G_0 \neq E_6^\e(q)$. First assume that $\Out(G_0) = \< \ddot{\p} \>$, so $\<\ddot{h}\> = \<\ddot{\p}^i\>$ for some proper divisor $i$ of $|\p|$. If $G_0 \neq {}^3D_4(q)$, then $|\p|=f$ and we are in case (iii)(a). Now suppose  $G_0 = {}^3D_4(q)$, so $\<\ddot{h}\> = \<\ddot{\p}^i\>$ for some divisor of $i$ of $|\p|=3f$. If $3$ divides $3f/i$, then $i$ divides $f$ and we are in (iii)(a) once again. Otherwise, $3$ divides $i$ and $f/j$ is not divisible by $3$, where $j=i/3$. Here $3f/(3f,f+j) = 3f/(3f,j)$ and 
\[
\<\ddot{h}\> = \<\ddot{\p}^i\> = \<\ddot{\p}^{f+j}\> = \<\ddot{\t}\ddot{\p}^j\>,
\]
which puts us in case (iii)(b). Finally, if $\Out(G_0) \neq \<\ddot{\p}\>$ then $G_0 = E_7(q)$ is the only option (see Table~\ref{tab:out}), where $q$ is odd and $\Out(G_0) = \<\ddot{\d}\> \times \<\ddot{\p}\>$. Here Lemma~\ref{l:divides} implies that $\<\ddot{h}\> = \<\ddot{\p}^i\>$ or $\<\ddot{\d}\ddot{\p}^i\>$ for some divisor $i$ of $f$, and these possibilities are covered by cases (iii)(a) and (iii)(c), respectively.

To complete the proof, we may assume that $G_0 = E_6^\e(q)$. First we handle the case  $\e=+$. Here $\<\ddot{h}\> = \<\ddot{h}_0\p^i\>$, where $h_0$ is a product of diagonal and graph automorphisms, and by Lemma~\ref{l:divides} we may assume that $i=0$ or $i$ divides $f$. If $q \not\equiv 1 \mod{3}$, then $h_0 \in \{1,\g\}$, so $\<\ddot{h}\> =  \<\ddot{g}\>$ for an automorphism $g$ in Row (R1) of Table~\ref{tab:cases_e6}. Now assume $q \equiv 1 \mod{3}$. Here $h_0 = \d^j\g^k$ with  $j \in \{0,1,2\}$ and $k \in \{0,1\}$; we claim that $\<\ddot{h}\>$ is $\Out(G_0)$-conjugate to $\<\ddot{g}\>$ for an automorphism $g$ in Table~\ref{tab:cases_e6}. To see this, first observe that $\ddot{\d}\ddot{\p}^i$ and $\ddot{\d}^2\ddot{\p}^i$ are $\Out(G_0)$-conjugate and so are $\ddot{\d}\ddot{\g}\ddot{\p}^i$ and $\ddot{\d}^2\ddot{\g}\ddot{\p}^i$ (see parts (i) and (ii) in Lemma~\ref{l:out_e6_facts}). Therefore, it remains to prove the claim when $h \in \{ \d\g\p^i, \d^2\g\p^i \}$ and $i=0$ or $f/i$ is odd, together with the additional claims in $\star$ and $\dagger$ (see Remark \ref{r:cases_e6}). If $i=0$ or $f/i$ is odd, then \eqref{e:cases_e6_condition} does not hold, so Lemma~\ref{l:out_e6_facts}(iv) implies that $\ddot{h}$ is $\Out(G_0)$-conjugate to $\ddot{\g}$. In addition, the claims in $\star$ and $\dagger$ follow immediately from parts (iv) and (iii) in Lemma~\ref{l:out_e6_facts}, respectively.

Finally, let us assume $G_0 = {}^2E_6(q)$. Here $\<\ddot{g}\>$ is $\Out(G_0)$-conjugate to $\< \ddot{h}\ddot{\p}^i\>$ where $h$ is trivial or diagonal, and $i$ is either $0$ or a divisor of $2f$. If $i > 0$ and $2f/i$ is even, then $i$ divides $f$. On the other hand, if $i > 0$ and $2f/i$ is odd, then $f/j$ is odd for $j=i/2$ and we note that $2f/(2f,i) = 2f/(2f,f+j)$. Therefore, $\<\ddot{\g}\>$ is $\Out(G_0)$-conjugate to one of $\<\ddot{h}\>$, $\<\ddot{h}\ddot{\p}^f\> = \<\ddot{h}\ddot{\g}\>$ or $\<\ddot{h}\ddot{\p}^i\>$, where $i$ is a proper divisor of $f$, or $\<\ddot{h}\ddot{\p}^{f+j}\> = \<\ddot{h}\ddot{\g}\ddot{\p}^j\>$ and $j$ is a proper divisor of $f$ such that $f/j$ is odd. Therefore, $\<\ddot{h}\>$ is $\Out(G_0)$-conjugate to $\<\ddot{g}\>$ for an automorphism $g$ in Table~\ref{tab:cases_e6} and for the case $q \equiv 2 \mod{3}$ we conclude by appealing to Lemma \ref{l:out_2e6_facts}.
\end{proof}

\subsection{Probabilistic method}\label{ss:p_prob}

In this section, we discuss a probabilistic approach for bounding the uniform spread of a finite group, which was introduced by Guralnick and Kantor \cite{GK}. This approach plays a central role in the sequence of papers \cite{BGK,BG, GK, Harper17, HarperClassical}, and it is also a core technique in our proof of Theorem \ref{t:exceptional} in this paper. Here we recall the general set up and we introduce the relevant notation. 

Let $G$ be a finite group, let $H$ be a subgroup of $G$ and consider the natural transitive action of $G$ on the set of cosets $G/H$. In terms of this action, the \emph{fixed point ratio} of $z \in G$ is 
\[
{\rm fpr}(z,G/H) = \frac{|\{ \omega \in G/H \,:\, \omega z = \omega \}|}{|G/H|} = \frac{|z^G \cap H|}{|z^G|}.
\]
For $z,x \in G$, let $P(z,x)$ be the probability that $z$ and a uniformly randomly chosen conjugate of $x$ do \emph{not} generate $G$, that is, 
\[
P(z,x) = \frac{|\{y \in x^G \,:\, \< z, y \> \neq G \}|}{|x^G|}.
\]

Now let us specialise to the case where $G$ is an almost simple group with socle $G_0$. Recall that $\mathcal{M}$ is the set of maximal subgroups $H$ of $G$ such that $G = HG_0$. For an element $x \in G$, write $\mathcal{M}(x)$ for the set of subgroups $H \in \mathcal{M}$ that contain $x$. Notice that if the conjugacy class $x^G$ witnesses $u(G) \geqs 1$, then we must have $G/G_0 = \<G_0x\>$ and thus $\mathcal{M}(x)$ is simply the set of all maximal subgroups of $G$ that contain $x$. Given this observation, the following result is a combination of \cite[Lemmas~2.1 and~2.2]{BG}.

\begin{lem} \label{l:prob_method}
Let $G$ be an almost simple group with socle $G_0$. Let $x \in G$ with $G/G_0 = \<G_0x\>$. 
\begin{itemize}\addtolength{\itemsep}{0.2\baselineskip}
\item[{\rm (i)}]  For $z \in G$, we have
\[ 
P(z,x) \leq \sum_{H \in \mathcal{M}(x)}^{} {\rm fpr}(z,G/H).
\]
\item[{\rm (ii)}] If $P(z,x) < 1/k$ for all nontrivial $z \in G$, then $u(G) \geq k$, witnessed by $x^G$.
\end{itemize}
\end{lem}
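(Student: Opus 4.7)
For part (i), the plan is to start from the definition of $P(z,x)$ and cover the failure set. If $y \in x^G$ satisfies $\la z, y \ra \ne G$, then $\la z,y\ra$ lies in some maximal subgroup $M$ of $G$. Since $G/G_0 = \la G_0 x \ra$ and $y \in x^G$, the image $G_0 y$ still generates $G/G_0$, so $M$ cannot be contained in $G_0$; hence $G = MG_0$ and $M \in \mathcal{M}$. Writing $y = x^g$ gives $x \in M^{g^{-1}}$, so $M = H^g$ for some $H \in \mathcal{M}(x)$. This produces the containment
\[
\{ y \in x^G \,:\, \la z,y \ra \ne G \} \subseteq \bigcup_{H \in \mathcal{M}(x)} \ \bigcup_{\substack{M \in H^G \\ z \in M}} (x^G \cap M).
\]
I would then apply the union bound and, for each $H \in \mathcal{M}(x)$, use the standard double-count of pairs $(M,z') \in H^G \times z^G$ with $z' \in M$ to get $|\{M \in H^G : z \in M\}| = [G:N_G(H)]\, |z^G \cap H|/|z^G|$, together with $|x^G \cap M| = |x^G \cap H|$ for all $M \in H^G$. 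Dividing by $|x^G|$ and simplifying via $|x^G \cap H|\cdot [G:N_G(H)]/|x^G| \leqs 1$ (a consequence of the orbit-counting identity applied to the action of $G$ on $G/H$) reduces the bound to $\sum_{H \in \mathcal{M}(x)} |z^G \cap H|/|z^G|$, which is exactly $\sum_{H \in \mathcal{M}(x)} \mathrm{fpr}(z,G/H)$.

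For part (ii), I would argue by a direct union bound. Given nontrivial elements $z_1, \dots, z_k \in G$, the proportion of $y \in x^G$ for which $\la z_i, y \ra \ne G$ for some $i$ is at most
\[
\sum_{i=1}^{k} P(z_i,x) < \sum_{i=1}^{k} \frac{1}{k} = 1.
\]
Hence there exists $y \in x^G$ such that $G = \la z_i, y \ra$ for every $i$, and since this holds for arbitrary nontrivial $z_1,\dots,z_k$, the class $x^G$ witnesses $u(G) \geqs k$.

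The only nontrivial step is the counting in part (i): correctly identifying that every obstruction to generation lies in some $G$-conjugate of a subgroup from $\mathcal{M}(x)$ (which uses the hypothesis $G/G_0 = \la G_0 x \ra$ in a crucial way to ensure the obstructing maximal subgroup belongs to $\mathcal{M}$), and then converting the double count into the fixed point ratio. Part (ii) is a one-line union bound and presents no difficulty.
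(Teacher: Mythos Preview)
Your argument for part~(ii) is correct and is exactly the standard union-bound argument. Your overall strategy for part~(i) is also the right one, and the paper does not give its own proof (it simply cites \cite[Lemmas~2.1 and~2.2]{BG}), so there is nothing to compare approaches against. However, there is a genuine error in the counting step.

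You claim that $|x^G \cap H|\cdot [G:N_G(H)]/|x^G| \leqs 1$. This is false in general: as your own double-counting argument (applied with $x$ in place of $z$) shows, this quantity is precisely the number of $G$-conjugates of $H$ that contain $x$, and since $H \in \mathcal{M}(x)$ this number is at least $1$, not at most $1$. The orbit-counting identity gives no such bound. The error is compounded by your choice to let the outer union run over all of $\mathcal{M}(x)$: if $H_1,H_2 \in \mathcal{M}(x)$ are $G$-conjugate, then $H_1^G = H_2^G$ and your union bound counts the same inner sum twice, so after your simplification you end up with $\sum_{[H]} n_H^2\,\mathrm{fpr}(z,G/H)$ (summing over conjugacy classes, with $n_H$ the number of conjugates of $H$ in $\mathcal{M}(x)$) rather than the desired $\sum_{[H]} n_H\,\mathrm{fpr}(z,G/H)$.

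The fix is simple. Let the outer union run over a set $\mathcal{M}^*(x)$ of representatives of the $G$-conjugacy classes meeting $\mathcal{M}(x)$; this removes the redundancy without changing the union. Your double count then gives
\[
P(z,x) \leqs \sum_{H \in \mathcal{M}^*(x)} \frac{[G:N_G(H)]\,|z^G\cap H|}{|z^G|}\cdot\frac{|x^G\cap H|}{|x^G|}.
\]
Now observe that the summand is symmetric in $x$ and $z$: it equals $n_H\cdot \mathrm{fpr}(z,G/H)$ where $n_H = |x^G\cap H|\cdot[G:N_G(H)]/|x^G|$ is the number of conjugates of $H$ lying in $\mathcal{M}(x)$. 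Hence
\[
P(z,x) \leqs \sum_{H \in \mathcal{M}^*(x)} n_H\,\mathrm{fpr}(z,G/H) = \sum_{H \in \mathcal{M}(x)} \mathrm{fpr}(z,G/H),
\]
which is what you want. No inequality of the form $n_H \leqs 1$ is needed.
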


Roughly speaking, in order to effectively apply Lemma~\ref{l:prob_method} we need to do two things:
\begin{itemize}\addtolength{\itemsep}{0.2\baselineskip}
\item[(a)] First we must identify an appropriate element $x \in G$ such that $G/G_0 = \la G_0x \ra$ and we have some control on the set of maximal overgroups $\mathcal{M}(x)$;
\item[(b)] Then we need to compute upper bounds on the fixed point ratios ${\rm fpr}(z,G/H)$ for all $H \in \mathcal{M}(x)$ and all nontrivial $z \in G$.
\end{itemize}

In the case where $G_0$ is a simple exceptional group of Lie type, upper bounds on ${\rm fpr}(z,G/H)$ for all maximal subgroups $H$ of $G$ are determined by Lawther, Liebeck and Seitz in \cite{LLS} and we will make extensive use of their work (and in a few cases, we will need to strengthen the bounds in \cite{LLS}). 

To handle the problem identified in (a), we will often appeal to the theory of \emph{Shintani descent}, both to find an element $x$ and to control the maximal subgroups containing $x$. We discuss this approach in the next section.

\subsection{Shintani descent}\label{ss:p_shintani}

To close this preliminary section, we briefly recall the general theory of Shintani descent, which is our principal method for identifying and studying elements in the nontrivial cosets of the socle of an almost simple group of Lie type. The general method was introduced by Shintani \cite{Shintani} and Kawanaka \cite{Kawanaka} in the 1970s and it has found important applications in character theory. It was first adapted for studying the uniform spread of almost simple groups in \cite{BG} and we refer the reader to \cite[Chapter~3]{HarperClassical} for a convenient overview of the relevant techniques. 

To describe the general set up, let $\bar{G}$ be a connected algebraic group over an algebraically closed field and let $\s$ be a Steinberg endomorphism of $\bar{G}$. Fix an integer $e > 1$. By identifying $\s$ with its restriction to $\bar{G}_{\s^e}$, we can consider the finite semidirect product $\bar{G}_{\s^e}{:}\<\s\> = \bar{G}_{\s^e}.e$. 

\begin{defn}\label{d:shin}
A \emph{Shintani map} of $(\bar{G},\s,e)$ is a map of conjugacy classes of the form
\[
F\: \{(g\s)^{\bar{G}_{\s^e}} \,:\, g \in \bar{G}_{\s^e} \} \to \{y^{\bar{G}_{\s}} \,:\, y \in \bar{G}_{\s} \}, \;\; (g\s)^{\bar{G}_{\s^e}} \mapsto (a^{-1}(g\s)^ea)^{\bar{G}_{\s}}
\] 
where $a \in \bar{G}$ satisfies $g=aa^{-\s^{-1}}$ (such an element $a$ exists by the Lang-Steinberg theorem, see \cite[Theorem 2.1.1]{GLS}).
\end{defn}

We now present the main theorem of Shintani descent (see \cite[Lemma~2.2]{Kawanaka}).

\begin{thm}\label{t:shintani}
Let $F$ be a Shintani map of $(\bar{G},\s,e)$. Then $F$ is a well-defined bijection from the set of $\bar{G}_{\s^e}$-conjugacy classes in the coset $\bar{G}_{\s^e}\s$ to the set of conjugacy classes in $\bar{G}_{\s}$. Moreover, $F$ does not depend on the choice of element $a \in \bar{G}$.
\end{thm}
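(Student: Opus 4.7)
The plan is to carry out the whole argument inside the finite semidirect product $\bar{G}_{\s^e}{:}\<\s\>$ of order $|\bar{G}_{\s^e}| \cdot e$, in which $\s^e$ acts trivially on $\bar{G}_{\s^e}$ and so may be identified with the identity. Under this identification $(g\s)^e$ collapses onto the twisted norm $N_e(g) := g \cdot \s(g) \cdots \s^{e-1}(g) \in \bar{G}_{\s^e}$, and the formula to be studied becomes $y := a^{-1}N_e(g)a$. The statement then splits naturally into three tasks: (a) verify that $y \in \bar{G}_\s$; (b) show that the class $y^{\bar{G}_\s}$ depends neither on the choice of $a$ (given $g$) nor on the choice of representative $g\s$ of its $\bar{G}_{\s^e}$-class; and (c) establish bijectivity. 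Existence of at least one $a \in \bar{G}$ with $g = aa^{-\s^{-1}}$ is supplied by the Lang-Steinberg theorem applied to $\s^{-1}$, and this is really the only deep input.

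For (a), I would use the Lang-Steinberg identity to write $\s^i(g) = \s^i(a)\cdot\s^{i-1}(a)^{-1}$ (under the appropriate convention), and observe that consecutive factors telescope in the product defining $N_e(g)$, so that after conjugating by $a^{-1}$ and $a$ the expression reduces to one explicitly built from $a$ and a single iterate $\s^e(a)$. A direct calculation then shows that $\s(y) = y$ precisely when $\s^e(g) = g$, which is exactly the hypothesis $g \in \bar{G}_{\s^e}$. For (b), the kernel of the Lang-Steinberg map $b \mapsto bb^{-\s^{-1}}$ is exactly $\bar{G}_\s$, so any second choice has the form $a' = ac$ with $c \in \bar{G}_\s$, and substituting yields $y' = c^{-1}yc$, which lies in the same $\bar{G}_\s$-class. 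Invariance under replacing $g\s$ by $h^{-1}(g\s)h$ with $h \in \bar{G}_{\s^e}$ is handled by the bookkeeping substitution $a' := h^{-1}a$, which one checks is a valid Lang-Steinberg witness for the new element and, using $\s^e(h)=h$, gives exactly the same $y$. For (c), surjectivity is obtained by reversing the construction: given $y \in \bar{G}_\s$, a second application of Lang-Steinberg, this time to $\s^e$, produces $a \in \bar{G}$ with $\s^e(a) = ay^{-1}$; setting $g := a\s(a)^{-1}$ one verifies that $g \in \bar{G}_{\s^e}$ and that $F((g\s)^{\bar{G}_{\s^e}}) = y^{\bar{G}_\s}$. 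Injectivity then follows by running the same computation in reverse: if two classes share an image, one can arrange their witnesses $a, a'$ to produce the same $y$ on the nose, from which $h := a'a^{-1}$ is forced into $\bar{G}_{\s^e}$ and intertwines $g\s$ and $g'\s$.

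The principal obstacle is task (a): because $N_e(g)$ is a product in a non-abelian group, the telescoping cancellation requires the Lang-Steinberg identity to be applied in precisely the right order, and the step in which the hypothesis $g \in \bar{G}_{\s^e}$ is consumed, in deducing $\s(y) = y$, is the delicate heart of the argument. Once this is cleanly set up, parts (b) and (c) reduce to two further applications of Lang-Steinberg (once to identify the kernel, once to invert the norm) together with routine bookkeeping inside the semidirect product.
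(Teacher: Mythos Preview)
The paper does not supply a proof of this theorem at all: it is quoted as the main result of Shintani descent and attributed to Kawanaka \cite[Lemma~2.2]{Kawanaka}, so there is nothing to compare your argument against except the standard literature proof, which your outline reproduces faithfully and correctly.

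One small point of phrasing: you speak of applying Lang--Steinberg ``to $\sigma^{-1}$'', but $\sigma$ is a Steinberg endomorphism of the algebraic group $\bar{G}$ and need not be invertible as a morphism of varieties. What is really being used is Lang--Steinberg for $\sigma$ itself, in the form that $a \mapsto a\,\sigma(a)^{-1}$ is surjective on $\bar{G}$; under the conjugation convention $a^{\sigma^{-1}} = \sigma a\sigma^{-1} = \sigma(a)$ in the semidirect product, the paper's relation $g = aa^{-\sigma^{-1}}$ unpacks to $g = a\,\sigma(a)^{-1}$. With that reading your telescoping in step~(a) goes through cleanly, yielding $N_e(g) = a\,\sigma^e(a)^{-1}$ and hence $y = a^{-1}N_e(g)a = \sigma^e(a)^{-1}a$, whose $\sigma$-invariance is equivalent to $\sigma^e(g)=g$ exactly as you say. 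Parts~(b) and~(c) are then routine, and your use of a second Lang--Steinberg application (to $\sigma^e$) to invert the construction is the standard way to establish surjectivity.
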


In light of Theorem~\ref{t:shintani}, we refer to $F$ as \emph{the} Shintani map of $(\bar{G},\s,e)$. To simplify the notation, if the setting is understood, we will write $F\: \bar{G}_{\s^e}\s \to \bar{G}_\s$ for the Shintani map and $F(g\s)$ for a representative of the $\bar{G}_\s$-class $F((g\s)^{\bar{G}_{\s^e}})$. We refer to $g\s$ as a \emph{Shintani correspondent} of $F(g\s)$.

The following elementary observation highlights the relationship between the order of an element in $\bar{G}_{\s}$ and the order of a Shintani correspondent in the coset $\bar{G}_{\s^e}\s$. 

\begin{lem}\label{l:shintani_order}
Let $y \in \bar{G}_\s$ and let $g \in \bar{G}_{\s^e}$ such that $F(g\s) = y$. Then $|g\s| = e|y|$.
\end{lem}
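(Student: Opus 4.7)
The plan is to work directly in the finite semidirect product $\bar{G}_{\s^e}{:}\<\s\>$, in which $\s$ has order $e$, and compare the order of $g\s$ with that of $(g\s)^e$, which by the definition of $F$ is a conjugate of $y$.

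First, I would observe that the quotient of $\bar{G}_{\s^e}{:}\<\s\>$ by the normal subgroup $\bar{G}_{\s^e}$ is cyclic of order $e$, generated by the image of $\s$. Since $g\s$ maps onto this generator, its order is divisible by $e$; write $|g\s| = em$ for some positive integer $m$. Equivalently, since $\gcd(|g\s|, e) = e$, we have $|(g\s)^e| = |g\s|/e = m$.

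Next, I would unpack the Shintani map. Since $\s^e = 1$ in our semidirect product, expanding $(g\s)^e$ via the twisted product gives $(g\s)^e = g \cdot g^{\s} \cdots g^{\s^{e-1}} \in \bar{G}_{\s^e}$, and by Definition \ref{d:shin} there exists $a \in \bar{G}$ with $g = aa^{-\s^{-1}}$ such that $y = a^{-1}(g\s)^e a$. In particular, $(g\s)^e$ and $y$ are conjugate in $\bar G$, so they share the same order; combining this with the previous paragraph yields $m = |y|$ and hence $|g\s| = e|y|$.

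There is no real obstacle here beyond bookkeeping: the only subtlety is that one must be careful to interpret $\s$ as an element of order $e$ in the finite overgroup $\bar{G}_{\s^e}.e$ (rather than as an endomorphism of $\bar G$), so that the computation $(g\s)^e = gg^{\s}\cdots g^{\s^{e-1}}$ is valid and the cyclic-quotient argument applies.
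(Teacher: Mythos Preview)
Your proof is correct and follows essentially the same approach as the paper: use the cyclic quotient of $\bar{G}_{\s^e}{:}\<\s\>$ by $\bar{G}_{\s^e}$ to see that $e$ divides $|g\s|$, and then observe that $(g\s)^e$ is $\bar{G}$-conjugate to $y$ via the defining element $a$ of the Shintani map, so $|g\s| = e|(g\s)^e| = e|y|$. The paper's version is terser but the argument is the same.
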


\begin{proof}
Since $g\s \in \bar{G}_{\s^e}{:}\<\s\>$, it follows that $e$ divides the order of $g\s$. Therefore, $|g\s|=e|(g\s)^e|$ and we conclude that $|g\s|=e|y|$ since $(g\s)^e$ is $\bar{G}$-conjugate to $y$.
\end{proof}

We will need the following technical result \cite[Corollary~3.2.3]{HarperClassical} (in the statement, for a finite group $X$ we write $O^{p'}(X)$ for the normal subgroup generated by the $p$-elements of $X$). 

\begin{lem}\label{l:shintani_split}
Let $\bar{G}$ be a simple algebraic group over $\bar{\mathbb{F}}_p$ of adjoint type and set $G_0 = (\bar{G}_{\s^e})'$. If $\< G_0, \s \> \leqn \< \bar{G}_{\s^e}, \s \>$, then the Shintani map $F$ of $(\bar{G},\s,e)$ restricts to a bijection
\[
\{ (g\s)^{\bar{G}_{\s^e}} \,:\, g \in G_0 \} \to \{ y^{\bar{G}_\s} \,:\, y \in O^{p'}(\bar{G}_\s) \}.
\]  
\end{lem}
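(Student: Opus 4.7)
My plan is to leverage the Shintani bijection of Theorem~\ref{t:shintani} and show that it restricts in the claimed way; this requires checking well-definedness of the domain, verifying the forward inclusion, and matching cardinalities.

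First, since $G_0 = (\bar{G}_{\s^e})'$ is characteristic in $\bar{G}_{\s^e}$, the coset $G_0 \s$ is a union of $\bar{G}_{\s^e}$-conjugacy classes, so the left-hand side of the proposed bijection is well-defined; similarly, $O^{p'}(\bar{G}_\s)$ is characteristic in $\bar{G}_\s$, so the right-hand side is well-defined. The hypothesis $\<G_0,\s\>\normeq\<\bar{G}_{\s^e},\s\>$ translates to the condition that $\s$ acts trivially on the abelian $p'$-quotient $\bar{G}_{\s^e}/G_0$, a fact that I will use to match cardinalities.

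Next, for the forward inclusion, a direct computation from Definition~\ref{d:shin} (using $g = a \cdot a^{-\s^{-1}}$ with $a \in \bar{G}$ from Lang-Steinberg) shows that $F(g\s)$ is $\bar{G}$-conjugate to the norm $N_\s(g) := g \cdot \s(g) \cdots \s^{e-1}(g) \in \bar{G}_{\s^e}$. If $g \in G_0$, then $N_\s(g) \in G_0$ since $G_0$ is $\s$-stable. I then pass to the simply connected cover: let $\pi\: \bar{G}_{\rm sc} \to \bar{G}$ be the central isogeny with kernel $Z = Z(\bar{G}_{\rm sc})$ and let $\hat\s$ be the lift of $\s$, noting the standard identifications $G_0 = \pi(\bar{G}_{{\rm sc},\hat\s^e})$ and $O^{p'}(\bar{G}_\s) = \pi(\bar{G}_{{\rm sc},\hat\s})$. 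Applying Lang-Steinberg in $\bar{G}_{\rm sc}$ to lift the conjugating element $a$, and running the Shintani correspondence at the simply connected level, one obtains $F(g\s) \in \pi(\bar{G}_{{\rm sc},\hat\s}) = O^{p'}(\bar{G}_\s)$.

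For the bijection, since $F$ is already injective as a restriction of a bijection, it suffices to match cardinalities of the two sets of classes. Using the cohomological descriptions $\bar{G}_{\s^e}/G_0 \cong H^1(\hat\s^e, Z)$ and $\bar{G}_\s/O^{p'}(\bar{G}_\s) \cong H^1(\hat\s, Z)$, together with the hypothesis that $\s$ acts trivially on $\bar{G}_{\s^e}/G_0$, the two quotients are canonically isomorphic, and a standard class-counting argument completes the proof. The main obstacle is the forward inclusion: it is not immediate that an element of $\bar{G}_\s$ which is $\bar{G}$-conjugate to an element of $G_0 \subseteq \bar{G}_{\s^e}$ must lie in $O^{p'}(\bar{G}_\s)$, and the careful lift to the simply connected cover (together with the hypothesis ensuring that Shintani descent commutes with the $p'$-quotient) is the heart of the argument.
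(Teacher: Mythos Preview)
The paper does not prove this lemma; it is quoted as \cite[Corollary~3.2.3]{HarperClassical}, so there is no in-paper argument to compare against. Your overall strategy---run Shintani descent at the simply connected level and push down via the central isogeny $\pi$---is the natural one and is essentially how the cited reference proceeds. Two points in your write-up need correction.

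First, your justification for well-definedness of the domain is wrong. That $G_0$ is characteristic in $\bar{G}_{\s^e}$ does \emph{not} by itself force $G_0\s$ to be a union of $\bar{G}_{\s^e}$-classes: for $h\in\bar{G}_{\s^e}$ one has $(g\s)^h=\bigl(h^{-1}g\,\s(h)\bigr)\s$, and this lies in $G_0\s$ precisely when $h^{-1}\s(h)\in G_0$, i.e.\ precisely when $\s$ acts trivially on $\bar{G}_{\s^e}/G_0$. So well-definedness is exactly the content of the hypothesis $\langle G_0,\s\rangle\normeq\langle\bar{G}_{\s^e},\s\rangle$, not a consequence of $G_0$ being characteristic. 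You state this translation of the hypothesis two sentences later, but it is needed already here.

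Second, the ``standard class-counting argument'' via $H^1(\hat\s,Z)$ is vague and in fact unnecessary. Once you have established the forward inclusion by lifting to $\bar{G}_{\rm sc}$, the same device gives surjectivity directly: given $y\in O^{p'}(\bar{G}_\s)=\pi(\bar{G}_{{\rm sc},\hat\s})$, choose a preimage $\hat y\in\bar{G}_{{\rm sc},\hat\s}$, apply the Shintani bijection of $(\bar{G}_{\rm sc},\hat\s,e)$ in reverse to obtain $\hat g\hat\s$ with $\hat g\in\bar{G}_{{\rm sc},\hat\s^e}$, set $g=\pi(\hat g)\in G_0$, and observe that $F(g\s)$ and $y$ lie in the same $\bar{G}_\s$-class. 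This avoids any counting and makes transparent that the normality hypothesis is used only for well-definedness of the domain, not for either inclusion.
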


Let us provide an example to demonstrate how we will use Lemma~\ref{l:shintani_split}.

\begin{ex}\label{ex:shintani_split}
Here we explain how we use Shintani descent to identify a conjugacy class in the coset $E_7(q)h$, where $q=p^f$ and $h$ is a field automorphism. 

Let $\bar{G}$ be the adjoint algebraic group of type $E_7$ over $\bar{\mathbb{F}}_p$. Let $\p$ be a standard Frobenius endomorphism of $\bar{G}$, let $\s = \p^i$ for a proper divisor $i$ of $f$ and set $e=f/i > 1$. Write $q = q_0^e$ and let $F$ be the Shintani map of $(\bar{G},\s,e)$. 

If $q$ is even, then $\bar{G}_{\s^e}$ and $\bar{G}_\s$ are the simple groups $E_7(q)$ and $E_7(q_0)$, respectively, so
\[
F\: \{ (g\p^i)^{E_7(q)} \,:\, g \in E_7(q) \} \to \{ y^{E_7(q_0)} \,:\, y \in E_7(q_0) \}.
\]
Therefore, we may select an element in the coset $E_7(q)\p^i$ by identifying an element in the subgroup $E_7(q_0)$ and taking its Shintani correspondent. However, if $q$ is odd, then $|\bar{G}_{\s^e}:E_7(q)| = |\bar{G}_\s:E_7(q_0)| = 2$ and the Shintani map
\[
F\: \{ (g\p^i)^{\bar{G}_{\s^e}} \,:\, g \in \bar{G}_{\s^e} \} \to \{ y^{\bar{G}_\s} \,:\, y \in \bar{G}_\s \}
\]
allows us to identify an element in $\bar{G}_{\s^e}\p^i$ but it does not tell us which coset of $E_7(q)$ this element is contained in. This is where Lemma~\ref{l:shintani_split} comes into play. 

Observe that $E_7(q) = (\bar{G}_{\s^e})'$ and $E_7(q_0) = O^{p'}(\bar{G}_\s)$. Moreover, $\<\ddot{\s}\>$ is an index two subgroup of $\<\ddot{\d},\ddot{\s}\> = \< \bar{G}_{\s^e}, \s\>/G_0$ (see Table~\ref{tab:out}), so $\<G_0,\s\> \leqn \<\bar{G}_{\s^e}, \s\>$. Therefore, Lemma~\ref{l:shintani_split} implies that $F$ restricts to a bijection
\[
\{ (g\p^i)^{\bar{G}_{\s^e}} \,:\, g \in E_7(q) \} \to \{ y^{\bar{G}_\s} \,:\, y \in E_7(q_0) \}.
\]
This means that the coset of $E_7(q_0)$ in $\bar{G}_{\s}$ containing a given element $y \in \bar{G}_{\s}$ controls the coset of $G_0=E_7(q)$ in ${\rm Aut}(G_0)$ that contains the Shintani correspondent of $y$.
\end{ex}

It is important to observe that the Shintani map gives more than just the bijection between conjugacy classes stated in Theorem \ref{t:shintani}. Indeed, we can use it to shed light on the overgroups in $\< \bar{G}_{\s^e}, \s \>$ of an element in the coset $\bar{G}_{\s^e}\s$. This is encapsulated in Lemmas~\ref{l:shintani_subgroups} and~\ref{l:centraliser_bound} below, which coincide with Lemmas~3.3.2 and~3.3.4 in \cite{HarperClassical} (in turn these results are closely related to Corollary~2.15 and Proposition~2.16(i) in \cite{BG}).

\begin{lem}\label{l:shintani_subgroups}
Let $\bar{H}$ be a closed connected $\s$-stable subgroup of $\bar{G}$ such that $N_{\bar{G}_{\s}}(\bar{H}_{\s}) = \bar{H}_{\s}$ and $N_{\bar{G}_{\s^e}}(\bar{H}_{\s^e}) = \bar{H}_{\s^e}$. Then for all $g \in \bar{G}_{\s^e}$, the number of $\bar{G}_{\s^e}$-conjugates of $\bar{H}_{\s^e}$ normalised by $g\s$ equals the number of $\bar{G}_{\s}$-conjugates of $\bar{H}_{\s}$ containing $F(g\s)$.
\end{lem}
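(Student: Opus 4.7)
The plan is to construct an explicit bijection between the two families of subgroups by lifting the problem to the variety $X$ of $\bar{G}$-conjugates of $\bar{H}$ and then using the Lang--Steinberg element that underlies the Shintani map. Since $\bar{H}$ is $\s$-stable (and hence $\s^e$-stable), both $\s$ and $\s^e$ act on $X$ by conjugation, and the element $g\s$ acts on the subvariety $X^{\s^e}$ of $\s^e$-fixed points. First I would check that the normaliser hypotheses let us identify the $\bar{G}_{\s^e}$-conjugates of $\bar{H}_{\s^e}$ with the $\bar{G}_{\s^e}$-orbits on $X^{\s^e}$, via $x\bar{H}x^{-1} \mapsto x\bar{H}_{\s^e}x^{-1}$, and similarly identify the $\bar{G}_{\s}$-conjugates of $\bar{H}_{\s}$ with the $\bar{G}_{\s}$-orbits on $X^{\s}$. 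A short semidirect-product computation then shows that $g\s$ normalises $x\bar{H}_{\s^e}x^{-1}$ precisely when $x\bar{H}x^{-1}$ is a $g\s$-fixed point of $X^{\s^e}$, and that $F(g\s)$ lies in $w\bar{H}_{\s}w^{-1}$ precisely when $w\bar{H}w^{-1}$ contains $F(g\s)$.

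Next I would pick $a \in \bar{G}$ with $g = a\,a^{-\s^{-1}}$, as in the definition of the Shintani map, so that $y := F(g\s) = a^{-1}(g\s)^e a$. A direct calculation in $\bar{G} \rtimes \<\s\>$ shows that conjugation by $a$ intertwines the action of $g\s$ on $X$ with the action of $\s$, and hence restricts to a bijection between the $g\s$-fixed points of $X^{\s^e}$ and the $\s$-fixed points $X^{\s}$. Under this bijection, the $\bar{G}_{\s^e}$-orbit of a fixed point is carried to the $\bar{G}_{\s}$-orbit of its image, by a further Lang--Steinberg argument applied to the pointwise stabiliser. Moreover, because $(g\s)^e$ centralises any $\bar{K} = x\bar{H}x^{-1}$ which is normalised by $g\s$, conjugating by $a$ places $y = a^{-1}(g\s)^e a$ inside $a^{-1}\bar{K}a$, so a subgroup normalised by $g\s$ corresponds precisely to a subgroup containing $y$. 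Assembling these steps yields the asserted equality of cardinalities.

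The main technical obstacle is controlling how the normaliser hypotheses interact with the passage between the two different Frobenius endomorphisms $\s$ and $\s^e$. In general the orbit-to-subgroup identifications in the first paragraph can fail to be bijections when $N_{\bar{G}}(\bar{H})$ is disconnected: distinct $\bar{G}_{\s^e}$-orbits on $X^{\s^e}$ can collapse to conjugate finite subgroups, or a single orbit can split into several non-conjugate ones, governed by a piece of Galois cohomology of the component group. The assumed equalities $N_{\bar{G}_{\s^e}}(\bar{H}_{\s^e}) = \bar{H}_{\s^e}$ and $N_{\bar{G}_{\s}}(\bar{H}_{\s}) = \bar{H}_{\s}$ are precisely the conditions that trivialise this cohomology and so exclude the pathology. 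With these identifications in place, the Lang--Steinberg transport of the second paragraph descends immediately to the required bijection of subgroups.
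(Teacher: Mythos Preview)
The paper does not give its own proof of this lemma; it simply records it as \cite[Lemma~3.3.2]{HarperClassical} (closely related to \cite[Corollary~2.15]{BG}) and uses it as a black box. So there is no ``paper's proof'' to compare against. Your outline is in fact the standard argument underlying those references: one works in $\bar{G}\rtimes\langle\hat\s\rangle$, uses the Lang--Steinberg identity $a^{-1}(g\hat\s)a=\hat\s$ to transport $g\s$-stable $\bar{G}$-conjugates of $\bar{H}$ to $\s$-stable ones, and then invokes the self-normalising hypotheses to pass between the algebraic conjugates and the finite conjugates of $\bar{H}_{\s^e}$ and $\bar{H}_{\s}$.

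One small slip: you write that $(g\s)^e$ \emph{centralises} any $\bar{K}=x\bar{H}x^{-1}$ normalised by $g\s$. It only \emph{normalises} $\bar{K}$; what you actually need is that $(g\s)^e\in\bar{G}_{\s^e}$ normalises $\bar{K}_{\s^e}$, and then the hypothesis $N_{\bar{G}_{\s^e}}(\bar{K}_{\s^e})=\bar{K}_{\s^e}$ forces $(g\s)^e\in\bar{K}_{\s^e}\subseteq\bar{K}$, whence $y=a^{-1}(g\s)^ea\in a^{-1}\bar{K}a$. With that correction, and once you have made precise the passage from ``$g\s$ normalises $x\bar{H}_{\s^e}x^{-1}$'' to ``$g\s$ normalises $x\bar{H}x^{-1}$'' (e.g.\ via Zariski density of $\bar{H}_{\s^e}$ in the connected group $\bar{H}$), your sketch is a correct proof.
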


\begin{cor}\label{c:shintani_parabolic}
Let $\bar{G}$ be a simple algebraic group and let $g \in \bar{G}_{\s^e}$. Then the number of maximal parabolic subgroups of $G = \< \bar{G}_{\s^e}, \s \>$ that contain $g\s$ is equal to the number of maximal parabolic subgroups of $\bar{G}_\s$ that contain $F(g\s)$.
\end{cor}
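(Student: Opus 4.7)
The plan is to prove Corollary~\ref{c:shintani_parabolic} by applying Lemma~\ref{l:shintani_subgroups} to each $\bar G$-conjugacy class of $\s$-stable maximal parabolic subgroups of $\bar G$ and summing the resulting equalities.

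First, I would translate the left-hand count into a count of maximal parabolic subgroups of $\bar G_{\s^e}$ normalised by $g\s$. Since $\bar G_{\s^e}$ is normal in $G = \<\bar G_{\s^e}, \s\>$ with cyclic quotient and parabolic subgroups of $\bar G_{\s^e}$ are self-normalising, the assignment $M \mapsto M \cap \bar G_{\s^e}$ gives a bijection between the maximal parabolic subgroups of $G$ containing $g\s$ and the maximal parabolic subgroups of $\bar G_{\s^e}$ normalised by $g\s$, with inverse $P \mapsto \<P, g\s\>$. On the right-hand side no translation is required: the maximal parabolic subgroups of $\bar G_\s$ containing $F(g\s)$ are the objects we want to count. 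So it suffices to compare these two counts.

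Recall that the maximal parabolic subgroups of $\bar G$ fall into finitely many $\bar G$-conjugacy classes (one per node of the Dynkin diagram), and for $\t \in \{\s, \s^e\}$ the maximal parabolic subgroups of $\bar G_\t$ are precisely the fixed-point subgroups $\bar H_\t$ of the $\t$-stable maximal parabolic subgroups $\bar H$ of $\bar G$. If $g\s$ normalises $\bar Q_{\s^e}$ for some $\s^e$-stable maximal parabolic $\bar Q$ of $\bar G$, then unwinding the action of $g\s$ shows that $\bar Q$ and its image under $\s$ are $\bar G$-conjugate, so the $\bar G$-class of $\bar Q$ is $\s$-stable, and by the Lang--Steinberg theorem (applied to the connected self-normaliser $\bar H$) such a class contains a $\s$-stable representative; an analogous remark holds on the $\bar G_\s$ side. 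Hence only $\s$-stable $\bar G$-classes contribute to either count.

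Fix a set $\mathcal{P}$ of $\s$-stable representatives, one from each $\s$-stable $\bar G$-class of maximal parabolic subgroups of $\bar G$. For each $\bar H \in \mathcal{P}$, $\bar H$ is closed and connected, and the self-normalising hypotheses $N_{\bar G_{\s^e}}(\bar H_{\s^e}) = \bar H_{\s^e}$ and $N_{\bar G_\s}(\bar H_\s) = \bar H_\s$ hold because parabolic subgroups of reductive algebraic groups are self-normalising. Applying Lemma~\ref{l:shintani_subgroups} to each $\bar H$ and summing over $\mathcal{P}$ completes the proof. The main point requiring care is the initial bijection together with the bookkeeping of how $\bar G$-classes, $\bar G_{\s^e}$-classes and $\bar G_\s$-classes of maximal parabolics match up; once these are in place, the result is a direct application of Lemma~\ref{l:shintani_subgroups}.
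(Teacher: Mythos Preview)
Your proposal is correct and follows essentially the same approach as the paper: both apply Lemma~\ref{l:shintani_subgroups} to each $\bar G$-class of $\s$-stable maximal parabolic subgroups and then sum, with the main work being the bookkeeping that only $\s$-stable classes contribute and that the $G$-classes and $\bar G_\s$-classes of maximal parabolics correspond bijectively to these. The paper phrases the reduction slightly differently (arguing directly that a maximal parabolic of $G$ must arise from a $\s$-stable, not merely $\s^e$-stable, parabolic of $\bar G$ in order to be maximal), but the content is the same.
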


\begin{proof}
Let $\bar{H}$ be a maximal $\s$-stable parabolic subgroup of $\bar{G}$, so $\bar{H}$ is connected and self-normalising. Then $\bar{H}_\s$ is a maximal parabolic subgroup of $\bar{G}_\s$ and we have $N_{\bar{G}_\s}(\bar{H}_\s) = \bar{H}_\s$. Similarly, $H = N_G(\bar{H}_{\s^e}) = \< \bar{H}_{\s^e}, \s \>$ is a maximal parabolic subgroup of $G$ and $N_G(H) = H$. Therefore, Lemma~\ref{l:shintani_subgroups} implies that the number of $G$-conjugates of $H$ that contain $g\s$ equals the number of $\bar{G}_\s$-conjugates of $\bar{H}_\s$ that contain $F(g\s)$.

Let us now explain why this gives the desired result. First observe that every maximal parabolic subgroup of $\bar{G}_\s$ is $\bar{G}_\s$-conjugate to $\bar{H}_\s$ for a maximal $\s$-stable parabolic subgroup $\bar{H}$ of $\bar{G}$, and similarly, every maximal parabolic subgroup of $G$ is $G$-conjugate to $N_G(\bar{H}_{\s^e})$ for a maximal $\s$-stable parabolic subgroup $\bar{H}$ of $\bar{G}$ (in the latter case,  $\bar{H}$ is $\s$-stable, not just $\s^e$-stable, because otherwise $N_G(\bar{H}_{\s^e})$ would not be maximal in $G = \<\bar{G}_{\s^e},\s\>$). Moreover, if $\bar{H}$ and $\bar{K}$ are two different maximal $\s$-stable parabolic subgroups of $\bar{G}$, then $\bar{H}_\s$ and $\bar{K}_\s$ are $\bar{G}_\s$-conjugate if and only if $N_{G}(\bar{H}_{\s^e})$ and $N_{G}(\bar{K}_{\s^e})$ are $G$-conjugate, since both of these conditions are equivalent to $\bar{H}$ and $\bar{K}$ being $\bar{G}$-conjugate. The result follows.
\end{proof}

\begin{lem}\label{l:centraliser_bound}
If $g \in \bar{G}_{\s^e}$ and $H \leq \<\bar{G}_{\s^e}, \s\>$, then $g\s$ is contained in at most $|C_{\bar{G}_{\s}}(F(g\s))|$ distinct $\<\bar{G}_{\s^e}, \s\>$-conjugates of $H$. 
\end{lem}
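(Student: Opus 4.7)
The plan is to bound the number of $K$-conjugates of $H$ containing $g\sigma$ by means of the standard double-counting formula, combine this with the coset structure of $K := \langle \bar{G}_{\sigma^e}, \sigma \rangle$ over $\bar{G}_{\sigma^e}$, and then appeal to the centralizer identity that accompanies the Shintani correspondence to reach the bound in the stated form.

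First, I would record the elementary fact that for any subgroup $H \leq K$ and any $x \in K$, the number of $K$-conjugates of $H$ that contain $x$ equals $|x^K \cap H| \cdot |C_K(x)|/|N_K(H)|$, proved by double counting the set $\{k \in K : kxk^{-1} \in H\}$. Setting $x = g\sigma$, the task reduces to bounding each of the three factors.

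The main idea is then to exploit that $K/\bar{G}_{\sigma^e} \cong C_e$ is generated by the image of $\sigma$. If no $K$-conjugate of $H$ contains $g\sigma$ there is nothing to show, so I may assume $x^K \cap H$ is nonempty. Since $x^K$ is contained in the coset $\bar{G}_{\sigma^e}\sigma$, this forces $H$ to meet $\bar{G}_{\sigma^e}\sigma$, and hence $H \bar{G}_{\sigma^e} = K$, so $|H \cap \bar{G}_{\sigma^e}| = |H|/e$. This yields $|x^K \cap H| \leq |H \cap \bar{G}_{\sigma^e}\sigma| = |H|/e$. A similar comparison shows that $C_K(x) \cap \bar{G}_{\sigma^e} = C_{\bar{G}_{\sigma^e}}(g\sigma)$ has index at most $e$ in $C_K(x)$, so $|C_K(x)| \leq e \cdot |C_{\bar{G}_{\sigma^e}}(g\sigma)|$. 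Since $|N_K(H)| \geq |H|$, plugging into the counting formula gives
\[
\frac{|x^K \cap H| \cdot |C_K(x)|}{|N_K(H)|} \leq \frac{(|H|/e) \cdot e \cdot |C_{\bar{G}_{\sigma^e}}(g\sigma)|}{|H|} = |C_{\bar{G}_{\sigma^e}}(g\sigma)|.
\]

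The final step is to replace $|C_{\bar{G}_{\sigma^e}}(g\sigma)|$ by $|C_{\bar{G}_\sigma}(F(g\sigma))|$, and this is where I expect the only real subtlety. The two centralizers have the same order: with $a \in \bar{G}$ chosen so that $g = a a^{-\sigma^{-1}}$ (as in Definition~\ref{d:shin}), conjugation by $a$ transports commutation with $g\sigma$ to commutation with $(g\sigma)^e$ (hence with $F(g\sigma)$) and matches the $\sigma^e$-fixed condition on one side with the $\sigma$-fixed condition on the other. This centralizer identity is a standard companion of the Shintani bijection (see the treatment in \cite{BG, HarperClassical}), and, quoted, it completes the proof.
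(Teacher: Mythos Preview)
Your argument is correct, and it is essentially the standard proof of this result as given in the references the paper cites (\cite[Lemma~3.3.4]{HarperClassical} and \cite[Proposition~2.16(i)]{BG}); note that the paper itself does not prove this lemma but simply quotes those sources. The one place worth tightening is the final step: the equality $|C_{\bar{G}_{\sigma^e}}(g\sigma)| = |C_{\bar{G}_\sigma}(F(g\sigma))|$ is indeed a standard companion of the Shintani bijection, but your sketch (``conjugation by $a$ transports \dots'') is a little loose, so it is right that you ultimately defer to the cited references for it.
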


In the proof of Theorem~\ref{t:exceptional}, there will be some cases where we will be unable to apply Shintani descent directly (for instance, see Example~\ref{ex:shintani_substitute}). In such a situation, we will often appeal to the following result (see \cite[Lemma~3.4.1]{HarperClassical}). In the statement of the lemma, by an automorphism $\rho$ of $\bar{G}$ we mean an \emph{algebraic} automorphism, in the sense that both $\rho$ and $\rho^{-1}$ are morphisms of varieties. 

\begin{lem}\label{l:shintani_substitute}
Let $\rho$ be an automorphism of $\bar{G}$ and let $\bar{K}$ be a closed connected $\s$-stable subgroup of $C_{\bar{G}}(\rho)$. Set $G = \bar{G}_{\rho\s^e}{:}\<\rho,\s\>$ and let $y \in \bar{K}_{\s} \leq \bar{G}_{\rho\s^e}$. 
\begin{itemize}\addtolength{\itemsep}{0.2\baselineskip}
\item[{\rm (i)}] There exists $g \in \bar{K}_{\s^e} \leq \bar{G}_{\rho\s^e}$ such that $(g\s)^e$ and $y\rho^{-1}$ are $\bar{G}$-conjugate elements of $G$.
\item[{\rm (ii)}] Suppose there is a positive integer $d$ such that $(\rho\s^e)^d = \s^{ed}$ as endomorphisms of $\bar{G}$.

\vspace{1mm}

\begin{itemize}\addtolength{\itemsep}{0.2\baselineskip}
\item[{\rm (a)}]  For each subgroup $H$ of $\< \bar{G}_{\rho\s^e}, \s\>$,  $g\s$ is contained in at most $|C_{\bar{G}_{\s}}(y^d)|$ distinct $\bar{G}_{\rho\s^e}$-conjugates of $H$.
\item[{\rm (b)}] For all closed connected $\s$-stable subgroups $\bar{H}$ of $\bar{G}$ such that $N_{\bar{G}_{\s}}(\bar{H}_{\s}) = \bar{H}_{\s}$ and $N_{\bar{G}_{\s^{de}}}(\bar{H}_{\s^{de}}) = \bar{H}_{\s^{de}}$, the number of $\bar{G}_{\s^{de}}$-conjugates of $\bar{H}_{\s^{de}}$ normalised by $g\s$ is equal to the number of $\bar{G}_{\s}$-conjugates of $\bar{H}_{\s}$ containing $y^d$.
\end{itemize}
\end{itemize}
\end{lem}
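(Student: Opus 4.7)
The plan is to reduce both parts to the classical Shintani correspondence of Theorem~\ref{t:shintani} applied in two different triples, bridged by the hypothesis $\bar K \leq C_{\bar G}(\rho)$. The key point is that $\rho$ acts trivially on $\bar K$, so $\bar K_{\rho\s^e} = \bar K_{\s^e} \subseteq \bar G_{\rho\s^e}$ and every $a \in \bar K$ commutes with $\rho$; this will allow conjugations by elements of $\bar K$ to move $\s^e$ past $\rho^{-1}$ without incident inside the ambient group $G$.

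For part~(i), I would apply the Shintani map $F_{\bar K}$ of $(\bar K, \s, e)$, which by Theorem~\ref{t:shintani} is a bijection onto the $\bar K_\s$-classes, to produce $g \in \bar K_{\s^e}$ with $F_{\bar K}(g\s) = y^{\bar K_\s}$. By Definition~\ref{d:shin} there is an element $a \in \bar K$ with $g = a a^{-\s^{-1}}$, and expanding $(g\s)^e = N(g)\s^e$ for $N(g) = g g^\s \cdots g^{\s^{e-1}}$, the Shintani identity translates to $N(g) = a y a^{-1}$ in $\bar K$. Working inside $G$, the defining relation of the semidirect product identifies $\s^e$ with $\rho^{-1}$ modulo the action on $\bar G_{\rho\s^e}$, and since $a^\rho = a$ a direct computation shows that conjugating $y\rho^{-1}$ by $a$ yields $N(g)\rho^{-1} = (g\s)^e$, giving the required $\bar G$-conjugacy.

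For part~(ii), the hypothesis $(\rho\s^e)^d = \s^{ed}$ forces $\rho^d = 1$ and $\bar G_{\rho\s^e} \leq \bar G_{\s^{ed}}$, so $g$ lies in $\bar G_{\s^{ed}}$ and the classical Shintani map $F$ of $(\bar G, \s, de)$ applies to $g\s$. Iterating part~(i), $(g\s)^{ed} = ((g\s)^e)^d$ is $\bar G$-conjugate to $(y\rho^{-1})^d$, which equals $y^d$ because $y$ centralises $\rho$ and $\rho^d = 1$; hence $F(g\s) = y^d$ up to $\bar G_\s$-conjugacy. Part~(a) then follows from Lemma~\ref{l:centraliser_bound} applied to $F$, since $H \leq \<\bar G_{\rho\s^e},\s\> \leq \<\bar G_{\s^{ed}},\s\>$ implies that every $\bar G_{\rho\s^e}$-conjugate of $H$ is a $\<\bar G_{\s^{ed}},\s\>$-conjugate, and the bounding quantity is $|C_{\bar G_\s}(F(g\s))| = |C_{\bar G_\s}(y^d)|$. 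Part~(b) is the analogous application of Lemma~\ref{l:shintani_subgroups} to the same map, using $F(g\s) = y^d$.

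The principal obstacle is part~(i), and specifically the careful bookkeeping needed to verify that the Shintani identity produced inside $\bar K$ (which naturally lives in the coset $\bar K_{\s^e}\s^e$) genuinely translates into the claimed $\bar G$-conjugacy of elements in the coset $\bar G \rho^{-1}$ of $G$; the hypothesis $\bar K \leq C_{\bar G}(\rho)$ is exactly what makes this translation clean. Once part~(i) is in place, part~(ii) is essentially formal, amounting to iterating part~(i) and quoting the classical Shintani lemmas of Section~\ref{ss:p_shintani}.
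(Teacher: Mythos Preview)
The paper does not supply its own proof of this lemma; it simply cites \cite[Lemma~3.4.1]{HarperClassical}. So there is nothing in the paper to compare against directly. That said, your outline is the natural argument and is essentially how the result is proved in the cited reference: apply Shintani descent inside the connected $\sigma$-stable subgroup $\bar K$ to produce $g$ and the conjugating element $a\in\bar K$, then use $\bar K\leq C_{\bar G}(\rho)$ to pass the conjugation across the extra factor $\rho^{-1}$; for part~(ii), embed everything into the Shintani triple $(\bar G,\sigma,de)$ via $\bar G_{\rho\sigma^e}\leq\bar G_{\sigma^{de}}$ and invoke Lemmas~\ref{l:shintani_subgroups} and~\ref{l:centraliser_bound}.

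Two points are worth tightening. First, the assertion that ``the defining relation of the semidirect product identifies $\sigma^e$ with $\rho^{-1}$'' deserves a sentence of justification: since $\rho\sigma^e$ acts trivially on $\bar G_{\rho\sigma^e}$ and $\langle\rho,\sigma\rangle$ is taken inside $\Aut(\bar G_{\rho\sigma^e})$, one has $\rho\sigma^e=1$ in $G$, hence $\sigma^e=\rho^{-1}$ and $(g\sigma)^e=N(g)\rho^{-1}$ on the nose. Second, in part~(ii) the step ``hence $F(g\sigma)=y^d$'' needs the observation that the \emph{same} element $a\in\bar K\leq\bar G$ satisfying $g=aa^{-\sigma^{-1}}$ serves as the Lang--Steinberg witness for the Shintani map of $(\bar G,\sigma,de)$ as well, so $F(g\sigma)=a^{-1}(g\sigma)^{de}a$ can be computed directly from $a^{-1}(g\sigma)^{e}a=y\rho^{-1}$ and $\rho^d=1$, rather than merely inferring it from a $\bar G$-conjugacy. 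With these two clarifications, your argument is complete.
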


\begin{rem}\label{r:shintani_order}
Adopt the notation in Lemma~\ref{l:shintani_substitute} and fix an appropriate element $g \in \bar{G}_{\rho\s^e}$ as in part (i). Now $e$ divides $|g\s|$ and $(g\s)^e$ is $\bar{G}$-conjugate to $y\rho^{-1}$, so $|g\s| = e|y\rho^{-1}|$. Since $y \in C_{\bar{G}}(\rho)$ we have $|y\rho^{-1}| = |y||\rho|/(|y|,|\rho|)$ and thus $|g\s| = e|y||\rho|/(|y|,|\rho|)$.
\end{rem}

The following example explains why Lemma~\ref{l:shintani_substitute} will be useful in the proof of Theorem~\ref{t:exceptional}.

\begin{ex}\label{ex:shintani_substitute}
Here we explain how we can use Shintani descent to identify a conjugacy class in the coset $E_6(q)h$, where $q=3^f$ and $h$ is a graph-field automorphism. 

Let $\bar{G}$ be the adjoint simple algebraic group $E_6$ over $\bar{\mathbb{F}}_3$. Let $\p$ and $\g$ be the standard Frobenius endomorphism and graph automorphism of $\bar{G}$, respectively, so $[\g,\p]=1$ (see Definition~\ref{d:aut}). Write $\s = \g\p^i$,  where $i$ divides $f$, and set $e=f/i > 1$. Write $q=q_0^e$ and let $F$ be the Shintani map of $(\bar{G},\s,e)$. 

If $e$ is even, then $\bar{G}_{\s^e} = \bar{G}_{\p^f} = E_6(q)$ and $\bar{G}_\s = \bar{G}_{\g\p} = {}^2E_6(q_0)$. Therefore,
\[
F\: \{ (g\g\p^i)^{E_6(q)} \,\:\, g \in E_6(q) \} \to \{ y^{{}^2E_6(q_0)} \,\:\, y \in {}^2E_6(q_0) \}
\]
and we can use $F$ to choose an element in the coset $E_6(q)\g\p^i$ as desired.

However, if $e$ is odd, then $\bar{G}_{\s^e} = \bar{G}_{\g\p^f} = {}^2E_6(q)$ and the Shintani map 
\[
F\: \{ (g\g\p^i)^{{}^2E_6(q)} \,\:\, g \in {}^2E_6(q) \} \to \{ y^{{}^2E_6(q_0)} \,\:\, y \in {}^2E_6(q_0) \}
\]
provides no information about the coset $E_6(q)\g\p^i$. In this case we apply Lemma~\ref{l:shintani_substitute}, with $\r = \g$. To this end, let $\bar{K} = C_{\bar{G}}(\g) = F_4$, which is connected. Then Lemma~\ref{l:shintani_substitute} allows us to choose an element in the coset $E_6(q)\gamma\p^i$. More precisely, part~(i) of the lemma implies that for all $y \in F_4(q_0) \leq {}^2E_6(q_0)$, there exists $g \in E_6(q)$ such that $(g\g\p^i)^e$ is $\bar{G}$-conjugate to $y\g$. In addition, part~(ii) provides  information on the maximal overgroups of $g\g\p^i$.
\end{ex}

\section{Proof of Theorem \ref{t:exceptional}: low rank groups} \label{s:low}

We now turn to the proof of Theorem~\ref{t:exceptional}, which will be spread across Sections~\ref{s:low}--\ref{s:3d4}. Since the theorem for simple exceptional groups is proved in \cite{BGK}, we will always assume that $G$ is almost simple, but not simple.

We begin in this section by handling the low rank almost simple groups $G$ with socle 
\begin{equation} \label{e:a}
G_0 \in \{ {}^2B_2(q), \, {}^2G_2(q)', \, {}^2F_4(q)', \, G_2(q)' \}.
\end{equation}

First we establish Theorem~\ref{t:exceptional} in some special cases. 

\begin{prop} \label{p:small}
The conclusion to Theorem \ref{t:exceptional} holds when 
\begin{equation} \label{e:small}
G_0 \in \{ {}^2B_2(8),  {}^2G_2(3)',  {}^2F_4(2)',  G_2(2)',  G_2(3),  G_2(4) \}.
\end{equation}
\end{prop}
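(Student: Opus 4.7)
The proof plan for Proposition~\ref{p:small} is essentially computational, since we are dealing with a finite list of almost simple groups of modest order. The asymptotic clause in Theorem~\ref{t:exceptional} is vacuous here, so it suffices to show $u(G) \geqs 2$ for each almost simple $G$ with $G \ne G_0$ and $G_0$ in the stated list.

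First, I would enumerate the relevant groups. For each of the six socles in \eqref{e:small}, the outer automorphism group has order at most $3$, so up to $\Aut(G_0)$-conjugacy there are only one or two groups $G = \la G_0, g\ra$ with $G \ne G_0$ to consider, namely ${}^2B_2(8).3$, ${}^2G_2(3)$, ${}^2F_4(2)$, $G_2(2)$, $G_2(3).2$, $G_2(4).2$. Each of these admits a faithful permutation or matrix representation of small degree that can be constructed in \textsc{Magma}~\cite{magma} via the \texttt{AutomorphismGroupSimpleGroup} functionality (or directly as matrix groups).

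The main work is then to apply the probabilistic method of Lemma~\ref{l:prob_method} in each case. For each $G$ I would proceed as follows:
\begin{itemize}\addtolength{\itemsep}{0.2\baselineskip}
\item[(i)] Compute the conjugacy classes of $G$ and restrict attention to those classes $x^G$ with $\la G_0, x \ra = G$.
\item[(ii)] For each candidate class $x^G$, compute the set $\mathcal{M}(x)$ of maximal overgroups of $x$ in $G$ (this is feasible using \texttt{MaximalSubgroups} together with a membership test, and the maximal subgroups of the relevant almost simple groups are already known from \cite[Chapter~8]{BHR}, \cite{Malle}, \cite{ATLAS} and related references).
\item[(iii)] For each nontrivial conjugacy class $z^G$, compute the upper bound
\[
P(z,x) \leqs \sum_{H \in \mathcal{M}(x)} \mathrm{fpr}(z, G/H) = \sum_{H \in \mathcal{M}(x)} \frac{|z^G \cap H|}{|z^G|}.
\]
\item[(iv)] Verify that $\max_{z} P(z,x) < 1/2$ for some suitable class $x^G$, which by Lemma~\ref{l:prob_method} yields $u(G) \geqs 2$, witnessed by $x^G$.
\end{itemize}

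The computation in step~(iii) reduces to intersecting conjugacy classes of $G$ with a modest list of subgroups, all of which have order well under $10^7$, so the runtime is negligible. In the unlikely event that no single class $x^G$ delivers the bound $P(z,x) < 1/2$ uniformly in $z$ via the fixed point ratio estimate, one can fall back on the direct combinatorial check: loop over pairs of representatives $(a,b)$ of the nontrivial conjugacy classes of $G$ and search for a conjugate $x^c$ with $\la a, x^c\ra = \la b, x^c\ra = G$, which is again a finite search.

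The main obstacle is really bookkeeping rather than mathematics: one must make sure that every conjugacy class of $G$ outside the identity is tested, and that the chosen $x$ satisfies $\la G_0, x\ra = G$ (in particular, has image generating $G/G_0$). Once these checks are in place, a short \textsc{Magma} script of the type used in \cite[Section~6.5]{BGK} for almost simple sporadic groups settles each of the six cases, completing the proof.
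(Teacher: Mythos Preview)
Your proposal is correct and takes essentially the same approach as the paper: a direct computational verification in \textsc{Magma} for each of the six groups, each of which has a unique proper almost simple overgroup since $|{\rm Out}(G_0)|$ is prime. The paper differs only in that it bypasses the fixed point ratio estimate and goes straight to the direct combinatorial check you describe as a fallback (using the method of \cite[Section~2.3]{Harper17}), recording for each group a specific class $x^G$ and an explicit $k$ with $u(G) \geqs k$ witnessed by $x^G$.
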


\begin{proof}
In each of these cases, we may assume that $G = {\rm Aut}(G_0)$ since this is the only almost simple group $G$ with ${\rm soc}(G) = G_0$ and $G \ne G_0$. We prove the result by way of computation in {\sc Magma} \cite{magma}. 

To do this, we first construct $G$ using the command {\tt AutomorphismGroupSimpleGroup} and we note that $|G:G_0|$ is prime. Our method for studying $u(G)$ computationally is described in \cite[Section~2.3]{Harper17} and the relevant code is given in \cite[Appendix~A]{HarperClassical}. In this way, we can verify that the bound $u(G) \geq k$ is witnessed by the conjugacy class $x^G$, where $k$ and $x^G$ are are as follows (in terms of the {\sc Atlas} \cite{ATLAS} notation):
\[
\begin{array}{ccccccc}
\hline
G_0     & {}^2B_2(8)   & {}^2G_2(3)'  & {}^2F_4(2)'  & G_2(2)'      & G_2(3)       & G_2(4)       \\
|G:G_0| & 3            & 3            & 2            & 2            & 2            & 2            \\
x^G       & \texttt{15A} & \texttt{9D}  & \texttt{12C} & \texttt{12C} & \texttt{18A} & \texttt{24B} \\
k       & 90           & 6            & 18           & 3            & 23           & 10           \\
\hline
\end{array}
\]
(The computations were carried out using {\sc Magma} 2.24-4 on a 2.7~GHz machine with 128~GB RAM. The largest computation took 2~seconds and 32~MB of memory.)
\end{proof}

Suppose $G = \< G_0, g \>$ with $G_0$ as in \eqref{e:a} and write $q=p^f$ where $p$ is prime. In view of Proposition~\ref{p:small}, we may (and will) assume for the remainder of this section that $G_0$ is not one of the groups in \eqref{e:small}. Then by Proposition~\ref{p:cases}, it suffices to consider the groups recorded in Table \ref{tab:small_cases}. In the table (and the proofs below), we refer freely to the notation for automorphisms in Definition~\ref{d:aut}.

\begin{table}
\caption{The relevant groups $G = \la G_0,g \ra$ for $G_0$ in \eqref{e:a}} \label{tab:small_cases}
\begin{center}
\vspace{-8mm}
\[
\begin{array}{clcl} 
\hline
\text{Case} & G_0                                & g      & \text{Conditions}                             \\ 
\hline    
\text{(a)}  & G_2(q)                             & \p^i   & \text{$i$ is a proper divisor of $f$}         \\
\text{(b)}  & G_2(q)                             & \rho^i & \text{$i$ is an odd divisor of $f$ \&\ $p=3$} \\
\text{(c)}  & {}^2B_2(q), {}^2G_2(q), {}^2F_4(q) & \rho^i & \text{$i$ is a proper divisor of $f$}         \\ 
\hline 
\end{array}
\]
\end{center}
\end{table}

\begin{prop} \label{p:low_a}
The conclusion to Theorem~\ref{t:exceptional} holds in case~(a) of Table \ref{tab:small_cases}.
\end{prop}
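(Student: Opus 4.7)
The plan is to apply the probabilistic method of Lemma~\ref{l:prob_method} combined with Shintani descent. Set $e = f/i > 1$ and write $q = q_0^e$ with $q_0 = p^i$. Take $\bar{G}$ to be the simple adjoint algebraic group of type $G_2$ over $\bar{\mathbb{F}}_p$, set $\s = \p^i$, and let $F\: \bar{G}_{\s^e}\s \to \bar{G}_\s$ be the associated Shintani map. Then $\bar{G}_{\s^e} = G_2(q)$ and $\bar{G}_\s = G_2(q_0)$, so $F$ provides a bijection between $G_2(q)$-classes in the coset $G_2(q)\p^i$ and conjugacy classes in $G_2(q_0)$.

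First I would choose $y \in G_2(q_0)$ to be a regular semisimple element generating a cyclic maximal torus $T$ of order $\Phi_3(q_0) = q_0^2 + q_0 + 1$ or $\Phi_6(q_0) = q_0^2 - q_0 + 1$ (selecting the sign to avoid small divisibility obstructions; the special case $q_0 = 2$ can be addressed with a variant choice since $G_2(q_0)'$ is handled via its ${\rm U}_3(3)$ structure). With such a $y$, the maximal subgroups of $G_2(q_0)$ containing $y$ are very few in number: inspection of the lists of Kleidman and Cooperstein in \cite[Chapter~8]{BHR} shows that $\mathcal{M}_{G_2(q_0)}(y)$ consists of $N_{G_2(q_0)}(T)$ together with at most one reductive overgroup of type ${\rm SL}_3^{\pm}(q_0).2$, and no subfield or exotic subgroup can contain $y$ since $|y|$ is a primitive prime divisor of $q_0^6 - 1$.

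Let $x \in G_2(q)\p^i$ be a Shintani correspondent of $y$, so Lemma~\ref{l:shintani_order} gives $|x| = e|y|$. By Lemma~\ref{l:shintani_subgroups} applied to each $\s$-stable maximal positive-dimensional closed subgroup $\bar{H}$, the number of $G$-conjugates of $N_G(\bar{H}_{\s^e})$ containing $x$ equals the number of $\bar{G}_\s$-conjugates of $\bar{H}_\s$ containing $y$. Combined with Lemma~\ref{l:centraliser_bound}, which bounds by $|C_{G_2(q_0)}(y)| = |T|$ the number of $G$-conjugates of any almost simple or non-geometric maximal subgroup containing $x$, this gives explicit control of $\mathcal{M}(x)$: it consists of a bounded number of overgroups, each essentially coming from the maximal overgroups of $y$ in $G_2(q_0)$, plus at most $|T|$ conjugates of each almost simple subgroup listed in Theorem~\ref{t:simples} (whose socle orders are bounded independently of $q$).

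The final step is to verify, for every nontrivial $z \in G$, the bound
\[
\sum_{H \in \mathcal{M}(x)} {\rm fpr}(z, G/H) < \tfrac{1}{2},
\]
using the fixed point ratio estimates for exceptional groups in \cite{LLS}. For the subgroups coming from $T$-normalisers and from ${\rm SL}_3^{\pm}$-type overgroups, the known bounds are $O(q^{-c})$ for some $c > 0$, while the almost simple contributions from Theorem~\ref{t:simples} are $O(q^{-c'})$ multiplied by the bounded number $|T| = O(q_0^2)$. Summing yields a quantity $f(q)$ with $f(q) < \tfrac{1}{2}$ and $f(q) \to 0$ as $q \to \infty$, giving both $u(G) \geq 2$ and the asymptotic conclusion in Theorem~\ref{t:exceptional}. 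The main obstacle will be the careful enumeration of $\mathcal{M}(x)$ in the smallest cases (in particular when $q_0 \in \{2,3\}$), where the maximal subgroup classification of $G_2(q_0)$ contains sporadic overgroups that must either be ruled out by a parity or divisibility argument on $|y|$, or absorbed into the fixed point ratio bound with some room to spare.
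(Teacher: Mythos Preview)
Your approach is essentially the same as the paper's: Shintani descent with $\s=\p^i$, choice of $y\in G_2(q_0)$ in a small cyclic maximal torus (the paper takes $|y|=q_0^2-q_0+1$ for $q_0>2$ and $|y|=7$ for $q_0=2$), and then the fixed point ratio bound ${\rm fpr}(z,G/H)\leqslant(q^2-q+1)^{-1}$ from \cite{LLS}. The only real difference is that you attempt a finer enumeration of the overgroups of $y$ in $G_2(q_0)$ via Lemma~\ref{l:shintani_subgroups}, whereas the paper is content with the cruder estimate: exclude parabolics via Corollary~\ref{c:shintani_parabolic}, then observe there are at most $6-3\delta_{2,p}+\log\log q$ non-parabolic classes in $\mathcal{M}$ (the maximal subgroups of $G_2(q)$ being completely known from \cite{Coop,KleidmanG2}), and multiply by $|C_{G_2(q_0)}(y)|=|y|$ using Lemma~\ref{l:centraliser_bound}. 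This coarser product already yields $P(z,x)<\tfrac12$ for $q\geqslant 8$ (with small ad hoc refinements for $q\in\{8,9\}$) and $P(z,x)<q^{-1/2}$ for large $q$, so the extra precision you propose, and the appeal to Theorem~\ref{t:simples}, are not needed here.
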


\begin{proof}
Let $G_0 = G_2(q)$ where $q=p^f$ with $f > 1$ and $q \geq 8$. Let $\bar{G}$ be the simple algebraic group $G_2$ over the algebraic closure of $\mathbb{F}_p$ and let $\p$ be a standard Frobenius endomorphism of $\bar{G}$. Let $\s = \p^i$ and write $e=f/i$ and $q_0=p^i$, so $q=q_0^e$ and $e > 1$. Then $\bar{G}_\s = G_2(q_0)$ and $\bar{G}_{\s^e} = G_2(q)$, and by identifying $\s$ with its restriction to $\bar{G}_{\s^e}$ we see that $\s=g$. Let $F\:G_2(q)g \to G_2(q_0)$ be the Shintani map of $(\bar{G},\s,e)$ (see Definition \ref{d:shin}) and choose $y \in G_2(q_0)$ such that 
\[
|y| = \left\{
\begin{array}{ll}
q_0^2-q_0+1 & \text{if $q_0 > 2$} \\
7           & \text{if $q_0=2$}.
\end{array}
\right.
\]
Note that $C_{G_2(q_0)}(y)=\<y\>$ (see \cite{Chang, Enomoto}). By Theorem~\ref{t:shintani}, fix $x \in G_0g$ such that $F(x) = y$.

Recall that $\mathcal{M}$ is the set of maximal subgroups $H$ of $G$ with $G = HG_0$ and $\mathcal{M}(x)$ is the collection of subgroups in $\mathcal{M}$ containing $x$. The maximal subgroups of $G$ are recorded in \cite[Tables~8.30, 8.41 and~8.42]{BHR}. The element $y$ is not contained in any maximal parabolic subgroup of $G_2(q_0)$ since $|y|$ does not divide the order of any such subgroup. Therefore, Corollary~\ref{c:shintani_parabolic} informs us that there are no maximal parabolic subgroups in $\mathcal{M}(x)$. Consequently, by inspecting the relevant tables in \cite[Chapter 8]{BHR}, we see that there are at most $6 - 3\delta_{2,p} + \log\log q$ conjugacy classes of subgroups in $\mathcal{M}(x)$. Moreover, if $H$ is any subgroup of $G$, then Lemma~\ref{l:centraliser_bound} implies that $x$ is contained in at most $|C_{G_2(q_0)}(y)|=|y|$ distinct $G$-conjugates of $H$. Therefore,
\[
|\mathcal{M}(x)| \leq (6 - 3\delta_{2,p} + \log\log q) \cdot |y|.
\]

Let $z \in G$ be nontrivial. Then \cite[Theorem~1]{LLS} gives ${\rm fpr}(z,G/H) \leqs (q^2-q+1)^{-1}$ for all $H \in \mathcal{M}$ and thus Lemma~\ref{l:prob_method}(i) yields
\[
P(z,x) \leq \sum_{H \in \mathcal{M}(x)} {\rm fpr}(z,G/H) \leq  (6-3\delta_{2,p}+\log\log{q}) \cdot |y| \cdot (q^2-q+1)^{-1}. 
\] 
For $q > 49$, this upper bound proves that $P(z,x) < q^{-1/2}$, so $P(z,x) \to 0$ as $q \to \infty$. In view of Lemma~\ref{l:prob_method}(ii), we conclude that $u(G) \to \infty$ as $q \to \infty$.

Moreover, since $q \geqs 8$, one checks that this upper bound is less than $\frac{1}{2}$ unless $q \in \{8,9\}$. If $q=9$, then $|y|=7$ and we check that there are only $3$ conjugacy classes of subgroups in $\mathcal{M}$ with order divisible by $7$ (here we are using the fact that $G$ does not contain any graph-field automorphisms). This allows us to replace the leading factor $6+\log\log{q}$ in the above bound by $3$ and this is sufficient to see that $P(z,x) < \frac{1}{2}$. Similarly, if $q=8$ then we can replace $3+\log\log q$ by $4$, which yields
\[
P(z,x) \leq \sum_{H \in \mathcal{M}(x)}{\rm fpr}(z,G/H) \leqs 4 \cdot 7 \cdot \frac{1}{57}  = \frac{28}{57}< \frac{1}{2}.
\]
Therefore, $P(z,x) < \frac{1}{2}$ in all cases and thus Lemma~\ref{l:prob_method}(ii) implies that $u(G) \geq 2$.
\end{proof}

\begin{prop} \label{p:low_b}
The conclusion to Theorem~\ref{t:exceptional} holds in case~(b) of Table \ref{tab:small_cases}.
\end{prop}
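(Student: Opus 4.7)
My plan is to adapt the proof of Proposition~\ref{p:low_a} to the graph-field setting, with Shintani descent now relating $G_2(q)$ to a Ree group rather than to a subfield $G_2$. Let $\bar G$ be the simple algebraic group of type $G_2$ over $\bar{\mathbb F}_3$, let $\p$ be the standard Frobenius endomorphism, and let $\r$ be the Steinberg endomorphism of $\bar G$ with $\r^2=\p$. Take $\s=\r^i$ and $e=2f/i$, which is an even integer since $i$ is odd and divides $f$. Then $\bar G_{\s^e}=\bar G_{\p^f}=G_2(q)=G_0$ and $\bar G_\s={}^2G_2(q_0)$, where $q_0=3^i$. Identifying $\s$ with its restriction to $G_0$ makes $g=\s$, and the Shintani map $F\:G_0g\to{}^2G_2(q_0)$ (see Definition~\ref{d:shin}) provides a bijection between $G_0$-classes in $G_0g$ and $\bar G_\s$-classes in the Ree group.

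Next I would choose $y\in\bar G_\s$ to be a regular semisimple element of order $q_0+\sqrt{3q_0}+1$ if $q_0\geqs 27$ and order $7$ if $q_0=3$ (that is, $i=1$); from Kleidman's analysis of ${}^2G_2(q_0)$, such a $y$ satisfies $C_{\bar G_\s}(y)=\<y\>$ and is contained in a unique maximal subgroup (a torus normaliser) of ${}^2G_2(q_0)$. Let $x\in G_0g$ be a Shintani correspondent of $y$, so $F(x)=y$ and by Lemma~\ref{l:shintani_order} the order of $x$ is $e|y|$. Since $|y|$ is coprime to the order of every maximal parabolic subgroup of ${}^2G_2(q_0)$, Corollary~\ref{c:shintani_parabolic} rules out the two classes of maximal parabolic subgroups of $G_0$ from $\mathcal M(x)$. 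Using the lists of maximal subgroups of $G_2(q)$ recorded in \cite[Tables~8.41 and~8.42]{BHR}, the number of remaining $\bar G_{\s^e}$-classes in $\mathcal M(x)$ is at most $c+\log\log q$ for a small explicit constant $c$ (the $\log\log q$ accounts for subfield subgroups $G_2(q')$ and ${}^2G_2(q')$), and Lemma~\ref{l:centraliser_bound} bounds each class contribution by $|C_{\bar G_\s}(y)|=|y|$. Combining these with the fixed point ratio estimate ${\rm fpr}(z,G/H)\leqs(q^2-q+1)^{-1}$ from \cite[Theorem~1]{LLS}, Lemma~\ref{l:prob_method} gives
\[
P(z,x)\leqs(c+\log\log q)\cdot|y|\cdot(q^2-q+1)^{-1}
\]
for every nontrivial $z\in G$, which tends to $0$ as $q\to\infty$ and is below $\tfrac12$ for all but a handful of small values of $q$. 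This proves the asymptotic statement in Theorem~\ref{t:exceptional} in this case and gives $u(G)\geqs 2$ outside a short explicit list.

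The main obstacle will be the small residual cases, in particular $q_0=3$ (so $i=1$), where $|y|=7$ and the generic counting bound above is too weak. Here I would sharpen $|\mathcal M(x)|$ by exploiting the fact that $7$ divides the order of very few maximal subgroups of $G$ — among subfield-type subgroups only those of the form ${}^2G_2(3^m)$ with odd $m\mid f$ qualify — and further restrict the classes of maximal overgroups by observing that $x$ lies in the specific coset $G_0g$, as in the $q=9$ case of Proposition~\ref{p:low_a}. If a finite list of very small $q$ remains after these refinements, they can be dispatched by a direct \textsc{Magma} computation along the lines of Proposition~\ref{p:small}.
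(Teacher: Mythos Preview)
Your proposal is correct and follows essentially the same route as the paper: Shintani descent from $G_2(q)$ to ${}^2G_2(q_0)$ via $\s=\r^i$, the choice of $y$ in the ``upper'' cyclic torus of order $q_0+\sqrt{3q_0}+1$ (which equals $7$ when $q_0=3$), the centraliser bound $|y|$ from Lemma~\ref{l:centraliser_bound}, and the LLS fixed point ratio bound $(q^2-q+1)^{-1}$. The paper is slightly more economical in that it does not bother to exclude parabolics and simply takes the total class count $7+\log\log q$ from \cite{KleidmanG2}; the only residual case is $q=9$, which is handled exactly as you suggest, by cutting down the class count via divisibility by $7$ (no \textsc{Magma} is needed).
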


\begin{proof}
Let $G_0=G_2(q)$ where $q=3^f$ and $f > 1$. Let $\bar{G} = G_2$ and let $\r$ be the Steinberg endomorphism of $\bar{G}$ from Definition~\ref{d:aut}(ii). Let $\s = \r^i$ and write $e=f/i$ and $q_0=3^i$, so $q=q_0^e$ and $e \geq 1$. Let $F\:G_2(q)g \to {}^2G_2(q_0)$ be the Shintani map of $(\bar{G},\s,2e)$, and fix $y \in {}^2G_2(q_0)$ with
\[
|y| = q_0+\sqrt{3q_0}+1.
\] 
Note that $C_{{}^2G_2(q_0)}(y)=\<y\>$ (see (3) in the main theorem of \cite{Ward}). Let $x \in G$ satisfy $F(x) = y$. 

By \cite{KleidmanG2}, there are at most $7 + \log\log q$ classes of subgroups in $\mathcal{M}$ and by Lemma~\ref{l:centraliser_bound}, $\mathcal{M}(x)$ contains at most $|C_{{}^2G_2(q_0)}(y)|=|y|$ conjugates of any given subgroup $H$ of $G$. Let $z \in G$ be nontrivial. Then \cite[Theorem~1]{LLS} gives ${\rm fpr}(z,G/H) \leqs (q^2-q+1)^{-1}$ for all $H \in \mathcal{M}$, so
\[
P(z,x) \leq \sum_{H \in \mathcal{M}(x)}{\rm fpr}(z,G/H) \leq (7+\log\log{q}) \cdot |y| \cdot (q^2-q+1)^{-1}.
\] 
This upper bound is less than $\frac{1}{2}$ for $q > 9$ and less than $q^{-1/2}$ for $q > 27$. Finally, if $q=9$ then there are only $2$ classes of subgroups in $\mathcal{M}$ with order divisible by $|y|=7$ and we obtain $P(z,x) < \frac{1}{2}$ by replacing the $7+\log\log{q}$ factor in the above bound by $2$. The result now follows by Lemma~\ref{l:prob_method}.
\end{proof}

\begin{prop} \label{p:low_c}
The conclusion to Theorem~\ref{t:exceptional} holds in case~(c) of Table \ref{tab:small_cases}.
\end{prop}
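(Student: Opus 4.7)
The plan is to adapt the Shintani descent strategy from Propositions~\ref{p:low_a} and~\ref{p:low_b}, treating each of the three twisted families in turn. For each $G_0 \in \{{}^2B_2(q), {}^2G_2(q), {}^2F_4(q)\}$, let $\bar{G}$ be the corresponding ambient algebraic group in the relevant characteristic, and let $\rho$ be the Steinberg endomorphism from Definition~\ref{d:aut}(ii). Set $\sigma = \rho^i$ and $e = f/i > 1$, so $q = q_0^e$ with $q_0 = p^i$ and $\bar{G}_{\sigma^e} = G_0$. Identifying $\sigma$ with its restriction to $G_0$ gives $\sigma = g$, and we obtain the Shintani map $F \colon G_0 g \to \bar{G}_\sigma$ of $(\bar{G}, \sigma, e)$.

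Next, I would choose $y \in \bar{G}_\sigma$ to be an element of (near-)maximal order lying in a cyclic self-centralising torus. For ${}^2B_2(q_0)$, take $|y| = q_0 + \sqrt{2q_0} + 1$; for ${}^2G_2(q_0)$, take $|y| = q_0 + \sqrt{3q_0} + 1$; and for ${}^2F_4(q_0)$, take $|y| = q_0^2 + q_0\sqrt{2q_0} + q_0 + \sqrt{2q_0} + 1$. In each case $C_{\bar{G}_\sigma}(y) = \<y\>$, by Suzuki~\cite{Suzuki}, Ward~\cite{Ward}, and Malle~\cite{Malle}, respectively. Let $x \in G_0 g$ satisfy $F(x) = y$. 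Since $|y|$ does not divide the order of any proper parabolic subgroup of $\bar{G}_\sigma$, Corollary~\ref{c:shintani_parabolic} eliminates all parabolic subgroups from $\mathcal{M}(x)$. Combined with the maximal subgroup classifications (\cite{Suzuki, KleidmanG2, Malle} as compiled in \cite[Chapter~8]{BHR}), this bounds the number of $G$-classes of subgroups in $\mathcal{M}(x)$ by a small absolute constant plus $\log\log q$ to account for subfield subgroups. Applying Lemma~\ref{l:centraliser_bound} gives
\[
|\mathcal{M}(x)| \leqs (c(G_0) + \log\log q) \cdot |y|
\]
for a small constant $c(G_0)$.

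For any nontrivial $z \in G$, I would invoke \cite[Theorem~1]{LLS} to bound ${\rm fpr}(z, G/H)$ uniformly over $H \in \mathcal{M}$; the relevant bounds are comparable to $(q^2+1)^{-1}$, $(q^3+1)^{-1}$ and $(q^4(q^2-1))^{-1}$ for the three families. Multiplying these bounds by $|\mathcal{M}(x)|$ and comparing with $|y|$, the estimate for $P(z,x)$ given by Lemma~\ref{l:prob_method}(i) is less than $\frac{1}{2}$ once $q$ is sufficiently large, and tends to $0$ as $q \to \infty$. This yields $u(G) \geqs 2$ and $u(G_n) \to \infty$ via Lemma~\ref{l:prob_method}(ii).

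The principal obstacle is the handling of very small values of $q_0$, particularly when $f$ is an odd prime and $i = 1$ is forced. In those cases $q_0 \in \{2,3\}$ and $\bar{G}_\sigma$ need not be simple (for instance, ${}^2B_2(2)$ is the Frobenius group of order $20$ and ${}^2F_4(2)$ properly contains ${}^2F_4(2)'$), which complicates the rough class count and the self-centralising torus element argument. The Shintani bijection itself remains valid in this regime, and an element $y$ of the prescribed order still exists with $C_{\bar{G}_\sigma}(y) = \<y\>$, so the method should still go through after a slightly more careful inspection of the maximal subgroups of $G$ that have order divisible by $|y|$ (as in the treatment of $q \in \{8,9\}$ in Proposition~\ref{p:low_a}). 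A secondary finite collection of low-$q$ cases, if the uniform bound fails, can be dispatched in the style of Proposition~\ref{p:small} by direct computation in {\sc Magma}.
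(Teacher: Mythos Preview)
Your approach is essentially the same as the paper's: Shintani descent via $F\colon G_0g \to \bar G_\sigma$, the same choice of $y$ in each family, the centraliser bound $|\mathcal{M}(x)| \leqs (m+\log\log q)\cdot|y|$, and the fixed point ratio bounds from \cite[Theorem~1]{LLS}. A few minor corrections and simplifications are worth noting.

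First, your worry about small $q_0$ is unnecessary. The Shintani map and Lemma~\ref{l:centraliser_bound} require only that $\bar G$ be connected with Steinberg endomorphism $\sigma$; simplicity of $\bar G_\sigma$ is irrelevant. The self-centralising element $y$ of the stated order exists in ${}^2B_2(2)$, ${}^2G_2(3)$ and ${}^2F_4(2)$ (orders $5$, $7$, $13$ respectively), and since $f$ is odd and $i$ is a proper divisor we have $e=f/i\geqs 3$, so $q\geqs q_0^3$. With the actual LLS bounds (see below) one checks directly that $(m+\log\log q)\cdot|y|\cdot a(q)<\tfrac12$ for all admissible $q$, with no residual cases requiring {\sc Magma}.

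Second, your stated fixed point ratio bounds are slightly too optimistic. For ${}^2B_2(q)$ the bound from \cite[Theorem~1]{LLS} is $(q^{2/\ell}+1)/(q^2+1)$, where $\ell$ is the least prime divisor of $f$ (the numerator comes from subfield subgroups), not $(q^2+1)^{-1}$; for ${}^2G_2(q)$ it is $(q^2-q+1)^{-1}$; for ${}^2F_4(q)$ it is $q^{-4}$. These weaker bounds still suffice.

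Third, the reference for $C_{\bar G_\sigma}(y)=\langle y\rangle$ in the ${}^2F_4$ case should be Shinoda \cite{Shinoda75} (conjugacy classes), not Malle \cite{Malle} (maximal subgroups). Finally, your preliminary elimination of parabolics via Corollary~\ref{c:shintani_parabolic} is valid but unnecessary: the paper simply bounds the total number of classes in $\mathcal{M}$ by $m+\log\log q$ with $m\in\{4,5,11\}$, parabolics included, and the estimate goes through.
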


\begin{proof}
Let $G_0 \in \{{}^2B_2(q), \, {}^2G_2(q), \, {}^2F_4(q)\}$. As usual, let $q=p^f$ where $p$ is prime, and note that $f \geq 3$ is odd. In each case, let $\bar{G}$ be the ambient simple algebraic group and let $\rho$ be the Steinberg endomorphism of $\bar{G}$ from Definition~\ref{d:aut}(ii). Let $\s = \rho^i$ and write $e=f/i$ and $q_0=p^i$, so $q=q_0^e$ and $e \geqs 3$ is odd. Let $F:G_0g \to \bar{G}_{\s}$ be the Shintani map of $(\bar{G},\s,e)$. 

Choose $y \in \bar{G}_\s$ as in Table~\ref{tab:small} and let $x \in G$ be a Shintani correspondent of $y$. By inspecting \cite{KleidmanG2, Malle, Suzuki}, we see that there are at most $m+\log \log q$ classes of subgroups in $\mathcal{M}$, where $m$ is given in Table~\ref{tab:small}. Moreover, $C_{\bar{G}_\s}(y) = \<y\>$ (see \cite{Shinoda75, Suzuki, Ward}), so $|\mathcal{M}(x)| \leq (m+\log\log{q})\cdot|y|$ by Lemma~\ref{l:centraliser_bound}. In addition, \cite[Theorem~1]{LLS} gives ${\rm fpr}(z,G/H) \leq a(q)$ for all $H \in \mathcal{M}$ and all nontrivial $z \in G$, where $a(q)$ is presented in Table~\ref{tab:small} (note that in the first row of Table~\ref{tab:small}, $\ell$ is the least prime divisor of $f$). Therefore, 
\[
P(z,x) \leq \sum_{H \in \mathcal{M}(x)}{\rm fpr}(z,G/H) \leq (m+\log\log{q}) \cdot |y| \cdot a(q).
\]
One can check that this bound gives $P(z,x) < \frac{1}{2}$ and $P(z,x) < q^{-1/6}$, whence $u(G) \geq 2$ and $u(G) \to \infty$ as $q \to \infty$. 
\end{proof}

\begin{table}
\caption{Data for the groups in case (c) of Table \ref{tab:small_cases}} \label{tab:small}
\begin{center}
\vspace{-8mm}
\[
\begin{array}{cccc} 
\hline
G_0        & |y|                                   & m  & a(q)                   \\ 
\hline    
{}^2B_2(q) & q_0+\sqrt{2q_0}+1                     & 4  & (q^{2/\ell}+1)/(q^2+1) \\
{}^2G_2(q) & q_0+\sqrt{3q_0}+1                     & 5  & (q^2-q+1)^{-1}         \\   
{}^2F_4(q) & q_0^2+\sqrt{2q_0^3}+q_0+\sqrt{2q_0}+1 & 11 & q^{-4}                 \\ 
\hline 
\end{array}
\]
\end{center}
\end{table}

By combining Propositions~\ref{p:small}--\ref{p:low_c}, we have now established the following theorem.

\begin{thm} \label{t:low}
The conclusion to Theorem~\ref{t:exceptional} holds when $G_0$ is one of the groups in \eqref{e:a}.
\end{thm}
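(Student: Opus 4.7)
The plan is to prove Theorem~\ref{t:low} by exhausting the possibilities for $G = \la G_0, g\ra$ using Proposition~\ref{p:cases}. First I would dispose of a finite list of small exceptional cases (such as ${}^2B_2(8)$, ${}^2G_2(3)'$, ${}^2F_4(2)'$, $G_2(2)'$, $G_2(3)$, $G_2(4)$) by direct computation in \textsc{Magma}, constructing $\Aut(G_0)$ via \texttt{AutomorphismGroupSimpleGroup} and verifying $u(G) \geqs 2$ with the algorithmic method referenced in Section~\ref{ss:p_prob}. Having removed these, Proposition~\ref{p:cases} reduces the problem to the three families of cases tabulated in Table~\ref{tab:small_cases}: namely $G_0 = G_2(q)$ with $g = \p^i$, $G_0 = G_2(q)$ with $p=3$ and $g = \r^i$, and the twisted families ${}^2B_2(q)$, ${}^2G_2(q)$, ${}^2F_4(q)$ with $g = \r^i$.

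In each remaining case the strategy is uniform: apply Shintani descent with $(\bar{G},\s,e)$, where $\s$ is a suitable restriction of $\p^i$ or $\r^i$ and $e = f/i$, and use the Shintani map $F$ to pull back a well-chosen element $y$ in the smaller group $\bar{G}_\s$ to an element $x \in G_0 g$. The critical choice is to pick $y$ to be a generator of a maximally anisotropic torus (a Coxeter-type element), so that $|y|$ is a large primitive-prime-divisor-like integer (for instance $q_0^2 - q_0 + 1$ for $G_2(q_0)$, $q_0+\sqrt{3q_0}+1$ for ${}^2G_2(q_0)$, and the analogues for ${}^2B_2$ and ${}^2F_4$). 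The point of this choice is twofold: first, $y$ lies in no proper parabolic subgroup of $\bar{G}_\s$, so by Corollary~\ref{c:shintani_parabolic} the overgroup $x$ avoids all maximal parabolics of $G$; second, $C_{\bar{G}_\s}(y) = \la y\ra$ is known from the literature (Chang, Enomoto, Shinoda, Suzuki, Ward), which bounds the multiplicity of each maximal overgroup of $x$ by $|y|$ via Lemma~\ref{l:centraliser_bound}.

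With $x$ fixed, I would bound $|\mathcal{M}(x)|$ by counting: the classification of maximal subgroups of $G$ (from \cite{BHR}, \cite{Malle}, \cite{KleidmanG2}, \cite{Suzuki}, etc.) gives only $O(1) + \log\log q$ classes of non-parabolic maximal subgroups containing $G_0$. Combined with the Lemma~\ref{l:centraliser_bound} bound, this yields $|\mathcal{M}(x)| \leqs (c + \log\log q)\cdot |y|$ for a small constant $c$ depending on the family. Then for any nontrivial $z \in G$, the fixed point ratio bounds of Lawther--Liebeck--Seitz \cite[Theorem~1]{LLS} give a uniform estimate ${\rm fpr}(z, G/H) \leqs a(q)$ (roughly of order $q^{-2}$), whence Lemma~\ref{l:prob_method}(i) yields
\[
P(z,x) \leqs (c + \log\log q) \cdot |y| \cdot a(q).
\]
This bound tends to $0$ as $q \to \infty$, giving the asymptotic statement, and a direct check shows it is less than $\tfrac{1}{2}$ for all remaining $q$ in each family, giving $u(G) \geqs 2$ via Lemma~\ref{l:prob_method}(ii).

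The main obstacle will be the finitely many borderline values of $q$ where the generic bound of the form $(c+\log\log q)\cdot |y| \cdot a(q)$ fails to be less than $\tfrac{1}{2}$ (for example, $q \in \{8, 9\}$ in the $G_2(q)$ case, or small $q$ in the Suzuki/Ree families). For these, I would need to refine the count of classes in $\mathcal{M}(x)$ by restricting to maximal subgroups whose order is divisible by $|y|$, which is a much shorter list extractable from the tables in \cite[Chapter~8]{BHR} together with \cite{KleidmanG2, Malle, Suzuki}; this sharpens the leading constant enough to recover $P(z,x) < \tfrac{1}{2}$. The truly exceptional small $q$ falling outside the reach of the tightened generic argument get absorbed into the computational base cases handled by Proposition~\ref{p:small}.
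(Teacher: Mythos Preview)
Your proposal is correct and matches the paper's approach essentially line for line: the paper also begins with a \textsc{Magma} treatment of the small cases (Proposition~\ref{p:small}), then handles the three families of Table~\ref{tab:small_cases} via Shintani descent with exactly the torus elements you describe, bounds $|\mathcal{M}(x)|$ by (class count)$\cdot |y|$ using Lemma~\ref{l:centraliser_bound} and the cited maximal subgroup classifications, applies \cite[Theorem~1]{LLS} for the fixed point ratio, and sharpens the class count for the borderline values $q\in\{8,9\}$ in the $G_2$ case precisely as you anticipate. The only detail you do not make explicit is that for $q_0=2$ in the $G_2(q_0)$ case one must take $|y|=7$ rather than $q_0^2-q_0+1=3$, but this is a routine adjustment within your framework.
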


In the next five sections, we will complete the proof of Theorem~\ref{t:exceptional} by handling the remaining groups with $G_0 \in \{ E_8(q), E_7(q),  E_6^{\e}(q),  F_4(q),  {}^3D_4(q) \}$.

\section{Proof of Theorem \ref{t:exceptional}: \texorpdfstring{$G_0 = E_8(q)$}{G0 = E8(q)}} \label{s:e8}

In this section, we prove Theorem~\ref{t:exceptional} for almost simple groups $G$ with socle $G_0 = E_8(q)$, where $q=p^f$. By Proposition~\ref{p:cases}, we may assume that $G = \< G_0, g \>$, where $g=\p^i$ for the field automorphism $\p$ in Definition \ref{d:aut}(i) and a proper divisor $i$ of $f$.  

Let $\bar{G}$ be the algebraic group $E_8$ over $\bar{\mathbb{F}}_p$, let $\s$ be the Frobenius endomorphism $\p^i$ of $\bar{G}$ and let $e = f/i$, so $G_0 = \bar{G}_{\s^e}$. Set $q=q_0^e$ and let $F\: E_8(q)g \to E_8(q_0)$ be the Shintani map of $(\bar{G},\s,e)$. 

Fix an element $y \in E_8(q_0)$ such that  
\[
|y|= q_0^8+q_0^7-q_0^5-q_0^4-q_0^3+q_0+1
\]
and $C_{E_8(q_0)}(y) = \la y \ra$ (see \cite{Lubeck} or \cite[Section 3]{FJ}). Let $x \in G$ be a Shintani correspondent of $y$ (that is, choose $x \in G$ such that $F(x) = y$). Then by Lemma~\ref{l:shintani_order}, we have $|x|=e|y|$ and we note that $|y| = 331$ if $q_0=2$ and $|y| \geq 8401$ if $q_0 \geq 3$.

Recall that for integers $a, b \geq 2$, a prime $r$ is said to be a \emph{primitive prime divisor} of $a^b-1$ if $r$ divides $a^b-1$ but $r$ does not divide $a^i-1$ for all $1 \leq i < b$. A theorem of Zsigmondy \cite{Zsigmondy} asserts that $a^b-1$ has at least one primitive prime divisor for all integers $a,b \geq 2$ unless $(a,b) = (2,6)$, or $a$ is a Mersenne prime and $b=2$. In particular, $q_0^{30}-1$ has a primitive prime divisor, and by considering the factorisation of $q_0^{30}-1$ as a product of cyclotomic polynomials, we see that such a primitive prime divisor necessarily divides $|y|$. 

As usual, we write $\mathcal{M}$ for the set of maximal subgroups $H$ of $G$ with $G = HG_0$ and $\mathcal{M}(x)$ for the collection of subgroups in $\mathcal{M}$ containing $x$. In the analysis below, we will refer repeatedly to the partition $\mathcal{M} = \mathcal{M}_1 \cup \mathcal{M}_2 \cup \mathcal{M}_3$ in \eqref{e:mpart}.

\begin{prop} \label{p:e8_max}
We have $\mathcal{M}(x) \subseteq \mathcal{M}_1$.
\end{prop}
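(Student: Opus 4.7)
The plan is to rule out both $\mathcal{M}_2$ and $\mathcal{M}_3$ from $\mathcal{M}(x)$ by combining an order bound with a primitive prime divisor argument. The two controlling facts are: first, $|x| = e|y|$ with $e \geqs 2$ and $|y| = \Phi_{30}(q_0) \geqs 331$, so $|x| \geqs 662$; second, by Zsigmondy's theorem there is a primitive prime divisor $r$ of $q_0^{30}-1$, and this $r$ divides $\Phi_{30}(q_0) = |y|$, hence $|x|$. Writing $q_0 = p^i$, the prime $r$ is at least $31$ and has multiplicative order exactly $30i$ modulo $p$.

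The class $\mathcal{M}_3$ is immediate: by Proposition~\ref{p:orders}, every $H \in \mathcal{M}_3$ satisfies ${\rm meo}(H) \leqs d(E_8(q)) = 210 < |x|$, so $x \notin H$. Now let $H \in \mathcal{M}_2$ with socle $S$ a simple group of Lie type over $\mathbb{F}_t$ where $t = p^b$. Theorem~\ref{t:simples} gives ${\rm rk}(S) \leqs 4$ and restricts $(S,t)$ to one of three explicit families, in every case with $|{\rm Out}(S)| < r$. Hence $r \mid |H|$ would force $r \mid |S|$, and every cyclotomic factor $\Phi_k(t)$ of $|S|$ satisfies $k \leqs 12$ (the maximum $12$ being attained only for $S$ of type $F_4$, ${}^3D_4$ or ${}^2F_4$). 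Thus $r \mid |S|$ forces the divisibility $30i \mid bk$ for some such $k$, which is severe.

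Running through the three families: when $t \leqs 9$ one has $b \leqs 3$, so $bk \leqs 36$ forces $i = 1$ and $bk = 30$. Inspecting the cyclotomic factorisations of $|S|$ for all rank-at-most-$4$ groups over fields of size at most $9$, the only configuration with $bk = 30$ is $S = {\rm U}_5(8)$, via $\Phi_{10}(t) \mid t^5+1$. When $S = {\rm L}_{3}^{\e}(16)$ one has $b = 4$ and $k \leqs 3$, so $bk \leqs 12 < 30i$ and $r \nmid |S|$. When $S \in \{{\rm L}_{2}(t), {}^2B_2(t)\}$, the bound $t \leqs 1312(2,q-1)$ from Theorem~\ref{t:simples}(iii) combined with the small available $k$ admits no solutions. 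When $S = {}^2G_2(t)$ the constraint $30i \mid 6b$ together with $b = 2k+1 \leqs 7$ odd leaves only $(i,t) = (1,243)$.

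The two residual configurations $S = {\rm U}_5(8)$ and $S = {}^2G_2(243)$ are eliminated by an order bound: Proposition~\ref{p:sp} yields ${\rm meo}({\rm Aut}({\rm U}_{5}(8))) \leqs 130$, and Table~\ref{tab:meo} gives ${\rm meo}({\rm Aut}({}^2G_2(243))) \leqs 5 \cdot 271 = 1355$, whereas $|x| \geqs 662$ when $q_0 = 2$ and $|x| \geqs 16802$ when $q_0 = 3$, comfortably exceeding the respective bounds. The main obstacle is essentially bookkeeping: keeping track of which cyclotomic polynomials $\Phi_k(t)$ genuinely divide $|S|$ for each candidate socle of Theorem~\ref{t:simples}, and verifying that these two small-field exceptions are precisely the places where one must invoke the sharper bounds of Proposition~\ref{p:sp} and Table~\ref{tab:meo} instead of the generic bound of Proposition~\ref{p:meo}(i).
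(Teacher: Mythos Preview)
Your approach is correct and takes a genuinely different route from the paper. The paper proceeds by a direct case analysis: for each family of candidate socles $S$ from Theorem~\ref{t:simples} it either applies the ${\rm meo}$ bound from Proposition~\ref{p:meo} to rule out $S$ outright, or reduces to a specific small value of $q_0$ and then checks that $|S|$ is not divisible by the particular prime $331$ (for $q_0=2$) or $31$ (for $q_0=3$). Your primitive prime divisor argument is more uniform: the single constraint $30i \mid bk$ with $bk \leqs 42$ forces $i=1$ and cuts the list down to the two exceptional socles ${\rm U}_5(8)$ and ${}^2G_2(243)$, which are then eliminated by order bounds. This buys a cleaner logical structure at the cost of some cyclotomic bookkeeping; the paper's version avoids that bookkeeping but repeats essentially the same divisibility check several times.

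Three imprecisions need attention. First, to guarantee that $p$ has multiplicative order exactly $30i$ modulo $r$, take $r$ to be a primitive prime divisor of $p^{30i}-1$ rather than of $q_0^{30}-1$; a prime with $q_0$-order $30$ may have $p$-order $30g$ for a proper divisor $g$ of $i$ (whenever $\gcd(i/g,30)=1$), and then the constraint you obtain is only $30g \mid bk$. Second, the assertion $|{\rm Out}(S)| < r$ is not literally true in all cases (for instance $|{\rm Out}({\rm P}\Omega_8^+(9))| = 48$ while $r$ could equal $31$); what you need, and what holds, is that every prime factor of $|{\rm Out}(S)|$ is at most $7 < 31 \leqs r$, so $r \nmid |{\rm Out}(S)|$ and hence $r \mid |S|$. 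Third, Proposition~\ref{p:sp} bounds ${\rm meo}({\rm Aut}({\rm U}_5(8)) \setminus {\rm U}_5(8))$, not ${\rm meo}({\rm Aut}({\rm U}_5(8)))$ --- the latter is at least $3641$. To invoke the bound $130$ you must note that $S \leqs G_0$ while $x \notin G_0$, so $x \in H \setminus S \subseteq {\rm Aut}(S) \setminus S$ and then $|x| \leqs 130 < 662$ gives the contradiction.
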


\begin{proof}
Let $H \in \mathcal{M}(x)$. If $H \in \mathcal{M}_3$, then Proposition~\ref{p:orders} gives ${\rm meo}(H) \leqs 210$, which is incompatible with the bound $|x| \geqs 331e$. Therefore, we may assume $H \in \mathcal{M}_2$. Let $S$ be the socle of $H$, which is a simple group of Lie type over a field $\mathbb{F}_t$ of characteristic $p$. We proceed by considering the possibilities for $S$ given in Theorem~\ref{t:simples}.

If $S = {\rm L}_{3}^{\e}(16)$, then Proposition~\ref{p:meo} gives ${\rm meo}(H) \leqs 273 < |x|$, so this case does not arise. Next assume that $S = {\rm L}_{2}(t)$ and $t \leqs 1312(2,t-1)$. By applying Proposition~\ref{p:meo}, we reduce to the case $q_0=2$, so $|x|=331e$ and $t = 2^k$ with $k \leqs 10$. However, for each $k$, it is easy to check that $|S|$ is indivisible by $331$, so this case does not arise. Next assume that $S = {}^2B_2(t)$, so $p=2$ and $t = 2^{2k+1}$ with $k \leqs 4$. Here Proposition~\ref{p:meo} gives ${\rm meo}(H) \leqs 4905$, so we immediately reduce to the case $q_0=2$ and one checks that $|S|$ is indivisible by $331$. Similarly, if $S = {}^2G_2(t)'$, then $t = 3^{2k+1}$ with $k \leqs 3$ and ${\rm meo}(H) \leqs 15883 < 8401e$, so this case is also ruled out. 

To complete the proof of the proposition, we may assume that ${\rm rk}(S) \in \{2,3,4\}$ and $t \leqs 9$. We consider each possibility for $S$ in turn, excluding ${}^2B_2(t)$ and ${}^2G_2(t)'$ since these groups were handled above.

To get started, let us assume ${\rm rk}(S)=4$, so 
\[
S \in \{{\rm L}_{5}^{\e}(t), \, {\rm PSp}_{8}(t), \, {\rm P}\Omega_{8}^{\e}(t), \, \Omega_9(t), \, F_4(t), \, {}^2F_4(t), \, {}^3D_4(t)\}.
\]
If $S$ is a classical group, Proposition~\ref{p:meo} gives ${\rm meo}(H) \leqs t^5/(t-1)$ and we immediately reduce to the case $(t,q_0)=(8,2)$. Here one checks that $|S|$ is divisible by $331$ if and only if $S = {\rm U}_{5}(8)$, but this case is ruled out by Proposition~\ref{p:sp}. Now assume that $S = F_4(t)$. By applying the bound on ${\rm meo}(H)$ from Proposition~\ref{p:meo}, we may assume that either $q_0=2$, or $q_0=3$ and $t=9$. For $q_0=2$ we have $t \in \{2,4,8\}$ and $|S|$ is indivisible by $331$. Similarly, if $q_0 = 3$, then $|y| = 8401 = 31\cdot271$, but $|S|$ is indivisible by $31$. The cases where $S$ is ${}^2F_4(t)'$ and ${}^3D_4(t)$ are very similar. For example, if $S = {}^2F_4(t)'$, then we reduce to the case $t=8$ with $q_0=2$ and one checks that $|{}^2F_4(8)|$ is indivisible by $331$.

Now assume ${\rm rk}(S) \in \{2,3\}$. If $S$ is classical, then the bound in Proposition~\ref{p:meo} implies that ${\rm meo}(H) < |x|$. Finally, if $S = G_2(t)'$, then Proposition~\ref{p:meo} gives ${\rm meo}(H) \leqs 8(t^2+t+1)\log t$, which is less than $|x|$ unless $(t,q_0) = (8,2)$. But $|G_2(8)|$ is indivisible by $331$, so this case does not arise and the proof is complete.
\end{proof}

\begin{thm} \label{t:e8}
The conclusion to Theorem~\ref{t:exceptional} holds when $G_0 = E_8(q)$.
\end{thm}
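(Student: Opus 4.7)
The plan is to apply the probabilistic method of Lemma~\ref{l:prob_method} to the Shintani-descent element $x$ constructed just before Proposition~\ref{p:e8_max}. Since that proposition has already ruled out all almost simple subgroups of type (V), every $H \in \mathcal{M}(x)$ is one of the well-classified subgroups in $\mathcal{M}_1$, which is exactly the setting where we have quantitative control over both the number of maximal overgroups and the fixed point ratios.

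The first step is to bound $|\mathcal{M}(x)|$. By Proposition~\ref{p:classes} there are at most $49 + \log\log q$ distinct $\bar{G}_{\s}$-classes of subgroups in $\mathcal{M}_1$, and Lemma~\ref{l:centraliser_bound} shows that each such class contributes at most $|C_{E_8(q_0)}(y)| = |y|$ members to $\mathcal{M}(x)$. Using $|y| \leq 2q_0^8 = 2q^{8/e}$, this gives the explicit bound
\[
|\mathcal{M}(x)| \;\leq\; 2(49 + \log\log q)\, q^{8/e}.
\]

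The second step is to invoke the main fixed point ratio theorem of Lawther--Liebeck--Seitz \cite[Theorem~1]{LLS}: for every nontrivial $z \in G$ and every $H \in \mathcal{M}_1$, we obtain ${\rm fpr}(z,G/H) \leq q^{-a_H}$ for some $a_H$, and the minimum value over all such $H$ is considerably larger than $8$ for $G_0 = E_8(q)$. Combining this with the cardinality bound and Lemma~\ref{l:prob_method}(i) yields
\[
P(z,x) \;\leq\; \sum_{H \in \mathcal{M}(x)} {\rm fpr}(z,G/H) \;<\; \tfrac{1}{2}
\]
uniformly in $q$ and $e$, with the right-hand side tending to $0$ as $q \to \infty$. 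Lemma~\ref{l:prob_method}(ii) then gives $u(G) \geq 2$, witnessed by $x^G$, and since $|G_n| \to \infty$ forces $q_n \to \infty$, the asymptotic statement $u(G_n) \to \infty$ follows from the same computation.

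The main point requiring care is verifying that the LLS exponents are large enough to dominate the factor $q^{8/e}$ for the tightest case $e = 2$ (where $|y|$ is as large as roughly $q^4$), and that this holds uniformly across all four families of type-(I)--(IV) maximal subgroups. If any particular family admits a weaker generic LLS bound, one would refine the estimate on that family individually using its explicit structure (e.g.\ the maximal-rank subgroups of \cite{LSS}, the subfield subgroups contributing the $\log\log q$ term, or the small list of exotic local and $(\mathrm{Alt}_5\times\mathrm{Alt}_6).2^2$ subgroups), and check the finitely many very small values of $q$ by hand or in \textsc{Magma}.
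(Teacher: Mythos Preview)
Your approach is exactly the paper's approach: Proposition~\ref{p:e8_max} forces $\mathcal{M}(x)\subseteq\mathcal{M}_1$, Proposition~\ref{p:classes} bounds the number of $\bar{G}_\sigma$-classes by $49+\log\log q$, Lemma~\ref{l:centraliser_bound} bounds the multiplicity of each class by $|C_{E_8(q_0)}(y)|=|y|$, and then one plugs in the LLS fixed point ratio bound. The structure is identical.

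The only gap is your final paragraph, which hedges instead of finishing. There is nothing to ``verify'' or ``refine individually'': \cite[Theorem~1]{LLS} gives, for $G_0=E_8(q)$ and \emph{every} $H\in\mathcal{M}$, the uniform bound
\[
{\rm fpr}(z,G/H)\;\leq\;\frac{1}{q^{8}(q^{4}-1)}.
\]
Since $e\geq 2$ forces $q_0\leq q^{1/2}$ and $|y|<2q_0^8\leq 2q^4$, one line of arithmetic gives
\[
P(z,x)\;<\;(49+\log\log q)\cdot |y|\cdot \frac{1}{q^{8}(q^{4}-1)}\;<\;\frac{1}{q},
\]
which is simultaneously $<\tfrac12$ for all $q$ and tends to $0$. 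No case analysis, no \textsc{Magma}, and no special treatment of any family is needed. Replace your last paragraph with this explicit citation and computation and the proof is complete.
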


\begin{proof}
We will apply Lemma~\ref{l:prob_method}. Recall that $y \in E_8(q_0)$ and $x$ is a Shintani correspondent of $y$. Let $H \in \mathcal{M}(x)$, so Proposition~\ref{p:e8_max} gives $H \in \mathcal{M}_1$ and Lemma~\ref{l:centraliser_bound} implies that at most $|C_{E_8(q_0)}(y)| = |y|$ distinct $G_0$-conjugates of $H$ are contained in $\mathcal{M}(x)$. Finally, if $z \in G$ is nontrivial then \cite[Theorem~1]{LLS} gives ${\rm fpr}(z,G/H) \leqs q^{-8}(q^4-1)^{-1}$ and therefore Proposition~\ref{p:classes} implies that
\[
P(z,x) \leq \sum_{H \in \mathcal{M}(x)}{\rm fpr}(z,G/H) < (49+\log\log q)\cdot |y| \cdot \frac{1}{q^8(q^4-1)} < \frac{1}{q}, 
\]
noting that $q_0 \leq q^{1/2}$. The result follows.
\end{proof}

\section{Proof of Theorem \ref{t:exceptional}: \texorpdfstring{$G_0 = E_7(q)$}{G0 = E7(q)}} \label{s:e7}

Let $G = \< G_0, g\>$, where $G_0 = E_7(q)$ and $g \in G \setminus G_0$. As usual, write $q=p^f$ with $p$ prime. According to Proposition~\ref{p:cases}, it is enough to prove Theorem~\ref{t:exceptional} for the cases recorded in Table \ref{tab:e7_cases}. In the table, we write $\Delta(f)$ for the set of proper positive divisors of $f$.

\begin{table}
\caption{The relevant groups $G = \la G_0,g \ra$ for $G_0 = E_7(q)$} \label{tab:e7_cases}
\begin{center}
\vspace{-8mm}
\[
\begin{array}{ccll} 
\hline
\text{Case} & g      & \multicolumn{2}{l}{\text{Conditions}} \\ 
\hline    
\text{(a)}  & \d     & \text{$q$ odd} &                      \\
\text{(b)}  & \p^i   &                & i \in \Delta(f)      \\
\text{(c)}  & \d\p^i & \text{$q$ odd} & i \in \Delta(f)      \\
\hline 
\end{array}
\]
\end{center}
\end{table}

\begin{prop} \label{p:e7_a}
The conclusion to Theorem~\ref{t:exceptional} holds in case~(a) of Table~\ref{tab:e7_cases}.
\end{prop}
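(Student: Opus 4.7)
Here $G = \<G_0,\d\> = \Inndiag(E_7(q)) = \bar{G}_\s$, where $\bar{G}$ is the adjoint algebraic group of type $E_7$ and $\s$ is a standard Frobenius endomorphism. Since $\d$ is a diagonal automorphism, the Shintani descent machinery used in Sections~\ref{s:low} and~\ref{s:e8} is not available; instead I plan to construct a suitable element $x$ in the coset $G_0\d$ directly and then invoke the probabilistic method of Lemma~\ref{l:prob_method}.

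To choose $x$, let $r$ be a primitive prime divisor of $q^{18}-1$, which exists for all but finitely many $q$ by Zsigmondy's theorem (the small-$q$ cases can be handled by computer). An element of order $r$ in $\bar{G}_\s$ is regular semisimple and lies in a unique cyclic maximal torus $T \leqs \bar{G}_\s$ of order $\Phi_{18}(q)(q+1)/(2,q-1)$, coming from a regular element of $W(E_7)$ of order $18$. Lifting this torus to the simply connected cover and examining its image under the adjoint isogeny $E_7(q)_{sc} \to \bar{G}_\s$, one can check that $T$ meets the nontrivial coset $G_0\d$ (in fact, in exactly half of its elements). I take $x$ to be a generator of the Sylow $r$-subgroup of $T$ chosen to lie in $G_0\d$; then $C_{\bar{G}_\s}(x) = T$.

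Next, I would show that $\mathcal{M}(x)$ is small, using the fact that $r \mid |H|$ for every $H \in \mathcal{M}(x)$. Exactly as in the proof of Proposition~\ref{p:e8_max}, Propositions~\ref{p:meo} and~\ref{p:orders} together with Theorem~\ref{t:simples} eliminate every $H \in \mathcal{M}_2 \cup \mathcal{M}_3$: the element-order bound ${\rm meo}(H) < r$ rules out each of the small almost-simple candidates, and a direct order inspection rules out the remaining $\mathcal{M}_2$ possibilities of bounded rank over small fields. Among the type~(I) subgroups in $\mathcal{M}_1$, divisibility by the primitive prime divisor $r$ of $q^{18}-1$ forces $H$ to normalise a reductive subgroup of $\bar{G}$ whose $\s$-fixed points admit a torus of order divisible by $\Phi_{18}(q)$, and inspecting \cite{LSS,LS03,CLSS} leaves only a bounded list of $\bar{G}_\s$-conjugacy classes of candidates. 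By Lemma~\ref{l:centraliser_bound}, each such class contributes at most $|C_{\bar{G}_\s}(x)| = |T| = O(q^7)$ subgroups to $\mathcal{M}(x)$.

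Finally, combining this with the fixed point ratio bound ${\rm fpr}(z,G/H) \leqs q^{-a}$ from \cite[Theorem~1]{LLS}, valid for every nontrivial $z \in G$ and every surviving $H$ with $a$ comfortably larger than the rank $7$ of $T$, yields
\[
P(z,x) \leqs \sum_{H \in \mathcal{M}(x)} {\rm fpr}(z,G/H) = O(q^{-a+7}),
\]
which is less than $1/2$ for all $q$ and tends to $0$ as $q \to \infty$. Lemma~\ref{l:prob_method} then gives both $u(G) \geqs 2$ and the asymptotic statement. The principal obstacle is the first step: verifying that a generator of order $r$ of $T$ really lies in the nontrivial coset $G_0\d$, which requires a careful analysis of the adjoint isogeny from $E_7(q)_{sc}$ and the particular torus $T$; the maximal subgroup analysis thereafter is a routine adaptation of the template set up in Proposition~\ref{p:e8_max}.
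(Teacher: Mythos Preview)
Your approach heads in the right direction but is far more involved than the paper's, and as written it does not close. The paper simply invokes \cite[Section~4(i)]{Weigel}, which already establishes both facts you are trying to reconstruct: there is an element $x \in G \setminus G_0$ of order $(q+1)(q^6-q^3+1)$, and this $x$ lies in a \emph{unique} maximal subgroup of $G$ (of type ${}^2E_6(q) \times (q+1)$). With $|\mathcal{M}(x)| = 1$, the bound ${\rm fpr}(z,G/H) \leqs (q^6-q^3+1)^{-1}$ from \cite[Theorem~1]{LLS} finishes the proof in one line.

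Your route has two concrete problems. First, Lemma~\ref{l:centraliser_bound} is a statement about Shintani correspondents $g\sigma$ and bounds the number of overgroups by $|C_{\bar{G}_\sigma}(F(g\sigma))|$; it does not apply here, where there is no Shintani map in play and you are working directly inside $\bar{G}_\sigma$. Second, and more seriously, your final estimate assumes ${\rm fpr}(z,G/H) \leqs q^{-a}$ with ``$a$ comfortably larger than $7$'', but \cite[Theorem~1]{LLS} for $E_7$ only gives ${\rm fpr}(z,G/H) \leqs (q^6-q^3+1)^{-1} \approx q^{-6}$. Combined with your bound $|\mathcal{M}(x)| = O(q^7)$, this yields $P(z,x) = O(q)$, which is useless. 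To salvage the argument you would have to exclude parabolic overgroups and invoke the sharper bound $2q^{-12}$ from \cite[Theorem~2]{LLS}, none of which you mention. Even then you are reproving Weigel's result piecemeal; citing it directly is both correct and far cleaner.
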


\begin{proof}
Here $q$ is odd and $G = \< G_0, \d \> =  \Inndiag(G_0)$. Fix an element $x \in G \setminus G_0$ of order $(q+1)(q^6-q^3+1)$. As explained in \cite[Section~4(i)]{Weigel}, $x$ is contained in a unique maximal subgroup of $G$ (namely, a maximal rank subgroup of type ${}^2E_6(q) \times (q+1)$). Therefore \cite[Theorem~1]{LLS} implies that
\[
P(z,x) \leq \sum_{H \in \mathcal{M}(x)} {\rm fpr}(z,G/H) \leq (q^6-q^3+1)^{-1}
\]
for all nontrivial $z \in G$ and the result follows.
\end{proof}

For the remainder of this section, we will assume that we are in cases~(b) and~(c) of Table~\ref{tab:e7_cases}. Therefore, fix a proper divisor $i$ of $f$ and write $e=f/i$ and $q=q_0^e$. Let $\bar{G}$ be the adjoint algebraic group of type $E_7$ over $\bar{\mathbb{F}}_{p}$, let $\s$ be the Steinberg endomorphism $\p^i$ and let 
\[
F\:\Inndiag(E_7(q))\p^i \to \Inndiag(E_7(q_0))
\]
be the Shintani map of $(\bar{G},\s,e)$. 

If $q$ is even, then we are necessarily in case~(b) and we have $F\:E_7(q)g \to E_7(q_0)$, which means that we can proceed as in Section~\ref{s:e8}. The following lemma will allow us to handle cases~(b) and~(c) simultaneously when $q$ is odd.

\begin{lem} \label{l:e7_split}
If $q$ is odd, then the Shintani map $F$ restricts to bijections
\begin{align*}
&\{ (t  \p^i)^{\Inndiag(E_7(q))} \, : \, t \in E_7(q) \} \to \{ y^{\Inndiag(E_7(q_0))} \, : \, y \in E_7(q_0) \} \\
&\{ (t\d\p^i)^{\Inndiag(E_7(q))} \, : \, t \in E_7(q) \} \to \{ y^{\Inndiag(E_7(q_0))} \, : \, y \in \Inndiag(E_7(q_0)) \setminus E_7(q_0) \}.
\end{align*}
\end{lem}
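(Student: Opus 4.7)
The plan is to apply Lemma~\ref{l:shintani_split} to derive the first bijection and then obtain the second by complementarity. Throughout, write $\bar{G}_{\s^e} = \Inndiag(E_7(q))$ and $\bar{G}_\s = \Inndiag(E_7(q_0))$.

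First I would observe that, since $q$ is odd, the diagonal automorphism $\d$ has order two, so both the domain and codomain of $F$ split into two pieces:
\[
\Inndiag(E_7(q))\p^i = E_7(q)\p^i \,\sqcup\, E_7(q)\d\p^i, \qquad \Inndiag(E_7(q_0)) = E_7(q_0) \,\sqcup\, E_7(q_0)\d.
\]
Each such piece is a union of conjugacy classes in the relevant group: on the codomain side this is immediate, and on the domain side it holds because $\p^i$ normalises $E_7(q)$ and hence acts on the cyclic quotient $\Inndiag(E_7(q))/E_7(q) \cong C_2$ trivially, so conjugation by any element of $\Inndiag(E_7(q))$ preserves the $E_7(q)$-coset of any element of $\Inndiag(E_7(q))\p^i$.

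Next I would invoke Lemma~\ref{l:shintani_split} with $G_0 = (\bar{G}_{\s^e})' = E_7(q)$ and $O^{p'}(\bar{G}_\s) = E_7(q_0)$, where both identifications are valid precisely because $p$ is odd. The hypothesis $\<G_0,\s\> \normeq \<\bar{G}_{\s^e},\s\>$ is immediate since $E_7(q)$ has index two in $\Inndiag(E_7(q))$ and $\s = \p^i$ normalises both. This yields the first claimed bijection directly.

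Finally, I would deduce the second bijection by complementarity: by Theorem~\ref{t:shintani}, $F$ is a bijection from the set of all $\Inndiag(E_7(q))$-classes in $\Inndiag(E_7(q))\p^i$ onto the set of all $\Inndiag(E_7(q_0))$-classes in $\Inndiag(E_7(q_0))$, and the first bijection matches the $E_7(q)\p^i$-classes to the $E_7(q_0)$-classes, so passing to the complements in the two partitions forces the $E_7(q)\d\p^i$-classes to correspond bijectively to the $E_7(q_0)\d$-classes. The only nontrivial point, addressed in the first paragraph, is verifying that the two cosets of $E_7(q)$ in $\Inndiag(E_7(q))\p^i$ are each stable under $\Inndiag(E_7(q))$-conjugation; once this is in place, the argument is purely formal and there is no substantial obstacle.
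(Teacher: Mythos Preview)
Your proposal is correct and follows essentially the same approach as the paper: apply Lemma~\ref{l:shintani_split} (after checking $\langle G_0,\s\rangle \trianglelefteqslant \langle \bar{G}_{\s^e},\s\rangle$ via the index-two observation) to obtain the first bijection, then deduce the second by complementarity using Theorem~\ref{t:shintani}. The paper's proof is slightly terser about the coset-stability point you spell out, but the arguments are the same.
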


\begin{proof}
This was essentially proved in Example~\ref{ex:shintani_split}. First observe that $E_7(q) = (\bar{G}_{\s^e})'$ and $E_7(q_0) = O^{p'}(\bar{G}_\s) = (\bar{G}_{\s})'$. Let us also note that $\<G_0, \s\> = \< E_7(q), \p^i\>$ is an index two (and hence normal) subgroup of $\<\bar{G}_{\s^e}, \s\> = \<\Inndiag(E_7(q)), \p^i\>$. Therefore, Lemma~\ref{l:shintani_split} implies that the Shintani map $F$ restricts to the bijection
\[
F_1\: \{ (t  \p^i)^{\Inndiag(E_7(q))} \, : \, t \in E_7(q) \} \to \{ y^{\Inndiag(E_7(q_0))} \, : \, y \in E_7(q_0) \},
\]
while the restriction of $F$ to the complement of the domain of $F_1$ is the bijection
\[
F_2\: \{ (t\d\p^i)^{\Inndiag(E_7(q))} \, : \, t \in E_7(q) \} \to \{ y^{\Inndiag(E_7(q_0))} \, : \, y \in \Inndiag(E_7(q_0)) \setminus E_7(q_0) \}.
\]
The result follows.
\end{proof}

Fix an element $y \in \Inndiag(E_7(q_0))$ such that 
\[
|y| = 
\left\{
\begin{array}{ll}
(q_0+1)(q_0^6-q_0^3+1) & \text{if $q_0 > 2$} \\
129                    & \text{if $q_0 = 2$}
\end{array}
\right.
\]
and $C_{{\rm Inndiag}(E_7(q_0))}(y^2) = \la y \ra$ (see \cite{Lubeck}). Let $x \in \<\Inndiag(E_7(q)),\p^i\>$ such that $F(x)$ is $y^2$ in case~(b) and $y$ in case~(c).

If $q$ is even, then we are in case~(b) and we have $y \in E_7(q_0)$ and $x \in G = \< G_0, \p^i\>$. If $q$ is odd, then $y \in \Inndiag(E_7(q_0)) \setminus E_7(q_0)$ and $y^2 \in E_7(q_0)$, so Lemma~\ref{l:e7_split} implies that $x \in G = \< G_0, g\>$ in both cases~(b) and~(c). By Lemma~\ref{l:shintani_order}, if we are in case~(b) with $q$ odd, then $|x| = e|y^2| = \frac{1}{2}e|y|$, whereas $|x| = e|y|$ in every other case (note that $|y|=|y^2|$ if $q$ is even). Let us also note that $|y^2| = 1406$ if $q_0=3$ and $|y^2| \geqs 20165$ if $q_0 \geq 4$.

\begin{prop} \label{p:e7_b_max}
Let $H \in \mathcal{M}(x)$. Then $H \in \mathcal{M}_1$ and $H$ is non-parabolic.
\end{prop}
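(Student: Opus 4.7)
The strategy is to eliminate, in turn, all subgroups in $\mathcal{M}_3$, all subgroups in $\mathcal{M}_2$, and finally all maximal parabolic subgroups in $\mathcal{M}_1$, following the blueprint of Proposition~\ref{p:e8_max}. Write $w = F(x)$, so $w \in \{y, y^2\}$ and $|x| = e|w|$ by Lemma~\ref{l:shintani_order}. The minimum $|x| = 258$ is attained in case~(b) at $(q_0,e)=(2,2)$; in every other situation $|x| \geq 2812$.

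First, Proposition~\ref{p:orders} gives ${\rm meo}(H) \leq 63$ for every $H \in \mathcal{M}_3$, and $|x| \geq 258 > 63$, so $\mathcal{M}(x) \cap \mathcal{M}_3 = \emptyset$. Next I would rule out $\mathcal{M}_2$: for $H \in \mathcal{M}_2$ with socle $S$ over $\mathbb{F}_t$, Theorem~\ref{t:simples} (with $b(E_7(q)) = 388$) splits into three sub-cases: (i) $t \leq 9$; (ii) $S = {\rm L}_3^{\e}(16)$; or (iii) $S$ is a rank one group with $t \leq 776$. In (ii), Remark~\ref{r:meo} gives ${\rm meo}({\rm Aut}(S)) \leq 273$; since $S \leq G_0$ forces $q \geq 16$, we have $|x| \geq 516$, a contradiction. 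In (iii), Proposition~\ref{p:meo}(i) provides a bound on ${\rm meo}({\rm Aut}(S))$ linear in $t$ and disposes of all but a bounded family of small $(q_0,t)$, each of which is eliminated by exhibiting a prime divisor of $|w|$ that does not divide $|{\rm Aut}(S)|$. Sub-case (i) is handled analogously: Proposition~\ref{p:meo} excludes the generic classical and exceptional possibilities via the order bound, and the finitely many remaining socles over $\mathbb{F}_t$ with $t \leq 9$ are ruled out by the same divisibility argument.

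For the parabolic subgroups, Corollary~\ref{c:shintani_parabolic} reduces the question to whether $w$ is contained in a maximal parabolic subgroup of $\Inndiag(E_7(q_0))$. By Zsigmondy's theorem, there is a primitive prime divisor $r$ of $q_0^{18} - 1$ when $q_0 \geq 3$, and of $q_0^{14}-1$ when $q_0 = 2$. Such $r$ divides the factor $\Phi_{18}(q_0) = q_0^6 - q_0^3 + 1$ (respectively $\Phi_{14}(q_0) = 43$) of $|w|$, noting that this factor is odd and hence survives in $|y^2|$. However, the Levi factor of any maximal parabolic of $E_7$ has semisimple rank at most $6$, so its order polynomial involves only $\Phi_d(q_0)$ with $d \leq 12$. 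Hence $r$ divides no maximal parabolic order, and $w$ lies in no maximal parabolic of $\Inndiag(E_7(q_0))$.

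The main obstacle lies in executing the $\mathcal{M}_2$ elimination cleanly when $q_0$ is small, since the generic order bounds in Proposition~\ref{p:meo} are not immediately decisive. The required contradiction in each residual case again comes from the large primitive prime divisor of $|w|$ identified above, but the bookkeeping for the various simple groups of small rank over fields of order at most $9$ constitutes the most delicate part of the argument.
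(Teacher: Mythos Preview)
Your overall strategy matches the paper's: eliminate $\mathcal{M}_3$ via Proposition~\ref{p:orders}, eliminate $\mathcal{M}_2$ case by case via Theorem~\ref{t:simples} and Proposition~\ref{p:meo}, then handle parabolics. Your parabolic argument via Zsigmondy primes is a valid alternative to the paper's route, which simply cites from \cite{GK} that $y^2$ lies in a unique maximal subgroup of $E_7(q_0)$ (necessarily non-parabolic) and then invokes Corollary~\ref{c:shintani_parabolic}.

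There is, however, a genuine gap in your treatment of $S = {\rm L}_3^{\e'}(16)$. You assert that ``$S \leq G_0$ forces $q \geq 16$,'' but nothing in the hypotheses justifies this: Theorem~\ref{t:simples} bounds the field of definition of $S$ from above, not $q$ from below, and a type~(V) subgroup over $\mathbb{F}_{16}$ is not a priori excluded inside $E_7(4)$ or $E_7(8)$. The paper's fix is to observe that $|{\rm L}_3^{\e'}(16)|$ is indivisible by $43$, while $43$ divides $|y|=129$ when $q_0=2$; hence $q_0 \geq 3$, and then ${\rm meo}(H) \leq 273 < |x|$ follows since $|y^2| \geq 1406$.

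A smaller slip: your claim that ``in every other situation $|x| \geq 2812$'' is false, since $|x| = 129e$ whenever $q_0=2$ (e.g.\ $(q_0,e)=(2,3)$ gives $|x|=387$). This is harmless for the $\mathcal{M}_3$ step, which only needs $|x|>63$, but it undermines your stated bounds for the residual $\mathcal{M}_2$ cases at $q_0=2$; again it is the divisibility-by-$43$ argument that actually disposes of those.
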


\begin{proof}
We proceed as in the proof of Proposition~\ref{p:e8_max}. By applying Proposition~\ref{p:orders}, we see that $H \not\in \mathcal{M}_3$. Now assume $H \in \mathcal{M}_2$ and let $S$ be the socle of $H$. We need to consider the possibilities for $S$ described in Theorem~\ref{t:simples}.

First assume that $S = {\rm L}_{3}^{\e}(16)$. Here $|S|$ is indivisible by $43$, so $q_0 \geqs 3$ and consequently ${\rm meo}(H) \leqs 273 < |x|$. Next assume $S = {\rm L}_{2}(t)$ with $t \leqs 388(2,t-1)$, so $t \leqs 2^8$ if $t$ is even. This case is ruled out since Proposition~\ref{p:meo} gives ${\rm meo}(H) \leqs t^2/(t-1)<|x|$. Now assume $S = {}^2B_2(t)$ with $t = 2^{2k+1}$ and $k \leqs 3$. Here ${\rm meo}(H) \leqs 1035$ and we may assume $q_0=2$ and $t=2^7$, but one checks that $|S|$ is indivisible by $43$, so this case does not arise. Similarly, if $S = {}^2G_2(t)'$ with $t = 3^{2k+1}$ and $k \leqs 2$, then ${\rm meo}(H) \leqs 1355<|x|$. 

Now assume that ${\rm rk}(S) \in \{2,3\}$ and $t \leqs 9$. If ${\rm rk}(S)=3$, then ${\rm meo}(H) \leqs t^4/(t-1)$ and we reduce to the case $t=8$ with $q_0=2$, but in every case, one checks that $|S|$ is indivisible by $43$. Finally, let us assume ${\rm rk}(S)=2$. If $S$ is classical, then Proposition~\ref{p:meo} implies that ${\rm meo}(H)<|x|$. If $S = G_2(t)'$, then ${\rm meo}(H) \leqs 8(t^2+t+1)\log t$ and this upper bound is less than $|x|$ unless $q_0=2$ and $t \in \{4,8\}$, but in both cases, $|S|$ is indivisible by $43$.

To complete the proof, let us observe that $y^2 \in E_7(q_0)$ is contained in a unique maximal subgroup of $E_7(q_0)$ (see \cite[Tables~III and~IV]{GK}). In particular, $y$ is not contained in a maximal parabolic subgroup of $E_7(q_0)$, so by applying Corollary~\ref{c:shintani_parabolic}, we deduce that $x$ is not contained in a maximal parabolic subgroup of $G$. 
\end{proof}

\begin{prop} \label{p:e7_b}
The conclusion to Theorem~\ref{t:exceptional} holds in cases~(b) and (c) of Table \ref{tab:e7_cases}.
\end{prop}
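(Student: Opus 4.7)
The plan is to imitate the proof of Theorem~\ref{t:e8} but using the refined information already collected about $x$ for $E_7$. We will apply Lemma~\ref{l:prob_method}: it suffices to exhibit an element $x \in G_0 g$ and show that $\sum_{H \in \mathcal{M}(x)} \mathrm{fpr}(z, G/H) < \tfrac{1}{2}$ for every nontrivial $z \in G$, and that this upper bound tends to $0$ as $q \to \infty$. The element $x$ is already chosen above via Shintani descent, and Lemma~\ref{l:e7_split} handles the bookkeeping needed to ensure that $x$ really lies in the correct coset of $G_0$ in both cases (b) and (c), so the first part of the work is done.

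Next, I would control $\mathcal{M}(x)$ in three steps. First, Proposition~\ref{p:e7_b_max} already reduces to $H \in \mathcal{M}_1$ with $H$ non-parabolic, so we do not have to worry about almost simple subgroups of type (V) or about parabolic subgroups, only about the maximal rank subgroups, non-maximal-rank positive dimensional subgroups, subfield subgroups and a handful of exotic/exceptional subgroups. Second, Proposition~\ref{p:classes} bounds the number of $\bar G_\sigma$-classes of such subgroups by $30 + \log\log q$. Third, Lemma~\ref{l:centraliser_bound} shows that $x$ is contained in at most $|C_{\mathrm{Inndiag}(E_7(q_0))}(F(x))|$ distinct $G$-conjugates of any single $H$. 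Since $F(x)$ is either $y$ or $y^2$, and the chosen $y$ satisfies $C_{\mathrm{Inndiag}(E_7(q_0))}(y^2) = \langle y \rangle$, with $C(y) \subseteq C(y^2)$, this centraliser has order at most $|y|$. Combining these,
\[
|\mathcal{M}(x)| \leq (30 + \log\log q)\,|y|.
\]

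To finish, I would feed in the fixed point ratio bound from \cite[Theorem~1]{LLS} for \emph{non-parabolic} maximal subgroups of an almost simple group with socle $E_7(q)$: this gives $\mathrm{fpr}(z, G/H) \leq q^{-c}$ for a suitable constant $c$ (for $E_7$ this is on the order of $q^{-14}$ or so, certainly better than the generic bound used for $E_8$ in Theorem~\ref{t:e8}). Using $|y| \leq 2(q_0+1)(q_0^6 - q_0^3 + 1)$, and the fact that $q_0 \leq q^{1/2}$ since $e \geq 2$, we get $|y| = O(q^{7/2})$, so the product $(30 + \log\log q)\,|y|\,\mathrm{fpr}$ is bounded above by something like $q^{-1}$, which is well below $\tfrac{1}{2}$ and tends to $0$ as $q \to \infty$. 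A small finite list of pairs $(q_0, e)$ will need to be inspected by hand (most likely $q_0 = 2$ with $e$ small) to confirm that the estimate still delivers $P(z,x) < \tfrac{1}{2}$; for those the sharper information that $|y| = 129$ when $q_0 = 2$, plus the fact that few classes of maximal subgroups have order divisible by a primitive prime divisor of $q_0^{14} - 1$, will do the job.

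The main obstacle is neither the Shintani machinery nor the fixed point ratio bound per se, but rather ensuring that the constants line up for the smallest cases: namely, when $q_0 = 2$ and $e$ is small, $|y|$ and the number of candidate subgroup classes conspire to make the naive estimate uncomfortably close to $\tfrac{1}{2}$. Handling these boundary cases will require either a slightly sharper bookkeeping of which of the maximal rank / positive dimensional / subfield classes can actually contain an element of order $e|y|$ (or $\tfrac{1}{2}e|y|$ in case (b) when $q$ is odd), or a direct order-divisibility check ruling out most classes because $|y|$ has a primitive prime divisor of $q_0^{14} - 1$ that does not divide $|H|$.
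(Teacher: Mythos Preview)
Your proposal is correct and follows essentially the same approach as the paper. Two minor points: the relevant fixed point ratio bound is \cite[Theorem~2]{LLS} (not Theorem~1), which gives $\mathrm{fpr}(z,G/H) \leqs 2q^{-12}$ for non-parabolic $H \in \mathcal{M}_1$, and with this precise constant the estimate $(30+\log\log q)\cdot (q_0+1)(q_0^6-q_0^3+1)\cdot 2q^{-12} < q^{-1}$ holds for all $q \geqs 4$, so no small-case analysis is actually needed.
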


\begin{proof}
We proceed as usual, via Lemma~\ref{l:prob_method}. Let $H \in \mathcal{M}(x)$ and let $z \in G$ be nontrivial. Then Proposition~\ref{p:e7_b_max} implies that $H \in \mathcal{M}_1$ and $H$ is not a parabolic subgroup, so \cite[Theorem~2]{LLS} gives ${\rm fpr}(z,G/H) \leqs 2q^{-12}$. Now 
\[
|C_{{\rm Inndiag}(E_7(q_0))}(y^2)| = |y| \leq (q_0+1)(q_0^6-q_0^3+1)
\]
and by applying Proposition~\ref{p:classes} and Lemma~\ref{l:centraliser_bound}, we deduce that
\[
P(z,x) \leq \sum_{H \in \mathcal{M}(x)}{\rm fpr}(z,G/H) < (30+\log\log q)\cdot (q_0+1)(q_0^6-q_0^3+1) \cdot 2q^{-12} < q^{-1}.
\]
The result follows.
\end{proof}

By combining Propositions~\ref{p:e7_a} and~\ref{p:e7_b}, we get the following.

\begin{thm} \label{t:e7}
The conclusion to Theorem~\ref{t:exceptional} holds when $G_0=E_7(q)$.
\end{thm}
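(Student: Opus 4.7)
The plan is to invoke Proposition~\ref{p:cases}, which reduces the problem to the three cases for $g$ recorded in Table~\ref{tab:e7_cases}: (a) the diagonal automorphism $\d$ when $q$ is odd; (b) a field automorphism $\p^i$ for a proper divisor $i$ of $f$; and (c) the mixed automorphism $\d\p^i$ when $q$ is odd. Since Propositions~\ref{p:e7_a} and~\ref{p:e7_b} already handle cases (a) and (b)/(c) respectively, the theorem follows by combining them; what remains is to outline how each of those propositions is established.

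In case~(a), the approach is to choose a semisimple element $x \in G \setminus G_0$ of order $(q+1)(q^6-q^3+1)$. By work of Weigel, $x$ lies in a unique maximal subgroup of $G = \Inndiag(E_7(q))$, namely one of type ${}^2E_6(q) \times (q+1)$, so the probabilistic sum in Lemma~\ref{l:prob_method} collapses to a single fixed point ratio bounded by $(q^6-q^3+1)^{-1}$ via \cite[Theorem~1]{LLS}. This immediately yields both $u(G) \geqs 2$ and $u(G) \to \infty$ as $q \to \infty$.

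For cases~(b) and~(c), the natural tool is Shintani descent. Writing $e = f/i$ and $q = q_0^e$, I would set up the Shintani map $F\colon \Inndiag(E_7(q))\p^i \to \Inndiag(E_7(q_0))$ of $(\bar G,\p^i,e)$ and select $y \in \Inndiag(E_7(q_0))$ of large order $(q_0+1)(q_0^6-q_0^3+1)$ with $\la y \ra = C_{\Inndiag(E_7(q_0))}(y^2)$. When $q$ is odd both cases must be handled simultaneously, and this requires a splitting result (Lemma~\ref{l:e7_split}) derived from Lemma~\ref{l:shintani_split}: the restriction of $F$ to $E_7(q)\p^i$ (respectively $E_7(q)\d\p^i$) has image consisting of the classes meeting $E_7(q_0)$ (respectively its non-trivial coset in $\Inndiag(E_7(q_0))$). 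Choosing $x$ correspondent to $y^2$ in case~(b) and to $y$ in case~(c) then places $x$ in the target coset.

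The hard part will be controlling $\mathcal{M}(x)$, which is the content of Proposition~\ref{p:e7_b_max}: one must show $\mathcal{M}(x) \subseteq \mathcal{M}_1$ with no maximal parabolic members. Subgroups in $\mathcal{M}_3$ are eliminated by Proposition~\ref{p:orders}, since ${\rm meo}(H) \leqs 63$ is dwarfed by $|x|$. Ruling out $\mathcal{M}_2$ is the most delicate step: one runs through the restricted list of possible socles in Theorem~\ref{t:simples}, comparing the element-order bound of Proposition~\ref{p:meo} against $|x|$, with special care for small $(t,q_0)$ pairs where the orders are tight and one typically needs the supplementary observation that a primitive prime divisor such as $43$ fails to divide $|S|$. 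The non-parabolic conclusion follows from Corollary~\ref{c:shintani_parabolic}, since $y^2$ lies in a unique, non-parabolic maximal subgroup of $E_7(q_0)$ by \cite{GK}. With $\mathcal{M}(x)$ under control, Proposition~\ref{p:classes} bounds the number of relevant conjugacy classes by $30+\log\log q$, Lemma~\ref{l:centraliser_bound} bounds the number of conjugates of each by $|y|$, and the non-parabolic fixed point ratio bound $2q^{-12}$ from \cite[Theorem~2]{LLS} combines via Lemma~\ref{l:prob_method} to give $P(z,x) < q^{-1}$ for every nontrivial $z \in G$.
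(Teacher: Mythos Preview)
Your proposal is correct and follows essentially the same approach as the paper: the theorem is obtained by combining Propositions~\ref{p:e7_a} and~\ref{p:e7_b}, and your outline of those propositions matches the paper's arguments (Weigel's unique-overgroup element for case~(a); Shintani descent with the splitting Lemma~\ref{l:e7_split}, the maximal-overgroup analysis of Proposition~\ref{p:e7_b_max}, and the $2q^{-12}$ fixed point ratio bound for cases~(b) and~(c)). The only detail you omit is the special choice $|y|=129$ when $q_0=2$, but this is a minor technicality that does not affect the structure of the argument.
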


\section{Proof of Theorem \ref{t:exceptional}: \texorpdfstring{$G_0 = E_6^{\e}(q)$}{G0 = E6(+/-)(q)}} \label{s:e6}

In this section we study the almost simple groups $G = \< G_0, g \>$, where $G_0 = E_6^\e(q)$ for some sign $\e \in \{+,-\}$. The description of $\Out(G_0)$ in Section~\ref{ss:p_aut} shows that there are several different types of automorphism $g$ that we must consider in order to prove Theorem~\ref{t:exceptional} in this setting. More precisely, in light of Proposition~\ref{p:cases}, it suffices to consider the groups recorded in Table \ref{tab:e6_cases} (as before, we write $\Delta(f)$ for the set of proper positive divisors of $f$). 

\begin{table}
\caption{The relevant groups $G = \la G_0,g \ra$ for $G_0 = E_6^{\e}(q)$} \label{tab:e6_cases}
\begin{center}
\vspace{-8mm}
\[
\begin{array}{ccllll} 
\hline
\text{Case}        & g            & \multicolumn{3}{l}{\text{Conditions}} \\ 
\hline    
\text{(a)}         & \d           & q \equiv \e \mod{3} &                 &                 \\
\text{(b)(i)}      &         \p^i &                     & \e = +          & i \in \Delta(f) \\
\text{(b)(ii)}     &       \g\p^i &                     & \e = (-)^{f/i}  & i \in \Delta(f) \\
\text{(b)(iii)}    & \d^\pm  \p^i & q \equiv 1  \mod{3} & \e = +          & i \in \Delta(f) \\
\text{(b)(iv)}     & \d^\pm\g\p^i & q \equiv \e \mod{3} & \e = (-)^{f/i}  & i \in \Delta(f) \\
\text{(c)(i)}      &         \p^i &                     & \e = -          & i \in \Delta(f) \\
\text{(c)(ii)}     &       \g\p^i &                     & \e = +          & i \in \Delta(f) \ \&\ \text{$f/i$ odd} \\
\text{(d)}         &       \g     &                     &                 &                 \\ 
\hline 
\end{array}
\]
\end{center}
\end{table}

Let us briefly comment on the distinction between cases~(b) and~(c) in Table \ref{tab:e6_cases}. The elements $g$ that arise in these two cases are precisely the automorphisms of $G_0$ that are not contained in $\<{\rm Inndiag}(G_0),\g\>$. One can check that such an automorphism features in case~(b) if and only if 
\[
\<G_0,g\> \cap \<\Inndiag(G_0),\g\> \leq \Inndiag(G_0).
\] 
We will see that Shintani descent applies in the usual way in case~(b), but in case~(c) we need to apply Lemma~\ref{l:shintani_substitute} (see Example~\ref{ex:shintani_substitute}, which contrasts cases~(b)(ii) and~(c)(ii) when $\e=+$ and $p=3$). 

Recall that Remark~\ref{r:cases_e6} (in particular, the notes labelled $\star$ and~$\dagger$) permits us to omit some of the cases in Table \ref{tab:e6_cases} if certain conditions on $p$, $f$ and $i$ are satisfied. We will consider cases~(a)--(d) in Sections~\ref{ss:e6_a}--\ref{ss:e6_d}, respectively.

It will be useful to note that if $G$ is any almost simple group with socle $E_6^{\e}(q)$, then 
\cite[Theorem~1]{LLS} gives
\begin{equation} \label{e:e6_fpr}
{\rm fpr}(z,G/H) \leq 
\left\{
\begin{array}{ll}
(q^4-q^2+1)^{-1} & \text{if $\e=+$ } \\
(q^6-q^3+1)^{-1} & \text{if $\e=-$}
\end{array}
\right.
\end{equation}
for all $H \in \mathcal{M}$ and all nontrivial $z \in G$.

\subsection{Case (a): diagonal automorphisms} \label{ss:e6_a}

We begin by handling the case where $g$ is a diagonal automorphism.

\begin{prop} \label{p:e6_a}
The conclusion to Theorem~\ref{t:exceptional} holds in case~(a) of Table \ref{tab:e6_cases}.
\end{prop}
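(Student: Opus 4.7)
The plan is to follow the template of Proposition~\ref{p:e7_a} closely. Since $g = \d$ generates $\Inndiag(G_0)/G_0 \cong C_3$, we have $G = \<G_0, \d\> = \Inndiag(G_0)$. I would identify a single regular semisimple element $x \in G \setminus G_0$ whose set of maximal overgroups $\mathcal{M}(x)$ is very small, and then apply Lemma~\ref{l:prob_method} together with the fixed point ratio bound in \eqref{e:e6_fpr}.

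For the choice of $x$, I would take a generator of a ``Singer-type'' maximal torus $T$ whose order is divisible by a primitive prime divisor $r$ of $\Phi_9(q) = q^6+q^3+1$ (when $\e=+$) or of $\Phi_{18}(q) = q^6-q^3+1$ (when $\e=-$). Using the tables in \cite{Lubeck} together with \cite[Tables~III and~IV]{GK}, one can check that under the given congruence $q \equiv \e \mod{3}$ there is such a torus whose generators lie in the non-trivial cosets of $G_0$ in $\Inndiag(G_0)$, so that $x^G \subseteq G \setminus G_0$. The prime $r$ is large enough that, arguing as in Proposition~\ref{p:e8_max}, no subgroup in $\mathcal{M}_2 \cup \mathcal{M}_3$ contains $x$: the $\mathcal{M}_3$ case is ruled out by Proposition~\ref{p:orders}, while each candidate socle from Theorem~\ref{t:simples} for $\mathcal{M}_2$ is eliminated using the element-order bounds in Proposition~\ref{p:meo} combined with the divisibility of $|x|$ by $r$. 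A direct order argument (comparing $r$ to the orders of the various maximal parabolic subgroups) rules out parabolic overgroups, so $\mathcal{M}(x)$ consists of at most a small bounded number of maximal rank subgroups of type $\mathrm{(I)}$ in Theorem~\ref{t:types} (typically $N_G(T)$, together with at most one extension of $\mathrm{SU}_3(q^3)$ or ${}^3D_4(q)$ by a small cyclic group).

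Combined with Lemma~\ref{l:centraliser_bound}, which controls how many $G$-conjugates of a fixed $H$ can appear in $\mathcal{M}(x)$ in terms of $|C_{\Inndiag(G_0)}(x)| = |T|$, the bound \eqref{e:e6_fpr} gives, for every nontrivial $z \in G$,
\[
P(z,x) \;\leqs\; \sum_{H \in \mathcal{M}(x)} {\rm fpr}(z,G/H) \;\leqs\; c \cdot |T| \cdot (q^4-q^2+1)^{-1}
\]
when $\e = +$, with the analogous bound involving $(q^6-q^3+1)^{-1}$ when $\e = -$; here $c$ is a small absolute constant. This is less than $\tfrac{1}{2}$ for all sufficiently large $q$ and tends to $0$ as $q \to \infty$, so Lemma~\ref{l:prob_method}(ii) yields both $u(G) \geqs 2$ and $u(G_n) \to \infty$ along any sequence; any small residual values of $q$ for which the estimate is not already below $\tfrac{1}{2}$ are dealt with by direct computation in \textsc{Magma}, as in Proposition~\ref{p:small}. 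The main obstacle I expect is the careful bookkeeping needed to pin down $|\mathcal{M}(x)|$: one has to verify that the chosen torus $T$ really does produce a generator in $G \setminus G_0$ (rather than in $G_0$), that $\<x\>$ is self-centralising in $\Inndiag(G_0)$, and that the maximal rank overgroups of $T$ really are limited to the expected short list. This last step is the delicate ingredient and will require a detailed case analysis based on the maximal rank subgroups classified in \cite{LSS}.
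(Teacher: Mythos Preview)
Your overall strategy matches the paper's: pick a generator $x$ of a cyclic maximal torus of order $q^6+\e q^3+1$ in $G=\Inndiag(G_0)$, lying outside $G_0$, and bound $P(z,x)$ via \eqref{e:e6_fpr}. The paper, however, simply quotes Weigel \cite[Sections~4(g),(h)]{Weigel} to conclude that such an $x$ lies in a \emph{unique} maximal subgroup of $G$ (of type ${\rm SL}_3^\e(q^3).3$), so $|\mathcal{M}(x)|=1$ and $P(z,x)\le(q^4-q^2+1)^{-1}$ immediately.

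Your version has a genuine numerical gap. After restricting to a bounded number $c$ of conjugacy \emph{classes} of maximal rank overgroups, you bound the number of conjugates of each by $|C_G(x)|=|T|$ and obtain
\[
P(z,x)\;\le\; c\cdot |T|\cdot (q^4-q^2+1)^{-1}.
\]
But $|T|=q^6+\e q^3+1$, so for $\e=+$ this is of order $q^2$ and certainly does not satisfy $P(z,x)<\tfrac12$, let alone tend to $0$; for $\e=-$ it is only $O(1)$, not $o(1)$. The crude centraliser bound is far too weak here precisely because the torus is large relative to the fixed point ratio bound. (Incidentally, Lemma~\ref{l:centraliser_bound} is formulated for the Shintani setting and does not literally apply; the general inequality $n(H)\le|C_G(x)|$ is true, but as just noted it is useless in this case.)

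To repair the argument without citing Weigel you would need an \emph{exact} or near-exact count of $n(H)$ for each relevant class, as the paper does for graph automorphisms in Proposition~\ref{p:e6_d}. That is feasible but considerably more work than the one-line citation the paper uses.
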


\begin{proof}
Here $q \equiv \e \mod{3}$ and $G = \< G_0, \d \> =  \Inndiag(G_0)$. Fix an element $x \in G \setminus G_0$ of order $q^6 + \e q^3+1$. By \cite[Sections~4(g) and~(h)]{Weigel}, $x$ is contained in a unique maximal subgroup of $G$ (namely, a subgroup of type ${\rm SL}^\e_3(q^3).3$). Therefore, with the bound in \eqref{e:e6_fpr}, we get
\[
P(z,x) \leq \sum_{H \in \mathcal{M}(x)} {\rm fpr}(z,G/H) \leq (q^4-q^2+1)^{-1}
\]
for all nontrivial $z \in G$ and the result follows.
\end{proof}

\subsection{Case (b): Shintani descent} \label{ss:e6_b}

Here we consider cases (b)(i)--(b)(iv) in Table \ref{tab:e6_cases}. Fix a proper divisor $i$ of $f$ and write $e=f/i$ and $q=q_0^e$. Recall that in cases~(iii) and~(iv), $e$ is even if $\e=+$ and $e$ is odd if $\e=-$. Let $\bar{G}$ be the adjoint algebraic group of type $E_6$ over $\bar{\mathbb{F}}_{p}$ and define
\[
(\s,\eta) =
\left\{
\begin{array}{ll}
(\p^i,  \, +) & \text{in cases~(i)  and~(iii)} \\
(\g\p^i,\, -) & \text{in cases~(ii) and~(iv).}
\end{array}
\right.
\] 
Notice that $\e = \eta^e$ and $\bar{G}_{\s} = {\rm Inndiag}(E_6^{\eta}(q_0))$. Let 
\[
F\:\Inndiag(E^\e_6(q))\s \to \Inndiag(E^\eta_6(q_0))
\]
be the Shintani map of $(\bar{G},\s,e)$. The following result is the analogue of Lemma~\ref{l:e7_split}.

\begin{lem} \label{l:e6_b_split}
If $q_0 \equiv \eta \mod{3}$, then the Shintani map $F$ restricts to bijections
\begin{align*}
&\{ (t  \s)^{\Inndiag(E^\e_6(q))} \, : \, t \in E^\e_6(q) \} \to \{ y^{\Inndiag(E^\eta_6(q_0))} \, : \, y \in E^\eta_6(q_0) \} \\
&\{ (t\d\s)^{\Inndiag(E^\e_6(q))} \, : \, t \in E^\e_6(q) \} \to \{ y^{\Inndiag(E^\eta_6(q_0))} \, : \, y \in \Inndiag(E^\eta_6(q_0)) \setminus E^\eta_6(q_0) \}.
\end{align*}
\end{lem}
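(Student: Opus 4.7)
The plan is to prove this by direct analogy with Lemma~\ref{l:e7_split}, adapted to the setting where the diagonal automorphism group has order three rather than two, so that the complement of the image of the first bijection splits across two nontrivial cosets rather than one.

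First, I would verify the hypothesis of Lemma~\ref{l:shintani_split}, namely that $\<G_0,\s\> \normeq \<\bar{G}_{\s^e},\s\>$ where $G_0 = (\bar{G}_{\s^e})' = E^\e_6(q)$. Since $G_0$ is characteristic in $\bar{G}_{\s^e}$, it suffices to analyse the quotient $\<\bar{G}_{\s^e},\s\>/G_0 \cong \<\ddot\d\> \rtimes \<\s\>$. Using $\d^\s = \d^{p^i}$ in cases~(i) and~(iii), and $\d^\s = \d^{-p^i}$ in cases~(ii) and~(iv) (since $\g$ inverts $\d$), together with the hypothesis $q_0 \equiv \eta \mod{3}$, one computes that $\ddot\d^{\s} = \ddot\d$ in each case. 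Hence $\<G_0,\s\>$ is normal of index three, and Lemma~\ref{l:shintani_split} yields the first bijection, using $O^{p'}(\bar{G}_\s) = E^\eta_6(q_0)$ (which holds since $E^\eta_6(q_0)$ is generated by root subgroups and the diagonal quotient is a $p'$-group).

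The second bijection I would deduce by complementation: since the full Shintani map is a bijection between $\bar{G}_{\s^e}$-classes in $\bar{G}_{\s^e}\s$ and $\bar{G}_\s$-classes in $\bar{G}_\s$, and the restriction to classes in $G_0\s$ has been shown to biject onto classes in $E^\eta_6(q_0)$, the induced map on the complements is also a bijection.

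The main obstacle will be reconciling the stated domain $\{(t\d\s)^{\Inndiag(E^\e_6(q))} : t \in E^\e_6(q)\}$ with the full complement: a priori, the complement consists of classes with representatives in both nonidentity cosets $E^\e_6(q)\d\s$ and $E^\e_6(q)\d^2\s$. I would resolve this by analysing how $\Inndiag(E^\e_6(q))$-conjugacy interacts with the pair of cosets under the centrality of $\ddot\d$ established above, combined with the symmetry $\d \leftrightarrow \d^{-1}$ afforded by passing through $\bar{G}$-conjugation in the Shintani formula; this allows the classes across both cosets to be represented uniformly in the stated form, completing the bijection.
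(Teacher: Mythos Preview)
Your approach mirrors the paper's exactly: verify $[\ddot\d,\ddot\s]=1$ (via $\ddot\d^{\ddot\p^i}=\ddot\d^{p^i}$ when $\eta=+$ and $\ddot\d^{\ddot\g\ddot\p^i}=\ddot\d^{-p^i}$ when $\eta=-$, together with $q_0\equiv\eta\pmod 3$) to obtain $\<G_0,\s\>\leqn\<\bar G_{\s^e},\s\>$, apply Lemma~\ref{l:shintani_split} for the first bijection, and take complements for the second. The paper's proof does precisely this and says nothing further.

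You have, however, spotted a genuine imprecision that the paper glosses over. Once $\ddot\d$ is central in $\<\ddot\d,\ddot\s\>$, the cosets $G_0\d\s$ and $G_0\d^2\s$ are each stable under $\Inndiag(E^\e_6(q))$-conjugation, so no class meeting $G_0\d^2\s$ has a representative of the form $t\d\s$ with $t\in G_0$. Your proposed repair via ``$\bar G$-conjugation in the Shintani formula'' does not work: the Shintani map is a bijection of $\bar G_{\s^e}$-classes, and coarsening to $\bar G$- or $\Aut(G_0)$-conjugacy changes the objects under discussion. The honest fix is that the domain of the second map should be $\{(t\d^j\s)^{\Inndiag(E^\e_6(q))}:t\in E^\e_6(q),\ j\in\{1,2\}\}$; complementation then gives the bijection immediately. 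This is harmless for the application: what is actually used afterwards is only that the Shintani correspondent of any $y\in\Inndiag(E^\eta_6(q_0))\setminus E^\eta_6(q_0)$ lies in $G_0\d\s\cup G_0\d^2\s$, and since Table~\ref{tab:e6_cases} permits $g=\d^\pm\s$ one simply chooses the sign to match.
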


\begin{proof}
By hypothesis $q_0 \equiv \eta \mod{3}$, which implies that $q = q_0^e \equiv \eta^e = \e \mod{3}$ and thus
\[
|\Inndiag(E_6^\e(q)):E_6^\e(q)|=|\Inndiag(E_6^\eta(q_0)):E_6^\eta(q_0)|=3.
\] 
We have already noted that $\bar{G}_{\s^e} = \Inndiag(E^\e_6(q))$ and $\bar{G}_\s = \Inndiag(E^\eta_6(q_0))$. Also observe that $E^\e_6(q) = (\bar{G}_{\s^e})'$ and $E^\eta_6(q_0) = O^{p'}(\bar{G}_\s) = (\bar{G}_{\s})'$. Therefore, in order to apply Lemma~\ref{l:shintani_split} it remains to check that $\<G_0,\s\> \leqn \<\bar{G}_{\s^e},\s\>$. In cases~(i) and (iii) we have 
$\s=\p^i$ and $\e=\eta=+$, so $p^i = q_0 \equiv 1 \mod{3}$. From \eqref{e:out_e6_facts} we see that $[\ddot{\p}^i,\ddot{\d}] = \ddot{\d}^{p^i-1}  = 1$, whence $\<\ddot{\s}\> \leqn \<\ddot{\s},\ddot{\d}\> = \<\bar{G}_{\s^e},\s\>/G_0$ and consequently $\<G_0,\s\> \leqn \<\bar{G}_{\s^e},\s\>$. Similarly, $\s=\g\p^i$ and $p^i = q_0 \equiv 2 \mod{3}$ in cases~(ii) and (iv). Here \eqref{e:out_e6_facts} and \eqref{e:out_2e6_facts} give $[\ddot{\g}\ddot{\p}^i,\ddot{\d}] = \ddot{\d}^{-p^i-1} = 1$, so we again obtain $\<\ddot{\s}\> \leqn \<\ddot{\s},\ddot{\d}\>$ and $\<G_0,\s\> \leqn \<\bar{G}_{\s^e},\s\>$. By  applying Lemma~\ref{l:shintani_split}, we see that the Shintani map $F$ restricts to the bijections in the statement.
\end{proof}

Fix a regular semisimple element $y \in \Inndiag(E^\eta_6(q_0))$ such that
\[
|y| = q_0^6 + \eta q_0^3 + 1
\]
and 
\[
C_{{\rm Inndiag}(E^\eta_6(q_0))}(y^3) = \la y \ra
\]
(see \cite{Lubeck}). Choose $x \in \<\Inndiag(E^\e_6(q)),\p^i\>$ such that 
\[
F(x)=
\left\{
\begin{array}{ll}
y^3 & \text{in cases~(i)   and (ii) } \\
y   & \text{in cases~(iii) and (iv).} \\
\end{array}
\right.
\]
If $q \not\equiv \e \mod{3}$, then we are in case~(i) or~(iii) and we have $y \in E^\eta_6(q_0)$ and $x \in G = \< G_0, g\>$. If $q_0 \equiv \eta \mod{3}$ (so $q \equiv \e \mod{3}$), then $y \in \Inndiag(E_6^{\eta}(q_0)) \setminus E_6^{\eta}(q_0)$ and $y^3 \in E_6^{\eta}(q_0)$, so Lemma~\ref{l:e6_b_split} implies that $x \in G = \< G_0, g\>$ once again. Finally, suppose that $q \equiv \e \mod{3}$ and $q \not\equiv \eta \mod{3}$. Here the notes~$\star$ and $\dagger$ in Remark~\ref{r:cases_e6} imply that we only need to consider one automorphism from $\{ \p^i, \d\p^i, \d^2\p^i \}$, so the fact that $x \in \< \Inndiag(E_6^\e(q)), \p^i\>$ is enough to ensure that $x \in \< G_0, g \>$ for a suitable choice of $g$.

By Lemma~\ref{l:shintani_order}, we note that
\[
|x| = 
\left\{
\begin{array}{ll}
e|y^3| & \text{in cases~(i)   and (ii) } \\
e|y|   & \text{in cases~(iii) and (iv).} \\
\end{array}
\right.
\]
Let us also note that if $\eta=+$, then $|y^3|=73$ for $q_0=2$, $|y^3| = 757$ for $q_0=3$, $|y^3|= 1387$ for $q_0=4$ and $|y^3| \geq 15751$ for $q_0 \geq 5$, so $|y^3|$ is prime in the first two cases. Similarly, if $\eta=-$ then $|y^3|=19$ for $q_0=2$, $|y^3| = 703 = 19 \cdot 37$ for $q_0=3$ and $|y^3| \geq 4033$ for $q_0 \geq 4$.

\begin{prop} \label{p:e6_b_max}
Let $H \in \mathcal{M}(x)$. Then $H \in \mathcal{M}_1$ and $H$ is non-parabolic.
\end{prop}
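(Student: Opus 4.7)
The plan is to follow the pattern of Propositions~\ref{p:e8_max} and~\ref{p:e7_b_max}: first exclude $H \in \mathcal{M}_3$, then $H \in \mathcal{M}_2$, and finally rule out maximal parabolic overgroups. A unifying tool is the observation that $|x|$ is divisible by a Zsigmondy primitive prime divisor $r$ of $q_0^9-1$ when $\eta=+$, and of $q_0^{18}-1$ when $\eta=-$. Indeed $|y|$ is the relevant cyclotomic polynomial evaluated at $q_0$, so $r$ divides $|y|$; and since $r > 3$ in every case of interest, $r$ also divides $|y^3|$, hence $|x|$, in all of cases~(i)--(iv).

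For $H \in \mathcal{M}_3$, Proposition~\ref{p:orders} gives ${\rm meo}(H) \leq 60$. In cases~(iii) and~(iv) we have $|x| = e|y| \geq 2 \cdot 57 > 60$, so there is nothing to check. In cases~(i) and~(ii) we have $|x| = e|y^3|$, which exceeds $60$ in all configurations except $\eta=-$, $q_0=2$, $e=2$ (so $G_0 = E_6(4)$), where $|x|=38$. For this single outlier I would consult the list of candidate socles for $\mathcal{M}_3$ in \cite[Tables~10.3 and~10.4]{LS99}, verifying directly that no such subgroup contains an element of order $38$; the prime $r=19$ cuts the list sharply.

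Turning to $H \in \mathcal{M}_2$, I would work through the candidates for the socle $S$ in Theorem~\ref{t:simples}. For $S = {\rm L}_3^{\e}(16)$ and the rank-one families ${\rm L}_2(t)$, ${}^2B_2(t)$, ${}^2G_2(t)'$ with $t$ bounded as in Theorem~\ref{t:simples}(iii), I would combine Proposition~\ref{p:meo} with the lower bounds on $|x|$ to force a tiny value of $q_0$, then exclude each survivor via $r \nmid |S|$. For ${\rm rk}(S) \in \{2,3,4\}$ with $t \leq 9$, the bounds of Proposition~\ref{p:meo} leave only finitely many pairs $(S,q_0)$, each handled either by ${\rm meo}(H)<|x|$ or by the same divisibility check. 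A few borderline pairs over $\mathbb{F}_8$ or $\mathbb{F}_9$ should be closed using Proposition~\ref{p:sp}.

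For the parabolic claim, Corollary~\ref{c:shintani_parabolic} tells us that the number of maximal parabolics of $G$ containing $x$ equals the number of maximal parabolics of $\Inndiag(E_6^{\eta}(q_0))$ containing $F(x) \in \{y,y^3\}$. Since $C_{\Inndiag(E_6^{\eta}(q_0))}(y^3) = \la y \ra$, the element $y$ lies in a unique $\s$-stable maximal torus of the ambient algebraic group, and the order of its fixed-point subgroup is divisible by $r$. No proper Levi subgroup of $E_6$ has order polynomial divisible by the $9$th or $18$th cyclotomic factor, so this torus cannot be conjugated into any proper Levi, and hence neither $y$ nor $y^3$ lies in a maximal parabolic. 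The main obstacle I anticipate is the tight configuration $\eta=-$, $q_0=2$, $e=2$ in case~(ii), where the blunt order estimates fail to exclude $\mathcal{M}_3$ and a supplementary inspection of candidate subgroup socles is unavoidable.
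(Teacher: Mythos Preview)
Your overall strategy matches the paper's, and the parabolic argument via primitive prime divisors is fine (the paper cites Weigel's uniqueness result for $y^3$ and then Corollary~\ref{c:shintani_parabolic}, which amounts to the same thing). However, there are two slips and one genuine gap.

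First, two small points. For $E_6^{\e}(q)$ the rank bound in Theorem~\ref{t:simples} gives ${\rm rk}(S) \leqs 3$, not $4$, so your case split for $\mathcal{M}_2$ should read ${\rm rk}(S)\in\{2,3\}$. Also, in the $\mathcal{M}_3$ analysis you claim the only outlier is $\eta=-$, $q_0=2$, $e=2$; in fact $e=3$ gives $|x|=57\leqs 60$ as well (here $G_0={}^2E_6(8)$). Your proposed remedy, checking that $19$ does not divide the order of any element of $\Aut(S)\setminus S$ for the candidate socles $S$, covers both values of $e$ at once, and this is exactly how the paper handles it.

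The serious omission is in $\mathcal{M}_2$ for $\eta=-$. With $q_0=2$ and $S={\rm U}_3(8)$ one has $19\mid |S|$ (since $8^3+1=27\cdot 19$), and a direct check shows that $\Aut(S)\setminus S$ contains elements of order $57=3\cdot 19$. Hence for $e=3$ (so $|x|=57$) none of your proposed tools work: neither the order bound of Proposition~\ref{p:meo}, nor the divisibility test $r\nmid |S|$, nor Proposition~\ref{p:sp} (which does not cover ${\rm U}_3(t)$). The paper closes this case by invoking Craven's theorem that ${\rm U}_3(8)$ is strongly imprimitive in the algebraic group $E_6$ \cite[Theorem~1.2]{CravenMedium}, so any almost simple subgroup with this socle already lies in a type~(I) maximal subgroup and hence cannot lie in $\mathcal{M}_2$. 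You need this external input; there is no way to finish the ${\rm U}_3(8)$ case with the order and divisibility arguments you have outlined.
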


\begin{proof}
Let us begin by noting that $y^3$ is not contained in a maximal parabolic subgroup of $E_6^\eta(q_0)$ (in fact, $y^3$ is contained in a unique maximal subgroup of $E_6^\eta(q_0)$; see  \cite[Sections~(g) and~(h)]{Weigel}), so Corollary~\ref{c:shintani_parabolic} implies that $H$ is non-parabolic.

Seeking a contradiction suppose that $H \in \mathcal{M}_2 \cup \mathcal{M}_3$ with socle $S$. First assume that $H \in \mathcal{M}_3$. Then the bound on ${\rm meo}(H)$ in Proposition~\ref{p:orders} implies that $\eta=-$ and $q_0=2$. Here $|y|=19$ and by inspecting each candidate for $S$ in \cite{LS99}, it is easy to check that ${\rm Aut}(S) \setminus S$ does not contain an element of order divisible by $19$. 

Now suppose $H \in \mathcal{M}_2$. First assume that $\eta=+$. If $S = {\rm L}_{3}^{\e'}(16)$, then $|S|$ is indivisible by $73$, so $q_0 \geqs 3$ and Proposition~\ref{p:meo} implies that ${\rm meo}(H) < |x|$. If $S={\rm L}_{2}(t)$ with $t \leqs 124\,(2,t-1)$, then ${\rm meo}(H) \leqs t^2/(t-1)< |x|$.  If $S = {}^2B_2(t)$ with $t=2^{2k+1}$ then $k \leqs 2$ and ${\rm meo}(H) \leqs 205$, so $q_0=2$ is the only option, but in both cases we find that $|S|$ is indivisible by $73$. If $S = {}^2G_2(t)'$, then Proposition~\ref{p:meo} implies that ${\rm meo}(H)<|x|$.

Next assume that ${\rm rk}(S) = 3$ and $t \leqs 9$ (we continue to assume that $\eta = +$). Here the bound on ${\rm meo}(H)$ from Proposition~\ref{p:meo} gives an immediate reduction to the case $t=8$ with $q_0=2$. The cases $S = {\rm L}_{4}(8)$ and ${\rm PSp}_{6}(8)$ are ruled out by Proposition~\ref{p:sp} and one checks that $|{\rm U}_{4}(8)|$ is indivisible by $73$. Finally, suppose ${\rm rk}(S)=2$. For classical $S$, the bound in Proposition~\ref{p:meo} is sufficient. Similarly, if $S = G_2(t)'$ then by applying Proposition~\ref{p:meo} we reduce to the cases where $(t,q_0)$ is one of $(4,2)$, $(8,2)$ or $(9,3)$. Here $|G_2(4)|$ and $|G_2(9)|$ are indivisible by $73$ and $757$ respectively, and the case $S = G_2(8)$ is ruled out by Proposition~\ref{p:sp}.

Finally, let us assume $\eta=-$. We proceed as before, first noting that the cases where $S$ is one of ${\rm L}_{3}^{\e'}(16)$, ${\rm L}_{2}(t)$, ${}^2B_2(t)$ and ${}^2G_2(t)'$ present no difficulties. Suppose ${\rm rk}(S)=3$ and $t \leqs 9$. Here $S$ is classical and by applying the bound on ${\rm meo}(H)$ in Proposition~\ref{p:meo}, we reduce to the case $q_0=2$ with $t \in \{4,8\}$. One checks that $|{\rm L}_{4}^{\e'}(4)|$ and $|{\rm L}_{4}(8)|$ are indivisible by $19$, so these options are ruled out. For $S = {\rm U}_{4}(8)$, with the aid of {\sc Magma} \cite{magma}, we find that there are no elements in ${\rm Aut}(S) \setminus S$ of order divisible by $19$ and so this possibility is also eliminated.

Now assume that ${\rm rk}(S)=2$. If $S = {\rm L}_{3}(t)$, then Proposition~\ref{p:meo} implies that ${\rm meo}(H) < |x|$ unless $t=8$ and $q_0=2$, but this case does not arise since $|{\rm L}_3(8)|$ is indivisible by $19$. Similar reasoning handles the case $S = {\rm PSp}_{4}(t)$. For $S = G_2(t)'$, the bound coming from Proposition~\ref{p:meo} is effective unless $(t,q_0)$ is one of $(4,2)$, $(8,2)$ or $(9,3)$. We can rule out the latter case since $|G_2(9)|$ is indivisible by $37$. Similarly, $|G_2(4)|$ is indivisible by $19$ and the case $S = G_2(8)$ is eliminated by applying Proposition~\ref{p:sp}. 

Finally, suppose that $S = {\rm U}_{3}(t)$. As before, by applying Proposition~\ref{p:meo} we reduce to the case $t=8$ with $q_0=2$. Using {\sc Magma}, we find that there are elements in ${\rm Aut}(S) \setminus S$ of order $19m$ for some positive integer $m$ if and only if $m=3$, so we must have $G_0 = {}^2E_6(8)$ and $g = \p$ (so $e=3$ and $|x| = e|y^3| = 3(2^6-2^3+1)/(2+1,3) = 57$). To resolve this case, we appeal to \cite[Theorem~1.2]{CravenMedium}, which states that $S$ is \emph{strongly imprimitive} in the ambient simple algebraic group $\bar{G} = E_6$ (also see \cite[Proposition~10.2]{CravenMedium}). In the context of Theorem~\ref{t:types}, this means that every almost simple subgroup of $G$ with socle ${\rm U}_3(8)$ is contained in a type (I) maximal subgroup of $G$, so this case does not arise.
\end{proof}

\begin{prop} \label{p:e6_b}
The conclusion to Theorem~\ref{t:exceptional} holds in case~(b) of Table \ref{tab:e6_cases}.
\end{prop}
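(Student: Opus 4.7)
My plan is to prove Proposition \ref{p:e6_b} by applying Lemma \ref{l:prob_method}, following the same template used for Proposition \ref{p:e7_b}. All the preparatory work is already done: Proposition \ref{p:e6_b_max} guarantees that every maximal overgroup of $x$ lies in $\mathcal{M}_1$ and is non-parabolic, so I can combine three ingredients to bound $P(z,x)$ for an arbitrary nontrivial $z \in G$. First, Proposition \ref{p:classes} bounds the number of $\bar{G}_\s$-classes of subgroups in $\mathcal{M}_1$ by $25 + \log\log q$. Second, by Lemma \ref{l:centraliser_bound}, each such class contributes at most
\[
|C_{\Inndiag(E_6^\eta(q_0))}(F(x))| \leqs |y| = q_0^6 + \eta q_0^3 + 1
\]
conjugates to $\mathcal{M}(x)$, since $C_{\Inndiag(E_6^\eta(q_0))}(y^3) = \la y \ra$ and in cases (iii)--(iv) we have $C(y) \leqs C(y^3)$. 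Third, since every $H \in \mathcal{M}(x)$ is non-parabolic of type (I)--(IV), \cite[Theorem~2]{LLS} yields a bound on ${\rm fpr}(z, G/H)$ of the form $c \, q^{-r}$ with $r$ sufficiently large (and a weaker uniform bound from \eqref{e:e6_fpr} is available as a fallback).

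Putting this together, I would obtain
\[
P(z,x) \leqs \sum_{H \in \mathcal{M}(x)} {\rm fpr}(z, G/H) \leqs (25 + \log\log q)(q_0^6 + q_0^3 + 1) \cdot c\,q^{-r}.
\]
Using $q_0 \leqs q^{1/2}$, the right-hand side is $O(q^{-1})$, which is less than $\frac{1}{2}$ for all sufficiently large $q$ and tends to $0$ as $q \to \infty$; by Lemma \ref{l:prob_method}(ii) this yields $u(G) \geqs 2$ and the asymptotic statement of Theorem~\ref{t:exceptional}. The main obstacle I anticipate is verifying the bound $P(z,x) < \frac{1}{2}$ in the residual small cases (most sharply when $q_0 = 2$, and in the untwisted $\e = +$ case where the fixed point ratio estimate in \eqref{e:e6_fpr} is weakest). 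For these I would work directly with the precise value of $|y|$ rather than its crude bound $q_0^6 + \eta q_0^3 + 1$, and, if needed, cut down $25 + \log\log q$ by noting that many of the families counted in Proposition \ref{p:classes} cannot contain an element of order divisible by the primitive prime divisor of $q_0^9 - 1$ or $q_0^{18} - 1$ that divides $|y|$, following the order-sieving arguments already employed in Proposition~\ref{p:e6_b_max}.
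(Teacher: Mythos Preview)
Your proposal is correct and follows essentially the same route as the paper. The paper combines Proposition~\ref{p:e6_b_max}, Proposition~\ref{p:classes}, Lemma~\ref{l:centraliser_bound} and the non-parabolic fixed point ratio bound ${\rm fpr}(z,G/H)\leqs 2q^{-6}$ from \cite[Theorem~2]{LLS} to obtain
\[
P(z,x) < (19+\log\log q)\cdot(q_0^6+\eta q_0^3+1)\cdot 2q^{-6},
\]
where the leading constant $19$ (rather than your $25$) comes from discarding the six parabolic classes already excluded by Proposition~\ref{p:e6_b_max}; the only residual case is $q=4$, where the paper simply observes that at most $14$ classes remain in $\mathcal{M}(x)$ and replaces $19+\log\log q$ by $14$, which is a slightly more direct version of the sieving you anticipated.
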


\begin{proof}
Let $H \in \mathcal{M}(x)$ and let $z \in G$ be nontrivial. By Proposition~\ref{p:e6_b_max}, $H \in \mathcal{M}_1$ and $H$ is non-parabolic, so \cite[Theorem~2]{LLS} gives ${\rm fpr}(z,G/H) \leqs 2q^{-6}$. Since 
\[
|C_{{\rm Inndiag}(E^\eta_6(q_0))}(y^3)| = |y| = q_0^6 + \eta q_0^3 + 1,
\]
we see that Proposition~\ref{p:classes} and Lemma~\ref{l:centraliser_bound} imply that
\[
P(z,x) \leq \sum_{H \in \mathcal{M}(x)}{\rm fpr}(z,G/H) < (19+\log\log q)\cdot (q_0^6 + \eta q_0^3 + 1) \cdot 2q^{-6}.
\]
If $q>7$, then the upper bound is at most $q^{-1}$, and for $q > 4$ it is less than $\frac{1}{2}$. Finally, if $q=4$ then we find that there are at most $14$ conjugacy classes of subgroups in $\mathcal{M}(x)$ (for example, there are no exotic locals). Replacing the $19+\log\log q$ factor by $14$ in the above bound shows that $P(z,x) < \frac{1}{2}$, which completes the proof.
\end{proof}

\subsection{Case (c): Shintani descent over \texorpdfstring{$F_4$}{F4}} \label{ss:e6_c}

We now turn to case~(c) in Table \ref{tab:e6_cases}, which involves two subcases labelled (i) and (ii). Here we can apply Shintani descent in the indirect manner encapsulated in Lemma~\ref{l:shintani_substitute} (compare with the method adopted in \cite[Sections~5.4.2 and~6.4.2]{HarperClassical}).
 
Fix a proper divisor $i$ of $f$ and write $e=f/i$ and $q=q_0^e$. Recall that in case~(ii) we have $\e=+$ and $e$ is odd, whereas $\e=-$ in case~(i) (and there is no parity condition on $e$). Let $\bar{G}$ be the adjoint algebraic group $E_6$ over $\bar{\mathbb{F}}_p$ and let $\s$ be the Steinberg endomorphism $\p^i$ in case~(i) and $\g\p^i$ in case~(ii). Observe that $\bar{G}_{\g\s^e} = \Inndiag(G_0)$ in both cases. Let $\bar{K} = C_{\bar{G}}(\g) = F_4$ and note that $\bar{K}$ is $\s$-stable. Choose $y \in \bar{K}_\s = F_4(q_0) \leq  E^{-\e}_6(q_0)$ such that
\[
|y| = q_0^4-q_0^2+1
\] 
and $C_{\bar{K}_{\s}}(y) = \la y \ra$. Here $\gamma$ is an algebraic automorphism of $\bar{G}$ of order $2$, so by Lemma~\ref{l:shintani_substitute}(i), there exists $x \in \bar{K}_{\s^e}\s = F_4(q)g \subseteq E^\e_6(q)g$ such $x^e$ is $\bar{G}$-conjugate to $y\g$. In addition, since $|y^2|$ is odd, we note that $|x| = 2e|y|$ (see Remark~\ref{r:shintani_order}). 

\begin{prop} \label{p:e6_c_max}
Let $H \in \mathcal{M}(x)$. Then $H \in \mathcal{M}_1$ and $H$ is non-parabolic.
\end{prop}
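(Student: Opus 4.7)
The approach is to mirror the strategy of Proposition~\ref{p:e6_b_max}, with the essential difference being that instead of using Corollary~\ref{c:shintani_parabolic} and Lemma~\ref{l:centraliser_bound} directly, we invoke Lemma~\ref{l:shintani_substitute}(ii) with $\rho = \gamma$. To set this up, I would first verify that $(\gamma\s^e)^2 = \s^{2e}$ in both subcases (c)(i) and (c)(ii): in (c)(i) we have $[\gamma,\s]=1$ and $\gamma^2=1$; in (c)(ii) one uses that $e$ is odd, so $\s^e = \gamma\p^f$ and $\gamma\s^e = \p^f$. Thus $d=2$ is the appropriate integer in Lemma~\ref{l:shintani_substitute}(ii).

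To show $H$ is non-parabolic, I would apply Lemma~\ref{l:shintani_substitute}(ii)(b) to conclude that $x$ lies in a maximal parabolic of $G$ only if $y^2$ lies in a maximal parabolic of $\bar{G}_\s = \Inndiag(E_6^{-\e}(q_0))$. Since $|y|$ is odd we have $|y^2|=q_0^4-q_0^2+1 = \Phi_{12}(q_0)$, which is divisible by a primitive prime divisor of $q_0^{12}-1$ (Zsigmondy applies since $(q_0,12)\ne(2,6)$). No such primitive prime divides the order of any proper parabolic of $E_6^\pm(q_0)$, since the Levi factors have type $A_5$, $D_5$, or smaller products of classical groups, whose orders factor into cyclotomic polynomials $\Phi_d(q_0)$ with $d\le 10$. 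Hence no maximal parabolic contains $y^2$, and by the lemma no maximal parabolic contains $x$.

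To establish $H\in\mathcal{M}_1$, I would argue by contradiction, supposing $H\in\mathcal{M}_2\cup\mathcal{M}_3$. Note $|x|=2e|y|=2e(q_0^4-q_0^2+1)$, so $|x|\geq 52$ with equality only if $(q_0,e)=(2,2)$, and $|x|\geq 78$ or $|x|\geq 292$ otherwise. For $H\in\mathcal{M}_3$, Proposition~\ref{p:orders} gives ${\rm meo}(H)\le 60$, which rules out every case except $(q_0,e)=(2,2)$ (which occurs only in case (c)(i), giving $G_0={}^2E_6(4)$ and $|x|=52$); for this residual case I would inspect the socles in \cite[Tables~10.1--10.4]{LS99} and observe that none has an element of order $52$ in its automorphism group. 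For $H\in\mathcal{M}_2$ with socle $S$ of Lie type in characteristic $p$, I would run through the list in Theorem~\ref{t:simples} exactly as in Proposition~\ref{p:e6_b_max}: since ${\rm rk}(S)\le 3$, the bounds of Proposition~\ref{p:meo} suffice to force $q_0\in\{2,3\}$ in almost every case, and then divisibility of $|S|$ by the relevant primitive prime divisor of $\Phi_{12}(q_0)$ (namely $13$ when $q_0=2$ and $73$ when $q_0=3$) eliminates the remaining candidates. The exceptional cases $S={\rm L}_4(8),{\rm U}_5(8),{\rm PSp}_6(8),G_2(8),G_2(9)$ are dispatched via Proposition~\ref{p:sp}.

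I expect the main obstacle to be the possibility $S={\rm U}_3(8)$ inside ${}^2E_6(8)$, which also caused trouble in Proposition~\ref{p:e6_b_max} and which cannot be ruled out by an element-order argument alone, because ${\rm Aut}({\rm U}_3(8))$ does contain elements of order divisible by $\Phi_{12}(2)=13$. As in the proof of Proposition~\ref{p:e6_b_max}, I would resolve this by appealing to Craven's strong imprimitivity result \cite[Theorem~1.2]{CravenMedium}, which guarantees that any ${\rm U}_3(8)$ subgroup of $\bar{G}=E_6$ is contained in a positive-dimensional $\s$-stable subgroup and hence in some $H\in\mathcal{M}_1$.
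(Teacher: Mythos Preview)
Your overall strategy matches the paper's, and the setup with $d=2$ in Lemma~\ref{l:shintani_substitute}(ii), the non-parabolic argument via $\Phi_{12}(q_0)$, and the treatment of $\mathcal{M}_3$ are all correct. However, the $\mathcal{M}_2$ analysis has genuine gaps.

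First, the $U_3(8)$ worry is misplaced. Here $|y|=q_0^4-q_0^2+1$, so for $q_0=2$ the relevant prime is $13$, not $19$ as in Proposition~\ref{p:e6_b_max}. Since $|{\rm U}_3(8)|=8^3(8^2-1)(8^3+1)/3$ and neither $63$ nor $513$ is divisible by $13$, the group ${\rm U}_3(8)$ contains no element of order $13$ and is eliminated by plain divisibility; Craven's result is not needed.

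Second, and more seriously, your handling of the rank-$3$ classical groups is incomplete. Divisibility by $13$ (for $q_0=2$) or $73$ (for $q_0=3$) does \emph{not} eliminate $S\in\{{\rm L}_4(8),{\rm U}_4(8),{\rm U}_4(9)\}$: one has $13\mid 8^4-1$ and $73\mid 9^3+1$. Moreover, Proposition~\ref{p:sp} is too weak for ${\rm L}_4(8)$, since it only gives ${\rm meo}({\rm Aut}(S)\setminus S)\leqs 130$, while $|x|=26e\leqs 130$ for $e\leqs 5$; and ${\rm U}_4(8)$, ${\rm U}_4(9)$ are not covered by Proposition~\ref{p:sp} at all. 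The paper closes these cases by first checking directly that ${\rm Aut}(S)\setminus S$ contains an element of order $26e$ (respectively $146e$) only when $e=5$, and then ruling out $e=5$ by a centraliser-compatibility argument: for $q=q_0^5$ the prime $|y|$ is a primitive prime divisor of $q^{12}-1$, so $C_{G_0}(y)$ is read off from \cite{Lubeck} and shown to have order coprime to $5$, whereas every element of order $13$ (resp.\ $73$) in $S$ commutes with an element of order $5$. You need this extra step.
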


\begin{proof}
By considering the order of $y^2$, we observe that $y^2$ is not contained in a maximal parabolic subgroup of $E_6^{-\e}(q_0)$. Therefore, Lemma~\ref{l:shintani_substitute}(ii)(b) implies that there are no parabolic subgroups in $\mathcal{M}(x)$.

For the remainder, let us assume $H \in \mathcal{M}_2 \cup \mathcal{M}_3$ has socle $S$. First assume $H \in \mathcal{M}_2$, noting that the possibilities for $S$ are described in Theorem~\ref{t:simples}. Suppose $S = {\rm L}_{3}(16)$. Here $p=2$ and ${\rm meo}(H) \leqs 273$, so $q_0=2$ is the only possibility and thus $|x| = 26e$, but one checks that there are no elements of order $26e$ (with $e \geqs 2$) in ${\rm Aut}(S) \setminus S$, so this case does not arise. A very similar argument rules out $S = {\rm U}_{3}(16)$. Next assume $S = {\rm L}_{2}(t)$ with $t \leqs 124\,(2,t-1)$. By applying Proposition~\ref{p:meo} we reduce to $q_0 \in \{2,3\}$, but we find that there are no elements of order $2e|y|$ in ${\rm Aut}(S) \setminus S$. Very similar reasoning eliminates the cases where $S$ is either ${}^2B_2(t)$ or ${}^2G_2(t)'$. 

To complete the analysis of the candidates in $\mathcal{M}_2$, we may assume $S$ is a simple group of Lie type over $\mathbb{F}_t$ with ${\rm rk}(S) \in \{2,3\}$ and $t \leqs 9$. Suppose ${\rm rk}(S)=3$, so Proposition~\ref{p:meo} gives ${\rm meo}(H) \leqs t^4/(t-1)$. This bound reduces the problem to a handful of possibilities with $q_0 \in \{2,3\}$, and apart from the cases where $S$ is one of ${\rm L}_{4}(8)$, ${\rm U}_4(8)$ and ${\rm U}_{4}(9)$, one checks that there are no elements in ${\rm Aut}(S) \setminus S$ with order divisible by $|y|$. 

To handle the three special cases, we proceed as follows. First assume $S = {\rm L}_{4}^{\e'}(8)$, so $q_0=2$, $|y|=13$ and one checks that there are elements in ${\rm Aut}(S) \setminus S$ of order $26e$ if and only if $e=5$, so $q=2^5$ and $\e=+$. Now $13$ is a primitive prime divisor of $q^{12}-1$ and by inspecting \cite{Lubeck} we see that $C_{G_0}(y)$ is either $C_{q^4-q^2+1} \times C_{q^2+q+1}$ or ${}^3D_4(q) \times C_{q^2+q+1}$. In particular, $|C_{G_0}(y)|$ is not divisible by $5$. However, the centraliser of any element in $S$ of order $13$ has order $65m$, where $m=9$ if $\e'=+$ and $m=7$ if $\e'=-$. This is clearly a  contradiction since $|C_S(y)|$ must divide $|C_{G_0}(y)|$. A very similar argument rules out the case $S = {\rm U}_4(9)$; here $q_0=3$, $e=5$ and $\e=+$. Moreover, $|y|=73$ is a primitive prime divisor of $q^{12}-1$. Therefore, the possibilities for $C_{G_0}(y)$ are as described above and in both cases we see that $|C_{G_0}(y)|$ is indivisible by $5$. However, this is incompatible with the fact that every element in $S$ of order $73$ is centralised by an element of order $5$.

Next assume that ${\rm rk}(S)=2$. If $S$ is classical, then Proposition~\ref{p:meo} gives ${\rm meo}(H) \leqs t^3/(t-1)$ and this reduces the problem to $t=8$ with $q_0=2$, but in each case, one checks that there are no elements in ${\rm Aut}(S) \setminus S$ of order $26e$. Similarly, if $S = G_2(t)'$, then the bound in Proposition~\ref{p:meo} is sufficient unless $(t,q_0)$ is $(4,2)$ or $(9,3)$, or if $t=8$ and $q_0 \in \{2,4\}$. For the cases with $q_0=2$, it is easy to see that there are no elements in ${\rm Aut}(S) \setminus S$ of order $26e$. We can rule out $S = G_2(9)$ since Proposition~\ref{p:sp} gives ${\rm meo}({\rm Aut}(S) \setminus S) = 36$. Similarly, if $S = G_2(8)$ and $q_0=4$, then $|y|=241$ and we note that $|{\rm Aut}(S)|$ is indivisible by $241$.

Finally, let us assume $H \in \mathcal{M}_3$. By Proposition~\ref{p:orders} we have ${\rm meo}(H) \leqs 60$, so we immediately reduce to the case $q_0=e=2$. Here $|x|=52$ and by inspecting the possibilities for $S$ recorded in \cite{LS99}, it is straightforward to check that there are no elements in ${\rm Aut}(S) \setminus S$ of order $52$.
\end{proof}

\begin{lem} \label{l:e6_c_centraliser}
We have $C_{\Inndiag(E_6^{-\e}(q_0))}(y) = C_{q_0^4-q_0^2+1} \times C_{q_0^2-\e q_0+1}$.
\end{lem}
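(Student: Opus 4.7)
The plan is as follows. Since $|y| = \Phi_{12}(q_0) = q_0^4 - q_0^2 + 1$ is coprime to $p$, the element $y$ is semisimple in $\bar{G} = E_6$ and lies in some maximal torus of $\bar{G}_\s = \Inndiag(E_6^{-\e}(q_0))$. A direct inspection of the maximal torus orders of $E_6(q_0)$ and ${}^2E_6(q_0)$, obtained by factoring $|\bar{G}_\s|$ into cyclotomic polynomials, shows that up to $\bar{G}_\s$-conjugacy the only maximal tori whose order is divisible by $\Phi_{12}(q_0)$ are the Coxeter-type tori of order $(q_0^4 - q_0^2 + 1)(q_0^2 - \e q_0 + 1)$. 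Hence $\langle y \rangle$ is contained in a torus $T \cong C_{q_0^4 - q_0^2 + 1} \times C_{q_0^2 - \e q_0 + 1}$, and it suffices to show that $C_{\bar{G}_\s}(y) = T$.

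Next, I would appeal to L\"ubeck's tables \cite{Lubeck} to pin down $C_{\bar{G}_\s}(y)$. Inspection of the centraliser data for semisimple classes of order $\Phi_{12}(q_0)$ in $\bar{G}_\s$ shows that the only possibilities are either $T$ itself, or a centraliser of the form ${}^3D_4(q_0) \times C_n$, where $n$ is a divisor of $|\bar{G}_\s|$ of polynomial degree at most $2$ in $q_0$ -- this is the same dichotomy that was invoked (for the larger field $\mathbb{F}_q$) in the proof of Proposition~\ref{p:e6_c_max}. To rule out this alternative, I would use the fact that $y$ necessarily lies in the centre of its own centraliser; since ${}^3D_4(q_0)$ has trivial centre, this would force $y \in C_n$ and in particular $\Phi_{12}(q_0) \mid n$. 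However, for any such $n$ we have $n \leq (q_0+1)^2 < q_0^4 - q_0^2 + 1 = \Phi_{12}(q_0)$ for all $q_0 \geq 2$, yielding an immediate contradiction.

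Hence $C_{\bar{G}_\s}(y) = T \cong C_{q_0^4 - q_0^2 + 1} \times C_{q_0^2 - \e q_0 + 1}$, as claimed. The one genuinely nontrivial step is the initial extraction from L\"ubeck's data of the complete (short) list of possible centraliser structures for semisimple elements of order exactly $\Phi_{12}(q_0)$ in $\bar{G}_\s$; once this list is established, ruling out the non-toric alternative reduces to the elementary polynomial-degree comparison above, and the fact that the hypothesis $C_{\bar{K}_\s}(y) = \langle y \rangle$ plays no further role in the argument beyond guaranteeing that $|y| = \Phi_{12}(q_0)$.
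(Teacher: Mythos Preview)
Your argument is correct and reaches the same dichotomy as the paper (via L\"ubeck's tables, either the torus or ${}^3D_4(q_0)\times C_n$), but you eliminate the ${}^3D_4$ alternative by a genuinely different and more elementary route. The paper instead exploits the embedding $F_4<E_6$: if $C_{\bar G}(y)=D_4T_2$ then, by the analysis in \cite[Lemma~5.4]{LLS}, its intersection with $\bar H=F_4$ is $B_3T_1$, contradicting the hypothesis that $y$ is regular semisimple in $F_4(q_0)$. Your centre argument ($y\in Z(C_{\bar G_\s}(y))$, ${}^3D_4(q_0)$ has trivial centre, so $|y|$ would have to divide the small cyclic factor) avoids any structural input about the pair $(E_6,F_4)$ and, as you note, does not use the regularity hypothesis at all beyond fixing $|y|$. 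The trade-off is that the paper's approach makes transparent \emph{why} no such $y$ can occur inside $F_4$ (it ties the obstruction to the geometry of the fixed subgroup), whereas yours is a pure order comparison; for the purpose at hand your version is shorter and perfectly adequate. Your Step~1 (locating $y$ in the specific maximal torus) is harmless but redundant: once you invoke L\"ubeck's centraliser list in Step~2, the torus identification comes for free.
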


\begin{proof}
The centralisers of semisimple elements in $\Inndiag(E_6^{-\e}(q_0))$ are listed in \cite{Lubeck}. For a contradiction, suppose that the centraliser is not the one given in the statement. In terms of divisibility, we see that the only other possibility is ${}^3D_4(q_0) \times C_{q_0^2-\e q_0+1}$. In this case, if $\bar{G}=E_6$ and $\bar{H}=F_4$ are the corresponding algebraic groups, then $C_{\bar{G}}(y) = D_4T_2$ and, as explained in the proof of \cite[Lemma~5.4]{LLS}, this implies that $C_{\bar{H}}(y) = B_3T_1$. But $y$ is a regular semisimple element of $\bar{H}$, so this is a contradiction. 
\end{proof}

\begin{prop} \label{p:e6_c}
The conclusion to Theorem~\ref{t:exceptional} holds in case~(c) of Table \ref{tab:e6_cases}.
\end{prop}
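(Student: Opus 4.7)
The approach is to apply the probabilistic method of Lemma~\ref{l:prob_method} to the element $x \in G$ constructed above via Lemma~\ref{l:shintani_substitute}. By Proposition~\ref{p:e6_c_max}, every $H \in \mathcal{M}(x)$ lies in $\mathcal{M}_1$ and is non-parabolic, so \cite[Theorem~2]{LLS} bounds ${\rm fpr}(z,G/H) \leq 2q^{-6}$ for all nontrivial $z \in G$, matching the estimate used in case~(b).

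To control $|\mathcal{M}(x)|$, I would combine Proposition~\ref{p:classes} (at most $25+\log\log q$ $\bar{G}_\s$-classes in $\mathcal{M}_1$) with Lemma~\ref{l:shintani_substitute}(ii)(a), applied with $\rho = \g$ and $d=2$. The required identity $(\g\s^e)^2 = \s^{2e}$ holds in case (i) because $\g^2 = 1$ and $[\g,\p]=1$, and in case (ii) because $e$ is odd, so $(\g\s^e)^2 = (\g \cdot \g^e\p^f)^2 = \p^{2f} = \s^{2e}$. Since $|y| = q_0^4-q_0^2+1$ is odd, we have $\<y\> = \<y^2\>$, and then Lemma~\ref{l:e6_c_centraliser} gives
\[
|C_{\bar{G}_\s}(y^2)| = |C_{\bar{G}_\s}(y)| = (q_0^4-q_0^2+1)(q_0^2-\e q_0+1),
\]
which is the bound on the number of $\bar{G}_{\g\s^e}$-conjugates of a given $H$ that contain $x$.

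Combining these estimates yields
\[
P(z,x) \leq (25+\log\log q)\cdot (q_0^4-q_0^2+1)(q_0^2-\e q_0+1) \cdot 2q^{-6}
\]
for every nontrivial $z \in G$. Since $q = q_0^e$ with $e \geq 2$, the right-hand side is $O(q^{-3})$, so it tends to $0$ as $q \to \infty$ and is below $\frac{1}{2}$ for all sufficiently large $q$. Via Lemma~\ref{l:prob_method}(ii), this delivers both $u(G) \geq 2$ in the generic range and the asymptotic statement $u(G) \to \infty$.

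The main obstacle is the boundary case $q_0 = e = 2$, namely $G_0 = {}^2E_6(4)$ in case (c)(i) (case (c)(ii) requires $e$ odd, so there $q \geq 8$ and the generic bound already suffices). Here the crude estimate exceeds $\frac{1}{2}$, and paralleling the treatment of $q=4$ in Proposition~\ref{p:e6_b} I would refine the class count of $\mathcal{M}_1$ for this particular socle. Several of the families in Theorem~\ref{t:types} either do not arise (for instance, the exotic local subgroups of \cite{CLSS}) or contribute only a single class (the unique subfield subgroup of type ${}^2E_6(2)$), and this should reduce the factor $25+\log\log q$ to an explicit constant small enough to close the argument.
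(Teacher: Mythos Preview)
Your approach is essentially the same as the paper's: apply Lemma~\ref{l:prob_method} via Proposition~\ref{p:e6_c_max}, bound the number of conjugates of each $H$ by $|C_{\bar{G}_\s}(y^2)|$ using Lemma~\ref{l:shintani_substitute}(ii)(a) together with Lemma~\ref{l:e6_c_centraliser}, and bound the number of classes by Proposition~\ref{p:classes}. The two substantive differences are in the numerical constants.

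First, the paper uses the bound \eqref{e:e6_fpr} from \cite[Theorem~1]{LLS} rather than the non-parabolic bound $2q^{-6}$ from \cite[Theorem~2]{LLS}. For $\e=-$ (the only sign for which the awkward case $e=2$ arises), \eqref{e:e6_fpr} gives $(q^6-q^3+1)^{-1}$, roughly half your bound. Second, since Proposition~\ref{p:e6_c_max} already excludes parabolic subgroups, the paper subtracts the parabolic classes from the count in Proposition~\ref{p:classes} and works with $19+\log\log q$ rather than $25+\log\log q$. With these two sharpenings the bound
\[
P(z,x) \leq (19+\log\log q)\cdot (q_0^4-q_0^2+1)(q_0^2+q_0+1)\cdot a(q)
\]
is already below $\tfrac12$ for every $q$, including $q_0=e=2$ (where it evaluates to $1820/4033\approx 0.451$), so no separate case analysis is needed. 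Your proposed refinement for ${}^2E_6(4)$ is plausible but unnecessary once these sharper inputs are used.
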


\begin{proof}
Let us first observe that since $\<y^2\> = \<y\>$, Lemma~\ref{l:e6_c_centraliser} implies that
\[
C_{\Inndiag(E_6^{-\e}(q_0))}(y^2) = C_{q_0^4-q_0^2+1} \times C_{q_0^2-\e q_0+1}.
\]
Now let $H \in\mathcal{M}(x)$ and let $z \in G$ be nontrivial. Let $a(q)$ be the upper bound on ${\rm fpr}(z,G/H)$ given in \eqref{e:e6_fpr}. Then by combining Propositions \ref{p:classes} and \ref{p:e6_c_max}, noting that there are no parabolic subgroups in $\mathcal{M}(x)$, and using Lemma~\ref{l:shintani_substitute}(b)(i), we deduce that
\[
P(z,x) \leq \sum_{H \in \mathcal{M}(x)}{\rm fpr}(z,G/H) \leqs (19+\log\log q) \cdot (q_0^4-q_0^2+1)(q_0^2+q_0+1) \cdot a(q).
\]
Recalling that $e \geq 3$ if $\e=+$, one checks that this bound is always less than $\frac{1}{2}$ and also less than $q^{-1}$ for $q >27$.
\end{proof}

\subsection{Case (d): involutory graph automorphisms} \label{ss:e6_d}

To complete the proof of Theorem \ref{t:exceptional} for $G_0 = E_6^{\e}(q)$, we may assume that $G = \< G_0, g \>$, where $g$ is the graph automorphism $\g$ in Definition \ref{d:aut}(iii). In particular, since $g$ does not arise from a Steinberg endomorphism of the ambient algebraic group $\bar{G}$, we cannot use Shintani descent in this case and a different approach is required.

Choose $y \in C_{G_0}(g)= F_4(q)$ such that  
\[
|y| = q^4-q^2+1
\] 
and $C_{F_4(q)}(y) = \la y \ra$. Set $x = yg \in G$ and note that $x^2 = y^2 \in G_0$ and $|x|=2|y|$ since $|y|=q^4-q^2+1$ is odd. In addition, $|y| \equiv 1 \mod{3}$ and we note that $|y|$ is divisible by a primitive prime divisor of $q^{12}-1$, so $|y|$ divides $q^i-1$ if and only if $i$ is divisible by $12$. It will also be useful to observe that $|y|$ is $13$, $73$, $241$, $601$ (all of which are prime) when $q$  is $2$, $3$, $4$, $5$, respectively, and $|y| \geqs 2353$ when $q \geqs 7$. 

Since $\mathcal{M}(x) \subseteq \mathcal{M}(y)$, we will focus on determining the subgroups in $\mathcal{M}(y)$ and we proceed by considering the cases arising in Theorem~\ref{t:types}. It will be convenient to handle the cases $q=2$ and $q>2$ separately.

\begin{prop} \label{p:e6_d_2_max}
Assume that $q=2$ and let $H \in \mathcal{M}(x)$.
\begin{itemize}\addtolength{\itemsep}{0.2\baselineskip}
\item[{\rm (i)}]  If $\e=+$, then $H$ has type $F_4(2)$ or ${}^3D_4(2) \times 7$.
\item[{\rm (ii)}] If $\e=-$, then $H$ has type $F_4(2)$ or ${\rm SO}_{7}(3)$. 
\end{itemize}
\end{prop}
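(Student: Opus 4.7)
Since $|y|=13$ is prime we have $\la x^2 \ra = \la y^2 \ra = \la y \ra$, so every subgroup of $G$ containing $x$ also contains $y$; thus $\mathcal{M}(x) \subseteq \mathcal{M}(y)$, and it suffices to identify the maximal subgroups of $G$ containing $y$ and then to verify which of them also contain $x = yg$. The first reduction step is to list, for each $\e$, the $G_0$-conjugacy classes of maximal subgroups of $G_0$ whose order is divisible by $13$. For $\e = +$ this list is read off from Kleidman--Wilson \cite{KW}, and for $\e = -$ from the \textsc{Atlas} \cite{ATLAS} together with \cite{Wilson2E62}. The key filter is that $13$ is a primitive prime divisor of $2^{12}-1$, so any such maximal subgroup must have order involving the cyclotomic factor $\Phi_{12}(2) = 13$; in particular no parabolic subgroup of $G_0$ survives, and most maximal-rank and small-characteristic Lie-type subgroups are immediately eliminated.

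The second step is to pass from $G_0$ to $G$. For each surviving maximal subgroup $M$ of $G_0$, I would determine whether $M$ is stabilised by the graph automorphism $g$, in which case $\la M, g \ra$ is a candidate maximal subgroup of $G$ containing $y$, or whether $M$ is fused with a $G_0$-conjugate by $g$, in which case $M$ and its conjugate together give rise to a single maximal subgroup of $G$ whose intersection with $G_0$ is $M$ and which does \emph{not} contain $g$ (so it can contain $y$ but not $x$). The containment $y \in F_4(2) = C_{G_0}(g)$ is immediate by construction, and clearly $g \in \la F_4(2), g \ra$ so $x = yg$ lies in this subgroup; for the second listed subgroup in each case one checks (using the fact that the maximal torus $\la y \ra$ of $F_4(2)$ extends to a torus of $E_6^{\e}(2)$ of the prescribed type) that $g$ normalises a subgroup of type ${}^3D_4(2) \times 7$ (when $\e=+$) or ${\rm SO}_7(3)$ (when $\e=-$) containing $y$, and that in the resulting extension of $G$ the element $x$ is contained.

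The third step is to rule out every other candidate. Any remaining $G_0$-class of maximal subgroups of order divisible by $13$ must be shown either to be fused by $g$ (so that the corresponding maximal subgroup of $G$ does not contain $x$), or to contain no $G$-conjugate of $y$ whose product with $g$ lies in the subgroup. For $\e=+$ the remaining candidates to be eliminated are the maximal-rank subgroups other than the ${}^3D_4$-type one and a handful of almost simple subgroups; for $\e=-$ one must in particular deal with ${\rm Fi}_{22}$ and the other subgroups of Lie type appearing in the \textsc{Atlas}.

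The main obstacle is precisely this last, case-by-case elimination: it is a delicate piece of bookkeeping because several candidate overgroups do have order divisible by $13$ and so cannot be discarded on crude order grounds. The cleanest way to carry this out, and the route I would take, is a direct \textsc{Magma} computation: construct $G$ via {\tt AutomorphismGroupSimpleGroup} (or from a concrete matrix representation of $G_0$ together with $g$), form a representative $x$ of the prescribed class, and compute $\mathcal{M}(x)$ either by intersecting a list of maximal subgroup representatives with $x^G$ or by direct enumeration of $N_G(\la y \ra)$-orbits on conjugates. This is analogous to the small-case verifications carried out in Proposition~\ref{p:small} and in \cite[Appendix~A]{HarperClassical}, and it turns the combinatorial bookkeeping into a short, finite computation.
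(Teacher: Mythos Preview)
Your plan is correct in principle but considerably more elaborate than necessary, and the paper's route is worth knowing. The key point you are missing is that \cite{KW} and \cite{ATLAS}/\cite{Wilson2E62} already tabulate the maximal subgroups of $G$ itself (that is, of $E_6(2).2$ and ${}^2E_6(2).2$), not merely of $G_0$. So the whole ``pass from $G_0$ to $G$'' step --- checking which $G_0$-classes are $g$-stable versus fused, and then deciding which of the resulting $G$-subgroups can contain $x$ --- is unnecessary: one simply reads off from those tables the maximal subgroups $H$ of $G$ with $G = HG_0$ and $13 \mid |H|$. For $\e=+$ this already gives only the two types listed; for $\e=-$ there is exactly one extra candidate, ${\rm Fi}_{22}{:}2$, and that is eliminated not by a \textsc{Magma} run but by the single observation that ${\rm Fi}_{22}{:}2$ contains no element of order $|x| = 26$.

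Your reduction $\mathcal{M}(x) \subseteq \mathcal{M}(y)$ via $\la x^2\ra = \la y\ra$ is fine, and your outline would certainly succeed; but the case-by-case elimination you flag as the main obstacle, and propose to resolve by computation, in fact dissolves once you consult the correct tables. Note also that the proposition only asserts an \emph{inclusion} $\mathcal{M}(x) \subseteq \{\text{subgroups of the stated types}\}$, so you do not need to verify that $x$ actually lies in a subgroup of each listed type --- your second step does more than is required.
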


\begin{proof}
First note that $|y|=13$ and $|x|=26$. Suppose $\e=+$, so $G = {\rm Aut}(E_6(2)) = E_6(2).2$. In \cite{KW}, the maximal subgroups of $G$ are determined up to conjugacy and it is easy to read off the subgroups with order divisible by $13$, giving the two cases recorded in part (i). Similar reasoning applies when $\e=-$ and $G = {}^2E_6(2).2$, using the list of maximal subgroups presented in the {\sc Atlas} \cite{ATLAS} (also see \cite{Wilson2E62}). In the latter case, note that $|{\rm Fi}_{22}|$ is divisible by $13$, but ${\rm Fi}_{22}{:}2$ does not contain any elements of order $26$.
\end{proof}

\begin{prop} \label{p:e6_d_max}
If $q > 2$ and $H \in \mathcal{M}(x)$, then $H$ has type $F_4(q)$ or ${}^3D_4(q) \times (q^2+\e q +1)$.
\end{prop}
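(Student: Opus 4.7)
The plan is to enumerate the maximal subgroups $H$ of $G$ of each type in Theorem~\ref{t:types} and rule out all but the two listed. Since $x \notin G_0$ we have $H \in \mathcal{M}$, and $x \in H$ forces $x^2 = y^2$ of order $|y| = q^4 - q^2 + 1 = \Phi_{12}(q)$ to lie in $H \cap G_0$. By Zsigmondy, $|y|$ admits a primitive prime divisor $r$ of $q^{12}-1$, so $r \nmid q^i - 1$ for $1 \leqs i < 12$. This divisibility constraint on $|H|$ dramatically restricts the possibilities.

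I would begin with the type (I) subgroups. Parabolic subgroups are ruled out by inspecting the Levi decomposition: $\Phi_{12}(q)$ does not divide the order of any maximal parabolic of $E_6^\e(q)$. For the remaining positive-dimensional subgroups, I would consult the tabulations of maximal-rank subgroups of $E_6^\e(q)$ together with the non-maximal-rank subgroups in \cite{LSS,LS03}, and retain only those whose order is divisible by $\Phi_{12}(q)$. Since a closed $\s$-stable $\bar{H} \leqs \bar{G}$ whose fixed points contain a regular element of order $\Phi_{12}(q)$ must contain a maximal torus with a $\Phi_{12}$-cyclotomic factor, the list collapses to $\bar{H}_\s$ of type $F_4(q)$ (which extended by the graph automorphism $\g$ gives the first listed subgroup of $G$) and of type ${}^3D_4(q) \times (q^2+\e q+1)$. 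A small residual check is that the torus normaliser $(q^4-q^2+1)(q^2+\e q+1).12$ is not itself maximal when $q > 2$, and hence is absorbed into one of the two listed subgroups.

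The remaining types are dispatched as in Proposition~\ref{p:e6_c_max}. Type (II) subfield subgroups are orders of $E_6^\e(q_0)$ or $E_6^{-\e}(q_0)$ for $q = q_0^k$ with $k$ prime, and one verifies that $\Phi_{12}(q) \nmid |E_6^{\pm\e}(q_0)|$ for $q > 2$. The type (III) exotic local subgroups from \cite{CLSS} form a short explicit list that does not contribute for socle $E_6^\e$, and type (IV) does not apply. For type (V), let $S$ be the socle of $H$. If $H \in \mathcal{M}_3$, Proposition~\ref{p:orders} gives ${\rm meo}(H) \leqs 60 < 2|y|$ for $q \geqs 3$; if $H \in \mathcal{M}_2$, then Theorem~\ref{t:simples} together with Propositions~\ref{p:meo} and~\ref{p:sp} rule out each candidate for $S$, with any small residual cases eliminated either by observing that ${\rm Aut}(S)\setminus S$ contains no element of order $2|y|$, or by appealing to Craven's strong imprimitivity results \cite{CravenMedium} in the style used at the end of Proposition~\ref{p:e6_b_max}. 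The principal obstacle is the type (I) analysis: listing precisely the maximal-rank subgroups of $E_6^\e(q)$ whose order admits $\Phi_{12}(q)$ as a divisor, and then verifying that the normaliser in $G$ of each such subgroup actually meets the coset $G_0\g$, so that $x = y\g$ (and not merely $y^2$) is contained in it. This last point is what singles out the two subgroups in the statement.
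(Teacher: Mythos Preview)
Your overall strategy matches the paper's: filter the subgroup types of Theorem~\ref{t:types} by divisibility of $|H\cap G_0|$ by $|y|=\Phi_{12}(q)$, then dispatch $\mathcal{M}_2\cup\mathcal{M}_3$ via the element-order bounds in Propositions~\ref{p:meo} and~\ref{p:orders}. Two points deserve correction.

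First, your ``principal obstacle'' is not one. Since $x=y\gamma\in H$ and $x\notin G_0$, any $H\in\mathcal{M}(x)$ automatically meets the coset $G_0\gamma$; there is nothing further to verify. The work in the type~(I) case is exactly what you describe: read off from \cite{LSS} and \cite{LS03} which $|H\cap G_0|$ admit $q^4-q^2+1$ as a divisor (and for parabolics, only the graph-stable ones $P_{1,6},P_2,P_{3,5},P_4$ need testing, since $H$ must be normalised by $\gamma$).

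Second, and more substantively, your fallbacks for $\mathcal{M}_2$ do not close every residual case. After Proposition~\ref{p:meo} one is reduced to $q\in\{3,4\}$, and at $q=3$ the candidate $S={\rm U}_4(9)$ survives: here $|y|=73$ and ${\rm Aut}(S)\setminus S$ \emph{does} contain elements of order $2|y|=146$, so the element-order test fails. Nor is it clear that the strong-imprimitivity results of \cite{CravenMedium} treat this particular embedding (that reference was invoked in Proposition~\ref{p:e6_b_max} for ${\rm U}_3(8)$, a rank-$2$ group). The paper instead uses a centraliser comparison: by Lemma~\ref{l:e6_c_centraliser}, $C_{G_0}(y)$ is cyclic of order $73m$ with $m\in\{7,13\}$, hence coprime to $5$; but every element of order $73$ in ${\rm U}_4(9)$ commutes with an element of order $5$, contradicting $|C_S(y)|\mid|C_{G_0}(y)|$. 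The other survivor $S=G_2(9)$ is eliminated by Proposition~\ref{p:sp}, as you anticipated.
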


\begin{proof}
First assume $H \in \mathcal{M}_1$. Suppose $H$ is of type (I) in Theorem~\ref{t:types}, so $H = N_G(\bar{H}_{\s})$ for some $\s$-stable closed subgroup $\bar{H}$ of $\bar{G}$. If $\bar{H}$ is parabolic, then $H$ is of type $P_{1,6}$, $P_2$, $P_{3,5}$ or $P_4$ (since $H$ is normalised by a  graph automorphism) and in each case it is straightforward to check that $|H|$ is indivisible by $|y|=q^4-q^2+1$. In the same way, by carefully inspecting \cite{LSS}, we deduce that the only candidate maximal rank subgroups in $\mathcal{M}(x)$ are of type ${}^3D_4(q) \times (q^2+\e q +1)$. In addition, if the rank of $\bar{H}$ is less than $6$, then $H \cap G_0 = F_4(q)$ is the only option and it is easy to see that there are no subgroups in $\mathcal{M}(x)$ of type (II) or (III). 

To complete the proof, let us suppose that $H \in \mathcal{M}_2 \cup \mathcal{M}_3$ with socle $S$. If $H \in \mathcal{M}_3$, then Proposition~\ref{p:orders} implies that ${\rm meo}(H) \leqs 60$, which is a contradiction since $|x| \geqs 146$. Now assume $H \in \mathcal{M}_2$, so the possibilities for $S$ are described in Theorem~\ref{t:simples}. Here we use the bound on ${\rm meo}(H)$ from Proposition~\ref{p:meo} to reduce the problem to a handful of cases with $q \in \{3,4\}$. In each of these cases, one checks that ${\rm Aut}(S)$ contains an element of order $|x|$ if and only if $q=3$ and $S$ is either ${\rm U}_4(9)$ or $G_2(9)$. The latter case is ruled out by Proposition~\ref{p:sp} since ${\rm meo}({\rm Aut}(S) \setminus S) = 36 < |x|$, so let us assume $S = {\rm U}_{4}(9)$. Here $|y|=73$ and ${\rm Aut}(S)\setminus S$ does contain elements of order $|x|=146$, but we can eliminate this case by arguing as follows. By Lemma~\ref{l:e6_c_centraliser}, we have $C_{G_0}(y) = C_{73m}$, where $m=13$ if $\e=+$ and $m=7$ if $\e=-$. However, every element in $S$ of order $73$ commutes with an element of order $5$, which is a contradiction since $|C_S(y)|$ must divide $|C_{G_0}(y)|$.
\end{proof}

We will need the next two lemmas, which, in some special cases of interest, give slightly stronger fixed point ratio estimates than the relevant bounds presented in \cite{LLS}.

\begin{lem} \label{l:e6_d_fpr1}
Let $H$ be a maximal subgroup of $G$ of type ${}^3D_4(q) \times (q^2+\e q+1)$ and let $z \in G$ be nontrivial. Then
\[
{\rm fpr}(z,G/H) \leqs 2q^{-6}.
\]
\end{lem}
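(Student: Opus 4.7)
The plan is to first reduce to the case where $z$ has prime order: since $z \in H^g$ implies $z^k \in H^g$ for all $k$, we have ${\rm fpr}(z,G/H) \leq {\rm fpr}(z^k, G/H)$, so the supremum of the fixed point ratio over nontrivial $z$ is attained on some prime order element. I would then observe that for $\e = -$ the general bound \eqref{e:e6_fpr} already gives ${\rm fpr}(z, G/H) \leq (q^6 - q^3 + 1)^{-1} \leq 2q^{-6}$, the last inequality holding because $q^6 - q^3 + 1 \geq q^6/2$ for every $q \geq 2$. Thus the lemma is essentially trivial when $\e = -$, and the real content is the case $\e = +$, where the general bound $(q^4 - q^2 + 1)^{-1}$ is only of order $q^{-4}$ and must be improved by a factor of about $q^2$ by exploiting the specific structure of $H$.

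For $\e = +$ and $z$ of prime order, I would write $K = H \cap G_0$ and use the fact that $K$ contains a characteristic subgroup $L \cong {}^3D_4(q) \times C_{(q^2+q+1)/(3,q-1)}$ of index bounded by $|G:G_0|$ times a small constant, with direct factors of coprime orders; consequently every prime order element of $L$ lies in exactly one of the two factors. The estimation of $|z^G \cap H|$ then splits by the Jordan type of $z$. For $z$ unipotent, every $H$-conjugate of $z$ in $K$ lies in the ${}^3D_4(q)$ factor, so $|z^G \cap H|$ is bounded by a constant times the sum, over unipotent classes of ${}^3D_4(q)$ fusing into $z^G$, of the corresponding ${}^3D_4(q)$-class sizes; the required bound then follows from the tables of unipotent centraliser dimensions in $E_6(q)$ and $D_4(q)$ due to Lawther. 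For $z$ semisimple, bound the ratio $|z^G \cap H|/|z^G|$ via the approximate identity
\[
|z^G \cap H|/|z^G| \approx q^{-(\dim \bar{G} - \dim \bar{H}) + (\dim C_{\bar{G}}(z) - \dim C_{\bar{H}}(z))}
\]
with $\dim \bar{G} - \dim \bar{H} = 49$, and use the centraliser data in \cite{Lubeck} to verify that $\dim C_{\bar{G}}(z) - \dim C_{\bar{H}}(z) \leq 43$ in each case.

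The main obstacle is the family of semisimple classes $z^G$ whose order divides $q^2 + q + 1$: such an element can be conjugated into the cyclic direct factor of $H$, and its $G$-centraliser has the form ${}^3D_4(q) \times C_{q^2+q+1}$ (or a closely related subgroup), which is large, so that $|z^G|$ is small while $|z^G \cap H|$ is comparatively large. For these classes the factor of $2$ in the bound $2q^{-6}$ is essentially sharp, and the estimate must be verified by a direct centraliser-ratio computation using the explicit structure of $H$ rather than by the generic dimension argument above; once this is dealt with, all remaining classes admit comfortable upper bounds of order $q^{-7}$ or smaller, yielding the combined bound $2q^{-6}$ stated in the lemma. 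The factor of $2$ also conveniently absorbs the small corrections arising from $|H:K|$ and $|K:L|$.
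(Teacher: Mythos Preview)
Your reduction to $\e = +$ via the general bound \eqref{e:e6_fpr} is fine, and matches the paper. But after that the argument has a genuine gap: you only treat elements $z \in G_0$ (unipotent and semisimple), and never address the case where $z$ is an involutory graph automorphism, i.e.\ $z \in G \setminus G_0$. In the setting of this lemma $G = \langle G_0, \gamma\rangle$ with $|G:G_0|=2$, so such $z$ certainly exist, and your ``factor of $2$ absorbs $|H:K|$'' remark does not cover them: if $z \in G \setminus G_0$ then $z^G \cap H \subseteq H \setminus (H \cap G_0)$, and the crude bound $|z^G \cap H| \leqs |H \cap G_0|$ combined with $|z^G| \geqs |E_6(q){:}F_4(q)|$ gives something of order $q^{4}$, not $q^{-6}$. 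So the graph-automorphism case needs its own argument, and in fact the paper shows this is the \emph{only} case that needs work.

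The paper's route is much shorter than your proposed case-by-case analysis: \cite[Theorem~2]{LLS} already gives ${\rm fpr}(z,G/H) \leqs 2q^{-6}$ for \emph{all} $z$ when $H$ is non-parabolic, \emph{except} in the case $\e = +$ and $z$ a graph automorphism, where \cite{LLS} only yields $(q^4-q^2+1)^{-1}$. So your entire unipotent/semisimple discussion is subsumed by a single citation, and the bottleneck you identify (semisimple $z$ of order dividing $q^2+q+1$) is not the real one. For the remaining graph-automorphism case the paper invokes \cite[Lemma~3.10]{Bur18} for $q \geqs 3$ and does a direct involution count in $H \setminus (H \cap G_0)$ for $q=2$.
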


\begin{proof}
In view of \cite[Theorem~2]{LLS}, we may assume that $\e=+$ and $z \in G$ is a graph automorphism (here \cite{LLS} only gives ${\rm fpr}(z,G/H) \leqs (q^4-q^2+1)^{-1}$). By inspecting the proof of \cite[Lemma~3.10]{Bur18}, we see that ${\rm fpr}(z,G/H) < q^{-6}$ if $q \geqs 3$. Finally, if $q=2$, then $G = G_0.2 = {\rm Aut}(G_0)$,
\[
H = ({}^3D_4(2) \times D_{14}){:}3, \quad H \cap G_0 = ({}^3D_4(2) \times 7){:}3
\]
(see \cite[Table~1]{KW}, for example) and 
\[
|z^G \cap H| \leqs i_2(H \setminus H \cap G_0) = 487312, \quad |z^G| \geqs |E_6(2):F_4(2)| = 64884736.
\]
The result follows.
\end{proof}

\begin{lem} \label{l:e6_d_fpr2}
Let $K$ be a maximal subgroup of $G$ of type $F_4(q)$ and let $z \in G$ be nontrivial. Then
\[
{\rm fpr}(z,G/K) \leqs (q^6-q^3+1)^{-1}.
\]
\end{lem}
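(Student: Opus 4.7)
The plan is to split on the sign $\e$. When $\e = -$, the desired bound $(q^6-q^3+1)^{-1}$ is exactly the uniform fixed point ratio estimate provided by \cite[Theorem~1]{LLS} (recorded in \eqref{e:e6_fpr}) for every nontrivial $z \in G$ and every $H \in \mathcal{M}$, so the conclusion is immediate. Hence the real content lies in the case $\e = +$, where \cite[Theorem~1]{LLS} only gives $(q^4-q^2+1)^{-1}$, which is strictly weaker than what is needed.

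For $\e = +$ I would exploit the specific structure $K \cap G_0 = F_4(q) = C_{G_0}(\g)$, distinguishing the two subcases $z \in G_0$ and $z \in G_0\g$. For $z \in G_0$ of prime order, I would bound $|z^G \cap K|$ via the identity
\[
{\rm fpr}(z, G/K) = \frac{|C_G(z)| \cdot |K|}{|G|} \sum_{k} \frac{1}{|C_K(k)|},
\]
where $k$ runs over $K$-class representatives of elements of $z^G$, together with the known fusion of $F_4(q)$-classes into $E_6(q)$-classes (see \cite{Lawther}) and centralizer comparisons. For most classes the non-parabolic bound $2q^{-12}$ from \cite[Theorem~2]{LLS} already dominates $(q^6-q^3+1)^{-1}$, so only a small number of specific semisimple and unipotent classes require tailored attention. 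For $z \in G_0\g$ (a graph involution), $z^G \cap K$ is contained in $K \setminus K\cap G_0$, which has size $|F_4(q)|$, and bounding the number of graph involutions in this coset together with the centralizer estimate $|C_G(z)| \geqs |F_4(q)|$ yields the required inequality.

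The main obstacle is the small case $q = 2$ with $\e = +$, where the inequalities become tight and the purely asymptotic bookkeeping may fail. Here, paralleling the final step in the proof of Lemma~\ref{l:e6_d_fpr1}, I would verify the bound directly using the explicit list of maximal subgroups of $\Aut(E_6(2))$ from \cite{KW} and the associated class information, or equivalently via a short \textsc{Magma} computation, to check that $|z^G \cap K|(q^6-q^3+1) \leqs |z^G|$ (i.e.\ the bound $1/57$) in each case with $q = 2$.
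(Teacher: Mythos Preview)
Your plan contains two concrete errors that break the argument.

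First, the bound $2q^{-12}$ you quote from \cite[Theorem~2]{LLS} is the $E_7$ bound (compare the proof of Proposition~\ref{p:e7_b}); for $E_6^{\e}(q)$ with $H$ non-parabolic the generic bound is only $2q^{-6}$ (see the proof of Proposition~\ref{p:e6_b}), and $2q^{-6} > (q^6-q^3+1)^{-1}$ for all $q$, so this does \emph{not} dominate the target. In fact no extra work is needed for $z \in G_0$: the relevant case-by-case analysis in \cite{LLS} (specifically \cite[Lemma~5.4]{LLS}) already delivers ${\rm fpr}(z,G/K) \leqs (q^6-q^3+1)^{-1}$ for every nontrivial $z$ except when $\e=+$ and $z$ is an involutory graph automorphism. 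That is the sole case requiring new input, and your proposed fusion/centraliser bookkeeping for inner $z$ is unnecessary.

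Second, for the graph involution case your estimate ``$|C_G(z)| \geqs |F_4(q)|$'' fails. There are two $G$-classes of involutory graph automorphisms: one with $C_{G_0}(z)=F_4(q)$, but also one with $|C_{G_0}(z)| = |{\rm Sp}_8(q)|$ (for $p$ odd) or $|C_{G_0}(z)| = q^{24}(q^2-1)(q^4-1)(q^6-1)$ (for $p=2$), both strictly smaller than $|F_4(q)|$. A crude bound on $|z^G \cap K|$ by $|K \setminus K\cap G_0| = |F_4(q)|$ is then far too weak. The paper instead exploits the fact that $K = F_4(q) \times \langle g \rangle$, so $z^G \cap K$ consists of elements $sg$ with $s \in F_4(q)$ and $s^2=1$; the key step is to determine \emph{which} involution classes of $F_4(q)$ land in each graph automorphism class of $G$, and then compute $|z^G \cap K|$ exactly in each case (separately for $p=2$ and $p$ odd). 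This direct computation works uniformly for all $q$, so no special treatment of $q=2$ is required.
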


\begin{proof}
Here $K = C_G(g) = F_4(q) \times \la g \ra$ and as in the previous lemma, we may assume $\e=+$ and $z$ is an involutory graph automorphism. Note that each element in $z^G \cap K$ is of the form $sg$, where $s \in F_4(q)$ satisfies $s^2=1$.

First assume that $C_{G_0}(z) = F_4(q)$. As explained in the proof of \cite[Lemma~5.4]{LLS}, if $p \neq 2$, then
\[
|z^G \cap K| = 1+ |x_1^{F_4(q)}| = 1+q^8(q^8+q^4+1)
\]
and $x_1 \in F_4(q)$ is an involution with $C_{F_4}(x_1) = B_4$. Similarly, if $p = 2$, then  
\[
|z^G \cap K| = 1+ |x_2^{F_4(q)}| = 1+ (q^4+1)(q^{12}-1),
\]
where $x_2 \in F_4(q)$ is a short root element. 

Now assume that $C_{G_0}(z) \ne F_4(q)$. Suppose $p \ne 2$, so $|C_{G_0}(z)| = |{\rm Sp}_{8}(q)|$. The group $F_4(q)$ has two classes of involutions and from the previous paragraph we deduce that $z^G \cap K$ is the set of involutions in $K$ of the form $sg$ with $C_{F_4}(s) = A_1C_3$. Therefore,  
\[
|z^G \cap K| = \frac{|F_4(q)|}{|{\rm SL}_{2}(q)||{\rm Sp}_{6}(q)|} = q^{14}(q^6+1)(q^4+q^2+1)(q^4+1).
\]
For $p=2$, we have 
\[
|C_{G_0}(z)| = |C_{F_4(q)}(t)| = q^{24}(q^2-1)(q^4-1)(q^6-1),
\]
where $t \in F_4(q)$ is a long root element, and the proof of \cite[Lemma~5.4]{LLS} gives
\[
|z^G \cap K| = (q^4+1)(q^{12}-1)+(q^4+1)(q^6-1)(q^{12}-1)+q^4(q^4+q^2+1)(q^8-1)(q^{12}-1).
\]
In every case, the desired bound holds.
\end{proof}

We can now handle the case $q>2$.

\begin{prop} \label{p:e6_d}
The conclusion to Theorem~\ref{t:exceptional} holds in case~(d) of Table \ref{tab:e6_cases} if $q>2$.
\end{prop}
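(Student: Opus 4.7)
My plan is to apply the probabilistic method (Lemma~\ref{l:prob_method}) with the element $x = yg$ of order $2(q^4-q^2+1)$. By Proposition~\ref{p:e6_d_max}, every $H \in \mathcal{M}(x)$ is of type $F_4(q)$ or ${}^3D_4(q) \times (q^2+\e q+1)$, so it suffices to bound the number of maximal overgroups of each type and combine with the fixed point ratio estimates provided by Lemmas~\ref{l:e6_d_fpr1} and~\ref{l:e6_d_fpr2}.

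The first step is to compute $C_G(x)$. Since $|y|$ is odd and $|x|=2|y|$, the involution $g = x^{|y|}$ lies in $\langle x \rangle$, hence $C_G(x) \subseteq C_G(g) = F_4(q) \times \langle g \rangle$. Combining with $C_{F_4(q)}(y) = \langle y \rangle$, I obtain $C_G(x) = \langle y \rangle \times \langle g \rangle$, a cyclic group of order $2(q^4-q^2+1)$ whose unique involution is $g$. This already shows that \emph{exactly one} $F_4$-type maximal subgroup of $G$ contains $x$: every such subgroup has the form $K^h = C_G(g^h)$ for some $h\in G$, and $K^h \ni x$ forces $g^h \in C_G(x)$; uniqueness of the involution of $C_G(x)$ gives $g^h = g$ and hence $K^h = K := C_G(g)$.

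For the ${}^3D_4$-type subgroups, the orbit-counting formula shows that, within a single $G$-conjugacy class, the number of conjugates of $H$ containing $x$ equals $|C_G(x)|\cdot|x^G \cap H|/|N_G(H)| \leqs |C_G(x)| = 2(q^4-q^2+1)$; this is to be multiplied by the number of $G$-classes of ${}^3D_4$-type maximal subgroups, which the maximal rank classification in \cite{LSS} shows equals $1$. With the bounds ${\rm fpr}(z,G/K) \leqs (q^6-q^3+1)^{-1}$ and ${\rm fpr}(z,G/H) \leqs 2q^{-6}$, this yields
\[
P(z,x) \leqs \frac{1}{q^6-q^3+1} + 2(q^4-q^2+1) \cdot \frac{2}{q^6}.
\]
A direct numerical check at $q=3$ gives $P(z,x) < \tfrac{1}{2}$ (and the bound is $O(q^{-2})$, hence tends to $0$); monotonicity then handles larger $q$. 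Lemma~\ref{l:prob_method} delivers $u(G) \geqs 2$ and the asymptotic assertion.

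The main technical obstacle is confirming that exactly one $G$-class of ${}^3D_4$-type maximal subgroups of $G$ exists and that the crude per-class bound $|C_G(x)|$ suffices for $q=3$, where the estimate is tight. Should any looseness arise, the orbit count can be tightened by working out $|x^G \cap H|$ from the embedding of $\langle y \rangle$ into the $\Phi_{12}$-torus of ${}^3D_4(q)$ and using $|N_G(H)| = |{}^3D_4(q)|\cdot(q^2+\e q+1)\cdot |G:G_0|\cdot 3$, which introduces a factor of $q^4$ in the denominator and renders the bound comfortable for all $q \geqs 3$.
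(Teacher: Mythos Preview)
Your argument is correct and takes a genuinely different route from the paper's.

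The paper bounds the number $n(M)$ of conjugates of each type containing $y$ by explicitly estimating $|y^G\cap M|$: it counts regular semisimple classes in ${}^3D_4(q)$ and in $F_4(q)$ whose centraliser has order $q^4-q^2+1$, obtaining
\[
n(H)\leqs \tfrac{1}{12}q^2(q^2-1),\qquad n(K)\leqs \min\{\tfrac{1}{12}q^2(q^2-1)(q^2+\e q+1),\,45(q^2+\e q+1)\},
\]
and then combines these with Lemmas~\ref{l:e6_d_fpr1} and~\ref{l:e6_d_fpr2} to get $P(z,x)<q^{-1}$. Your approach swaps the work between the two types: for the $F_4$-type you observe that $K=C_G(g)$, that $g\in\langle x\rangle$, and that $g$ is the \emph{unique} involution of $C_G(x)=\langle y\rangle\times\langle g\rangle$, which pins down $n(K)=1$ exactly; for the ${}^3D_4$-type you use only the crude orbit--counting bound $n(H)\leqs |C_G(x)|=2(q^4-q^2+1)$. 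Your $F_4$ count is far sharper and more conceptual than the paper's (exact value $1$ versus an $O(q^6)$ upper bound), while your ${}^3D_4$ count is about $24$ times looser than the paper's; the trade still yields $P(z,x)<\tfrac12$ for $q\geqs 3$ and $P(z,x)=O(q^{-2})$.

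One point worth tightening: your $F_4$ argument, as written, already assumes a single $G$-class (``every such subgroup has the form $K^h$''). This is true, but it deserves the same caveat you attach to the ${}^3D_4$-type. When $q\equiv\e\pmod 3$ the single $\Inndiag(G_0)$-class of $F_4(q)$-subgroups splits into three $G_0$-classes $K_0,K_0^{\delta},K_0^{\delta^2}$, and since $\delta^\gamma=\delta^{-1}$ the involution $\gamma$ swaps $K_0^\delta$ and $K_0^{\delta^2}$; hence only the $G_0$-class of $K_0$ lifts to a maximal subgroup of $G$, giving a unique $G$-class as you use. (Alternatively, one can avoid the class count entirely: any $F_4$-type maximal $M\leqs G$ has $C_G(M\cap G_0)\ne 1$, because an element of $M\setminus G_0$ inducing a non-inner automorphism of $M\cap G_0\cong F_4(q)$ would have to induce a field or graph-field automorphism, impossible for an element of the coset $G_0\gamma$; the resulting central involution of $M$ must lie in $C_G(x)$, forcing $M=K$.) The analogous check for the ${}^3D_4$-type also gives one $G$-class, so your crude per-class bound suffices without the fallback refinement you sketch.
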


\begin{proof}
Let $H$ and $K$ be maximal subgroups of $G$ of type ${}^3D_4(q) \times (q^2+\e q+1)$ and $F_4(q)$, respectively, and note that there is a unique $\Inndiag(G_0)$-class of each type of subgroup. By Proposition~\ref{p:e6_d_max}, each subgroup in $\mathcal{M}(x)$ is conjugate to either $H$ or $K$. 

For any maximal subgroup $M \leq G$, let $n(M)$ be the number of conjugates of $M$ that contain $y$, and note that
\[
n(M) = \frac{|y^G \cap M|}{|y^G|}\cdot \frac{|G|}{|M|}.
\]

First consider $n(H)$. Given the structure of $H$, we see that every element in $H$ of order $q^4-q^2+1$ is contained in the subgroup $L = {}^3D_4(q)$ (indeed, note that $q^4-q^2+1$ and $q^2+\e q+1$ are coprime for all $q$). Each $z \in L$ of order $q^4-q^2+1$ is self-centralising and by inspecting \cite[Table~4.4]{DM} we deduce that $L$ has $\frac{1}{4}q^2(q^2-1)$ distinct classes of semisimple elements with centraliser a cyclic maximal torus of order $q^4-q^2+1$. Therefore, 
\[
|y^G \cap H| \leqs \frac{1}{4}q^2(q^2-1) \cdot \frac{|{}^3D_4(q)|}{q^4-q^2+1}
\]
and this yields $n(H) \leqs \frac{1}{12}q^2(q^2-1)$.

Now let us turn to $n(K)$. By inspecting \cite{Shinoda74, Shoji}, we see that there are precisely $\frac{1}{12}q^2(q^2-1)$ regular semisimple classes in $F_4(q)$ with centraliser a torus of order $q^4-q^2+1$. Therefore,
\[
|y^G \cap K| \leqs \frac{1}{12}q^2(q^2-1) \cdot \frac{|F_4(q)|}{q^4-q^2+1}
\]
and we deduce that  
\[
n(K) \leqs \frac{1}{12}q^2(q^2-1)(q^2+\e q+1).
\]
Alternatively, we can bound $|y^G \cap K|$ by arguing as in the proof of \cite[Lemma~4.5]{LLS}, noting that $|W(E_6):W(F_4)| = 45$ (where $W(X)$ denotes the Weyl group of $X$). Indeed, it follows that $y^G \cap K$ is a union of at most $45$ distinct $K$-classes, so
\[
|y^G \cap K| \leqs 45\cdot \frac{|F_4(q)|}{q^4-q^2+1}
\]
and subsequently $n(K) \leqs 45(q^2+\e q+1)$, which is a better bound for $q \geqs 5$.

Finally, using the bounds in Lemmas \ref{l:e6_d_fpr1} and \ref{l:e6_d_fpr2}, we deduce that
\[
\sum_{H \in \mathcal{M}(x)}{\rm fpr}(z,G/H) \leqs \frac{1}{12}q^2(q^2-1)\cdot 2q^{-6} + a(q) \cdot(q^2+\e q+1)\cdot (q^6-q^3+1)^{-1}
\]
for all nontrivial $z \in G$, where $a(q) = \frac{1}{12}q^2(q^2-1)$ if $q \leqs 4$ and $a(q) = 45$ for $q \geqs 5$. One checks that this upper bound is less than $q^{-1}$ for all $q$. 
\end{proof}

Finally, we deal with the special case $q=2$.

\begin{prop} \label{p:e6_d_2}
The conclusion to Theorem~\ref{t:exceptional} holds in case~(d) of Table \ref{tab:e6_cases}.
\end{prop}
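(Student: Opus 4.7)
The setting here is much more concrete than the generic case treated in Proposition~\ref{p:e6_d}: we have $q=2$, $|y|=13$, $|x|=26$, and Proposition~\ref{p:e6_d_2_max} already pins down the maximal overgroups of $x$ explicitly. So the strategy is to run the same calculation as in Proposition~\ref{p:e6_d} but substitute the explicit data available for $E_6(2)$ and ${}^2E_6(2)$, calling on \textsc{Magma} or the \textsc{Atlas} where needed. Concretely, I would apply Lemma~\ref{l:prob_method} to the class $x^G$, showing that
\[
\sum_{H \in \mathcal{M}(x)} \mathrm{fpr}(z,G/H) < \tfrac{1}{2}
\]
for every nontrivial $z \in G$.

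First I would set up the two cases separately. In case $\e = +$, Proposition~\ref{p:e6_d_2_max}(i) says each $H \in \mathcal{M}(x)$ has type $F_4(2)$ or ${}^3D_4(2)\times 7$; in case $\e=-$, Proposition~\ref{p:e6_d_2_max}(ii) says each $H$ has type $F_4(2)$ or $\mathrm{SO}_7(3)$. There is a unique $\Inndiag(G_0)$-class of each such type (this can be read off from \cite{KW}, \cite{ATLAS}, \cite{Wilson2E62}). I would then compute $n(M) = |y^G \cap M|\cdot|G|/(|y^G|\cdot|M|)$ for each type $M$, by counting the semisimple classes of order $13$ inside $M$ and using the fact that each such element is self-centralising in $M \cap G_0$ (a direct verification in the relevant ambient simple group via \textsc{Magma}, or by inspecting the character tables in the \textsc{Atlas}).

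Next I would bound $\mathrm{fpr}(z,G/H)$ for each maximal overgroup type. For $H$ of type $F_4(2)$, Lemma~\ref{l:e6_d_fpr2} gives $\mathrm{fpr}(z,G/H) \leq 1/57$. For $H$ of type ${}^3D_4(2)\times 7$, Lemma~\ref{l:e6_d_fpr1} gives $\mathrm{fpr}(z,G/H) \leq 1/32$, and the case $q=2$ in that lemma was verified explicitly there. For $H$ of type $\mathrm{SO}_7(3)$ in ${}^2E_6(2).2$, I would obtain a bound directly: by \cite[Theorem~1]{LLS}, $\mathrm{fpr}(z,G/H) \leq (q^6-q^3+1)^{-1}=1/57$, and if this is not tight enough I would strengthen it in the style of Lemma~\ref{l:e6_d_fpr2}, by splitting on the $G$-class of $z$ and using the explicit fusion of $G_0$-classes in $H$ available in the \textsc{Atlas}.

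Combining these ingredients gives the upper bound
\[
\sum_{H \in \mathcal{M}(x)} \mathrm{fpr}(z,G/H) \;\leq\; n(K)\cdot\mathrm{fpr}(z,G/K) + n(H)\cdot\mathrm{fpr}(z,G/H),
\]
where $K$ is of type $F_4(2)$ and $H$ is the second type in each subcase, and the plan is to check arithmetically that this is less than $\tfrac{1}{2}$. The main obstacle I anticipate is that the crude counting of $n(M)$ from $|W(E_6):W(F_4)|=45$ used in Proposition~\ref{p:e6_d} may not be sharp enough at $q=2$, so for at least one of the two subcases I expect to have to compute $|y^G \cap M|$ exactly in \textsc{Magma} (which is entirely feasible, as the maximal subgroups of $E_6(2).2$ and ${}^2E_6(2).2$ are available and the element $y$ lives in the much smaller subgroup $F_4(2)$). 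If even that is not enough, I would fall back to the direct computational strategy of Proposition~\ref{p:small}, working inside a permutation representation of $G$ of small-enough degree (for instance the one on cosets of a type $F_4(2)$ maximal subgroup) to verify $u(G)\geq 2$ by the algorithm of \cite[Section~2.3]{Harper17}.
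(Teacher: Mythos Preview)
Your proposal is correct and follows essentially the same route as the paper. The paper's execution avoids any machine computation by noting that $G$ has a \emph{unique} conjugacy class of elements of order $13$, so $|y^G\cap M|=i_{13}(M\cap G_0)$ and (using $|C_{G_0}(y)|=91$ for $\e=+$ from Lemma~\ref{l:e6_c_centraliser}, and $C_{G_0}(y)=\langle y\rangle$ for $\e=-$) one reads off $n=1,7$ for the two types when $\e=+$ and $n=1,2$ when $\e=-$; the sums $1/32+7/57$ and $3/57$ then fall below $1/2$ immediately.
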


\begin{proof}
In view of Proposition \ref{p:e6_d}, we may assume $G_0 = E_6^\e(2)$, so $|y| = 13$ and we note that $G$ has a unique conjugacy class of elements of order $13$ (see \cite[Table 9]{BLS}, for example). 

First assume $\e=+$, so each subgroup in $\mathcal{M}(x)$ has type ${}^3D_4(2) \times 7$ or $F_4(2)$ (see Proposition~\ref{p:e6_d_2_max}(i)). Note that $|C_{G_0}(y)| = 91$ by Lemma~\ref{l:e6_c_centraliser}. By repeating the argument in the proof of Proposition~\ref{p:e6_d}, we see that $y$ is contained in a unique subgroup of type ${}^3D_4(2) \times 7$ and $7$ subgroups of type $F_4(2)$. Therefore, by applying the fixed point ratio bounds in Lemmas~\ref{l:e6_d_fpr1} and~\ref{l:e6_d_fpr2}, we deduce that
\[
P(z,x) \leq \sum_{H \in \mathcal{M}(x)}{\rm fpr}(z,G/H) \leqs 1\cdot \frac{1}{2^5} + 7 \cdot \frac{1}{2^6-2^3+1} < \frac{1}{2}
\]
for all nontrivial $z \in G$ and the result follows.

Now assume $\e=-$, so Proposition~\ref{p:e6_d_2_max}(ii) informs us that the subgroups in $\mathcal{M}(x)$ are of type $F_4(2)$ or ${\rm SO}_7(3)$. Note that $C_{G_0}(y) = \la y \ra$. If $H \in \mathcal{M}(x)$ has type $F_4(2)$, then $|y^G \cap H| = i_{13}(F_4(2)) = |F_4(2)|/13$ and we deduce that $y$ is contained in a unique conjugate of $H$. Similarly, if $H$ has type ${\rm SO}_7(3)$, then $|y^G \cap H| = i_{13}(H) = |{\rm SO}_{7}(3)|/13$ and thus $y$ is contained in $2$ conjugates of $H$. Therefore, $x$ is contained in at most $3$ maximal subgroups of $G$ and the bound in \eqref{e:e6_fpr} implies that 
\[
P(z,x) \leq \sum_{H \in \mathcal{M}(x)}{\rm fpr}(z,G/H) \leqs 3\cdot \frac{1}{2^6-2^3+1} < \frac{1}{2}
\]
for all nontrivial $z \in G$, as required.
\end{proof}

By combining Propositions~\ref{p:e6_a}, \ref{p:e6_b}, \ref{p:e6_c} and~\ref{p:e6_d_2}, we obtain the following.

\begin{thm} \label{t:e6}
The conclusion to Theorem~\ref{t:exceptional} holds when $G_0 = E_6^\e(q)$.
\end{thm}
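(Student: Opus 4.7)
The plan is to combine the reduction in Proposition~\ref{p:cases} with a case-by-case analysis corresponding to the four rows of Table~\ref{tab:e6_cases}, which partition the possible automorphisms $g \in \Aut(G_0)\setminus G_0$ (up to $\Aut(G_0)$-conjugacy of $\<G_0,g\>$). Each of the four cases (a)--(d) requires a different technique to select the witnessing element $x \in G_0g$ and to control its maximal overgroups, so I would dedicate a separate subsection to each. In every case, the final step is an application of the probabilistic machinery in Lemma~\ref{l:prob_method}, using the fixed point ratio bound from \eqref{e:e6_fpr} (or the sharper bounds in \cite{LLS}) together with an estimate on $|\mathcal{M}(x)|$.

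For case (a) (diagonal automorphism $g=\d$, $q \equiv \e \imod 3$), I would take $x \in G\setminus G_0$ of order $q^6+\e q^3+1$. Work of Weigel \cite{Weigel} shows that such an $x$ lies in a unique maximal subgroup of $G$ (of type ${\rm SL}_3^\e(q^3).3$), so the probability estimate is immediate. For case (b), which covers the four types of automorphisms $\p^i, \g\p^i, \d^{\pm}\p^i, \d^{\pm}\g\p^i$ satisfying the sign conditions of Table~\ref{tab:e6_cases}, I would use genuine Shintani descent: define $F$ as the Shintani map of $(\bar G,\s,e)$ with $\s = \p^i$ or $\g\p^i$, fix $y \in {\rm Inndiag}(E_6^\eta(q_0))$ with $|y| = q_0^6+\eta q_0^3+1$ (regular semisimple, self-centralizing modulo a cubing), and let $x$ be a Shintani correspondent of $y^3$ (resp.~$y$). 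The crucial Lemma~\ref{l:e6_b_split} guarantees $x$ lies in the correct coset $G_0g$. Then I must show $\mathcal{M}(x) \subseteq \mathcal{M}_1$ with no parabolics, using the order restrictions from Propositions~\ref{p:orders}, \ref{p:meo}, \ref{p:sp} together with Theorem~\ref{t:simples} and (in one delicate ${\rm U}_3(8) \leq {}^2E_6(8)$ subcase) the strong imprimitivity result from \cite{CravenMedium}. The probability estimate then follows from Proposition~\ref{p:classes}, Lemma~\ref{l:centraliser_bound}, and \cite[Theorem~2]{LLS}.

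Case (c) is the indirect Shintani descent case: here $g = \p^i$ with $\e=-$, or $g = \g\p^i$ with $\e=+$ and $f/i$ odd, so the coset $G_0g$ is not a union of $\bar G_{\s^e}$-cosets of $G_0$ in the usual fashion. I would instead apply Lemma~\ref{l:shintani_substitute} with $\bar K = C_{\bar G}(\g) = F_4$, choosing $y \in F_4(q_0)$ with $|y| = q_0^4-q_0^2+1$, whence there exists $x \in F_4(q)g$ with $x^e$ conjugate to $y\g$. The computation of $C_{{\rm Inndiag}(E_6^{-\e}(q_0))}(y)$ carried out via algebraic-group centralizers (Lemma~\ref{l:e6_c_centraliser}) pins down the multiplicity with which maximal subgroups appear, and the remaining analysis proceeds as in case~(b).

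Finally, case (d), where $g=\g$, is the most subtle because $g$ does not come from a Steinberg endomorphism and Shintani descent is unavailable. Here I would set $x = y\g$ with $y \in C_{G_0}(\g) = F_4(q)$ of order $q^4-q^2+1$, and determine $\mathcal{M}(x)$ directly: combine the decomposition \eqref{e:mpart} of $\mathcal{M}$, the maximal-rank classification of \cite{LSS}, and the bounds from Propositions~\ref{p:orders}--\ref{p:sp} to show that only subgroups of types $F_4(q)$ and ${}^3D_4(q)\times(q^2+\e q+1)$ can arise for $q>2$. The fixed point ratio analysis needs the slightly sharpened bounds in Lemmas~\ref{l:e6_d_fpr1} and~\ref{l:e6_d_fpr2}, which improve on \cite{LLS} by treating the troublesome graph automorphism class. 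The main obstacle is the small case $q=2$: here the fixed point ratio bounds and the asymptotic $|y| \to \infty$ argument both fail, so I would resort to the explicit lists of maximal subgroups of $E_6(2).2$ and ${}^2E_6(2).2$ in \cite{KW, ATLAS, Wilson2E62} to enumerate $\mathcal{M}(x)$ precisely and verify $P(z,x) < 1/2$ by a direct count, as carried out in Propositions~\ref{p:e6_d_2_max} and~\ref{p:e6_d_2}. Assembling the four cases completes the proof.
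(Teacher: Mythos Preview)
Your proposal is correct and follows exactly the same route as the paper: the proof of Theorem~\ref{t:e6} in the paper is precisely the combination of Propositions~\ref{p:e6_a}, \ref{p:e6_b}, \ref{p:e6_c} and~\ref{p:e6_d_2}, which handle cases~(a)--(d) of Table~\ref{tab:e6_cases} by the very techniques you describe (Weigel's element for case~(a), genuine Shintani descent via Lemma~\ref{l:e6_b_split} for case~(b), indirect descent over $F_4$ via Lemma~\ref{l:shintani_substitute} for case~(c), and the direct construction $x=y\g$ with the sharpened bounds of Lemmas~\ref{l:e6_d_fpr1}--\ref{l:e6_d_fpr2} plus the explicit $q=2$ treatment for case~(d)). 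Your outline even captures the paper's handling of the delicate ${\rm U}_3(8)$ subcase via \cite{CravenMedium}.
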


\section{Proof of Theorem \ref{t:exceptional}: \texorpdfstring{$G_0 = F_4(q)$}{G0 = F4(q)}} \label{s:f4}

We now turn to the groups with socle $G_0 = F_4(q)$. By Proposition~\ref{p:cases}, in order to prove Theorem~\ref{t:exceptional} we may assume that $G = \< G_0, g \>$, where $g$ is recorded in Table~\ref{tab:f4_cases}. We will analyse cases~(a) and~(b) in Sections~\ref{ss:f4_a} and~\ref{ss:f4_b}, respectively. It will be useful to observe that for all $H \in \mathcal{M}$ and all nontrivial $z \in G$, \cite[Theorem~1]{LLS} gives
\begin{equation} \label{e:f4_fpr}
{\rm fpr}(z,G/H) \leq (q^4-q^2+1)^{-1}. 
\end{equation}

\begin{table}
\caption{The relevant groups $G = \la G_0,g \ra$ for $G_0 = F_4(q)$} \label{tab:f4_cases}
\begin{center}
\vspace{-8mm}
\[
\begin{array}{ccl} 
\hline
\text{Case} & g      & \text{Conditions}                             \\ 
\hline    
\text{(a)}  & \p^i   & \text{$i$ is a proper divisor of $f$}         \\
\text{(b)}  & \rho^i & \text{$i$ is an odd divisor of $f$ \&\ $p=2$} \\ 
\hline 
\end{array}
\]
\end{center}
\end{table}

\subsection{Case (a): field automorphisms} \label{ss:f4_a}

Fix a proper divisor $i$ of $f$ and write $e=f/i$ and $q=q_0^e$. Let $\bar{G}$ be the algebraic group $F_4$ and let $\s$ be the Steinberg endomorphism $\p^i$ of $\bar{G}$. Let $F\:F_4(q)g \to F_4(q_0)$ be the Shintani map of $(\bar{G},\s,e)$. Fix an element $y \in F_4(q_0)$ such that 
\[
|y| = 
\left\{
\begin{array}{ll}
q_0^4-q_0^2+1 & \text{if $q_0 > 2$ } \\
17            & \text{if $q_0 = 2$} 
\end{array}
\right.
\]
and $C_{F_4(q_0)}(y)=\<y\>$ (see \cite{Lubeck}). Choose $x \in G$ with $F(x) = y$ and note that $|x| = e|y|$ (see Lemma~\ref{l:shintani_order}). In addition, note that $|y|$ is $7$, $73$, $241$, $601$ (all of which are prime) when $q_0$ is $2$, $3$, $4$, $5$, respectively, and that $|y| \geqs 2353$ for $q_0 \geqs 7$.

\begin{prop} \label{p:f4_a_max}
Let $H \in \mathcal{M}(x)$. Then $H \in \mathcal{M}_1$ and $H$ is non-parabolic.
\end{prop}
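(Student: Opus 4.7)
The overall strategy parallels the proofs of Propositions~\ref{p:e8_max}, \ref{p:e7_b_max} and~\ref{p:e6_b_max}: we first handle the parabolic possibility via Shintani descent, then successively rule out $\mathcal{M}_3$ and $\mathcal{M}_2$ using the element-order bounds in Propositions~\ref{p:orders} and~\ref{p:meo}.

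To dispose of parabolic subgroups, by Corollary~\ref{c:shintani_parabolic} it suffices to show that $y$ lies in no maximal parabolic subgroup of $F_4(q_0)$. For $q_0 \geqs 3$ we have $|y| = q_0^4 - q_0^2 + 1 = \Phi_{12}(q_0)$, while every maximal parabolic of $F_4(q_0)$ has Levi factor of untwisted rank at most three, whose order involves only the cyclotomic polynomials $\Phi_d(q_0)$ with $d \in \{1,2,3,4,6\}$. For $q_0 = 2$ one instead checks directly that $17$ does not divide the order of any maximal parabolic of $F_4(2)$. The conclusion then transfers to $G$ through the Shintani correspondence.

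Next, to rule out $H \in \mathcal{M}_3$, Proposition~\ref{p:orders} gives ${\rm meo}(H) \leqs 40$, whereas $|x| = e|y|$. When $q_0 \geqs 3$ we have $|y| \geqs 73$ and hence $|x| \geqs 146$; when $q_0 = 2$ and $e \geqs 3$ we have $|x| \geqs 51$. The only borderline case is $q_0 = 2$, $e = 2$, where $|x| = 34$: here we must inspect the explicit list of candidate socles in \cite{LS99} and observe that $|x|$ is divisible by $17$, which forces $|{\rm Aut}(S)|$ to be divisible by $17$ and drastically constrains $S$, after which a direct case check eliminates each remaining candidate. Finally, to rule out $H \in \mathcal{M}_2$, Theorem~\ref{t:simples} reduces the socle $S$ to a short list: either ${\rm rk}(S) \leqs 2$ with $t \leqs 9$, or $S = {\rm L}_3^{\pm}(16)$, or $S \in \{{\rm L}_2(t), {}^2B_2(t), {}^2G_2(t)'\}$ with $t \leqs 68(2,q-1)$. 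In each family we bound ${\rm meo}(H)$ via Proposition~\ref{p:meo} and compare with $|x|$; this reduces to a handful of exceptional pairs $(t,q_0)$ with $q_0 \in \{2,3\}$, which we eliminate either by divisibility (using that $|y|$ has a primitive prime divisor of $q_0^{12}-1$ for $q_0 \geqs 3$, or by the explicit value $|y|=17$ for $q_0 = 2$) or via Proposition~\ref{p:sp} when $S \in \{G_2(8), G_2(9),\ldots\}$.

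The main obstacle is the endpoint analysis for small $q_0$. For $\mathcal{M}_3$, the case $q_0 = 2$, $e = 2$ is not ruled out by the crude inequality $|x| > 40$ and instead requires a direct inspection of the tables in \cite{LS99}. Similarly for $\mathcal{M}_2$, the borderline pairs such as $(t,q_0) = (8,2)$ or $(9,3)$ with $S$ a classical group of small rank, or $S = G_2(t)'$, force us to go beyond the order comparison and appeal to finer structural information (primitive prime divisors of $|y|$, explicit automorphism group calculations, or Proposition~\ref{p:sp}) in order to complete the exclusion.
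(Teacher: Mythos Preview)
Your overall architecture matches the paper's, and the parabolic and $\mathcal{M}_3$ arguments go through essentially as you describe. However, there is a genuine gap in your treatment of $\mathcal{M}_2$, namely the case $S = {\rm L}_3^{\pm}(16)$ with $q_0 = 2$.

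Here $p=2$, $|y| = 17$, and $17$ divides $|S|$ (via $16^2-1 = 255 = 15 \cdot 17$), so your divisibility test does not apply. Moreover, Proposition~\ref{p:sp} does not cover these groups. A direct check shows that $\Aut(S) \setminus S$ \emph{does} contain elements of order $34$ (and, for $\e = +$, also of order $3\cdot 17$ and $15 \cdot 17$), so the comparison $|x| = 17e \leqs {\rm meo}(\Aut(S)\setminus S)$ leaves the cases $e \in \{2,3,15\}$ alive. None of the tools you listed---element-order bounds, primitive prime divisors of $|y|$, or Proposition~\ref{p:sp}---eliminates these.

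The paper closes this gap by invoking a structural result, \cite[Lemma~8.5]{BTh}, which shows that $G$ has no maximal subgroup with socle ${\rm L}_3^{\pm}(16)$ when $e \in \{2,3\}$, and observes that the same proof extends to $e=15$. You need to add this (or an equivalent argument ruling out the embedding of ${\rm L}_3^{\pm}(16)$ as a Lie primitive subgroup) to complete the proof.
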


\begin{proof}
First observe that the order of each maximal parabolic subgroup of $F_4(q_0)$ is indivisible by $|y|$, so Corollary~\ref{c:shintani_parabolic} implies that there are no parabolic subgroups in $\mathcal{M}(x)$. For the remainder, let us assume $H \in \mathcal{M}_2 \cup \mathcal{M}_3$ has socle $S$.

Suppose $H \in \mathcal{M}_3$. By inspecting the possibilities for $S$ given in \cite{LS99}, it is easy to see that ${\rm Aut}(S) \setminus S$ does not contain an element of order divisible by $|y|$. For the remainder, let us assume $H \in \mathcal{M}_2$. 

Suppose $S = {\rm L}_{3}^{\e}(16)$, so $p=2$ and Proposition \ref{p:meo} gives ${\rm meo}(H) \leqs 273$. Therefore, $q_0=2$ is the only possibility and we find that ${\rm Aut}(S) \setminus S$ contains elements of order $34$ (and there are also elements of order $3\cdot 17$ and $15 \cdot 17$ when $\e=+$). This implies that $e \in \{2,3,15\}$. However, if $e \in \{2,3\}$, then \cite[Lemma~8.5]{BTh} states that $G$ does not have a maximal subgroup with socle ${\rm L}_{3}^{\e}(16)$ and it is easy to see that the same proof also applies when $e=15$. 

Next assume $S = {\rm L}_{2}(t)$ with $t \leqs 68\,(2,t-1)$. By applying the bound on ${\rm meo}(H)$ from Proposition~\ref{p:meo}, we quickly reduce to the case $q_0=2$ with $t=2^6$, but $|{\rm L}_{2}(64)|$ is indivisible by $|y|=17$, so this case does not arise. The cases $S = {}^2B_2(t)$ and ${}^2G_2(t)'$ are handled in the same way.

To complete the proof, we may assume that ${\rm rk}(S)=2$ and $t \leqs 9$. If $S$ is classical, then by applying Proposition~\ref{p:meo} we may assume $q_0=2$ and $t=8$, but in each case one checks that $|S|$ is indivisible by $17$. Finally, suppose $S = G_2(t)'$. Here the bound from Proposition~\ref{p:meo} is sufficient unless $q_0 \in \{2,3\}$. For $t \in \{2,4,8\}$, it is easy to check that $|S|$ is indivisible by $17$. Similarly, $|G_2(3)|$ is indivisible by $73$ and the case $S = G_2(9)$ is ruled out by Proposition~\ref{p:sp}.
\end{proof}

\begin{prop} \label{p:f4_a}
The conclusion to Theorem~\ref{t:exceptional} holds in case~(a) of Table \ref{tab:f4_cases}.
\end{prop}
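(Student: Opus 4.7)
The plan is to apply the probabilistic method via Lemma~\ref{l:prob_method}, using the Shintani element $x$ constructed above. So let $z \in G$ be a nontrivial element; we need a good upper bound on
\[
P(z,x) \leq \sum_{H \in \mathcal{M}(x)} {\rm fpr}(z, G/H).
\]
By Proposition~\ref{p:f4_a_max}, every $H \in \mathcal{M}(x)$ lies in $\mathcal{M}_1$ and is non-parabolic, so two inputs become available. First, Proposition~\ref{p:classes} (with $G_0 = F_4(q)$) bounds the number of $\bar{G}_\sigma$-classes of subgroups in $\mathcal{M}_1$ by $25 + \log\log q$. Second, Lemma~\ref{l:centraliser_bound} shows that at most $|C_{F_4(q_0)}(y)| = |y|$ conjugates of any single $H$ can lie in $\mathcal{M}(x)$. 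Thus
\[
|\mathcal{M}(x)| \leq (25 + \log\log q) \cdot |y|.
\]

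For the fixed point ratios, I would invoke \cite[Theorem~2]{LLS}, which gives a bound of the form ${\rm fpr}(z, G/H) \leq b(q)$ that is asymptotically much stronger than the universal bound in \eqref{e:f4_fpr}, since we have already ruled out parabolic subgroups. Combining,
\[
P(z,x) \leq (25 + \log\log q) \cdot (q_0^4 - q_0^2 + 1) \cdot b(q).
\]
Since $q_0 \leq q^{1/2}$ (as $e \geq 2$) and $b(q)$ decays as a negative power of $q$, this upper bound will both be strictly less than $\tfrac{1}{2}$ and tend to $0$ as $q \to \infty$, yielding $u(G) \geq 2$ and the asymptotic statement via Lemma~\ref{l:prob_method}(ii).

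The main obstacle will be a handful of small cases (in particular $q=4$, where $q_0=2$ and $|y|=17$), where the generic estimate may not immediately beat $\tfrac{1}{2}$. I would deal with these by mimicking the refinements used in Propositions~\ref{p:low_a} and~\ref{p:e6_b}: directly inspect the candidate maximal subgroups of $G$ whose order is divisible by $|y|$ (using \cite[Chapter~8]{BHR} for $F_4(4)$, etc.), drastically reducing the leading constant $25 + \log\log q$ in the bound above. In the case $q_0=2$ one should also verify that the various exotic or type (III) subgroups either do not contain an element of order $e|y|$ or do not contribute to $\mathcal{M}(x)$, which is a finite check consistent with the subfield/parabolic eliminations already carried out in Proposition~\ref{p:f4_a_max}.
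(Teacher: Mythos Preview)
Your proposal is correct and follows essentially the same approach as the paper: combine Proposition~\ref{p:f4_a_max}, Proposition~\ref{p:classes} and Lemma~\ref{l:centraliser_bound} to bound $|\mathcal{M}(x)|$, apply a uniform fixed point ratio bound, and then treat $q=4$ by hand. Two small remarks: the paper actually uses the simpler universal bound \eqref{e:f4_fpr} (that is, $(q^4-q^2+1)^{-1}$) rather than \cite[Theorem~2]{LLS}, and this already suffices for all $q_0\geqs 2$ except $q=4$; and your reference to \cite[Chapter~8]{BHR} for $F_4(4)$ is misplaced (that book treats classical groups), so for the residual case one instead inspects the type (I)--(III) subgroups of Theorem~\ref{t:types} directly, as the paper does, reducing to at most five classes and using Lemma~\ref{l:shintani_subgroups} to pin down the $\mathrm{Sp}_8(4)$ contribution.
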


\begin{proof}
Let $H \in \mathcal{M}(x)$ and let $z \in G$ be nontrivial. By combining Propositions~\ref{p:classes} and \ref{p:f4_a_max} with Lemma~\ref{l:centraliser_bound}, we deduce that
\[
P(z,x) \leq \sum_{H \in \mathcal{M}(x)}{\rm fpr}(z,G/H) < (25+\log\log q) \cdot |y| \cdot (q^{4}-q^2+1)^{-1}.
\]
If $q_0 \geq 3$, then this bound is always less than $\frac{1}{2}$ and it is less than $q^{-1}$ for $q>25$. If $q_0=2$, then $|y|=7$ and again this bound is sufficient unless $q=4$. 

Therefore, for the remainder of the proof, we may assume that $q=4$. In this case, $|x|=34$. By Proposition~\ref{p:f4_a_max}, we know that $H \in \mathcal{M}_1$ is non-parabolic. Moreover, by carefully considering the subgroups of type~(I) to~(IV) in Theorem~\ref{t:types}, noting that $|H \cap \bar{G}_{\s}|$ is divisible by $17$, we deduce that there are at most $5$ $G_0$-classes of subgroups in $\mathcal{M}(x)$, namely
\[
\begin{array}{l}
{\rm Aut}({\rm Sp}_{8}(4)) = {\rm Sp}_{8}(4).2 \mbox{ (two classes)} \\
{\rm Aut}(\Omega_{8}^{+}(4)) = \Omega_{8}^{+}(4).{\rm Sym}_3.2  \mbox{ (two classes)} \\ F_4(2) \times 2
\end{array}
\]
Now $y$ is contained in exactly two maximal subgroups of $F_4(2)$, both of type ${\rm Sp}_{8}(2)$ (see \cite[Table~IV]{GK}). Since $B_4$ and $C_4$ are closed connected maximal subgroups of $\bar{G}$, Lemma~\ref{l:shintani_subgroups} implies that $x$ is contained in exactly two maximal subgroups of $G$ of type ${\rm Sp}_{8}(4).2$. Therefore
\[
P(z,x) \leq \sum_{H \in \mathcal{M}(x)}{\rm fpr}(z,G/H) < \frac{2 + 3 \cdot 17}{4^4-4^2+1}  = \frac{53}{241} < \frac{1}{2}
\]
for all nontrivial $z \in G$ and we have proved the result.
\end{proof}

\subsection{Case (b): graph-field automorphisms} \label{ss:f4_b}

Now let us turn to case (b) in Table \ref{tab:f4_cases}. Here $q=2^f$ and $i$ is an odd divisor of $f$. As usual, write $e=f/i$ and $q=q_0^e$, where $e \geq 1$. Let $\s$ be the graph-field Steinberg endomorphism $\r^i$ of $\bar{G} = F_4$ and let $F\:F_4(q)g \to {}^2F_4(q_0)$ be the Shintani map of $(\bar{G},\s,2e)$. Let $y \in {}^2F_4(q_0)$ be a regular semisimple element such that 
\[
|y| = q_0^2 + \sqrt{2q_0^3} + q_0 + \sqrt{2q_0}+1
\]
and $C_{{}^2F_4(q_0)}(y) = \la y \ra$ (see \cite[Table IV]{Shinoda75}). Let $x \in G$ be a Shintani correspondent of $y$ and note that $|x| = 2e|y|$ by Lemma~\ref{l:shintani_order}. In addition, observe that $|y|$ is $13$, $109$, $1321$ (all of which are prime) when $q_0$ is $2$, $8$, $32$, respectively, and $|y| \geq 18577$ if $q>32$.

First we settle the case $q=2$.

\begin{prop}\label{p:f4_b_2}
The conclusion to Theorem~\ref{t:exceptional} holds in case (b) of Table \ref{tab:f4_cases} with $q=2$.
\end{prop}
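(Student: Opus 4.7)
The plan is to handle this as a small-case computation, mirroring the approach used in Proposition~\ref{p:small}. When $q=2$ we have $f=1$, so the only odd divisor of $f$ is $i=1$, giving $e=1$ and $G=\la F_4(2),\rho\ra \cong F_4(2).2$, where $\rho$ is the graph-field involution. Thus only a single finite group needs to be analysed.

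I would construct $G$ in \textsc{Magma} via \texttt{AutomorphismGroupSimpleGroup} applied to $F_4(2)$ and then verify the bound $u(G)\geqs 2$ using the computational framework described in \cite[Section~2.3]{Harper17} and the code in \cite[Appendix~A]{HarperClassical}, exactly as in the proof of Proposition~\ref{p:small}. The natural candidate conjugacy class to test first is $x^G$ with $|x|=2e|y|=26$, namely the Shintani correspondent of an element $y\in {}^2F_4(2)$ of order $|y|=q_0^2+\sqrt{2q_0^3}+q_0+\sqrt{2q_0}+1=13$ with $C_{{}^2F_4(2)}(y)=\la y\ra$. The prime $13$ is a primitive prime divisor of $q^{12}-1$, so $y$ lies in the socle ${}^2F_4(2)'$ (the Tits group), and the Shintani set-up of Section~\ref{ss:f4_b} applies uniformly. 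The expectation is that this class witnesses $u(G)\geqs 2$ with a comfortable margin.

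If one wished to avoid computation, an alternative is to run the Shintani descent argument of Propositions~\ref{p:f4_a_max} and~\ref{p:f4_a} with $q=2$: rule out $\mathcal{M}_2\cup \mathcal{M}_3$ by comparing $|x|=26$ with the \textrm{meo} bounds of Propositions~\ref{p:meo} and~\ref{p:orders}, eliminate parabolics via Corollary~\ref{c:shintani_parabolic} (since $|y|=13$ does not divide the order of any parabolic of ${}^2F_4(2)$), and then enumerate the few remaining $G_0$-classes in $\mathcal{M}_1$ containing an element of order $13$ (using the list of maximal subgroups of ${}^2F_4(2)$ in \cite{Malle} together with Lemma~\ref{l:shintani_subgroups}). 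Combined with \cite[Theorem~2]{LLS} and the centraliser bound $|C_{{}^2F_4(2)}(y)|=13$, this should yield $P(z,x)<\tfrac12$ for all nontrivial $z\in G$.

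The main obstacle, in either route, is bookkeeping rather than mathematical difficulty: for the computational route, $|F_4(2)|$ is large enough that random-search techniques must be used carefully to keep memory manageable; for the theoretical route, one must be slightly more attentive than in the generic argument because the asymptotic slack $|y|/(q^4-q^2+1)$ is no longer small when $q=2$, so the count of overgroups in $\mathcal{M}_1$ must actually be carried out rather than bounded crudely by Proposition~\ref{p:classes}.
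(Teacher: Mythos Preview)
Your computational Route~1 would work and is a legitimate alternative; the paper itself handles several small cases this way in Proposition~\ref{p:small}.

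Your theoretical Route~2, however, has a genuine gap. You propose to ``rule out $\mathcal{M}_2\cup\mathcal{M}_3$ by comparing $|x|=26$ with the \textrm{meo} bounds of Propositions~\ref{p:meo} and~\ref{p:orders}'', but Proposition~\ref{p:orders} only gives $d(F_4(q))=40$, and $26<40$, so $\mathcal{M}_3$ is \emph{not} eliminated. This is not a technicality: in fact ${\rm L}_4(3){:}2^2$ (a group in $\mathcal{M}_3$, since its socle is defined over $\mathbb{F}_3$ while $p=2$) is a maximal subgroup of $F_4(2).2$ containing elements of order $26$, and two of its conjugates lie in $\mathcal{M}(x)$. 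Your Shintani bookkeeping via Lemma~\ref{l:shintani_subgroups} and the list of maximal subgroups of ${}^2F_4(2)$ cannot detect this, since that lemma controls only subgroups of the form $N_G(\bar{H}_{\s^e})$ for closed connected $\s$-stable $\bar{H}$, not the almost simple subgroups in $\mathcal{M}_3$.

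The paper takes neither of your routes directly. Instead it appeals to the Norton--Wilson classification \cite{NW} of maximal subgroups of $F_4(2).2$, reads off that the only maximal overgroups of an element of order $13$ (other than $G_0$) are of type ${}^2F_4(2)\times 2$ and ${\rm L}_4(3){:}2^2$, counts that $x$ lies in one conjugate of the former and two of the latter, and then applies the bound \eqref{e:f4_fpr} to obtain $P(z,x)\leqs 3/13<\tfrac12$. This is closest in spirit to your Route~2, but with the crucial difference that it uses the explicit maximal subgroup list of $G$ rather than the general reduction machinery, precisely because the latter is not sharp enough when $q=2$.
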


\begin{proof}
Here $G = {\rm Aut}(G_0) = G_0.2$ and the maximal subgroups of $G$ are determined up to conjugacy by Norton and Wilson in \cite{NW}. By inspecting \cite[Table~1]{NW}, we see that the only maximal subgroups of $G$ containing $y$ (other than $G_0$) are of the form $H = {}^2F_4(2) \times 2$ and $K = {\rm L}_{4}(3){:}2^2$ (the latter is in $\mathcal{M}_3$) and there is a unique conjugacy class of each type of subgroup. As before, we write $n(H)$ and $n(K)$ for the number of conjugates of $H$ and $K$, respectively, that contain $y$. Now $F_4(2)$ has a unique class of elements of order $13$, so $C_{G_0}(y) = \la y \ra$ and we deduce that $|y^G \cap H| = i_{13}(H) = |{}^2F_4(2)|/13$ and $|y^G \cap K| = i_{13}({\rm L}_{4}(3)) = 4|{\rm L}_{4}(3)|/13$. Therefore, $n(H) = 1$ and $n(K) = 2$. Finally, by applying the bound in \eqref{e:f4_fpr}, we see that
\[
P(z,x) \leq \sum_{H \in \mathcal{M}(x)}{\rm fpr}(z,G/H) \leqs \frac{3}{13}
\]
for all nontrivial $z \in G$ and the result follows.
\end{proof}

\begin{prop}\label{p:f4_b_max_e1}
If $e=1$ and $q \geq 8$, then each $H \in \mathcal{M}(x)$ is of type ${}^2F_4(q)$ or $C_{q^4-q^2+1}{:}C_{12}$.
\end{prop}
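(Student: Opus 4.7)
The plan is to transfer the enumeration of $\mathcal{M}(x)$ to the enumeration of maximal overgroups of $y = F(x)$ in ${}^2F_4(q)$, where Malle's classification \cite{Malle} applies. The key observation is that $m_+ = |y|$ is a primitive prime divisor of $q^{12} - 1$ for $q \geq 8$, so inspection of Malle's list shows that the Ree torus normaliser $C_{m_+}{:}C_{12}$ is the only maximal subgroup of ${}^2F_4(q)$ with order divisible by $m_+$: all other candidates, including the two classes of maximal parabolics, have orders involving only proper divisors of $q^{12}-1$.

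I would first eliminate the subgroups in $\mathcal{M}_2 \cup \mathcal{M}_3$ by using the bounds on ${\rm meo}(H)$ in Propositions \ref{p:meo}, \ref{p:orders}, and \ref{p:sp}, comparing against $|x| = 2m_+ \geq 218$, and using the constraint that $m_+$ divides $|H|$. This essentially follows the case analysis in the proof of Proposition \ref{p:f4_a_max}, now strengthened by the primitive prime divisor condition on $m_+$. Maximal parabolic subgroups of $G$ are ruled out via Corollary \ref{c:shintani_parabolic}, since the corresponding parabolics of ${}^2F_4(q)$ have orders divisible by the characteristic and hence not by $m_+$.

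This leaves the non-parabolic subgroups in $\mathcal{M}_1$. Types (III) and (IV) of Theorem \ref{t:types} are quickly excluded on order grounds. Type (II) subfield subgroups over proper subfields $\mathbb{F}_{q_0} \subsetneq \mathbb{F}_q$ are also excluded since $m_+ \nmid |F_4(q_0)|$ and $m_+ \nmid |{}^2F_4(q_0)|$ for $q_0 < q$, again because $m_+$ is a primitive prime divisor of $q^{12}-1$. However, the distinguished type (II) subgroup $C_G(g) = {}^2F_4(q) \cdot \langle g \rangle$ does contain $x$, because (since $g$ is an involution) we may arrange for $x$ to commute with $g$; this accounts for the ${}^2F_4(q)$ subgroup in the statement. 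For type (I) subgroups, I would apply Lemma \ref{l:shintani_subgroups} to the maximal torus $\bar{T}$ of $\bar{G}$ with $\bar{T}_\sigma = C_{m_+}$: since $y$ lies in a unique such Ree torus in ${}^2F_4(q)$, the element $x$ normalises a unique $G_0$-conjugate of the Coxeter torus $T = \bar{T}_{\sigma^2} = C_{q^4-q^2+1}$ in $F_4(q)$, and the resulting maximal overgroup of $x$ in $G$ is the corresponding torus normaliser of structure $C_{q^4-q^2+1}{:}C_{12}$.

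The main obstacle is pinning down the exact structure of this Coxeter torus normaliser in $G$ as opposed to in $G_0$. In $F_4(q)$, the normaliser has structure $T{:}C_{12}$, and in $G$ one must analyse the interaction between the graph-field involution $g$ and the Weyl group $N_{F_4(q)}(T)/T \cong C_{12}$ to verify that the maximal subgroup of $G$ containing $x$ that arises from Lemma \ref{l:shintani_subgroups} has the claimed structure $C_{q^4-q^2+1}{:}C_{12}$, rather than some larger or differently-presented extension.
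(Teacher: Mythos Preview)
Your overall arithmetic observation is right---every prime divisor of $|y|$ divides $\Phi_{12}(q)=q^4-q^2+1$ and is therefore a primitive prime divisor of $q^{12}-1$---but the framework you build around it does not hold together. The paper does \emph{not} transfer the problem to ${}^2F_4(q)$ via Malle's list. Instead it works directly in $G$: since $x^{2}\in G_0$ has order $|y|$, every $H\in\mathcal{M}(x)$ satisfies $|y|\mid |H\cap G_0|$, and one then runs through the types in Theorem~\ref{t:types}. For the parabolics one simply notes that only $P_{1,4}$ and $P_{2,3}$ are maximal in $G$ (because $G$ contains a graph-field automorphism) and that $|y|$ does not divide either order. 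For type~(I) one inspects \cite{LSS}: among the $\sigma$-stable maximal rank subgroups, only ${}^3D_4(q).3$ and the Coxeter torus normaliser have order divisible by $|y|$, and the former is non-maximal in $G$ precisely because $G$ contains graph-field automorphisms. Types~(II) and~(III) are dispatched by the same divisibility constraint, leaving only the subfield subgroup ${}^2F_4(q)$ and the torus normaliser.

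Your attempt to replace this by Shintani descent has two concrete problems. First, there is no general mechanism that converts the statement ``$y$ lies in a unique maximal subgroup of ${}^2F_4(q)$'' into a classification of $\mathcal{M}(x)$: Lemma~\ref{l:shintani_subgroups} and Corollary~\ref{c:shintani_parabolic} apply only to very particular $\bar{H}$, not to arbitrary maximal subgroups, so Malle's list cannot be invoked wholesale. Second, your one explicit application of Lemma~\ref{l:shintani_subgroups}---to the maximal torus $\bar{T}$---is illegitimate, since a maximal torus is never self-normalising in $\bar{G}_\sigma$ or $\bar{G}_{\sigma^{2}}$ and the hypotheses of that lemma fail. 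Consequently your argument neither rules out the other type~(I) possibilities (e.g.\ $B_4$, $C_4$, $D_4.S_3$, $A_2\tilde{A}_2$, ${}^3D_4(q).3$) nor correctly establishes the torus case. (Two smaller points: $|y|$ need not itself be prime for $q\geqslant 128$, so ``$m_+$ is a primitive prime divisor'' is false as stated; and the clause ``divisible by the characteristic and hence not by $m_+$'' is a non sequitur.) The final paragraph worrying about the precise structure of the torus normaliser in $G$ is unnecessary: the proposition only asserts the \emph{type} of $H$, and this is read off directly from the list in \cite{LSS}.
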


\begin{proof}
Let $H \in \mathcal{M}(x)$ and observe that $|H|$ is divisible by $|y|$. Then just by considering the orders of the maximal parabolic subgroups $P_{1,4}$ and $P_{2,3}$, we immediately deduce that $H$ is non-parabolic. As in previous cases, the bound on ${\rm meo}(H)$ in Proposition~\ref{p:orders} eliminates the subgroups in $\mathcal{M}_3$. Similarly, if $H \in \mathcal{M}_2$ has socle $S$, then Proposition~\ref{p:meo} reduces the analysis to a handful of cases with $q \in \{8,32\}$ and in each one it is clear that there are no elements in ${\rm Aut}(S) \setminus S$ of order $|x|$. Finally, if $H \in \mathcal{M}_1$ then the fact that $|H|$ is divisible by $|y|$ is highly restrictive and by inspecting the subgroups of type (I), (II) and (III) in Theorem~\ref{t:types} it is easy to check that the only possibilities are those of type ${}^2F_4(q)$ and $C_{q^4-q^2+1}{:}C_{12}$ (here we note that the maximal rank subgroups of type ${}^3D_4(q).3$ are non-maximal since $G$ contains graph-field automorphisms). This completes the proof and we note that there is a unique $G_0$-class of subgroups of each type.
\end{proof}

\begin{prop}\label{p:f4_b_max}
If $e>1$ then each $H \in \mathcal{M}(x)$ is non-parabolic and contained in $\mathcal{M}_1$.
\end{prop}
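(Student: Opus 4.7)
The plan is to mirror the strategy used for Propositions~\ref{p:f4_a_max} and~\ref{p:f4_b_max_e1}. The new key input is that $e \geqs 2$, hence
\[
|x| = 2e|y| \geqs 4|y| \geqs 52,
\]
which is stronger than the corresponding bound in the $e=1$ case and will repeatedly be enough to force contradictions.

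First I would rule out parabolic overgroups. The order $|y|$ divides $q_0^4 - q_0^2 + 1$, and a primitive prime divisor of $q_0^{12}-1$ divides this quantity. By inspection of the two classes of maximal parabolic subgroups of ${}^2F_4(q_0)$, whose orders involve only the factors $q_0^a(q_0-1)(q_0^2-1)(q_0^2+1)(q_0^3+1)(q_0^4-1)$, we see that no maximal parabolic subgroup of ${}^2F_4(q_0)$ contains $y$. Since the Shintani map $F\colon F_4(q)g \to {}^2F_4(q_0)$ arises from $(\bar G, \sigma, 2e)$ with $\bar G_{\sigma^{2e}} = F_4(q)$ and $\bar G_\sigma = {}^2F_4(q_0)$, Corollary~\ref{c:shintani_parabolic} transfers this to the conclusion that $x$ lies in no maximal parabolic of $G$.

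Next, subgroups in $\mathcal{M}_3$ are eliminated immediately by Proposition~\ref{p:orders}, which gives ${\rm meo}(H) \leqs 40 < 52 \leqs |x|$. It remains to show $\mathcal{M}(x) \cap \mathcal{M}_2 = \emptyset$. For $H \in \mathcal{M}_2$ with socle $S$, I would proceed through the list of possibilities provided by Theorem~\ref{t:simples} (with $p=2$, which in particular removes ${}^2G_2$):
\emph{(a)} $S = {\rm L}_3^{\pm}(16)$; \emph{(b)} $S \in \{{\rm L}_2(t), {}^2B_2(t)\}$ with $t \leqs 68$; \emph{(c)} ${\rm rk}(S)=2$ with $t \in \{2,4,8\}$. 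In each case, the upper bound on ${\rm meo}(H)$ from Proposition~\ref{p:meo} (and Table~\ref{tab:meo}), compared against $|x| = 2e|y|$, reduces the problem to a handful of small $(t,q_0,e)$ triples. For each such triple one compares $2e|y|$ with the actual element orders in ${\rm Aut}(S)$ and rules the case out, exactly as in the proof of Proposition~\ref{p:f4_a_max}; the $q_0=2$ cases of part~(a) should again be eliminated by \cite[Lemma~8.5]{BTh}.

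The main technical obstacle is handling part (a) with $q_0=2$, where $|y|=13$ and the presence of elements of order $34$, $3\cdot 17$, $15\cdot 17$ in ${\rm Aut}({\rm L}_3^{\pm}(16))$ leaves several values of $e$ to analyse; exactly as in the proof of Proposition~\ref{p:f4_a_max}, however, the conclusion that $G$ simply has no such maximal subgroup for any of these $e$ should follow from the proof of \cite[Lemma~8.5]{BTh}, which does not depend on the particular value of $e$. Modulo this input, all remaining cases are routine checks of divisibility of element orders.
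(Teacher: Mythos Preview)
Your overall strategy matches the paper's: rule out parabolics via Corollary~\ref{c:shintani_parabolic}, eliminate $\mathcal{M}_3$ by Proposition~\ref{p:orders} since $|x| \geqs 52 > 40$, and handle $\mathcal{M}_2$ by case analysis via Theorem~\ref{t:simples} and Proposition~\ref{p:meo}.

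However, you have confused the details of the $\mathcal{M}_2$ analysis with those of Proposition~\ref{p:f4_a_max}. In case~(a) of Table~\ref{tab:f4_cases} one has $|y|=17$ when $q_0=2$, and that is where the element orders $34$, $3\cdot 17$, $15\cdot 17$ in ${\rm Aut}({\rm L}_3^{\pm}(16))$ and the appeal to \cite[Lemma~8.5]{BTh} come from. Here in case~(b) we have $|y|=13$ when $q_0=2$, so the relevant question is whether ${\rm Aut}(S)\setminus S$ contains elements of order $26e$ with $e\geqs 2$. The paper simply checks directly that for $S={\rm L}_3(16)$ and $S={\rm PSp}_4(8)$ (the only survivors of the ${\rm meo}$ sieve with $q_0\in\{2,8\}$ having $|y|$ dividing some element order in ${\rm Aut}(S)\setminus S$) there are no such elements; no appeal to \cite{BTh} is needed, and ${\rm U}_3(16)$ is already out since $13 \nmid |{\rm U}_3(16)|$. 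So your ``main technical obstacle'' is a phantom, imported from the wrong case, and once you use the correct value $|y|=13$ the argument goes through by a straightforward element-order check.
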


\begin{proof}
To see this, let us first observe that $|x| \geqs 52$, so subgroups in $\mathcal{M}_3$ are ruled out by Proposition~\ref{p:orders}. Now assume $H \in \mathcal{M}_2$. By applying the bound on ${\rm meo}(H)$ in Proposition~\ref{p:meo}, we quickly reduce to a handful of cases with $q_0 \in \{2,8\}$. In each of these, one checks that ${\rm Aut}(S) \setminus S$ has an element of order divisible by $|y|$ if and only if $S = {\rm L}_{3}(16)$ or ${\rm PSp}_{4}(8)$ (both with $q_0=2$, so $|y|=13$). However, in both cases, there are no elements in  ${\rm Aut}(S) \setminus S$ of order $26e$ with $e \geqs 2$. Finally, observe that the order of $y$ is not compatible with the containment of $y$ in a maximal parabolic subgroup of ${}^2F_4(q_0)$, so Corollary~\ref{c:shintani_parabolic} implies that there are no maximal parabolic subgroups in $\mathcal{M}(x)$. 
\end{proof}

\begin{prop}\label{p:f4_b}
The conclusion to Theorem~\ref{t:exceptional} holds in case~(b) of Table \ref{tab:f4_cases}.
\end{prop}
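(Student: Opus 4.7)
The plan is to apply the probabilistic method of Lemma~\ref{l:prob_method} to the element $x \in G_0 g$ already constructed via Shintani descent, using the fixed point ratio bound \eqref{e:f4_fpr}, namely ${\rm fpr}(z,G/H) \leq (q^4-q^2+1)^{-1}$ for all $H \in \mathcal{M}$ and all nontrivial $z \in G$. The case $q=2$ is already dealt with in Proposition~\ref{p:f4_b_2}, so I would assume throughout that $q \geq 8$; this leaves the two subcases $e=1$ (with $q \geq 8$) and $e > 1$, which correspond to the structural results in Propositions~\ref{p:f4_b_max_e1} and~\ref{p:f4_b_max} respectively. In each subcase the goal is to produce a uniform bound on $|\mathcal{M}(x)|$ and then sum up fixed point ratios.

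For $e = 1$, Proposition~\ref{p:f4_b_max_e1} pins down $\mathcal{M}(x)$ to subgroups in just two $G_0$-classes, of types ${}^2F_4(q)$ and $C_{q^4-q^2+1}{:}C_{12}$. Since $C_{{}^2F_4(q)}(y) = \langle y \rangle$, Lemma~\ref{l:centraliser_bound} gives at most $|y|$ conjugates of any given maximal subgroup containing $x$, so $|\mathcal{M}(x)| \leq 2|y|$, yielding
\[
P(z,x) \leq \frac{2|y|}{q^4-q^2+1},
\]
with $|y| = q^2+\sqrt{2q^3}+q+\sqrt{2q}+1 \leq 2q^2$ for $q \geq 8$. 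This is easily checked to be less than $q^{-1}$ (indeed $\leq 8/q^2$), and in particular less than $\tfrac{1}{2}$.

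For $e > 1$, Proposition~\ref{p:f4_b_max} gives the stronger statement that $\mathcal{M}(x) \subseteq \mathcal{M}_1$ with no parabolic members. Then Proposition~\ref{p:classes} bounds the number of $G_0$-classes of such subgroups by $25 + \log\log q$, and Lemma~\ref{l:centraliser_bound} again bounds the number of $G_0$-conjugates of each that contain $x$ by $|C_{{}^2F_4(q_0)}(y)| = |y|$. Combining these,
\[
P(z,x) \leq \frac{(25 + \log\log q)\,|y|}{q^4-q^2+1}.
\]
Using $|y| \leq 2q_0^{5/2} \leq 2q^{5/(2e)}$ together with $e \geq 2$ (and $e \geq 3$ when $q_0 = 2$), one verifies that this bound is less than $\tfrac{1}{2}$, and in fact tends to $0$ as $q \to \infty$, which yields both the bound $u(G) \geq 2$ and the asymptotic conclusion via Lemma~\ref{l:prob_method}(ii).

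The main obstacle is not really in the present proposition but is already absorbed into Propositions~\ref{p:f4_b_max_e1} and~\ref{p:f4_b_max}; once the maximal overgroup structure is known, the remaining calculation is a routine application of the probabilistic method. The slightly delicate point here is the $e=1$ case, where the two classes of overgroups must be separated and the absence of a graph-field class of type ${}^3D_4(q).3$ (as already noted in the proof of Proposition~\ref{p:f4_b_max_e1}) is what keeps the count at two; I would double-check that the multiplicity estimate from Lemma~\ref{l:centraliser_bound} is adequate, rather than having to produce a sharper count of conjugates from Shintani descent in the style of Lemma~\ref{l:shintani_subgroups}.
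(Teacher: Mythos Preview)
Your overall strategy matches the paper's, but there is a genuine gap at $q=4$. You assert that after excluding $q=2$ you may assume $q \geq 8$, and later that ``$e \geq 3$ when $q_0 = 2$''. Neither is justified: in case~(b) the constraint is only that $i$ is an \emph{odd} divisor of $f$, with no parity condition on $f$ or on $e=f/i$. In particular $f=2$, $i=1$, $e=2$, $q_0=2$, $q=4$ is a perfectly valid instance. For this instance your bound reads
\[
P(z,x) \leq \frac{(25+\log\log 4)\,|y|}{q^4-q^2+1} = \frac{26 \cdot 13}{241} > 1,
\]
so it gives nothing. The paper hits exactly the same obstruction (even with the sharper constant $21$ in place of $25$, obtained by discarding the four parabolic classes) and resolves it by a closer look at $\mathcal{M}(x)$ for $q=4$: using that $|H \cap G_0|$ must be divisible by $13$ and that $G$ contains graph-field automorphisms, one reduces to at most three $G_0$-classes of non-parabolic subgroups in $\mathcal{M}_1$ (of types ${\rm U}_3(4)^2.2$, $13^2{:}(3 \times {\rm SL}_2(3))$, and the subfield subgroup $F_4(2)$), whence the leading factor drops from $25+\log\log q$ to $3$ and the bound becomes $3 \cdot 13/241 < \tfrac12$.

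Apart from this missing case, your argument for $e=1$ and for $e>1$ with $q>4$ is essentially the paper's argument; the only cosmetic difference is the constant $25$ versus the paper's $21$ (the latter coming from subtracting the four parabolic classes already excluded by Proposition~\ref{p:f4_b_max}), which does not affect the outcome for $q>4$.
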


\begin{proof}
In view of Proposition \ref{p:f4_b_2}, we may assume $q>2$. Let $H \in \mathcal{M}(x)$ and let $z \in G$ be nontrivial. Recall that $C_{{}^2F_4(q_0)}(y) = \la y \ra$. If $e=1$, then by combining  Lemma \ref{l:centraliser_bound} with Proposition~\ref{p:f4_b_max_e1} and the bound in \eqref{e:f4_fpr}, we deduce that
\[
P(z,x) \leq \sum_{H \in \mathcal{M}(x)} {\rm fpr}(z,G/H) < 2 \cdot |y| \cdot (q^4-q^2+1)^{-1} < \frac{1}{q}
\]
for all nontrivial $z \in G$. Similarly, if $e > 1$ then by applying Propositions~\ref{p:classes} and \ref{p:f4_b_max} we get
\[
P(z,x) \leq \sum_{H \in \mathcal{M}(x)} {\rm fpr}(z,G/H) < (21 + \log\log q) \cdot |y| \cdot (q^4-q^2+1)^{-1}.
\]
One checks that this upper bound is less than $q^{-1}$ for $q>4$, but the case $q=4$ requires further attention (even for the desired $\frac{1}{2}$ bound).

Assume that $q=4$ with $q_0=2$ and $e=2$, so $|x|=52$. By Proposition~\ref{p:f4_b_max}, we know that each $H \in \mathcal{M}(x)$ is non-parabolic and is contained in $\mathcal{M}_1$. There are no exotic local subgroups (see \cite{CLSS}) and so it remains to consider the maximal rank subgroups in $\mathcal{M}(x)$, together with the subfield subgroup $F_4(2)$. By inspecting \cite{LSS}, using the fact that $H \cap G_0$ must contain elements of order $13$ and $G$ contains graph-field automorphisms, we deduce that the only possible maximal rank subgroups in $\mathcal{M}(x)$ are of type ${\rm U}_{3}(4)^2.2$ and $13^2{:}(3 \times {\rm SL}_{2}(3))$. In particular, we may replace the leading factor $21+\log\log q$ in the above bound by $3$ and the result follows.
\end{proof}

By combining Propositions~\ref{p:f4_a} and \ref{p:f4_b}, we have now proved the following theorem.

\begin{thm} \label{t:f4}
The conclusion to Theorem~\ref{t:exceptional} holds when $G_0 = F_4(q)$.
\end{thm}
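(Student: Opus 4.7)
The plan is to follow the standard scheme set up in Section~\ref{ss:p_prob}, applying the probabilistic method of Lemma~\ref{l:prob_method} after splitting into the two families of automorphisms identified in Proposition~\ref{p:cases}, namely (a) field automorphisms $g = \p^i$ with $i$ a proper divisor of $f$, and (b) graph-field automorphisms $g = \rho^i$ with $i$ an odd divisor of $f$ and $p=2$. In each case the goal is to exhibit a class $x^G \subseteq G_0 g$ such that $\sum_{H \in \mathcal{M}(x)} {\rm fpr}(z, G/H)$ is bounded above by $1/2$ for all nontrivial $z \in G$, and moreover tends to zero as $q \to \infty$, which by Lemma~\ref{l:prob_method}(ii) then yields both $u(G) \geqs 2$ and the asymptotic statement.

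In case~(a), I would set $\s = \p^i$, $e = f/i$, $q = q_0^e$, and apply the Shintani map $F \colon F_4(q)g \to F_4(q_0)$ of $(\bar{G}, \s, e)$ from Section~\ref{ss:p_shintani}. I would pick $y \in F_4(q_0)$ of order $q_0^4 - q_0^2 + 1$ (with the obvious small-field modification for $q_0 = 2$), which is self-centralising by \cite{Lubeck}, and take $x \in G_0 g$ with $F(x) = y$. Since $|y|$ is coprime to the order of any maximal parabolic of $F_4(q_0)$, Corollary~\ref{c:shintani_parabolic} immediately removes all parabolic overgroups. The critical step is then to show $\mathcal{M}(x) \subseteq \mathcal{M}_1$: for $H \in \mathcal{M}_3$ the order bound from Proposition~\ref{p:orders} suffices, while for $H \in \mathcal{M}_2$ one goes through each possibility for the socle $S$ in Theorem~\ref{t:simples}, combining Proposition~\ref{p:meo} with a check that $|S|$ (or $|{\rm Aut}(S) \setminus S|$) is divisible by $|y|$. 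Lemma~\ref{l:centraliser_bound} then gives $|\mathcal{M}(x)| \leqs (25 + \log\log q)\,|y|$ via Proposition~\ref{p:classes}, and the bound \eqref{e:f4_fpr} from \cite[Theorem~1]{LLS} completes the estimate in the generic range.

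In case~(b), I would instead set $\s = \rho^i$, $e = f/i$, and apply the Shintani map $F \colon F_4(q)g \to {}^2F_4(q_0)$ of $(\bar{G}, \s, 2e)$. The natural choice is $y \in {}^2F_4(q_0)$ with $|y| = q_0^2 + \sqrt{2q_0^3} + q_0 + \sqrt{2q_0} + 1$ (self-centralising by \cite{Shinoda75}), giving $|x| = 2e|y|$. The maximal overgroup analysis proceeds as in case~(a): order considerations and Proposition~\ref{p:orders} eliminate $\mathcal{M}_3$; Proposition~\ref{p:meo} combined with case checks on $|S|$ eliminates $\mathcal{M}_2$; and divisibility by $|y|$ together with Corollary~\ref{c:shintani_parabolic} eliminates the parabolic members of $\mathcal{M}_1$. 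When $e = 1$ a direct inspection shows the only surviving members of $\mathcal{M}(x)$ are the subfield subgroups ${}^2F_4(q)$ and the torus normaliser $C_{q^4 - q^2 + 1}{:}C_{12}$, and there is a unique $G_0$-class of each.

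The main obstacles will be the sporadic small-$q$ situations where the generic inequalities fail. These include $q = 4$ in case~(a), $q = 2$ in case~(b), and the borderline subcases $q = 4$ with $q_0 = 2, e = 2$ in case~(b). For $q = 2$ in case~(b) one cannot apply Shintani descent to simplify, so I would instead invoke the explicit list of maximal subgroups of $F_4(2).2$ from \cite{NW} and compute the counts $n(H)$, $n(K)$ of overgroups directly using $i_{13}$. For the two $q = 4$ cases, the rescue consists of carefully enumerating the surviving conjugacy classes of maximal overgroups (using \cite{LSS} for maximal rank subgroups and \cite[Lemma~8.5]{BTh} to eliminate ${\rm L}_3^\e(16)$-type subgroups), which sharpens the prefactor $25 + \log\log q$ or $21 + \log\log q$ enough to push the bound below $\frac{1}{2}$. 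Once these finite exceptional cases are resolved, concatenating Propositions~\ref{p:f4_a} and~\ref{p:f4_b} establishes the theorem.
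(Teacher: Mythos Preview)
Your proposal is correct and follows essentially the same approach as the paper: the same case split from Proposition~\ref{p:cases}, the same Shintani descent setup and choice of $y$ in each case, the same elimination of $\mathcal{M}_2$, $\mathcal{M}_3$, and parabolics via Propositions~\ref{p:orders}, \ref{p:meo} and Corollary~\ref{c:shintani_parabolic}, and the same ad hoc treatment of the small-$q$ boundary cases (including the use of \cite{NW} at $q=2$ and of \cite[Lemma~8.5]{BTh} for the ${\rm L}_3^\e(16)$ socle). The only place you are slightly imprecise is the ``obvious small-field modification'' at $q_0=2$ in case~(a): the paper takes $|y|=17$ there rather than $q_0^4-q_0^2+1=13$, which is not entirely obvious but is what makes the subsequent divisibility checks go through.
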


\section{Proof of Theorem \ref{t:exceptional}: \texorpdfstring{$G_0 = {}^3D_4(q)$}{G0 = 3D4(q)}} \label{s:3d4}

In this final section, we complete the proof of Theorem~\ref{t:exceptional} by handling the almost simple groups $G$ with socle $G_0 = {}^3D_4(q)$. In \cite{Kleidman3D4}, Kleidman determines the maximal subgroups of $G$ and we note that $G$ has at most $10+\log\log q$ conjugacy classes of maximal subgroups. In addition, \cite[Theorem~1]{LLS} gives the bound
\begin{equation} \label{e:3d4_fpr}
{\rm fpr}(z,G/H) \leqs (q^4-q^2+1)^{-1}
\end{equation}
for all $H \in \mathcal{M}$ and all nontrivial $z \in G$. 

By considering Proposition~\ref{p:cases}, we see that it suffices to assume $G = \la G_0, g\ra$, where $g$ is recorded in Table \ref{tab:3d4_cases}.
In this table, we write $\Delta(f)$ for the set of positive proper divisors of $f$ and $\tau$ is the triality graph automorphism of $G_0$ in Definition \ref{d:aut}(iii).

\begin{table}
\caption{The relevant groups $G = \la G_0,g \ra$ for $G_0 = {}^3D_4(q)$} \label{tab:3d4_cases}
\begin{center}
\vspace{-8mm}
\[
\begin{array}{ccl} 
\hline
\text{Case} & g      & \text{Conditions}                                       \\ 
\hline    
\text{(a)}  & \t\p^i & \text{$i \in \Delta(f)$ \&\ $f/i \not\equiv 0 \mod{3}$} \\
\text{(b)}  &   \p^i & i \in \Delta(f)                                         \\ 
\text{(c)}  & \t     &                                                         \\ 
\hline 
\end{array}
\]
\end{center}
\end{table}

\subsection{Case (a): Shintani descent} \label{ss:3d4_a}

Here $i$ is a proper divisor of $f$ and $e=f/i$ is indivisible by $3$. Set $q=q_0^e$ and let $\bar{G}$ be the adjoint algebraic group $D_4$ over $\bar{\mathbb{F}}_p$. Let $\s$ be the Steinberg endomorphism $\t\p^i$ of $\bar{G}$ and let $F\:{}^3D_4(q)g \to {}^3D_4(q_0)$ be the Shintani map of $(\bar{G},\s,e)$. Choose $y \in {}^3D_4(q_0)$ such that $|y|= q_0^4-q_0^2+1$ and $C_{{}^3D_4(q_0)}(y) = \< y \>$. Let $x \in G$ be a Shintani correspondent, so $|x|=e|y|$.

\begin{prop} \label{p:3d4_a}
The conclusion to Theorem~\ref{t:exceptional} holds in case~(a) of Table \ref{tab:3d4_cases}.
\end{prop}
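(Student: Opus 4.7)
The strategy is to apply the probabilistic method of Lemma~\ref{l:prob_method} to the element $x \in G_0 g$ supplied by Shintani descent; it suffices to prove that
\[
\sum_{H \in \mathcal{M}(x)} \mathrm{fpr}(z, G/H) < \tfrac{1}{2}
\]
for every nontrivial $z \in G$, and that this sum tends to $0$ as $q \to \infty$. Since $F$ is a bijection onto $\bar G_\sigma$-classes and $x$ is a Shintani correspondent of $y$, we have $x \in G_0 g$, so $G/G_0 = \langle G_0 x \rangle$ as required for Lemma~\ref{l:prob_method}(ii).

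To bound $|\mathcal{M}(x)|$, I would combine three ingredients. First, Kleidman's classification \cite{Kleidman3D4} bounds the number of conjugacy classes of maximal subgroups of $G$ by $10 + \log\log q$, of which exactly two comprise maximal parabolic subgroups. Second, the order $|y| = q_0^4 - q_0^2 + 1 = \Phi_{12}(q_0)$ does not divide the order of any maximal parabolic of ${}^3D_4(q_0)$: the two Levi factors are built from the cyclotomic factors $\Phi_1, \Phi_2, \Phi_3, \Phi_6$ evaluated at $q_0$ (reflecting the $G_2$-relative root system and the twisting by triality), so none contributes a $\Phi_{12}(q_0)$. Hence $y$ lies in no maximal parabolic of ${}^3D_4(q_0)$, and Corollary~\ref{c:shintani_parabolic} removes the two parabolic classes from consideration. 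Third, since $C_{{}^3D_4(q_0)}(y) = \langle y \rangle$, Lemma~\ref{l:centraliser_bound} ensures that at most $|y|$ conjugates of any given subgroup lie in $\mathcal{M}(x)$, yielding
\[
|\mathcal{M}(x)| \leq (8 + \log\log q) \cdot |y|.
\]
Combining with the uniform fixed point ratio bound \eqref{e:3d4_fpr} gives
\[
P(z, x) \leq (8 + \log\log q)(q_0^4 - q_0^2 + 1)(q^4 - q^2 + 1)^{-1}
\]
for every nontrivial $z \in G$.

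Since $q = q_0^e$ with $e \geq 2$ (as $i$ is a proper divisor of $f$) and $e \not\equiv 0 \pmod 3$, this upper bound is of order $(\log\log q)/q^2$ and tends to $0$, which delivers the asymptotic statement $u(G_n) \to \infty$. For the explicit bound $P(z, x) < 1/2$, the only case demanding inspection is the smallest allowed configuration $(q_0, e) = (2, 2)$, so $q = 4$ and $|y| = 13$, where the right-hand side evaluates to $9 \cdot 13 / 241 = 117/241 < 1/2$; for all larger $q_0$ or $e$ the bound is comfortably satisfied. The single nontrivial step is the parabolic exclusion via the cyclotomic observation; everything else is an assembly of preliminary results in the now-familiar pattern established in the preceding sections.
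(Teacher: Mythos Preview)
Your proof is correct and follows essentially the same approach as the paper: exclude the two parabolic classes via Corollary~\ref{c:shintani_parabolic} (since $|y|=\Phi_{12}(q_0)$ does not divide the order of either maximal parabolic of ${}^3D_4(q_0)$), then combine the class count $8+\log\log q$, Lemma~\ref{l:centraliser_bound}, and the bound~\eqref{e:3d4_fpr}. Your explicit verification at $(q_0,e)=(2,2)$ matches the paper's implicit check that the bound ``is always sufficient''.
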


\begin{proof}
First observe that $y$ is not contained in a maximal parabolic subgroup of ${}^3D_4(q_0)$, since $|y|$ does not divide the order of any such group, whence $x$ is not contained in a maximal parabolic subgroup of $G$ by Corollary~\ref{c:shintani_parabolic}. Therefore, in view of Lemma \ref{l:centraliser_bound} and the bound in \eqref{e:3d4_fpr}, we deduce that
\[
\sum_{H \in \mathcal{M}(x)}{\rm fpr}(z,G/H) \leqs (8+\log\log q)\cdot |y| \cdot (q^4-q^2+1)^{-1}
\]
for all nontrivial $z \in G$. One checks that this bound is always sufficient (in particular, the bound is less than $q^{-1}$ for $q>4$).
\end{proof}

\subsection{Case (b): Shintani descent over \texorpdfstring{$G_2$}{G2}} \label{ss:3d4_b}

In this case, we can proceed as in Section~\ref{ss:e6_d}, using Lemma~\ref{l:shintani_substitute}. Fix a proper divisor $i$ of $f$ and write $e=f/i$ and $q=q_0^e$. Set $\bar{G} = D_4$ and let $\s$ be the Steinberg endomorphism $\p^i$. In addition, let $\tau$ be a triality graph automorphism of $\bar{G}$ such that $\bar{K} = C_{\bar{G}}(\t) = G_2$ and note that $\bar{K}$ is $\s$-stable and $\bar{G}_{\t\s^e} = {}^3D_4(q)$. Fix an element $y \in \bar{K}_\s = G_2(q_0)$ of order 
\[
|y| = 
\left\{
\begin{array}{ll}
q_0^2-q_0+1 & \text{if $q_0 > 2$} \\
7           & \text{if $q_0 = 2$.} 
\end{array}
\right.
\]
By Lemma~\ref{l:shintani_substitute}(i), there exists $x \in \bar{K}_{\s^e}\s = G_2(q)g \subseteq {}^3D_4(q)g$ such that $x^e$ is $\bar{G}$-conjugate to $y\t^2$. In particular, $x^{3e}$ is $\bar{G}$-conjugate to $y^3$. By Remark~\ref{r:shintani_order}, $|x| = 3e|y^3|$ and we note that $|y^3| = (q_0^2-q_0+1)/(3,q_0+1)$ if $q_0 > 2$.

\begin{prop} \label{p:3d4_b}
The conclusion to Theorem~\ref{t:exceptional} holds in case~(b) of Table \ref{tab:3d4_cases}.
\end{prop}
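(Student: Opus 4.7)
The plan is to apply the probabilistic method of Lemma~\ref{l:prob_method} to the element $x \in G_0 g$ furnished by Lemma~\ref{l:shintani_substitute} with $\rho=\tau$, noting that $(\tau\s^e)^3=\s^{3e}$ so we may take $d=3$. The goal is to show that for every nontrivial $z \in G$,
\[
\sum_{H \in \mathcal{M}(x)} {\rm fpr}(z,G/H) < \tfrac{1}{2},
\]
and that the left-hand side tends to $0$ as $q \to \infty$.

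First I would determine the possible $H \in \mathcal{M}(x)$. Here we do \emph{not} appeal to Theorem~\ref{t:types} and Proposition~\ref{p:classes} (since $G_0 \notin \mathcal{B}'$); instead we use Kleidman's classification \cite{Kleidman3D4} as cited at the start of Section~\ref{s:3d4}, which gives at most $10+\log\log q$ conjugacy classes of maximal subgroups $H$ with $G=HG_0$. For any such $H$, Lemma~\ref{l:shintani_substitute}(ii)(a) applied with $d=3$ shows that $x$ lies in at most $|C_{\bar G_\s}(y^3)|=|C_{D_4(q_0)}(y^3)|$ distinct $G_0$-conjugates of $H$. To exclude parabolic subgroups from $\mathcal{M}(x)$, I would use Lemma~\ref{l:shintani_substitute}(ii)(b) in the manner of Corollary~\ref{c:shintani_parabolic}: since $|y^3|=(q_0^2-q_0+1)/(3,q_0+1)$ (with $|y^3|=7$ if $q_0=2$) does not divide the order of any maximal parabolic of $D_4(q_0)$, no parabolic subgroup of $G$ contains $x$.

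The next step is to bound $|C_{D_4(q_0)}(y^3)|$. Because $y \in G_2(q_0) \leqs D_4(q_0)$ is a regular semisimple element of the $G_2$-torus of order $q_0^2-q_0+1$, a direct analysis of the embedding $G_2 \hookrightarrow D_4 = C_{\bar G}(\tau)$ (mirroring the argument of Lemma~\ref{l:e6_c_centraliser}) shows that the connected centraliser $C_{\bar G}^\circ(y^3)$ is a $\tau$-stable maximal torus of $\bar G$ extending the $G_2$-torus. Passing to fixed points under $\s$, this yields a polynomial bound of the shape $|C_{D_4(q_0)}(y^3)| \leqs c\,(q_0^2-q_0+1)^2$ for a small absolute constant $c$ coming from the component group.

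Combining these ingredients with \eqref{e:3d4_fpr}, we obtain
\[
\sum_{H \in \mathcal{M}(x)} {\rm fpr}(z,G/H) \;\leqs\; (10+\log\log q)\cdot c\,(q_0^2-q_0+1)^2\cdot (q^4-q^2+1)^{-1}.
\]
Since $e \geqs 2$ (as $i$ is a proper divisor of $f$), we have $q_0 \leqs q^{1/2}$, so the bound is $O(q^{-2}\log\log q)$; this both tends to $0$ as $q \to \infty$ and is below $\frac{1}{2}$ for all but finitely many $q$. The main obstacle will be the small cases, in particular $q_0 = 2$ with small $e$ (where $|y^3|=7$ and the above generic bound is not yet decisive). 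Here I would refine the count of $\mathcal{M}(x)$ by inspecting Kleidman's tables directly, retaining only those classes whose order is divisible by $|y^3|$ and that are compatible with the coset $G_0 g$ (compare the endgame of Proposition~\ref{p:f4_b}), and if necessary fall back on a direct \textsc{Magma} computation in the spirit of Proposition~\ref{p:small}.
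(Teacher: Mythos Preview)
Your overall strategy is right and matches the paper's: apply Lemma~\ref{l:shintani_substitute}(ii)(a) with $d=3$, combine with the class count $10+\log\log q$ from \cite{Kleidman3D4} and the fixed point ratio bound \eqref{e:3d4_fpr}. But two points need correcting.

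First, your centraliser computation is off. The $\s$-fixed points of the $\tau$-stable $D_4$-torus extending the $G_2$ Coxeter torus do \emph{not} have order $(q_0^2-q_0+1)^2$; rather, for $q_0>2$ one has
\[
|C_{\bar G_\s}(y^3)| = (q_0^3+1)(q_0+1) = (q_0+1)^2(q_0^2-q_0+1),
\]
which the paper simply reads off from L\"ubeck's tables \cite{Lubeck}. (Concretely, the Coxeter element of $W(G_2)$, viewed in $W(D_4)$, acts on the $4$-dimensional Cartan with characteristic polynomial $\Phi_6(t)\Phi_2(t)^2$, not $\Phi_6(t)^2$.) Since you need an \emph{upper} bound on the centraliser, your underestimate is a genuine gap, though both quantities are $O(q_0^4)$ so the asymptotic conclusion is unaffected.

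Second, the parabolic exclusion is both unnecessary and does not work as stated: Lemma~\ref{l:shintani_substitute}(ii)(b) counts conjugates of $\bar H_{\s^{de}}$ inside $\bar G_{\s^{de}} = D_4(q^3)$, not maximal parabolics of $G_0 = \bar G_{\tau\s^e} = {}^3D_4(q)$, so the analogue of Corollary~\ref{c:shintani_parabolic} is not immediate here. The paper simply skips this step; with the correct centraliser (noting $c(2)=7$), the crude bound
\[
(10+\log\log q)\cdot (q_0^3+1)(q_0+1)\cdot (q^4-q^2+1)^{-1}
\]
is already below $\tfrac12$ for every $q \geqs 4$, so no small-case analysis or {\sc Magma} fallback is needed.
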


\begin{proof}
Here $\bar{G}_\s = \Inndiag({\rm P}\Omega_8^+(q_0))$ and $y^3 \in G_2(q_0) \leq \bar{G}_\s$. From \cite{Lubeck}, we see that the order of $y$ implies that
\[
|C_{\bar{G}_\s}(y^3)| = c(q_0) = 
\left\{
\begin{array}{ll}
(q_0^3+1)(q_0+1) & \text{if $q_0 > 2$} \\
7                & \text{if $q_0 = 2$.}
\end{array}
\right.
\] 
Therefore, by applying Lemma~\ref{l:shintani_substitute}(ii)(a) and the bound in \eqref{e:3d4_fpr} we deduce that
\[
P(z,x) \leq \sum_{H \in \mathcal{M}(x)}{\rm fpr}(z,G/H) \leqs (10+\log\log q)\cdot c(q_0) \cdot (q^4-q^2+1)^{-1}
\]
for all nontrivial $z \in G$. The result follows (in particular, the upper bound is less than $q^{-1}$ if $q>9$).
\end{proof}

\subsection{Case (c): triality graph automorphisms} \label{ss:3d4_c}

We have reached the final case. Here we may assume that $G = \< G_0, g \>$, where $g$ is the standard triality graph automorphism of $G_0$. As in Section~\ref{ss:e6_c}, we cannot apply Shintani descent in this case.

First we handle the case $q=2$.

\begin{prop}\label{p:case2}
The conclusion to Theorem~\ref{t:exceptional} holds in case (c) of Table \ref{tab:3d4_cases} with $q=2$.
\end{prop}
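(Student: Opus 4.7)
The plan is to verify this remaining case by a direct computation in \textsc{Magma}, exactly as was done for the six small cases in Proposition~\ref{p:small}. Here $G = \langle G_0, \tau \rangle = {\rm Aut}({}^3D_4(2)) = {}^3D_4(2).3$, a group of order roughly $6 \times 10^8$, well within computational reach; and since this is a single fixed group, the asymptotic statement in Theorem~\ref{t:exceptional} is vacuous and it suffices to establish the bound $u(G) \geq 2$.

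I would first construct $G$ via \texttt{AutomorphismGroupSimpleGroup} and identify the conjugacy classes of $G$ meeting the outer coset $G_0\tau$. A natural candidate witness is $x = y\tau$, where $y \in C_{G_0}(\tau) = G_2(2) \leq G_0$ has order $7$: since $y$ and $\tau$ commute and have coprime orders, $x$ has order $21$ and $G/G_0 = \langle G_0x\rangle$. This is precisely the analogue for triality of the Shintani-type choice used in Section~\ref{ss:3d4_b} (where the same $G_2(q)$ subgroup was exploited through Lemma~\ref{l:shintani_substitute}). I would then apply the algorithm from \cite[Section~2.3]{Harper17}, coded in \cite[Appendix~A]{HarperClassical}, to obtain an upper bound on $P(z,x)$ for each conjugacy class representative $z$ of a nontrivial element of $G$, and verify that $P(z,x) < 1/2$ in every case. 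By Lemma~\ref{l:prob_method}(ii) this yields $u(G) \geq 2$.

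The main obstacle is essentially computational and minor: the size of $G$ is entirely comparable to ${}^2F_4(2)'.2$ handled in Proposition~\ref{p:small}, so termination within seconds is expected. Should the class of elements of order $21$ fail to witness $u(G) \geq 2$ on its own, one simply tries other classes in the outer cosets of $G_0$. As a theoretical backup requiring no computation, one may instead invoke Kleidman's classification of the maximal subgroups of $G$ from \cite{Kleidman3D4}: only a handful have order divisible by $21$, so $\mathcal{M}(x)$ is very small, and combining the fixed point ratio bound~\eqref{e:3d4_fpr} with the centraliser bound for $y$ in $G_0$ (via Lemma~\ref{l:shintani_substitute}(ii)(a) with $\rho=\tau$) will yield $\sum_{H \in \mathcal{M}(x)} {\rm fpr}(z,G/H) < 1/2$ by direct summation.
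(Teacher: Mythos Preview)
Your approach is essentially identical to the paper's: a direct {\sc Magma} computation in the spirit of Proposition~\ref{p:small}, and the paper reports that the {\sc Atlas} class {\tt 24A} witnesses $u(G)\geqslant 4$. One minor caveat: your theoretical backup via Lemma~\ref{l:shintani_substitute}(ii)(a) does not quite apply as stated, since that lemma presumes a nontrivial Steinberg endomorphism $\sigma$ in play, whereas in case~(c) one has $g=\tau$ alone with no field component --- but this is moot given that the computational route succeeds.
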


\begin{proof}
As in the proof of Proposition~\ref{p:small}, it is straightforward to use {\sc Magma} to handle this case. In particular, we find that the class labelled ${\tt 24A}$ in the {\sc Atlas} \cite{ATLAS} witnesses $u(G) \geq 4$.
\end{proof}

For the remainder, we will assume $q \geq 3$. Choose $y \in C_{G_0}(g) = G_2(q)$ such that  
\[
|y| = q^2-q+1
\]
and $C_{G_2(q)}(y) = \la y \ra$. Set $x = yg \in G$ and note that $x^3 = y^3 \in G_0$ and $|x| = 3|y|/(3,q+1)$. Write 
\[
r = |y^3| = (q^2-q+1)/(q+1,3),
\] 
which is divisible by a primitive prime divisor of $q^6-1$. 

\begin{lem}\label{l:3d40}
We have $C_{G_0}(y) = C_{q^2-q+1} \times C_{q^2-q+1}$.
\end{lem}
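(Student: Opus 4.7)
The plan is to show that $y$ is a regular semisimple element of the algebraic group $\bar{G} = D_4$, whence $C_{G_0}(y) = \bar{T}^F$ is a maximal torus of $G_0$ (where $F = \tau\varphi_q$ is the Steinberg endomorphism defining ${}^3D_4(q)$), and then to identify this torus using the classification of semisimple centralisers in ${}^3D_4(q)$ tabulated in \cite{Lubeck} (see also \cite{DM}).

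To establish regularity of $y$ in $\bar{G}$, set $\bar{K} = G_2 = C_{\bar{G}}(\tau)^{\circ}$ and let $\bar{S}$ be the maximal torus of $\bar{K}$ containing $y$. Since $C_{G_2(q)}(y) = \la y \ra$, the element $y$ is regular in $\bar{K}$, so no root of $\bar{K}$ vanishes on $y$. Under the action of $\tau$, the 24 roots of $\bar{G}$ partition as 6 fixed roots (which restrict on $\bar{S}$ to the 6 long roots of $\bar{K}$) together with 6 $\tau$-orbits of length 3 (each such orbit restricting on $\bar{S}$ to a common short root of $\bar{K}$). Hence no root of $\bar{G}$ vanishes on $y$, so $C_{\bar{G}}(y) = \bar{T}$ is a maximal torus with $\bar{T} \cap \bar{K} = \bar{S}$, and $C_{G_0}(y) = \bar{T}^F$ is a maximal torus of $G_0$.

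Among the seven classes of maximal tori of $G_0$, with orders $\Phi_1^2 \Phi_3$, $\Phi_2^2 \Phi_6$, $\Phi_3^2$, $\Phi_6^2$, $\Phi_{12}$, $\Phi_1 \Phi_2 \Phi_3$, $\Phi_1 \Phi_2 \Phi_6$ evaluated at $q$, only three have order divisible by $\Phi_6(q) = q^2 - q + 1$, namely those of orders $(q^2-q+1)^2$, $(q+1)^2(q^2-q+1)$, and $(q-1)(q+1)(q^2-q+1)$. The main obstacle is distinguishing between these three candidates, since each abstractly contains $\la y \ra$.

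To complete the identification, I would analyse the $F$-action on $X_*(\bar{T}) \otimes \mathbb{Q}$, which preserves the $\tau$-decomposition $X_*(\bar{S}) \oplus V$, with $\tau$ acting trivially on the 2-dimensional summand $X_*(\bar{S})$ and with eigenvalues $\omega, \omega^2$ on the 2-dimensional complement $V$. On $X_*(\bar{S})$, $F$ restricts to $\varphi_q$, acting as the Coxeter element of $W(G_2)$ scaled by $q$, with characteristic polynomial $\Phi_6(q) = q^2 - q + 1$; on $V$, the $\tau$-twisted Weyl element realising $\bar{T}$ in $W(D_4) \rtimes \la \tau \ra$ is then forced by compatibility with the Coxeter action on $X_*(\bar{S})$ to have the same characteristic polynomial $\Phi_6(q)$. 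Hence $\bar{T}^F$ is the maximal torus of type $\Phi_6^2$ of order $(q^2-q+1)^2$, and the isomorphism $\bar{T}^F \cong C_{q^2-q+1} \times C_{q^2-q+1}$ follows since each $\Phi_6$ factor contributes a cyclic summand $\mathbb{Z}[\zeta_6]/(q-\zeta_6) \cong C_{q^2-q+1}$ to the $F$-invariants. Alternatively, the entire result can be read off directly from \cite{Lubeck}'s tables, bypassing the Weyl-group calculation.
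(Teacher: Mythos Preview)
Your regularity argument via root restriction is correct and arguably cleaner than the paper's: since every $D_4$-root restricts on $\bar{S}$ to a nonzero $G_2$-root (long or short according as the $D_4$-root is $\tau$-fixed or lies in a $\tau$-orbit of length $3$), the $G_2$-regularity of $y$ immediately forces $D_4$-regularity. The paper instead places $y$ inside ${\rm SU}_3(q) < G_2(q)$ and reads off regularity from the eigenvalues of $y$ on the natural $8$-dimensional module via the branching $V|_{A_2} = U \oplus U^* \oplus 0^2$.

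The identification step, however, has a genuine gap. Having reduced to three candidate tori and factored $|\bar{T}^F| = (q^2-q+1)\cdot|\det(F|_V-1)|$, you assert that the second factor equals $\Phi_6(q)$ ``by compatibility with the Coxeter action on $X_*(\bar{S})$'', but you never say what this compatibility is or why it determines anything on $V$. What is actually required is a computation: choosing the conjugating element inside $\bar{K}=G_2$, the twisting Weyl element is the Coxeter element $c \in W(G_2) = C_{W(D_4)}(\tau)$, and one must check that $c$ acts on $V$ as $-1$ (the long-root simple reflection $s_{\alpha_2}$ fixes $V$ while the short-root reflection $s_{\alpha_1}s_{\alpha_3}s_{\alpha_4}$ negates it), whence $qc\tau|_V$ has eigenvalues $-q\omega,\,-q\omega^2$ and $|\det(qc\tau|_V-1)| = q^2-q+1$. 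Without this, all three candidates are consistent with what you have written. Your fallback of ``reading off L\"ubeck's tables'' does not help either, since those tables are indexed by semisimple class and deciding which class contains $y$ is exactly the point at issue.

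The paper bypasses the lattice computation entirely by exploiting the embedding $y \in {\rm SU}_3(q)$: this copy of ${\rm SU}_3(q)$ is centralised in ${}^3D_4(q)$ by a torus of order $q^2-q+1$ (from the maximal-rank subgroup of type ${\rm SU}_3(q)\circ C_{q^2-q+1}$), so $C_{G_0}(y)$ contains $\langle y\rangle \times C_{q^2-q+1}$ and hence has order divisible by $(q^2-q+1)^2$, which singles out the correct torus at once.
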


\begin{proof}
We may choose $y \in {\rm SU}_{3}(q) < G_2(q)$, so $y \in \bar{L}< \bar{H} < \bar{G}$, where $\bar{L} = A_2$, $\bar{H} = G_2$ and $\bar{G} = D_4$ are the corresponding algebraic groups. Let $V$ and $U$ be the natural modules for $\bar{G}$ and $\bar{L}$, respectively, and observe that $V|_{\bar{L}} = U \oplus U^* \oplus 0^2$, where $0$ is the trivial module and $U^*$ is the dual of $U$. By first considering the eigenvalues of $y$ on $U$, and then on $V$ via the given decomposition, we deduce that the connected component of $C_{\bar{G}}(y)$ is a maximal torus. In particular, $y$ is a regular semisimple element of $G_0$ and by inspecting \cite[Table II]{Kleidman3D4} we deduce that $C_{G_0}(y)$ is either $C_{q^2-q+1} \times C_{q^2-q+1}$ or $C_{q^3+1} \times C_{q+1}$. Finally, we observe that the ${\rm SU}_{3}(q)$ subgroup of $G_2(q)$ containing $y$ is centralised in ${}^3D_4(q)$ by a torus of order $q^2-q+1$ and this rules out the latter possibility.
\end{proof}

\begin{prop}\label{p:3d4_c_max}
If $q > 2$ then each $H \in \mathcal{M}(x)$ is of one of the following types:
\[
G_2(q), \ {\rm PGU}_{3}(q) \, \mbox{$(q \equiv 2 \imod{3})$}, \ {\rm SU}_{3}(q) \times C_{q^2-q+1}, \ C_{q^2-q+1} \times C_{q^2-q+1}.
\]
\end{prop}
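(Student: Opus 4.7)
The plan is to first observe that $x^3 = y^3$ (since $g^3 = 1$ and $[y,g]=1$), so $\mathcal{M}(x) \subseteq \mathcal{M}(y^3)$. It therefore suffices to identify the maximal subgroups of $G$ containing $y^3$ and then to check which of these also contain $x = yg$. Since $q \geqs 3$, Zsigmondy's theorem provides a primitive prime divisor $s$ of $q^6-1$; this $s$ divides $r = |y^3|$, and so $s$ divides $|H|$ for every $H \in \mathcal{M}(x)$. Crucially, $s$ divides $q^d-1$ only when $6 \mid d$, so $s$ divides the factor $q^2-q+1$ of $|G_0|$ but none of $q-1$, $q+1$, $q^2+q+1$.

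Next, I would inspect Kleidman's classification of the maximal subgroups of $G_0 = {}^3D_4(q)$ in \cite{Kleidman3D4}, together with the action of the triality automorphism $g$, and eliminate all candidates whose order is not divisible by $s$. On order grounds, both classes of maximal parabolic subgroups, the maximal rank subgroups of types $({\rm SL}_2(q^3) \circ {\rm SL}_2(q)).2$, $({\rm SL}_3(q) \times C_{q^2+q+1}).d$, $(C_{q^2+q+1})^2.{\rm SL}_2(3)$ and $(q^4-q^2+1){:}4$ (the last contains no element of order $r$ outside its cyclic core, which already sits inside the torus normaliser we keep), and every proper subfield subgroup ${}^3D_4(q_0)$ are all ruled out. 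The remaining almost simple and exotic local candidates (such as $7^2{:}{\rm SL}_2(3)$) can be eliminated by direct inspection. What survives is precisely $G_2(q)$, the $A_2$-type maximal rank subgroup with ${\rm SU}_3(q)$ factor, and the normaliser of a maximal torus of type $C_{q^2-q+1} \times C_{q^2-q+1}$, matching the four structures in the statement (the ${\rm PGU}_3(q)$ case arises from the $A_2$-type subgroup when $q \equiv 2 \mod{3}$, reflecting the $(3,q+1)$-dichotomy in the structure of ${\rm SU}_3(q)$).

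Finally, I would verify that some $G$-conjugate of each survivor actually contains $x$ (not just $y^3$). The subgroup $G_2(q) = C_{G_0}(g)$ is $g$-invariant and contains $y$, hence $x$. The ${\rm SU}_3(q)$ subgroup of $G_2(q)$ housing $y$ sits inside $C_{G_0}(g)$, so it is $g$-invariant, whence its $G_0$-normaliser is $g$-invariant and contains $x$. For the torus $T = C_{G_0}(y)$ given by Lemma~\ref{l:3d40}, $g$ centralises $y$ and hence normalises $T$, so $x \in \langle T, g\rangle$ and lies in the corresponding torus normaliser.

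The main obstacle will be the careful bookkeeping within Kleidman's list: correctly tracking the action of the triality automorphism $g$ on the $G_0$-classes of maximal subgroups, handling the small values of $q$ where the exotic local subgroups could a priori survive, and clarifying the relationship between the two $A_2$-type outputs in the statement that arise from whether or not $3$ divides $q+1$.
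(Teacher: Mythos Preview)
Your overall strategy is sound and matches the paper's, but the divisibility argument has a genuine gap: the primitive prime divisor $s$ of $q^6-1$ does \emph{not} rule out every subgroup on your list. Since $|{\rm SL}_2(q^3)| = q^3(q^6-1)$, the prime $s$ divides the order of the parabolic subgroup $q^{1+8}{:}{\rm SL}_2(q^3).C_{q-1}$ and of the maximal rank subgroup $({\rm SL}_2(q^3)\circ{\rm SL}_2(q)).2$ (which is the involution centraliser for $p$ odd and ${\rm L}_2(q^3)\times{\rm L}_2(q)$ for $p=2$). So order considerations alone leave these candidates standing, and your proof as written does not eliminate them.

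The paper closes this gap using the centraliser structure established in Lemma~\ref{l:3d40}, namely $C_{G_0}(y)=C_{q^2-q+1}\times C_{q^2-q+1}$. Any element of order $r$ in one of the problematic subgroups lies in a maximal torus of ${\rm SL}_2(q^3)$ of order $q^3+1$, and hence is centralised by a cyclic group of order $q^3+1$; this is incompatible with the centraliser of $y^3$ in $G_0$ (which contains no element of order $q+1$ outside the $(3,q+1)$ factor). For the involution centraliser when $p$ is odd the argument is even simpler: $y^3$ does not commute with any involution. You should incorporate this centraliser comparison to repair the two cases.

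One further point: your description of ${\rm PGU}_3(q)$ as ``aris[ing] from the $A_2$-type subgroup when $q\equiv 2\pmod 3$'' conflates two distinct subgroups in Kleidman's list. The subgroup of type ${\rm SU}_3(q)\times C_{q^2-q+1}$ is a maximal rank subgroup coming from an $A_2T_2$ subgroup of the ambient algebraic group $D_4$, whereas ${\rm PGU}_3(q)$ is the centraliser in $G_0$ of a (different) triality graph automorphism; both survive your analysis and both appear in the statement as separate items.
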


\begin{proof}
Since $\mathcal{M}(x) \subseteq \mathcal{M}(y^3)$, we proceed by considering the maximal overgroups $H_0$ of $y^3$ in $G_0$, referring to the main theorem of \cite{Kleidman3D4} (also see \cite[Theorem~4.3]{Wilson}). 

By inspection, the only parabolic subgroup with order divisible by $r$ is of the form 
\[
H_0 = q^{1+8}{:}{\rm SL}_{2}(q^3).C_{q-1}.
\]
However, the maximal tori of ${\rm SL}_{2}(q^3)$ have order $q^3 \pm 1$, so there are no elements in $H_0$ with the appropriate centraliser in $G_0$. Therefore, there are no parabolic subgroups in $\mathcal{M}(x)$.

Plainly, we will find  subgroups of type $G_2(q)$ in $\mathcal{M}(y^3)$, and there may also be subgroups of type ${\rm PGU}_{3}(q)$ when $q \equiv 2 \imod{3}$. If $p=2$ and $H_0 = {\rm L}_{2}(q^3) \times {\rm L}_{2}(q)$, then $C_{H_0}(z)$ has a cyclic subgroup of order $q^3+1$ (a maximal torus in the first factor) for each $z \in H_0$ of order $r$, so these subgroups do not arise. Since $y^3$ does not commute with an involution, we can also exclude the involution centraliser when $p$ is odd. Subfield subgroups can be ruled out by Lagrange's theorem. Similarly, just by considering divisibility, we see that the only other possibilities are subgroups of type ${\rm SU}_{3}(q) \times C_{q^2-q+1}$ and $C_{q^2-q+1} \times C_{q^2-q+1}$ (the latter is the centraliser of $y^3$ in $G_0$). The result follows.
\end{proof}

Let $H \in \mathcal{M}(x)$. If $H$ is of type $G_2(q)$, then \cite{LLS} gives ${\rm fpr}(z,G/H) \leqs (q^4-q^2+1)^{-1}$ for all nontrivial $z \in G$ and this bound is best possible. Indeed, equality holds if $z$ is a long root element in $G$ (see the proof of \cite[Lemma~6.3]{LLS}). For the other subgroups arising in Proposition~\ref{p:3d4_c_max}, we need to sharpen the bound on ${\rm fpr}(z,G/H)$ in \cite{LLS}. To do this, it will be helpful to observe that if $z \in G$ has prime order, but is not a long root element, then $|z^G|>q^{14}$. In addition, if $1 \ne z \in G_0$ is not a long root element, then $|z^G| > q^{16}$. For both of these claims, see \cite{DM}.

\begin{lem}\label{l:3d4_c_fpr1}
Let $H$ be a maximal subgroup of $G$ of type ${\rm PGU}_{3}(q)$, where $q \equiv 2 \imod{3}$ and $q \geqs 5$, and let $z \in G$ be nontrivial. Then 
\[
{\rm fpr}(z,G/H) < 2q^{-6}. 
\]
\end{lem}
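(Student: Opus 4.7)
My plan is to bound ${\rm fpr}(z,G/H) = |z^G \cap H|/|z^G|$ by estimating the numerator and denominator separately, and I would begin by reducing to prime order $z$: for any nontrivial $z$, every coset of $H$ fixed by $z$ is also fixed by any nontrivial power $w$ of $z$, so ${\rm fpr}(z,G/H) \leqs {\rm fpr}(w,G/H)$, and it therefore suffices to bound ${\rm fpr}(w,G/H)$ for $w$ of prime order.

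For the numerator, I would observe that $H \cap G_0 \cong {\rm PGU}_3(q)$ has order $q^3(q^2-1)(q^3+1) = q^8 - q^6 + q^5 - q^3 < q^8$, and that the conjugacy class $z^G$ lies entirely within a single $G_0$-coset of $G$. The intersection of $H$ with this coset is either empty or a coset of $H \cap G_0$ in $H$, yielding $|z^G \cap H| \leqs |H \cap G_0| < q^8$. For the denominator, the remark immediately preceding the lemma gives $|z^G|>q^{14}$ whenever the prime order element $z$ is not a long root element; if $z$ is a long root element then $z \in G_0$, and using the centraliser order for this class in ${}^3D_4(q)$ (recorded in \cite[Table~II]{Kleidman3D4}) one computes
\[
|z^G| \geqs |z^{G_0}| = q(q^8+q^4+1)(q^6-1) > q^{14}
\]
for all $q \geqs 2$. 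Hence $|z^G|>q^{14}$ for every prime order $z \in G$, and combining the estimates yields
\[
{\rm fpr}(z,G/H) < \frac{q^8}{q^{14}} = q^{-6} < 2q^{-6},
\]
as required.

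The argument is largely arithmetic; the main point is to verify that the long root class size in ${}^3D_4(q)$ comfortably exceeds $q^{14}$, so that no refined counting of the long root elements lying inside $H \cap G_0$ (via the chain ${\rm PGU}_3(q) \leqs G_2(q) \leqs {}^3D_4(q)$) is required. The single-coset observation for elements outside $G_0$ similarly obviates any separate treatment of this case, rendering the argument uniform across all prime order elements.
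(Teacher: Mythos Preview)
Your argument contains a genuine error in the treatment of long root elements. The class of long root elements in $G_0 = {}^3D_4(q)$ has size
\[
|z^{G_0}| = (q^2-1)(q^8+q^4+1),
\]
which is of order roughly $q^{10}$, not $q^{15}$ as your formula $q(q^8+q^4+1)(q^6-1)$ would suggest. (You can read this off from the paper itself: in the proof of Lemma~\ref{l:3d4_c_fpr3} the long root class size is stated explicitly as $(q^2-1)(q^8+q^4+1)$; equivalently, the centraliser of a long root element in $G_0$ has order $q^{12}(q^6-1)$.) In particular $|z^G|$ is nowhere near $q^{14}$ for long root elements, so the crude estimate $|z^G\cap H| < q^8$ only yields ${\rm fpr}(z,G/H) < q^{-2}$, which is far too weak. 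This is precisely why the paper singles out long root elements in the remark preceding the lemma: they are the \emph{only} prime order elements failing $|z^G|>q^{14}$.

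The paper closes this gap not by sharper counting but by showing that $H$ contains \emph{no} long root elements at all. The argument is structural: viewing $\bar{H}=A_2$ inside $\bar{G}=D_4$, the natural $8$-dimensional module for $\bar{G}$ restricts to $\bar{H}$ as its adjoint module, and one checks that no unipotent element of $\bar{H}$ has Jordan form $[J_2^2,J_1^4]$ on this module (the Jordan form of a long root element of $\bar{G}$). Once long root elements are excluded, your numerator--denominator estimate goes through exactly as you wrote it. So your overall strategy is the same as the paper's, but you need the extra step ruling out long root elements in $H$; the arithmetic shortcut you attempted for that case does not work.
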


\begin{proof}
By replacing $z$ by a suitable conjugate, we may as well assume $z$ is contained in $H$ and has prime order. Observe that $H = {\rm PGU}_{3}(q) \times 3 =  C_{G}(g')$, where $g'$ is a certain triality graph automorphism of $G_0$. 

First we claim that $z$ is not a long root element in $G_0$. To see this, let $\bar{H}=A_2$ and $\bar{G}=D_4$ be the corresponding algebraic groups and observe that the natural module $V$ for $\bar{G}$ is the adjoint module for $\bar{H}$. This allows us to compute the Jordan form of each unipotent element in $\bar{H}$ on $V$. Indeed, if $p=2$ then $\bar{H}$ has a unique class of involutions and such an element has Jordan form $[J_2^4]$ on $V$. Similarly, if $p \geqs 5$ then each element in $\bar{H}$ of order $p$ has Jordan form $[J_3,J_2^2,J_1]$ or $[J_5,J_3]$ on $V$. The claim now follows since the long root elements in $\bar{G}$ have Jordan form $[J_2^2, J_1^4]$ on $V$. 

To complete the proof, recall that $|z^G|>q^{14}$ if $z$ is not a long root element, so the result follows from the trivial bound $|z^G \cap H| \leqs 2|{\rm PGU}_{3}(q)|< 2q^8$.
\end{proof}

\begin{lem}\label{l:3d4_c_fpr2}
Let $H$ be a maximal subgroup of $G$ of type $C_{q^2-q+1} \times C_{q^2-q+1}$, where  $q \geqs 3$, and let $z \in G$ be nontrivial. Then 
\[
{\rm fpr}(z,G/H) < q^{-6}. 
\]
\end{lem}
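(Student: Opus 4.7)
The plan is to exploit the small size of $H$ compared to the typical size of nontrivial conjugacy classes in $G$. By Kleidman's classification \cite[Table~II]{Kleidman3D4}, the intersection $H \cap G_0 = T.W_0$, where $T \cong C_{q^2-q+1} \times C_{q^2-q+1}$ is a maximal torus of $G_0$ and $W_0 \cong {\rm SL}_2(3)$ is its relative Weyl group, so $|H \cap G_0| = 24(q^2-q+1)^2$ and hence $|H| \leqs 72(q^2-q+1)^2 < 72 q^4$ since $|H:H\cap G_0| \leqs 3$. By replacing $z$ with a suitable power, we may assume that $z$ has prime order. If $z$ is not a long root element of $G_0$, then the bound $|z^G| > q^{14}$ recorded just before Lemma~\ref{l:3d4_c_fpr1} gives
\[
{\rm fpr}(z,G/H) \leqs \frac{|H|}{|z^G|} < \frac{72 q^4}{q^{14}} = 72 q^{-10} < q^{-6},
\]
where the final inequality holds because $q^4 > 72$ for all $q \geqs 3$.

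The remaining case, in which $z \in G_0$ is a long root element, is the main obstacle: the long root class of $G_0$ has size only $\Theta(q^{10})$, so even the refined Schur--Zassenhaus count of $p$-elements in $T.W_0$ (bounded by $|W_0|_p \cdot |T|$) is insufficient for the desired inequality. My plan is therefore to show directly that no long root element lies in $H$ at all. Since $z$ has order $\ell = p$ (the defining characteristic) and $|T| = (q^2-q+1)^2$ is coprime to $p$, every $p$-element of $T.W_0$ is conjugate into a fixed Sylow $p$-subgroup of order $|W_0|_p$, which forces $p \in \{2,3\}$ as $|W_0| = 24$.

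In each of these two cases, I would compute the Jordan form of a $p$-element on the natural $8$-dimensional module $V$ for the ambient algebraic group $\bar G = D_4$ and compare it with the long root form $[J_2^2, J_1^4]$. For $p = 2$, the unique involution in ${\rm SL}_2(3)$ is the central element $-I$, which acts on the character lattice of the algebraic maximal torus $\bar T \supseteq T$ by inversion; hence the lift of this involution in $H \cap G_0$ permutes the eight weight spaces of $V$ in the four pairs $V_\lambda \leftrightarrow V_{-\lambda}$, yielding Jordan form $[J_2^4]$ and therefore not the long root form. For $p = 3$, an order-$3$ element of ${\rm SL}_2(3)$ acts fixed-point-freely on the relevant rational character lattice and cyclically permutes triples of weights; tracking this action under the triality twist defining $G_0 = {}^3D_4(q)$, and verifying that the resulting Jordan form on $V$ is likewise incompatible with $[J_2^2, J_1^4]$, will be the most technically delicate step of the proposed proof.
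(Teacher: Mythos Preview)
Your overall structure matches the paper exactly: bound ${\rm fpr}$ trivially via $|H|/|z^G|$ unless $z$ is a long root element, then argue that $H$ contains no long root elements, reducing to $p\in\{2,3\}$ since $|T|$ is coprime to $p$.

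For $p=2$ your Jordan form argument is essentially correct and is a valid alternative to the paper's citation of \cite[Corollary~4.4]{BTh}. Your key claim, that the central involution of ${\rm SL}_2(3)$ acts as $-1$ on the cocharacter lattice of $\bar T$, does hold: the longest element $w_0\in W(D_4)$ acts as $-1$, is central in $W(D_4)$, hence lies in the relative Weyl group $C_W(w\tau)\cong{\rm SL}_2(3)$, and must therefore be its unique involution. You should say this explicitly, and also note that the computation applies to \emph{every} involution in the coset $Tt$ (not just $t$ itself), since each such element still swaps $V_\lambda$ and $V_{-\lambda}$ and in characteristic $2$ a $2\times 2$ block of that shape has Jordan form $J_2$.

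The genuine gap is the $p=3$ case, which you explicitly leave open. Your proposed line (tracking weight permutations of an order-$3$ Weyl element through the triality twist and reading off a Jordan form on $V$) is delicate, and your claim that such an element acts fixed-point-freely on the character lattice is not obviously true and would itself need proof. The paper sidesteps all of this in one line: since $z\in H$ normalises the algebraic maximal torus $\bar T$, \cite[Proposition~1.13(iii)]{LLSAlgebraic} forces $p=2$ whenever $z$ is a long root element. This eliminates $p=3$ instantly and is the missing ingredient in your argument.
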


\begin{proof}
Here $H = (C_{q^2-q+1} \times C_{q^2-q+1}){:}{\rm SL}_{2}(3).3$. Assume that $z \in H$ has prime order. As in the proof of Lemma~\ref{l:3d4_c_fpr1}, if $z \in G$ is not a long root element, then $|z^G|>q^{14}$ and we get
\[
{\rm fpr}(z,G/H) \leqs |H|q^{-14} < q^{-6}.
\]
Therefore, we just need to rule out the existence of long root elements in $H$.

If $p \geqs 5$, then $|H|$ is indivisible by $p$, so we may assume $p \in \{2,3\}$. Seeking a contradiction, suppose $z \in H$ is a long root element. Viewing $z$ as an element of the algebraic group $\bar{G} = D_4$, note that $z$ normalises a maximal torus of $\bar{G}$, so \cite[Proposition~1.13(iii)]{LLSAlgebraic} implies that $p=2$ and thus $z$ is an involution. In particular, $z$ is in the coset $St$ of $S = C_{q^2-q+1} \times C_{q^2-q+1}$, where $t$ is the unique involution in ${\rm SL}_{2}(3)$. However, all the involutions in $St$ are contained in the largest class of involutions in $\bar{G}$ (see \cite[Corollary~4.4]{BTh}, for example), whence all the involutions in $H$ are in the $G_0$-class labelled $3A_1$. In particular, there are no involutions in the class $A_1$, which comprises the long root elements in $G_0$. This is a contradiction and the result follows.
\end{proof}

\begin{lem}\label{l:3d4_c_fpr3}
Let $H$ be a maximal subgroup of $G$ of type ${\rm SU}_{3}(q) \times C_{q^2-q+1}$ and let $z \in G$ be nontrivial. Then 
\[
{\rm fpr}(z,G/H) < 2q^{-6}.
\]
\end{lem}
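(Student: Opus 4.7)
The strategy is to adapt the case analysis of Lemmas \ref{l:3d4_c_fpr1} and \ref{l:3d4_c_fpr2}, exploiting the standing bounds $|z^G|>q^{14}$ for prime-order $z$ not a long root element, and $|z^G|>q^{16}$ when additionally $z\in G_0$. The central difficulty is that $|H|$ is of order roughly $3q^{10}$, so the trivial bound $|z^G\cap H|\leq |H|$ combined with $|z^G|>q^{14}$ only yields $3q^{-4}$, falling short of $2q^{-6}$ by a factor of $q^2$. The proof therefore requires finer control on $|z^G\cap H|$ in each case.

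First I would carry out a Jordan-form analysis, using the decomposition $V|_K=U\oplus U^*\oplus 0\oplus 0$ from Lemma \ref{l:3d40}, where $V$ is the natural $8$-dimensional $D_4$-module and $K={\rm SU}_3(q)$. Transvections of $K$ (Jordan type $[J_2,J_1]$ on $U$) act on $V$ as $[J_2^2,J_1^4]$, matching the long root class $A_1$ of $G_0$, while regular unipotents of $K$ ($[J_3]$ on $U$) act as $[J_3^2,J_1^2]$, which is the class $A_2$, not $A_1$. Since the factor $C_{q^2-q+1}$ is a $p'$-group, this accounts for all unipotent elements of $H\cap G_0$. For $z$ a long root element, $|z^G\cap H|$ equals the number of transvections in $K$, exactly $(q-1)(q^3+1)$, while $|A_1^{G_0}|$ has order of size $q^{10}$ by \cite{DM}, giving ratio of order $(q^3+1)/q^9<2q^{-6}$. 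For $z$ a non-long-root unipotent (so $z$ is regular unipotent in $K$), $|z^G\cap H|\leq 3|K|/q^2<3q^6$ combined with $|z^G|>q^{14}$ gives $<3q^{-8}$. For $z$ semisimple of prime order in $G_0$, the estimate $|z^G\cap H|\leq |H|_{p'}<3q^7$ combined with $|z^G|>q^{16}$ gives $<3q^{-9}$.

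The hardest case will be $z\in G\setminus G_0$, where $|z|$ is a prime dividing $|G/G_0|=3$, forcing $|z|=3$. Using a Shintani-descent style argument in the ambient algebraic group, the order-$3$ elements in the coset $(H\cap G_0)g$ (where $g$ is a triality generating $H/(H\cap G_0)$) are controlled by the fixed subgroup $C_{H\cap G_0}(g)$: since $C_{G_0}(g)=G_2(q)$, ${\rm SU}_3(q)\leq G_2(q)$, and the extra $C_{q^2-q+1}$ factor of $K$ centralises ${\rm SU}_3(q)$ in $G_0$ but meets $G_2(q)$ in at most the centre of order $(3,q+1)$, we obtain $|C_K(g)|\leq 3|{\rm SU}_3(q)|\leq 3q^8$. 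Combined with $|z^G|>q^{14}$, this yields ratio of order at most $q^{-6}$. The main obstacle is making this Shintani reduction rigorous in the non-connected setting and carefully accounting for the outer diagonal $3$-extension of $K$ inside $H$, as well as nailing down the precise constant so that the final ratio lies below $2q^{-6}$ uniformly in $q\geq 3$.
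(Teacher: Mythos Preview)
Your treatment of inner elements is essentially the same as the paper's, with only cosmetic differences: the paper handles all semisimple and non-long-root unipotent $z\in G_0$ uniformly via $|z^G\cap H|\leqs |H_0|<2q^{10}$ and $|z^G|>q^{16}$, while you split into unipotent and semisimple subcases with slightly finer bounds. Your Jordan-form identification of transvections in ${\rm SU}_3(q)$ with the $A_1$ class is correct and matches the paper's assertion that ``the long root elements in $H_0$ coincide with the long root elements in the ${\rm SU}_3(q)$ subgroup''.

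The graph automorphism case, however, has a genuine gap. Your proposed Shintani-style count of order-$3$ elements in the coset $(H\cap G_0)g$ by $|C_{H\cap G_0}(g)|$ is not a valid bound: there is no general principle equating the number of elements $h$ with $(hg)^3=1$ to the size of $C_{H_0}(g)$, and your claim ${\rm SU}_3(q)\leqs C_{G_0}(g)=G_2(q)$ presumes the particular $A_2$ factor of $\bar{J}=A_2T_2$ is $\tau$-fixed, which is exactly what needs to be analysed. Even granting your stated estimate $|z^G\cap H|\leqs 3q^8$, combined with $|z^G|>q^{14}$ you only obtain $3q^{-6}$, which is not below $2q^{-6}$.

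The paper resolves this by splitting the order-$3$ graph automorphisms into two conjugacy classes. If $C_{G_0}(z)\ne G_2(q)$ then $|z^G|>\tfrac{1}{2}q^{20}$, so the crude bound $|z^G\cap H|\leqs 2|H_0|<q^{12}$ already gives ${\rm fpr}(z,G/H)<2q^{-8}$. If $C_{G_0}(z)=G_2(q)$ then $|z^G|>q^{14}$, and the paper invokes \cite[Proposition~3.3]{Bur4} to show that an element $t\tau\in\bar{J}\tau$ is a $G_2$-type triality if and only if $t\in Z(\bar{J})$; this yields the exact count $|z^G\cap H|=2|Z({\rm SU}_3(q))|\cdot (q^2-q+1)/(3,q+1)=2(q^2-q+1)$, and hence ${\rm fpr}(z,G/H)<2q^{-12}$. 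This dichotomy, together with the precise algebraic-group input from \cite{Bur4}, is the missing ingredient in your argument.
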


\begin{proof}
Assume that $z \in H$ has prime order. Write $H_0 = H \cap G_0$ and observe that 
\[
H_0 = ({\rm SU}_{3}(q) \circ C_{q^2-q+1}).(3,q+1).2 = {\rm SU}_{3}(q).C_{q^2-q+1}.2. 
\]
First assume $z \in G$ is either semisimple or unipotent, but not a long root element. Then $|z^G|>q^{16}$ and the result follows since $|z^G \cap H| \leqs |H_0|< 2q^{10}$. In addition, the long root elements in $H_0$ coincide with the long root elements in the ${\rm SU}_{3}(q)$ subgroup, so if $z$ is such an element then  
\[
|z^G \cap H| = (q-1)(q^3+1),\; |z^G| = (q^2-1)(q^8+q^4+1)
\]
and thus ${\rm fpr}(z,G/H) < q^{-6}$.

To complete the proof, assume $z \in G$ is a graph automorphism of order $3$. If $C_{G_0}(z) \ne G_2(q)$, then $|z^G|>\frac{1}{2}q^{20}$ and the desired bound follows since $|z^G \cap H| \leqs 2|H_0|< q^{12}$. Finally, suppose $C_{G_0}(z) = G_2(q)$, so $|z^G|>q^{14}$. In terms of algebraic groups, let $\bar{J} = A_2T_2 < \bar{G}=D_4$ and let $\tau$ be a graph automorphism of $\bar{G}$ with $C_{\bar{G}}(\tau) = G_2$. By arguing as in the proof of \cite[Proposition~3.3]{Bur4}, we see that $t\tau \in \bar{J}\tau$ is a $G_2$-type triality graph automorphism if and only if $t \in Z(\bar{J})$. Therefore, returning to the finite groups, we deduce that 
\[
|z^G \cap H| = 2|Z({\rm SU}_{3}(q))| \cdot \frac{q^2-q+1}{(3,q+1)} = 2(q^2-q+1)
\]
and the desired bound follows.
\end{proof}

\begin{prop} \label{p:3d4_c}
The conclusion to Theorem~\ref{t:exceptional} holds in case~(c) of Table \ref{tab:3d4_cases}.
\end{prop}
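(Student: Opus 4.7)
The plan is to apply the probabilistic criterion of Lemma~\ref{l:prob_method} to the element $x = yg$ constructed above, having already reduced to $q \geqs 3$ by Proposition~\ref{p:case2}. By Proposition~\ref{p:3d4_c_max}, every $H \in \mathcal{M}(x)$ is of one of four types: $G_2(q)$; ${\rm PGU}_3(q)$ (present only when $q \equiv 2 \mod{3}$); ${\rm SU}_3(q) \times C_{q^2-q+1}$; and the torus normaliser of type $C_{q^2-q+1} \times C_{q^2-q+1}$. For each type I will bound the number $n(H)$ of $G$-conjugates of $H$ that contain $x$, using the standard formula $n(H) = \frac{|x^G \cap H|}{|x^G|} \cdot \frac{|G|}{|H|}$, and then combine with the relevant fixed point ratio estimate: the general bound in~\eqref{e:3d4_fpr} for type $G_2(q)$, and the sharper bounds in Lemmas~\ref{l:3d4_c_fpr1}, \ref{l:3d4_c_fpr2} and~\ref{l:3d4_c_fpr3} for the other three types.

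The two key inputs for the counting are Lemma~\ref{l:3d40}, which yields $C_{G_0}(y) = C_{q^2-q+1}\times C_{q^2-q+1}$ and identifies $C_{G_0}(y^3)$, and the fact that $g$ commutes with $y$, since $y \in C_{G_0}(g) = G_2(q)$. For the torus type, the subgroup is the normaliser of $C_{G_0}(y^3)$, and because $y^3$ is regular its centralising torus is unique in $G_0$; since $g$ normalises this torus, $x$ lies in exactly one conjugate and $n(H)=1$. For the types ${\rm SU}_3(q) \times C_{q^2-q+1}$ and ${\rm PGU}_3(q)$, I will mimic the $E_6^{\e}(q)$ analysis of Section~\ref{ss:e6_d}: one computes $|y^G \cap H|$ by counting the regular semisimple ${\rm SU}_3(q)$-classes with centraliser a torus of order $q^2-q+1$, and then divides by $|y^G|$ and multiplies by $|G{:}H|$, giving $n(H)$ of order $O(q^2)$ in both cases.

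The main obstacle is the count for the $G_2(q)$-type, since $H = C_G(g) = G_2(q) \times \la g \ra$ itself contains $x$, and a crude bound would be too weak. To sharpen it, I will use the Weyl-group fusion $|W(D_4).{\rm Sym}_3 : W(G_2)|$ to bound the number of distinct $G$-classes in $y^G \cap H$, coupled with the fact that $y$ is a regular semisimple element of $G_2(q)$ with $C_{G_2(q)}(y) = \la y \ra$ of order $q^2-q+1$; this produces $|y^G \cap H| \leqs c \cdot |H|/|y|$ for a small absolute constant $c$, and hence $n(H) = O(q^2)$.

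Combining the four estimates with the corresponding fixed point ratio bounds yields
\[
\sum_{H \in \mathcal{M}(x)} {\rm fpr}(z,G/H) \leqs \frac{O(q^2)}{q^4-q^2+1} + \frac{O(q^2)}{q^{6}} + \frac{O(q^2)}{q^6} + \frac{1}{q^6},
\]
for every nontrivial $z \in G$. One then checks routinely that this bound is less than $\tfrac{1}{2}$ for all $q \geqs 3$ and is $O(q^{-2})$ as $q \to \infty$; Lemma~\ref{l:prob_method}(ii) gives $u(G) \geqs 2$ and the asymptotic conclusion of Theorem~\ref{t:exceptional}, completing the proof.
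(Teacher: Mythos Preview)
Your overall strategy matches the paper's exactly: bound $n(H)$ for each of the four types in Proposition~\ref{p:3d4_c_max} and combine with the fixed point ratio estimates from \eqref{e:3d4_fpr} and Lemmas~\ref{l:3d4_c_fpr1}--\ref{l:3d4_c_fpr3}. However, two of your multiplicity estimates are off.

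First, your claim that $n(H)=O(q^2)$ for the ${\rm PGU}_3(q)$ and ${\rm SU}_3(q)\times C_{q^2-q+1}$ types is incorrect. The number of relevant ${\rm SU}_3(q)$-classes is indeed $O(q^2)$, but after forming $|y^G\cap H|$, dividing by $|y^G|=|G_0|/(q^2-q+1)^2$ (from Lemma~\ref{l:3d40}) and multiplying by $|G{:}H|$, one obtains $n(H)\leqs \tfrac{1}{3}(q^2-q-2)(q^2-q+1)$ and $n(H)\leqs \tfrac{1}{6}q(q-1)(q^2-q+1)$ respectively, both of order $q^4$, not $q^2$. (The analogous computation in Section~\ref{ss:e6_d} that you cite also produces bounds of this larger order.) Second, your assertion that $n(H)=1$ for the torus normaliser hinges on $y^3$ being a \emph{regular} semisimple element of $G_0$; when $q\equiv 2\pmod 3$ we have $|y^3|=(q^2-q+1)/3<|y|$, and the setup before Proposition~\ref{p:3d4_c_max} only records that $y$ is regular, so this step needs an argument (or you can fall back on the crude estimate $|y^G\cap H|\leqs (q^2-q+1)^2$, which yields $n(H)\leqs \tfrac{1}{24}(q^2-q+1)^2$).

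Fortunately neither slip is fatal: with the corrected bounds $n(H)=O(q^4)$ in all three non-$G_2$ cases, each contributes $O(q^4)\cdot q^{-6}=O(q^{-2})$, and together with the $G_2$ contribution $O(q^2)\cdot(q^4-q^2+1)^{-1}=O(q^{-2})$ you still get $\sum_{H\in\mathcal{M}(x)}{\rm fpr}(z,G/H)=O(q^{-2})$, and a direct check shows the explicit sum is below $\tfrac{1}{2}$ for all $q\geqs 3$. So the architecture is right; just redo the two arithmetic steps.
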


\begin{proof}
In view of Proposition \ref{p:case2}, we may assume $q \geqs 3$. Recall that the maximal overgroups $H$ of $x$ are described in Proposition~\ref{p:3d4_c_max}. For each type of subgroup, we need to bound the number of conjugates of $H$ containing $x$. As before, we do this by estimating the number of conjugates containing $y$, which we denote by $n(H)$.

First assume $H \in \mathcal{M}(x)$ is a subgroup of type $G_2(q)$. By inspecting \cite{Chang, Enomoto}, we see that $G_2(q)$ has at most $\frac{1}{6}q(q-1)$ conjugacy classes of semisimple elements with centraliser $C_{q^2-q+1}$ and thus
\[
|y^G \cap H| \leqs \frac{1}{6}q(q-1) \cdot \frac{|G_2(q)|}{q^2-q+1}.
\]
This implies that
\[
n(H) \leqs \frac{1}{6}q(q-1)(q^2-q+1).
\]
Alternatively, by arguing as in the proof of \cite[Lemma~4.5]{LLS} we see that $y^G \cap H$ is a union of at most $|W(D_4):W(G_2)| = 16$ distinct $H$-classes and this yields $n(H) \leqs 16(q^2-q+1)$. Notice that the latter bound is better for $q>9$.

Next assume $q \equiv 2 \imod{3}$ and $H \in \mathcal{M}(x)$ is of type ${\rm PGU}_{3}(q)$. Now ${\rm PGU}_{3}(q)$ has $\frac{1}{3}(q^2-q-2)$ classes of semisimple elements with centraliser $C_{q^2-q+1}$, so 
\[
|y^G \cap H| \leqs \frac{1}{3}(q^2-q-2) \cdot \frac{|{\rm PGU}_{3}(q)|}{q^2-q+1}
\]
and we get $n(H) \leqs \frac{1}{3}(q^2-q-2)(q^2-q+1)$.

Now suppose $H \in \mathcal{M}(x)$ is of type ${\rm SU}_{3}(q) \times C_{q^2-q+1}$. Set $H_0 = H \cap G_0$ and recall that $H_0 = {\rm SU}_{3}(q).C_{q^2-q+1}.2$. Now ${\rm SU}_{3}(q)$ has $\lceil \frac{1}{3}(q^2-q-2)\rceil \leqs \frac{1}{3}q(q-1)$ conjugacy classes of semisimple elements with centraliser of order $q^2-q+1$ and this implies that 
\[
|y^G \cap H| \leqs \frac{1}{3}q(q-1) \cdot \frac{|{\rm SU}_{3}(q)|}{q^2-q+1}\cdot (q^2-q+1).
\]
In turn, this gives $n(H) \leqs \frac{1}{6}q(q-1)(q^2-q+1)$.

Finally, if $H \in \mathcal{M}(x)$ is of type $C_{q^2-q+1} \times C_{q^2-q+1}$ then $|y^G \cap H| \leqs (q^2-q+1)^2$ and we deduce that $n(H) \leqs \frac{1}{24}(q^2-q+1)^2$. 

By combining the above bounds with the fixed point ratio estimates in \eqref{e:3d4_fpr} and Lemmas \ref{l:3d4_c_fpr1}--\ref{l:3d4_c_fpr3}, we conclude that 
\begin{align*}
P(z,x) &\leq \sum_{H \in \mathcal{M}(x)} {\rm fpr}(z,G/H) \\ 
&< a(q)\cdot (q^2-q+1)\cdot (q^4-q^2+1)^{-1} + \frac{1}{3}(q^2-q-2)(q^2-q+1)\cdot 2q^{-6} \\
&\quad + \frac{1}{6}q(q-1)(q^2-q+1)\cdot 2q^{-6}  + \frac{1}{24}(q^2-q+1)^2\cdot q^{-6}
\end{align*}
for all nontrivial $z \in G$, where $a(q) = \frac{1}{6}q(q-1)$ if $q \leqs 9$ and $a(q)=16$ for $q>9$. One checks that this upper bound is less than $\frac{1}{2}$ for all $q > 2$ and less than $q^{-1}$ for $q>16$.
\end{proof}

In view of Propositions~\ref{p:3d4_a}, \ref{p:3d4_b} and~\ref{p:3d4_c}, we have now proved the following result. 

\begin{thm} \label{t:3d4}
The conclusion to Theorem~\ref{t:exceptional} holds when $G_0={}^3D_4(q)$.
\end{thm}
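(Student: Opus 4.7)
The theorem is essentially a packaging statement: it asserts that Theorem~\ref{t:exceptional} holds for every almost simple group with socle ${}^3D_4(q)$, and all three types of almost simple extensions have already been treated individually in Propositions~\ref{p:3d4_a}, \ref{p:3d4_b} and~\ref{p:3d4_c}. So my plan is simply to assemble these three cases via the preliminary reduction.

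First, I would fix an almost simple group $G=\la G_0,g\ra$ with $G_0={}^3D_4(q)$ and $g\in G\setminus G_0$ and apply Proposition~\ref{p:cases} to reduce to the case where $g$ is one of the automorphisms listed in Table~\ref{tab:3d4_cases}. This step is immediate once one recalls that ${\rm Out}(G_0)$ is cyclic of order $3f$ and that $\<\ddot\t\ddot\p^i\>$, $\<\ddot\p^i\>$, $\<\ddot\t\>$ cover all nontrivial cyclic subgroups that arise up to conjugacy in $\Out(G_0)$ (with the parity condition $3\nmid f/i$ appearing in case (a) to distinguish it from case (b) after a change of generator).

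Next, I would dispatch each case by citing the corresponding proposition: case (a) by Proposition~\ref{p:3d4_a} (Shintani descent on $(\bar G,\t\p^i,e)$ with $\bar G=D_4$, choosing $y\in {}^3D_4(q_0)$ of order $q_0^4-q_0^2+1$), case (b) by Proposition~\ref{p:3d4_b} (Shintani descent over the $G_2$-subsystem using Lemma~\ref{l:shintani_substitute}, with $y\in G_2(q_0)$ of order $(q_0^2-q_0+1)$ or $7$), and case (c) by Proposition~\ref{p:3d4_c} (direct analysis of $\mathcal M(x)$ for $x=yg$ with $y$ a suitable element of $C_{G_0}(\t)=G_2(q)$, combined with the sharpened fixed point ratio bounds of Lemmas~\ref{l:3d4_c_fpr1}--\ref{l:3d4_c_fpr3} and the computer check for $q=2$ in Proposition~\ref{p:case2}). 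In each case the relevant proposition furnishes an explicit bound of the form $P(z,x)<\tfrac12$ (giving $u(G)\geqs 2$ via Lemma~\ref{l:prob_method}(ii)) as well as a bound $P(z,x)<q^{-c}$ for large $q$ (yielding the asymptotic statement).

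To conclude the asymptotic part, for any sequence $(G_n)$ of almost simple groups with socle ${}^3D_4(q_n)$ satisfying $|G_n|\to\infty$ we have $q_n\to\infty$, so the $q$-decaying bounds from the three cases combine to give $P(z,x_n)\to 0$ uniformly in nontrivial $z\in G_n$, whence $u(G_n)\to\infty$ by Lemma~\ref{l:prob_method}(ii). There is no genuine obstacle at the level of the theorem itself; all of the technical difficulty is already absorbed into the three propositions, with the hardest part being case (c), where Shintani descent is unavailable and one must instead identify the maximal overgroups of $x=yg$ directly via Kleidman's classification \cite{Kleidman3D4} and then refine the fixed point ratio estimates of \cite{LLS} for the non-$G_2(q)$ overgroups to beat the contribution of the $O(q^2)$ conjugates of the $G_2(q)$-type subgroup.
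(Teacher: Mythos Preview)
Your proposal is correct and takes essentially the same approach as the paper: the paper's proof of this theorem is a single sentence noting that the result follows from Propositions~\ref{p:3d4_a}, \ref{p:3d4_b} and~\ref{p:3d4_c}, after the case reduction of Proposition~\ref{p:cases}. Your write-up simply expands this with a reminder of what each proposition establishes and how the asymptotic conclusion is extracted, which is entirely in line with the paper's argument.
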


Moreover, by combining this with Theorems~\ref{t:low}, \ref{t:e8}, \ref{t:e7}, \ref{t:e6} and  \ref{t:f4}, we conclude that the proof of Theorem~\ref{t:exceptional} is complete.

\end{document}